\newtheorem{theorem}{Theorem}[section]
\newtheorem{lemma}[theorem]{Lemma}
\newtheorem{proposition}[theorem]{Proposition}
\newtheorem{corollary}[theorem]{Corollary}
\theoremstyle{definition}
\newtheorem{definition}[theorem]{Definition}
\theoremstyle{remark}
\newtheorem{rem}[theorem]{Remark}
\newtheorem{rems}[theorem]{Remarks}
\newtheorem{example}[theorem]{Example}
\newcommand\pf{\begin{proof}}
\newcommand\epf{\end{proof}}
\newcommand\Oo{\mathcal{O}}
\DeclareMathOperator{\Hom}{Hom}
\DeclareMathOperator{\SL}{SL}
\DeclareMathOperator{\Aut}{Aut}
\DeclareMathOperator{\Ker}{Ker}
\DeclareMathOperator{\id}{id}
\numberwithin{equation}{section}
\title{Some isomorphism results for graded twistings of function algebras on finite groups}
\begin{document}

\author{Julien Bichon}
\address{ Universit\'e Clermont Auvergne, CNRS, LMBP, F-63000 CLERMONT-FERRAND, FRANCE}
%Laboratoire de Math\'ematiques Blaise Pascal, Universit\'e Clermont Auvergne \& CNRS, Complexe universitaire des C\'ezeaux, 3 place Vasar\'ely,63178~Aubi\`ere Cedex, France}
\email{julien.bichon@uca.fr}

\author{Maeva Paradis}
\email{maeva.paradis@uca.fr}

\begin{abstract}
We provide isomorphism results for Hopf algebras that are obtained as graded twistings  of function algebras on finite groups by cocentral actions of cyclic groups. More generally, we also consider the isomorphism problem for finite-dimensional Hopf algebras fitting into abelian cocentral extensions. We  apply our classification results to a number of concrete examples involving special linear groups over finite fields, alternating groups and dihedral groups.
\end{abstract}

\maketitle

\tableofcontents

\section{Introduction}

Hopf algebras are useful and far-reaching generalizations of groups. In the semisimple (hence finite-dimensional) setting, the framework that is the closest from the one of finite groups, all the known examples arise from groups via a number of sophisticated constructions, and a general fundamental question \cite[Problem 3.9]{am} is whether any semisimple Hopf algebra is ``group-theoretical'' in an appropriate sense. An answer to the above question, positive or not, still would leave open the hard problem of the isomorphic classification of the ``group-theoretical'' Hopf algebras.
This paper proposes contributions to this classification problem, mainly for the class of Hopf algebras that are obtained as graded twisting of function algebras of finite groups.

The graded twisting of Hopf algebras, which differs in general from the familiar Hopf $2$-cocycle twisting construction  \cite{doi},  was introduced in \cite{bny16}, and is the formalization of a construction in \cite{ny} that solved the quantum group realization problem of the Kazhdan-Wenzl categories \cite{kw}. The initial data is that of a graded Hopf algebra $A$, acted on by a group $\Gamma$. The resulting twisted Hopf algebra then has a number of pleasant features related to those of initial one. Among those features, the following one \cite{bny16,bny18} is of particular interest: if $A=\mathcal O(G)$ is the coordinate algebra on a linear algebraic group $G$ and $\Gamma$ has prime order, then all the noncommutative quotients of the graded twisted Hopf algebra again are graded twist of $\Oo(H)$, for a well-chosen ``admissible'' closed subgroup $H\subset G$. This applies in particular to $\Oo_{-1}({\rm SL}_2(\mathbb C))$, whose noncommutative quotients have been discussed and classified in \cite{pod,bina}. 
The results in \cite{bny16,bny18}, however, leave open the question of the isomorphic classification of the Hopf algebras that are obtained by graded twisting, and this is precisely the problem that we discuss in this paper. 

We prove 3  isomorphism results for graded twisting of Hopf algebras of functions on finite groups. 
These results all have in common strong cohomological assumptions on the underlying group, which we believe to be difficult to overcome to obtain general results, but yet are broad enough to cover a number of interesting cases. Namely, we obtain classification results for Hopf algebras that are graded twists of
\begin{enumerate}
 \item $\mathcal O({\rm SL}_n(\mathbb F_q))$ by $\mathbb Z_m$, where $q$ is a power of a prime number, $m={\rm GCD}(n,q-1)$ is prime and $(n,q)\not\in\{(2,9),(3,4)\}$ (see Theorem \ref{thm:slzp});
\item $\Oo(\widetilde{A_n})$ by $\mathbb Z_2$, where $\widetilde{A_n}$ is the unique Schur cover of the alternating group $A_n$, with $n\not=6$ (see Theorem \ref{thm:an}); 
\item $\Oo(\widetilde{S_n})$ by $\mathbb Z_2$, where $\widetilde{S_n}$ is any of the two Schur covers of the symmetric group $S_n$, with $n\not=6$ (see Theorem \ref{thm:sn}).
\end{enumerate}

While the first two isomorphism theorems (Theorem \ref{thm:firstmain} and Theorem \ref{thm:firstmainbis}) are obtained rather directly and early in the paper (in Section 3), the third one (Theorem \ref{thm:secondmain}) is obtained by considering the more general problem of the classification of the Hopf algebras fitting into an abelian cocentral extension. This is a classical topic in the field, which has been quite studied and very successful to obtain several classification results \cite{mas95,nat,kas00}. Most of our analysis in Section 4 is thus  well-know to specialists, but we feel that certain formulations and our focus on extensions that are universal bring some novelty, and we get as applications some results in this framework that seem to be new. Indeed we obtain classification results (i.e. parameterizations by concrete and explicitly known group-theoretical data) for noncommutative Hopf algebras $A$ fitting into an abelian cocentral extension $k \to \Oo(H)\to A \to k\mathbb Z_m$ in the following cases:
\begin{enumerate}
 \item $H={\rm PSL}_2(\mathbb F_p)$, with $p$ odd prime and $m=2$;
\item $H=A_n$, with $n=5$ or $n\geq 8$ and $m=2$;
\item $H=A_5$, for any $m\geq 1$;
\item $H=S_n$, with $n\not=6$ and $m=2$;
\item $H=D_n$, the dihedral group of order $2n$ with $n$ odd and $m\geq 1$;
\item $H=D_n$ with $n$ even, with the above extension universal and $m=2$;
\item $H=\mathbb Z_p\times \mathbb Z_p$ with $p$ an odd prime and $m$ a power of a prime such that $m|(p-1)$.
\end{enumerate}
Among those examples, it is interesting to note that the one with $D_n$ and $n$ even is certainly the most intricate one, and does not follow from a general result, although the structure of this group is certainly not the richest one.

The paper is organized as follows. Section 2 consists of reminders and preliminaries. In Section 3 we provide our first two isomorphism results for graded twistings of function algebras on finite groups. Section 4 deals with general abelian cocentral extensions and provides our third isomorphism result for graded twistings. The final Section 5 discusses  applications of the previous results to the concrete examples mentioned above. 

\medskip

\noindent
\textbf{Notation and conventions.} We work over a fixed base field $k$, that we assume to be algebraically closed and of characteristic zero. We assume familiarity with the theory of Hopf algebras, for which \cite{mon} is a convenient reference, and we adopt the usual conventions: for example $\Delta$, $\varepsilon$ and $S$ always respectively stand for the comultiplication, counit and antipode of a Hopf algebra, and we will use Sweedler's notation in the standard manner. A slightly less usual convention is that we will assume that Hopf algebras have bijective antipode.  We also assume some familiarity with basic homological algebra, for which \cite{hs,kar} are convenient references, and in particular we will use \cite{kar} as a reference for Schur multiplier computations.  Other specific notations will be introduced throughout the text.

\section{Preliminaries}
This section consists of reminders about cocentral Hopf algebra maps, cocentral gradings, and the graded twisting construction. It also provides a number of simple but useful preliminary results.

\subsection{Cocentral Hopf algebra maps, cocentral gradings}
The concept of cocentral Hopf algebra map is dual to the familiar one of central algebra map. The precise definition is as follows \cite{ad}, see \cite{chi,chka} for extensive discussions on these notions.
\begin{definition}
	\begin{enumerate}
		\item A Hopf algebra map $p: A \to B$ is said to be \textsl{cocentral} if for any $a \in A$, we have $p(a_{(1)})\otimes a_{(2)}= p(a_{(2)}) \otimes a_{(1)}$.
		\item A cocentral Hopf algebra map $p: A \to B$ is said to be \textsl{universal} if for any cocentral Hopf algebra map $q : A \to C$, there exists a unique Hopf algebra map $f : B \to C$ such that $f\circ p=q$.
		\item A Hopf algebra is said to have a \textsl{universal grading group} if there exists a universal cocentral Hopf algebra map $p: A \to k\Gamma$ for some group $\Gamma$, the unique such group $\Gamma$ being called the universal grading group of $A$.
	\end{enumerate}
\end{definition}

\begin{rems}\label{rems:cocentralbasic}
	\begin{enumerate}
		\item If $p: A \to B$ is a cocentral surjective Hopf algebra map, then $B$ is necessarily cocommutative.
		\item Given a Hopf algebra $A$, the existence of a universal cocentral Hopf algebra map $A \to B$ is easily shown as follows. Consider $X$, the linear subspace of $A$ spanned by the elements $$\varphi(a_{(1)})a_{(2)}-\varphi(a_{(2)})a_{(1)}, \ \varphi \in A^*, \ a \in A$$ 
		It is easy to see that $X$ is a co-ideal in $A$, and then the ideal $I$ generated by $X$ is a Hopf ideal in $A$. The quotient Hopf algebra map $p : A \to A/I$ is then universal cocentral. Uniqueness of the universal cocentral Hopf algebra map is obvious from the definition.
		\item If $G$ is a linear algebraic subgroup, denote by $\Oo(G)$ the algebra of coordinate functions on $G$. If $H \subset G$ is a closed subgroup, the restriction map $\Oo(G)\to \Oo(H)$ is cocentral if and only if $H$ is central in $G$: $H \subset Z(G)$, and the restriction map $\Oo(G) \to \Oo(Z(G))$ is universal cocentral.
		\item If a Hopf algebra $A$ is cosemisimple, it is not difficult to see, using the Peter-Weyl decomposition of $A$ (decomposition of $A$ into direct sum  of matrix subcoalgebras), that $A$ has a universal grading group. %The terminology ``universal grading group'' comes from tensor categories, see \cite{gn}. 
	\end{enumerate}
\end{rems}

The following lemma will be used several times in the text.

\begin{lemma}\label{lemm:semi-univ}
	Let $A$, $B$ be Hopf algebras having the same universal finite cyclic grading group $\Gamma_0$ and suppose given two surjective cocentral Hopf algebra maps $p: A \to k\Gamma$ and $q : B \to k\Gamma$ for some finite cyclic group $\Gamma$, and a Hopf algebra isomorphism $f : A \to B$. Then there exists $u \in \Aut(\Gamma)$ such that $u\circ p = q\circ f$.
\end{lemma}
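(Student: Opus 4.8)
The plan is to reduce everything to the universal cocentral maps, and then to an elementary fact about surjections between finite cyclic groups. Denote by $p_0\colon A\to k\Gamma_0$ and $q_0\colon B\to k\Gamma_0$ the universal cocentral Hopf algebra maps. Since $p$ and $q$ are cocentral, the universal property gives unique Hopf algebra maps $\pi\colon k\Gamma_0\to k\Gamma$ and $\pi'\colon k\Gamma_0\to k\Gamma$ with $\pi\circ p_0=p$ and $\pi'\circ q_0=q$, and these are surjective because $p$ and $q$ are. Here I would use that Hopf algebra maps between group algebras correspond bijectively to group homomorphisms between the underlying groups, surjections matching surjections and isomorphisms matching isomorphisms: an algebra map out of $k\Gamma_0$ is determined by the images of the grouplike basis, and these must lie among the grouplikes of the target, that is, in $\Gamma$.

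Next I would transport the universal structure along $f$. As $f$ is a Hopf algebra isomorphism, $q_0\circ f\colon A\to k\Gamma_0$ is again cocentral, and it is again \emph{universal}: any cocentral map $r\colon A\to C$ factors uniquely through $q_0\circ f$ since $r\circ f^{-1}\colon B\to C$ factors uniquely through $q_0$. By the universal property of $p_0$ there is then a Hopf algebra map $\rho\colon k\Gamma_0\to k\Gamma_0$ with $\rho\circ p_0=q_0\circ f$, and by that of $q_0\circ f$ a map $\sigma\colon k\Gamma_0\to k\Gamma_0$ with $\sigma\circ q_0\circ f=p_0$; then $\sigma\circ\rho\circ p_0=p_0$, so $\sigma\circ\rho=\mathrm{id}$ by uniqueness, and similarly $\rho\circ\sigma=\mathrm{id}$, whence $\rho$ is an isomorphism. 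Therefore
\[
q\circ f=\pi'\circ q_0\circ f=\pi'\circ\rho\circ p_0=\widetilde\pi\circ p_0 ,
\]
where $\widetilde\pi=\pi'\circ\rho\colon k\Gamma_0\to k\Gamma$ is surjective, being a surjection composed with an isomorphism. Thus both $p$ and $q\circ f$ factor through the single map $p_0$, via surjective Hopf algebra maps $\pi,\widetilde\pi\colon k\Gamma_0\to k\Gamma$.

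It now suffices to find $u\in\Aut(\Gamma)$ with $u\circ\pi=\widetilde\pi$, for then $u\circ p=u\circ\pi\circ p_0=\widetilde\pi\circ p_0=q\circ f$ since $p_0$ is surjective. Translating back to groups, this is the assertion that two surjective group homomorphisms from the finite cyclic group $\Gamma_0$ onto the finite cyclic group $\Gamma$ differ by an automorphism of $\Gamma$; it holds because both kernels equal the unique subgroup of $\Gamma_0$ of order $|\Gamma_0|/|\Gamma|$, so the two induced maps $\Gamma_0/(\text{common kernel})\to\Gamma$ are isomorphisms, and $u$ is their composite in the appropriate order. This last step is precisely where cyclicity of $\Gamma_0$ is used: for $\Gamma_0=\mathbb{Z}_2\times\mathbb{Z}_2$ it already fails.

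The whole argument is a string of diagram chases once the ingredients are assembled; the one point requiring real care is the second paragraph, where one must verify that $q_0\circ f$ is genuinely a \emph{universal} cocentral map for $A$ and hence that the comparison map $\rho$ is invertible rather than merely surjective — this is the standard uniqueness-up-to-unique-isomorphism property of universal objects applied to the two universal cocentral maps $p_0$ and $q_0\circ f$ on $A$.
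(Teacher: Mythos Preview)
Your proof is correct and follows essentially the same route as the paper's: factor $p$, $q$, and $q\circ f$ through the universal maps $p_0$, $q_0$, compare $p_0$ with $q_0\circ f$ via an automorphism of $\Gamma_0$, and then use that any two surjections from a finite cyclic group onto $\Gamma$ have the same kernel. The only cosmetic difference is that the paper establishes the invertibility of the comparison map (your $\rho$, their $v$) by noting it is a surjective endomorphism of a finite group, whereas you invoke the uniqueness-up-to-isomorphism of universal cocentral maps; both arguments are fine.
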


\begin{proof}
	Let  $p_0: A \to k\Gamma_0$ and $q_0 : B \to k\Gamma_0$ be the universal cocentral Hopf algebra maps. The Hopf algebra map $q_0\circ f : A \to k\Gamma_0$ being cocentral, there exists a unique group morphism $v :  \Gamma_0 \to \Gamma_0$ such that $v\circ p_0=q_0\circ f$. Since $q_0\circ f$ is surjective, so is $v$ and hence $v$ is an automorphism since $\Gamma_0$ is finite. The Hopf algebra maps $p: A \to k\Gamma$ and $q : B \to k\Gamma$ being cocentral and surjective, the universality of $p_0$ and $q_0$ yields surjective group morphisms $w,w' : \Gamma_0 \to \Gamma$ such that $w\circ p_0=p$ and $w'\circ q_0=q$. Let 
	$N=\Ker(w)$ and $N'=\Ker(w')$. We have $|N|= \frac{|\Gamma_0|}{|\Gamma|}=|N'|$, hence the uniqueness of a subgroup of given order in a finite cyclic group yields $N=N'=v(N)$, and  there exists a unique group morphism $u : \Gamma \to \Gamma$ such that $u\circ w=w'\circ v$:
	$$\xymatrix{
A \ar[r]_{p_0} \ar[d]^f  \ar@/^1pc/[rr]^p& k \Gamma_0  \ar@{-->}[d]^v \ar@{-->}[r]_w & k\Gamma	\ar@{-->}[d]^u\\
B \ar[r]^{q_0} \ar@/_1pc/[rr]_q& k \Gamma_0   \ar@{-->}[r]^{w'} & k\Gamma
}$$
We get	$u\circ p= u\circ w\circ p_0=w'\circ v\circ p_0=w'\circ q_0\circ f=q\circ f$, as required.
	\end{proof}

\begin{definition}
	Let $A$ be a Hopf algebra and let $\Gamma$ be a group. A \textsl{cocentral grading} of $A$ by $\Gamma$ consists of a direct sum decomposition $A=\oplus_{g \in \Gamma}A_g$ such that for any $g,h \in \Gamma$ we have \begin{enumerate}
		\item $A_gA_h\subset A_{gh}$ and $1 \in A_e$, 
		\item $\Delta(A_g)\subset A_g \otimes A_g$ and $S(A_g)\subset A_{g^{-1}}$.
	\end{enumerate}
\end{definition}	

Cocentral gradings by $\Gamma$ correspond to cocentral Hopf algebra maps $p : A \to k\Gamma$. Indeed, given  a cocentral Hopf algebra map $p : A\to k\Gamma$, the corresponding grading is defined by 
$$A_{g}=\{a \in A \ | \ a_{(1)}\otimes p(a_{(2)})=a \otimes g=  a_{(2)} \otimes p(a_{(1)}) \}$$
We occasionally denote the set $A_g$ by $A_{g,p}$ to specify the dependence on $p$, in case there is a risk of confusion.
 Conversely, given a cocentral grading by $\Gamma$, the cocentral Hopf algebra map $p : A\to k\Gamma$ is defined by $p_{|A_g}=\varepsilon(-)g$, and is surjective if and only if $A_g\not=\{0\}$ for any $g \in \Gamma$. We will freely circulate from cocentral Hopf algebra maps to cocentral gradings. %In case there are several Hopf al

An important property of the cocentral gradings, provided that the corresponding cocentral Hopf algebra map is surjective, is that they are strong: for any $g,h \in \Gamma$, we have $A_gA_h=A_{gh}$ (see e.g. \cite[Proposition 2.2]{bny18} for this well-known fact). Here is a useful application, used in the proof of the forthcoming Lemma \ref{lemm:iso-weak}.

%There is a general notion of exact sequence of Hopf algebras 

%\begin{definition}
% A cocentral exact sequence of Hopf algebras is
%\end{definition}

\begin{lemma}\label{lemm:xy=yz}
 Let $p : A \to k\Gamma$ be a cocentral surjective Hopf algebra map. Let $g\in \Gamma$ and let $y,z \in A$ be
 such that $xy=xz$ for any $x \in A_g$. Then $y=z$. 
\end{lemma}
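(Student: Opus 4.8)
The plan is to reduce everything to the fact, recalled just above the statement, that the cocentral grading $A=\bigoplus_{h\in\Gamma}A_h$ attached to a surjective cocentral Hopf algebra map is strong. Put $w=y-z$, so that the hypothesis reads $xw=0$ for every $x\in A_g$, and the goal becomes $w=0$.

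First I would use strongness to decompose the unit. Strongness gives $A_{g^{-1}}A_g=A_{g^{-1}g}=A_e$, and since $1\in A_e$ by the grading axioms, we may write $1=\sum_{i=1}^{n}c_id_i$ with $c_i\in A_{g^{-1}}$ and $d_i\in A_g$ for some $n\geq 1$.

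Then the conclusion is immediate: since each $d_i$ lies in $A_g$, the hypothesis yields $d_iw=0$, whence
\[
w=1\cdot w=\Big(\sum_{i=1}^{n}c_id_i\Big)w=\sum_{i=1}^{n}c_i(d_iw)=0,
\]
so $y=z$, as required.

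There is no real obstacle here; the only point requiring a moment's thought is to decompose $1$ as a sum of products $c_id_i$ whose \emph{right-hand} factors $d_i$ — the ones that end up multiplying $w$ on the left — lie in $A_g$. This is what forces the use of the identity $A_{g^{-1}}A_g=A_e$ rather than $A_gA_{g^{-1}}=A_e$, but both are equally available from strongness.
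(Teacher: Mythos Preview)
Your proof is correct and follows essentially the same approach as the paper: both use strongness to write $1=\sum_i c_id_i$ with $c_i\in A_{g^{-1}}$, $d_i\in A_g$, and then conclude by applying the hypothesis to each $d_i$. The only cosmetic difference is that you set $w=y-z$ and show $w=0$, whereas the paper works directly with $y$ and $z$.
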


\begin{proof}
Since $A_e=A_{g^{-1}}A_g$, there exist  $x_1,\ldots ,x_m\in A_{g^{-1}}$ and $y_1, \ldots, y_m\in A_g$  such that $1= \sum_{i=1}^mx_iy_i$. Then, using our assumption, we have
$y= \sum_{i=1}^mx_iy_iy=\sum_{i=1}^mx_iy_iz=z$.
\end{proof}

\subsection{Cocentral actions and graded twisting}
The following notion is introduced in \cite{bny16} under the name ``invariant cocentral action". In the present paper, to simplify terminology, we will simply say ``cocentral action".

\begin{definition}
	A \textsl{cocentral action} of a group $\Gamma$ on a Hopf algebra $A$ consists of a pair $(p,\alpha)$ where $p : A \to k\Gamma$ is a surjective cocentral Hopf algebra map and $\alpha : \Gamma \to \Aut_{\rm Hopf}(A)$ is a group morphism, together with the compatibility condition $p\circ \alpha_g=p$ for any $g \in \Gamma$.  
\end{definition}

In the graded picture, the compatibility condition is $\alpha_g(A_h)=A_h$ for any $g,h\in \Gamma$.

\begin{definition}
Given a cocentral action  $(p,\alpha)$ of a group $\Gamma$ on a Hopf algebra $A$, the \textsl{graded twisting} $A^{p,\alpha}$ is the Hopf algebra having $A$ as underlying coalgebra, and whose product and antipode  are defined by
$$\forall a \in A_g, b\in A_h, \ a\cdot b = a\alpha_g(b), \ S(a) = \alpha_{g^{-1}}(S(a))$$ 
\end{definition}

The present definition of a graded twisting differs from the original one in \cite{bny16}, but is equivalent to it: see \cite[Remark 2.4]{bny18}, the underlying algebra structure is that of a twist in the sense of \cite{zha}.

\begin{lemma}\label{lemm:invariantuniv}
	Let $q : A \to B$ be a universal cocentral Hopf algebra map and let $(p,\alpha)$ be a cocentral action of a group $\Gamma$ on  $A$. Then  $q : A^{p,\alpha} \to B$ still is a universal cocentral Hopf algebra map.
\end{lemma}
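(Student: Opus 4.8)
The plan is to exploit the concrete description of the universal cocentral map given in Remark \ref{rems:cocentralbasic}(2): the universal cocentral quotient $B$ is $A/I$, where $I$ is the Hopf ideal generated by the co-ideal $X$ spanned by the elements $\varphi(a_{(1)})a_{(2)} - \varphi(a_{(2)})a_{(1)}$, with $\varphi \in A^*$ and $a \in A$. Since $A^{p,\alpha}$ and $A$ have the same underlying coalgebra, the subspace $X$ is \emph{literally the same} linear subspace in both $A$ and $A^{p,\alpha}$; the only thing that changes is the multiplication, so the ideal it generates could a priori differ. The crux of the argument is therefore to show that the ideal of $A$ generated by $X$ and the ideal of $A^{p,\alpha}$ generated by $X$ coincide, which will give a canonical identification of the two universal quotients and show that $q$ descends compatibly.

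First I would record that the twisted product $a \cdot b$ differs from $ab$ only by applying a Hopf automorphism $\alpha_g$ to the right factor (when $a \in A_g$), and that each $\alpha_g$ is a coalgebra automorphism fixing $X$ setwise — indeed $\alpha_g$ commutes with the maps $a \mapsto \varphi(a_{(1)})a_{(2)} - \varphi(a_{(2)})a_{(1)}$ up to replacing $\varphi$ by $\varphi \circ \alpha_g^{-1}$, so $\alpha_g(X) = X$. Next, for $x \in X$ homogeneous of degree $h$ (one can reduce to homogeneous elements since $X$ is a graded subspace, being defined coalgebra-theoretically and $p$ being $\alpha$-invariant, hence $X = \oplus_h (X \cap A_h)$) and $a \in A_g$, one computes $a \cdot x = a\,\alpha_g(x) \in A\,X$ and $x \cdot a = x\,\alpha_h(a) \in X A$, using $\alpha_g(x) \in X$ in the first case. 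This shows the ideal of $A^{p,\alpha}$ generated by $X$ is contained in the ideal $I$ of $A$ generated by $X$; the reverse inclusion is symmetric, running the same computation with $\alpha$ replaced by $\alpha^{-1}$ (equivalently, observing that $A = (A^{p,\alpha})^{p,\alpha^{-1}}$, since graded twisting by inverse data is inverse to graded twisting — this is the cleanest way to get the symmetric statement for free).

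With $I$ identified as a common ideal, the quotient coalgebra $A/I = B$ inherits from $A^{p,\alpha}$ a Hopf algebra structure, and the projection $q : A^{p,\alpha} \to A/I$ is a surjective Hopf algebra map. It is cocentral because $X$ is a co-ideal and the image of the generators of $X$ vanishes, exactly as in the untwisted case; and it is universal by the same argument as in Remark \ref{rems:cocentralbasic}(2), since that argument only used the coalgebra structure of $A$ and the fact that $I$ is generated by $X$ — the multiplication plays no role in checking the universal property once we know $B$ is the Hopf quotient by the ideal generated by $X$. Concretely: given any cocentral Hopf algebra map $r : A^{p,\alpha} \to C$, the generators of $X$ are sent to zero (cocentrality of $r$ applied to each $a$, using only that $r$ is a coalgebra map), so $I \subseteq \Ker(r)$ and $r$ factors uniquely through $A^{p,\alpha}/I = B$.

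The main obstacle is the interaction between the \emph{ideal} generated by $X$ and the twisted multiplication — a priori left/right ideals and two-sided ideals for $\cdot$ need not match those for the original product. The resolution is the observation that $\alpha_g(X) = X$: because the graded twisting only modifies multiplication by coalgebra automorphisms preserving $X$, multiplying a generator of $X$ on either side moves one within $AX$ or $XA$ up to applying some $\alpha_g$ to the generator, which keeps it in $X$. Once this stability is in place everything else is formal. I would also remark at the end that, since $A$ and $A^{p,\alpha}$ have the same universal grading group, this is consistent with (and refines) Lemma \ref{lemm:semi-univ} in the cosemisimple case, though the present lemma needs no semisimplicity hypothesis.
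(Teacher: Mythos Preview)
Your approach is correct and is essentially the paper's proof: both identify $B$ with $A/I$ for $I$ the ideal generated by the coideal $X$, observe that $X$ is the same subspace in $A$ and $A^{p,\alpha}$, and show that the ideals it generates in the two algebra structures coincide, using that each $\alpha_g$ preserves $X$ (the paper encodes this via the substitution $\varphi\mapsto\varphi\circ\alpha_g^{-1}$, $b\mapsto\alpha_g(b)$) and then invoking the symmetry $ab=a\cdot\alpha_{g^{-1}}(b)$ for the reverse inclusion.

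One small sloppiness: from ``$a\cdot x\in AX$ and $x\cdot a\in XA$'' you cannot immediately conclude that the \emph{two-sided} $\cdot$-ideal generated by $X$ lies in $I$, since $a\cdot x$ need not lie in $X$ again. The paper closes this by computing the full sandwich $a\cdot x\cdot c=a\,\alpha_g(x)\,\alpha_{gh}(c)\in AXA=I$ for $a\in A_g$, $x\in X\cap A_h$; equivalently, you could note that $I$ is graded and $\alpha_g$-stable, hence is itself a two-sided ideal for the twisted product, which is what your two one-sided computations combine to give.
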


\begin{proof}
Recall from Remark \ref{rems:cocentralbasic} that we can assume that $q$ is the quotient map $A \to A/I$ where $I$ is the ideal of $A$ generated by $X$, the linear subspace of $A$ spanned by the elements $\varphi(a_{(1)})a_{(2)}-\varphi(a_{(2)})a_{(1)}, \ \varphi \in A^*, \ a \in A$. The space $X$ is as well  
 the linear subspace of $A$ spanned by the elements 
$$\varphi(a_{(1)})a_{(2)}-\varphi(a_{(2)})a_{(1)}, \ \varphi \in A^*, \ a \in A_g, \ g \in \Gamma$$
Let $I'$ be the ideal of $A^{p,\alpha}$ generated by $X$. The computation, for $a \in A_g, b \in A_h, c \in A_r$,
\begin{align*}
a\cdot \left(\varphi(b_{(1)})b_{(2)}-\varphi(b_{(2)})b_{(1)}\right)\cdot c &=  \varphi(b_{(1)})a\alpha_g(b_{(2)})\alpha_{gh}(c)-\varphi(b_{(2)})a\alpha_g(b_{(1)})\alpha_{gh}(c) \\
		& =  a\left(\varphi\alpha_{g^{-1}}(\alpha_g(b_{(1)}))\alpha_g(b_{(2)})-\varphi\alpha_{g^{-1}}(\alpha_g(b_{(2)}))\alpha_g(b_{(1)})\right)\alpha_{gh}(c)
	\end{align*}
 shows that $I'\subset I$. In a symmetric manner, since $ab=a\cdot\alpha_{g^{-1}}(b)$ for $a \in A_g$ and $b \in A$, we have $I \subset I'$ and hence $I=I'$. Therefore the quotient map $q' : A^{p,\alpha} \to A^{p,\alpha}/I'$, which is universal cocentral, equals $q$, and we have our result.
	\end{proof}

Since our main goal is to compare the different Hopf algebras obtained via graded twisting, an obvious thing to do first is to compare the various cocentral actions, and for this the following notion is quite natural.
	
\begin{definition}
	Two cocentral actions $(p,\alpha)$ and $(q,\beta)$ of a group $\Gamma$ on a Hopf algebra $A$ are said to be \textsl{equivalent} if there exist $u\in \Aut(\Gamma)$ and $f\in \Aut_{\rm Hopf}(A)$ such that 
	$$u\circ p=q\circ f \ \textrm{and} \ \forall g\in \Gamma, f \circ \alpha_g  \circ f^{-1} = \beta_{u(g)}$$
\end{definition}	

\begin{lemma}\label{lemm:equi-iso}
 Let $(p,\alpha)$ and $(q,\beta)$ be cocentral actions of a group $\Gamma$ on a Hopf algebra $A$. If  $(p,\alpha)$ and $(q,\beta)$ are equivalent, then the Hopf algebras $A^{p,\alpha}$ and $A^{q,\beta}$ are isomorphic.
\end{lemma}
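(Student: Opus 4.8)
The plan is to promote the given Hopf algebra automorphism $f$ of $A$ to a Hopf algebra isomorphism $A^{p,\alpha}\to A^{q,\beta}$. Since both twists have $A$ as underlying coalgebra and $f$ is a coalgebra automorphism of $A$, the map $f$ is automatically a coalgebra isomorphism $A^{p,\alpha}\to A^{q,\beta}$; so all that remains is to check that $f$ is multiplicative for the two twisted products and sends $1$ to $1$, after which one invokes the standard fact that a bialgebra map between Hopf algebras commutes with the antipodes (which applies since we only ever work with Hopf algebras here).

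The first key step is to translate the condition $u\circ p=q\circ f$ into the graded picture, i.e. to show $f(A_{g,p})=A_{u(g),q}$ for every $g\in\Gamma$. This is a short Sweedler computation: for $a\in A_{g,p}$, using that $f$ is a coalgebra map and that $q\circ f=u\circ p$, one gets
\[
f(a)_{(1)}\otimes q(f(a)_{(2)})=f(a_{(1)})\otimes u(p(a_{(2)}))=f(a)\otimes u(g),
\]
so $f(a)\in A_{u(g),q}$. Bijectivity of $f$ and of $u$, together with the fact that $A=\oplus_{g}A_{g,p}=\oplus_{h}A_{h,q}$, then upgrades the inclusion $f(A_{g,p})\subset A_{u(g),q}$ to an equality.

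With this in hand the multiplicativity check is immediate. For $a\in A_{g,p}$ and $b\in A_{h,p}$, denoting by $\cdot_{p,\alpha}$ and $\cdot_{q,\beta}$ the products of $A^{p,\alpha}$ and $A^{q,\beta}$, and using that $f$ is an algebra automorphism of $A$ together with the intertwining relation $f\circ\alpha_g\circ f^{-1}=\beta_{u(g)}$, one computes
\[
f(a\cdot_{p,\alpha}b)=f\bigl(a\,\alpha_g(b)\bigr)=f(a)\,f(\alpha_g(b))=f(a)\,\beta_{u(g)}(f(b))=f(a)\cdot_{q,\beta}f(b),
\]
the last equality being the definition of the twisted product, valid because $f(a)$ has $q$-degree $u(g)$ and $f(b)$ has $q$-degree $u(h)$. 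Since also $1\in A_e$ and $f$ is an algebra automorphism of $A$, we have $f(1)=1$, so $f$ is a bialgebra isomorphism $A^{p,\alpha}\to A^{q,\beta}$, hence a Hopf algebra isomorphism.

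I do not anticipate a genuine obstacle in this argument; the only point that requires a little care is the passage from the identity $u\circ p=q\circ f$ to the grading identity $f(A_{g,p})=A_{u(g),q}$, and, throughout the multiplicativity computation, keeping straight which of the two gradings (and hence which of the twisted products) is being used on each side of the equation.
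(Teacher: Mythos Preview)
Your proof is correct and follows essentially the same approach as the paper: use the equivalence data $(u,f)$ to see that $f$ sends the $p$-grading to the $q$-grading via $u$, then verify multiplicativity of $f$ for the twisted products directly from the intertwining relation $f\circ\alpha_g\circ f^{-1}=\beta_{u(g)}$. You add a bit more detail (the Sweedler check of $f(A_{g,p})\subset A_{u(g),q}$ and the remark about the antipode following automatically), but the argument is the same.
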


\begin{proof}
Let $u\in \Aut(\Gamma)$ and $f\in \Aut_{\rm Hopf}(A)$ as in the above definition. The condition $u\circ p=q\circ f$ ensures that $f(A_g)=A_{u(g)}$ for any $g \in \Gamma$. Hence for $a \in A_g$ and $b$ in $A$, we have
$$f(a\cdot b) = f(a\alpha_g(b))= f(a)f(\alpha_g(b))= f(a)\beta_{u(g)}(f(b))= f(a)\cdot f(b)$$
which shows that $f$ is as well a Hopf algebra isomorphism from  $A^{p,\alpha}$ to $A^{q,\beta}$.
	\end{proof}

There is also another weaker notion of equivalence for cocentral actions, as follows.

\begin{definition}
	Two cocentral actions $(p,\alpha)$ and $(q,\beta)$ of a group $\Gamma$ on a Hopf algebra $A$ are said to be \textsl{weakly equivalent} if  there exist $u\in \Aut(\Gamma)$ and a Hopf algebra isomorphism $f : A_{e,p} \to A_{e,q}$ such that 
$$ \forall g\in \Gamma, f \circ ({\alpha_g})_{|A_{e,p}}  \circ f^{-1} = (\beta_{u(g)})_{|A_{e,q}}$$	
\end{definition}

Not surprisingly, equivalent cocentral actions are weakly equivalent.

\begin{lemma}
	Two equivalent cocentral actions $(p,\alpha)$ and $(q,\beta)$ of a group $\Gamma$ on a Hopf algebra $A$ are weakly equivalent.
\end{lemma}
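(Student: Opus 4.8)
The plan is to simply extract the data witnessing weak equivalence from the data witnessing equivalence. So suppose $(p,\alpha)$ and $(q,\beta)$ are equivalent, and pick $u\in \Aut(\Gamma)$ and $f\in \Aut_{\rm Hopf}(A)$ with $u\circ p=q\circ f$ and $f\circ \alpha_g\circ f^{-1}=\beta_{u(g)}$ for all $g\in\Gamma$. My claim will be that the same $u$, together with the restriction $\bar f:=f_{|A_{e,p}}$, does the job.

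First I would record that $f$ is compatible with the gradings: the identity $u\circ p=q\circ f$ forces $f(A_{g,p})=A_{u(g),q}$ for every $g\in\Gamma$ (this is exactly the observation already used in the proof of Lemma \ref{lemm:equi-iso}). Taking $g=e$ and using $u(e)=e$, we get $f(A_{e,p})=A_{e,q}$, so $\bar f:=f_{|A_{e,p}}$ is a Hopf algebra isomorphism $A_{e,p}\to A_{e,q}$, both being Hopf subalgebras of $A$. Next, the compatibility condition of a cocentral action, $p\circ\alpha_g=p$, gives $\alpha_g(A_{e,p})=A_{e,p}$, and likewise $q\circ\beta_{u(g)}=q$ gives $\beta_{u(g)}(A_{e,q})=A_{e,q}$. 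Hence all three maps $f$, $\alpha_g$, $\beta_{u(g)}$ restrict to the relevant neutral components, and restricting the conjugation relation $f\circ\alpha_g\circ f^{-1}=\beta_{u(g)}$ to $A_{e,q}$ yields $\bar f\circ(\alpha_g)_{|A_{e,p}}\circ\bar f^{-1}=(\beta_{u(g)})_{|A_{e,q}}$ for all $g\in\Gamma$. Thus $(u,\bar f)$ witnesses the weak equivalence of $(p,\alpha)$ and $(q,\beta)$.

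There is essentially no obstacle here: the statement is a routine unravelling of the two definitions, and the only mild point to keep track of is that $f$, $\alpha_g$ and $\beta_{u(g)}$ all preserve the neutral components, so that the restriction of the conjugation identity to $A_{e,q}$ makes sense and produces precisely the required relation.
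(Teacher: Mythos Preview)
Your proof is correct and follows exactly the same approach as the paper's: derive $f(A_{e,p})=A_{e,q}$ from $u\circ p=q\circ f$, then restrict the conjugation identity to the neutral component. You simply spell out in more detail what the paper compresses into ``and the conclusion follows.''
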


\begin{proof}
Let	$u\in \Aut(\Gamma)$ and $f\in \Aut_{\rm Hopf}(A)$ be such that 
	$u\circ p=q\circ f$ and $\forall g\in \Gamma, f \circ \alpha_g \circ f^{-1} = \beta_{u(g)}$. Then $f(A_{g,p})=A_{u(g),q}$, $\forall g\in \Gamma$, hence $f(A_{e,p})=A_{e,q}$, and the conclusion follows.
	\end{proof}

It is unclear to us whether the existence of a Hopf algebra isomorphism between $A^{p,\alpha}$ and $A^{q,\beta}$ forces the cocentral actions $(p,\alpha)$ and $(q,\beta)$ to be weakly equivalent. However this holds true in the following special situation.

\begin{lemma}\label{lemm:iso-weak}
 Let $A$ be a commutative Hopf algebra having a finite cyclic universal grading group, and let $(p,\alpha)$, $(q,\beta)$ be cocentral actions of a cyclic group $\Gamma$ on $A$. If the Hopf algebras $A^{p,\alpha}$ and $A^{q,\beta}$ are isomorphic, then the cocentral actions $(p,\alpha)$ and $(q,\beta)$ are weakly equivalent. 
\end{lemma}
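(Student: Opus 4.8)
The plan is to produce the required automorphism of $\Gamma$ from the results already established, and then to extract the intertwining relation on the degree-$e$ components from the multiplicative structure of the two twists.

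First, let $\Gamma_0$ be the universal (finite cyclic) grading group of $A$. Since $A^{p,\alpha}$ and $A^{q,\beta}$ share the underlying coalgebra of $A$, the maps $p$ and $q$ remain surjective cocentral Hopf algebra maps $A^{p,\alpha}\to k\Gamma$ and $A^{q,\beta}\to k\Gamma$ (so $\Gamma$, being a quotient of $\Gamma_0$, is finite cyclic), and by Lemma \ref{lemm:invariantuniv} both $A^{p,\alpha}$ and $A^{q,\beta}$ still have universal grading group $\Gamma_0$. Hence, given a Hopf algebra isomorphism $f : A^{p,\alpha}\to A^{q,\beta}$, Lemma \ref{lemm:semi-univ} provides $u\in\Aut(\Gamma)$ with $u\circ p=q\circ f$. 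As in the proof of Lemma \ref{lemm:equi-iso} this forces $f(A_{g,p})=A_{u(g),q}$ for all $g\in\Gamma$; in particular $f$ restricts to a Hopf algebra isomorphism $\bar f : A_{e,p}\to A_{e,q}$, these degree-$e$ parts being Hopf subalgebras on which the twisted products and antipodes coincide with those of $A$ since $\alpha_e=\beta_e=\id$.

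Next, fix a generator $g_0$ of $\Gamma$. The crucial point --- and the only place commutativity of $A$ is used --- is the identity
$$a\cdot x = x\cdot\alpha_{g_0^{-1}}(a)\qquad (a\in A_{e,p},\ x\in A_{g_0,p})$$
in $A^{p,\alpha}$, which is immediate from the definition of the twisted product together with $ax=xa$ in $A$. Applying $f$ and using $f(A_{e,p})=A_{e,q}$, $f(A_{g_0,p})=A_{u(g_0),q}$ gives $f(a)\cdot Y = Y\cdot f(\alpha_{g_0^{-1}}(a))$ in $A^{q,\beta}$ for all $Y\in A_{u(g_0),q}$, while the same identity applied to $(q,\beta)$ (note $u(g_0)$ generates $\Gamma$) gives $f(a)\cdot Y = Y\cdot\beta_{u(g_0)^{-1}}(f(a))$. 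Comparing the two and cancelling $Y$ on the left by Lemma \ref{lemm:xy=yz}, applied to the surjective cocentral map $q : A^{q,\beta}\to k\Gamma$, yields $f(\alpha_{g_0^{-1}}(a))=\beta_{u(g_0)^{-1}}(f(a))$ for all $a\in A_{e,p}$; as $\alpha_{g_0}$ and $\beta_{u(g_0)}$ preserve the degree-$e$ parts, this rearranges into $\bar f\circ(\alpha_{g_0})_{|A_{e,p}}\circ\bar f^{-1}=(\beta_{u(g_0)})_{|A_{e,q}}$.

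Finally, since $\Gamma=\langle g_0\rangle$ is cyclic and $\alpha$, $\beta$, $u$ are group morphisms, this single identity propagates to all $g\in\Gamma$ by taking powers, which is exactly the assertion that $(p,\alpha)$ and $(q,\beta)$ are weakly equivalent. I expect the second step to be the only delicate part: without commutativity of $A$ there is no reason for a twisted product to record the automorphisms $\alpha_g$ so transparently, and the left cancellation at the end is precisely what Lemma \ref{lemm:xy=yz} is designed to provide; the rest is bookkeeping across the three Hopf algebras $A$, $A^{p,\alpha}$, $A^{q,\beta}$, which all share the same underlying coalgebra.
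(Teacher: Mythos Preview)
Your proof is correct and follows essentially the same approach as the paper's: obtain $u$ from Lemmas \ref{lemm:invariantuniv} and \ref{lemm:semi-univ}, then exploit commutativity of $A$ to compare two expressions for a twisted product and cancel via Lemma \ref{lemm:xy=yz}. The only cosmetic differences are that the paper works with $a\in A_{g,p}$, $b\in A_{e,p}$ and computes $f(a\cdot b)$ two ways (landing in the ordinary product of $A$, where Lemma \ref{lemm:xy=yz} is applied to $q:A\to k\Gamma$), obtaining $\beta_{u(g)}(f(b))=f(\alpha_g(b))$ for all $g$ at once, whereas you swap the roles, apply Lemma \ref{lemm:xy=yz} inside $A^{q,\beta}$, and then propagate from a generator; neither variant has any advantage over the other.
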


\begin{proof}
Let $f :  A^{p,\alpha} \to A^{q,\beta}$ be a Hopf algebra isomorphism. By Lemma \ref{lemm:invariantuniv}, we can apply Lemma \ref{lemm:semi-univ} to get  $u\in \Aut(\Gamma)$ such that $u\circ p=q\circ f$, so that $f(A_{g,p})=A_{u(g),q}$ for any $g\in \Gamma$, and in particular $f(A_{e,p})=A_{e,q}$. For $a \in A_g$ and $b \in A_e$, we have
$$f(a\alpha_g(b))=f(a\cdot b)= f(a)\cdot f(b) = f(a)\beta_{u(g)}(f(b))$$
By the commutativity of $A$, we have as well
$$f(a\alpha_g(b))= f(\alpha_g(b)a)=f(\alpha_g(b)\cdot a) = f(\alpha_g(b))\cdot f(a) = f(\alpha_g(b))f(a)=f(a)f(\alpha_g(b))$$
We conclude from Lemma \ref{lemm:xy=yz} that $\beta_{u(g)}(f(b))= f(\alpha_g(b))$, so our cocentral actions are indeed weakly equivalent.
\end{proof}

\subsection{Graded twisting of function algebras} In this subsection we translate in group theoretical terms the notions discussed in the previous subsections when $A=\Oo(G)$, the function algebra on a finite group $G$ (this of course runs as well if we assume that $G$ is a linear algebraic group, but for simplicity we restrict to the finite case). The translations are rather obvious, convenient, and induce a few new notations. As usual, if
$\Gamma$ is group, the dual group ${\rm Hom}(\Gamma,k^\times)$ is denoted $\widehat{\Gamma}$.  If $G$ is a group and $T\subset G$ is a subgroup, we denote by $\Aut_T(G)$ the group of automorphisms of $G$ that preserve $T$, and by $\Aut_T^\circ(G)$ the subgroup of automorphisms that fix each element of $T$.  

\begin{enumerate}
	\item A cocentral action $(p,\alpha)$ of the finite group $\Gamma$ on $\Oo(G)$ corresponds to a pair $(i,\alpha)$ where $i : \widehat{\Gamma} \to Z(G)$ is an injective group morphism and  $\alpha :  \Gamma \to \Aut_{i(\widehat{\Gamma})}^\circ(G)$  a group morphism. We then consider cocentral actions of $\Gamma$ on $\Oo(G)$ as such pairs $(i,\alpha)$, call them cocentral actions on $G$, and denote the corresponding graded twisting $\Oo(G)^{p,\alpha}$ by $\Oo(G)^{i,\alpha}$.
	\item Two cocentral actions $(i,\alpha)$ and $(j,\beta)$ are equivalent if there exist $u\in \Aut(\Gamma)$ and $f\in \Aut(G)$ such that 
	$$i\circ \widehat{u}=f\circ j \ \textrm{and} \ \forall g\in \Gamma, \ f^{-1} \circ \alpha_g  \circ f = \beta_{u(g)}$$
where $\widehat{u} = -\circ u$.
		\item Two cocentral actions $(i,\alpha)$ and $(j,\beta)$ are weakly equivalent if there exist $u\in \Aut(\Gamma)$ and an isomorphism $f : G/j(\widehat{\Gamma}) \to G/i(\widehat{\Gamma})$ such that 
	$\forall g\in \Gamma, f^{-1}\circ \overline{\alpha}_g \circ f = \overline{\beta_{u(g)}}$, where $\overline{\alpha_g}$ and $\overline{\beta_{u(g)}}$  denote the automorphisms of $G/i(\widehat{\Gamma})$ and $G/j(\widehat{\Gamma})$ induced by  $\alpha_g$  and $\beta_{u(g)}$ respectively.
\end{enumerate}

Assuming that the finite group $G$ has a cyclic center, there is a convenient way to describe the equivalence classes of cocentral actions of $\mathbb Z_m$ on $G$, as follows.

For $m$ a divisor of $|Z(G)|$, let $T_m$ be the unique subgroup of order $m$ of $Z(G)$, and let $\mathbb X_m(G)$ be the set of  elements $\alpha_0\in {\rm Aut}_{T_m}^\circ(G)$ such that $\alpha_0^m={\rm id}_G$ modulo the equivalence relation
$$\alpha_0\sim \beta_0 \iff \text{$\exists f \in \Aut_{T_m}(G)$ and $l$ prime to $m$ such that $f^{-1}\circ \alpha_0\circ f=\beta_0^l$ and $f_{|T_m}=(-)^l$}$$ 
For $\alpha_0\in \Aut_{T_m}^\circ(G)$, we denote by $\ddot{\alpha_0}$ its equivalence class in $\mathbb X_m(G)$. We will also denote by    $\mathbb X_m^\bullet(G)$ the set of equivalence classes $\ddot{\alpha_0}$ such that $\alpha_0$ does not induce the identity on $G/T_m$.

\begin{lemma}\label{lem:XmG}
 If $G$ is a finite group with cyclic center and $m$ is a divisor of $|Z(G)|$, we have a bijection
$\mathbb X_m(G) \simeq \{\text{equivalence classes of cocentral actions of $\mathbb Z_m$ on $G$}\}$.
\end{lemma}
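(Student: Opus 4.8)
The plan is to fix, once and for all, an injective group morphism $i_0 : \widehat{\mathbb Z_m} \to Z(G)$ (this is possible, and its image is necessarily the unique order-$m$ subgroup $T_m$, since $Z(G)$ is cyclic), and to use the elementary bijection between group morphisms $\alpha : \mathbb Z_m \to \Aut_{T_m}^\circ(G)$ and elements $\alpha_0 := \alpha(1)\in \Aut_{T_m}^\circ(G)$ satisfying $\alpha_0^m=\id_G$, the reverse assignment being $\alpha(g)=\alpha_0^{\,g}$. Using the dictionary recalled above between cocentral actions of $\mathbb Z_m$ on $\Oo(G)$ and pairs $(i,\alpha)$, I would then define
$$\Phi : \mathbb X_m(G) \longrightarrow \{\text{equivalence classes of cocentral actions of $\mathbb Z_m$ on $G$}\}, \qquad \ddot{\alpha_0}\longmapsto \bigl[(i_0,\alpha)\bigr],\ \ \alpha(g)=\alpha_0^{\,g},$$
and prove it is a well-defined bijection.

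For well-definedness and injectivity I would parametrize $\Aut(\mathbb Z_m)$ by the integers $l$ prime to $m$, writing $u_l(g)=lg$, so that $\widehat{u_l}(\chi)=\chi^l$; hence the condition $i_0\circ\widehat{u_l}=f\circ i_0$ appearing in the definition of equivalence of cocentral actions is equivalent to $f_{|T_m}=(-)^l$, which in particular forces $f\in\Aut_{T_m}(G)$. For two cocentral actions $(i_0,\alpha)$ and $(i_0,\beta)$ with $\alpha(g)=\alpha_0^{\,g}$ and $\beta(g)=\beta_0^{\,g}$, an equivalence realized by $(u_l,f)$ then reads $f_{|T_m}=(-)^l$ together with $f^{-1}\circ\alpha(g)\circ f=\beta(lg)$ for all $g$; evaluating the latter at $g=1$ gives $f^{-1}\circ\alpha_0\circ f=\beta_0^{\,l}$, and conversely this identity at $g=1$ propagates back to all $g$. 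Thus $(i_0,\alpha)$ and $(i_0,\beta)$ are equivalent precisely when $\alpha_0\sim\beta_0$ in $\mathbb X_m(G)$, which is exactly well-definedness and injectivity of $\Phi$.

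For surjectivity I would show that every cocentral action $(i,\alpha)$ is equivalent to one of the form $(i_0,\beta)$: both $i$ and $i_0$ are isomorphisms onto $T_m$, so $i^{-1}\circ i_0\in\Aut(\widehat{\mathbb Z_m})$, and since $u\mapsto\widehat u$ is a bijection $\Aut(\mathbb Z_m)\to\Aut(\widehat{\mathbb Z_m})$ one can choose $u\in\Aut(\mathbb Z_m)$ with $\widehat u=i^{-1}\circ i_0$, i.e. $i\circ\widehat u=i_0$. The pair $(u,\id_G)$ then realizes an equivalence $(i,\alpha)\sim(i_0,\beta)$ with $\beta(h):=\alpha(u^{-1}(h))$, so that $[(i,\alpha)]=[(i_0,\beta)]=\Phi(\ddot{\beta_0})$ with $\beta_0=\alpha(u^{-1}(1))$. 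Hence $\Phi$ is onto, and therefore bijective.

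Everything here is a routine unwinding of definitions once the translations recalled above are in place; the one point that calls for care is the dictionary with the dual group $\widehat{\mathbb Z_m}$ — specifically the identity $\widehat{u_l}(\chi)=\chi^l$ and the consequent fact that injectivity of $i$ pins down the behaviour on $T_m$ of the automorphism $f$ occurring in any equivalence — so I expect the matching of the two equivalence relations in the second paragraph to be the fiddliest, though not genuinely difficult, step.
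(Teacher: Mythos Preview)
Your proposal is correct and follows essentially the same approach as the paper: fix a reference injection $i_0$ with image $T_m$, send $\alpha_0$ to the cocentral action $(i_0,\alpha)$ with $\alpha$ generated by $\alpha_0$, verify that the equivalence relation on $\mathbb X_m(G)$ matches equivalence of cocentral actions of the form $(i_0,-)$, and reduce an arbitrary $(i,\alpha)$ to one with first component $i_0$ via an automorphism of $\mathbb Z_m$. The paper's proof is terser (it declares the matching of equivalence relations ``clear'' where you spell out the $\widehat{u_l}(\chi)=\chi^l$ computation and the consequence $f_{|T_m}=(-)^l$), but the structure and ideas are the same.
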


\begin{proof}
 Fix a generator $g$ of $\mathbb Z_m$ and an injective group morphism $i : \widehat{\mathbb Z_m}\to Z(G)$ with $T_m=i(\widehat{\mathbb Z_m})$, and associate to 
$\alpha_0\in \Aut_{T_m}^\circ(G)$ the cocentral action $(i,\alpha)$ of $\mathbb Z_m$ on $G$ with $\alpha_g=\alpha_0$. It is clear that for $\alpha_0,\beta_0 \in \Aut_{T_m}^\circ(G)$, we have $\ddot{\alpha_0}=\ddot{\beta_0}$ if and only if the cocentral actions $(i,\alpha)$ and $(j,\beta)$ are equivalent, so we get the announced injective map. 

Start now with a cocentral action $(j,\beta)$ of $\mathbb Z_m$ on $G$. Let $u$ be the automorphism of $\mathbb Z_m$ defined  by $\widehat{u}=i^{-1}\circ j$: $u =(-)^l$ for $l$ prime to $m$. For $l'$ such that $ll'\equiv 1 [n]$, we then see that the cocentral actions 
$(j,\beta)$ and $(i,\beta^{l'})$ are equivalent, and this proves that our map is surjective.
\end{proof}

\subsection{Group-theoretical preliminaries}\label{subsec:groupprelim} This last subsection consists of group theoretical preliminaries. As usual, if $G$ is a group and $M$ is a $G$-module, the second cohomology group of $G$ with coefficients in $M$ is denoted $H^2(G,M)$. We mainly consider trivial $G$-modules (the only exception is in the proof of Lemma \ref{lemm:alga(h)}). If $\tau \in Z^2(G,M)$ is a (normalized) $2$-cocycle, its cohomology class in $H^2(G,M)$ is denoted $[\tau]$, while if $\mu : G \to M$ is a map with $\mu(1)=1$, the associated $2$-coboundary is denoted $\partial(\mu)$.

Our first lemma is certainly well-known. We provide the details of the proof for future use.

\begin{lemma}\label{lemm:autocentral}
 Let $T$ be a central subgroup of a group $G$. There is an exact sequence of groups
$$1 \to \Hom(G/T,T) \to \Aut_{T}(G) \to \Aut(G/T)\times \Aut(T)$$
where the map on the right is surjective when $|H^2(G/T,T)|\leq 2$ (or more generally when the natural actions of $\Aut(G/T)$ and $\Aut(T)$ on $H^2(G/T,T)$ are trivial).
\end{lemma}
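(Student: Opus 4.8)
The plan is to construct the maps in the sequence explicitly and check exactness at each spot, leaving the surjectivity of the right-hand map as the substantive point.

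First I would define the right-hand map $\Phi : \Aut_T(G) \to \Aut(G/T)\times\Aut(T)$ by sending $\sigma$ to the pair $(\bar\sigma, \sigma|_T)$, where $\bar\sigma$ is the automorphism of $G/T$ induced by $\sigma$ (this makes sense precisely because $\sigma(T)=T$). Clearly $\Phi$ is a group homomorphism. The kernel of $\Phi$ consists of those $\sigma\in\Aut_T(G)$ with $\sigma|_T=\id_T$ and $\bar\sigma=\id_{G/T}$; for such a $\sigma$, the element $\mu(g):=\sigma(g)g^{-1}$ lies in $T$ for every $g\in G$, and since $T$ is central one checks that $\mu(gh)=\sigma(gh)(gh)^{-1}=\sigma(g)\sigma(h)h^{-1}g^{-1}=\mu(g)\mu(h)$ using centrality to commute the $T$-valued factor $\mu(h)$ past $g^{-1}$; moreover $\mu$ only depends on $gT$ because $\sigma$ fixes $T$ pointwise, so $\mu\in\Hom(G/T,T)$. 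Conversely any $\mu\in\Hom(G/T,T)$ yields $\sigma_\mu(g)=\mu(gT)g$, which is readily seen to be an automorphism of $G$ fixing $T$ pointwise and inducing the identity on $G/T$; one verifies $\sigma_\mu\sigma_{\mu'}=\sigma_{\mu\mu'}$, so $\mu\mapsto\sigma_\mu$ is an injective homomorphism $\Hom(G/T,T)\hookrightarrow\Aut_T(G)$ with image $\ker\Phi$. This establishes exactness at $\Hom(G/T,T)$ and at $\Aut_T(G)$.

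The main obstacle is surjectivity of $\Phi$ under the stated hypothesis. Here I would use the classification of central extensions: the extension $1\to T\to G\to G/T\to 1$ corresponds to a class $[\tau]\in H^2(G/T,T)$. Given $(\varphi,\psi)\in\Aut(G/T)\times\Aut(T)$, we want to lift it to some $\sigma\in\Aut_T(G)$. The pair acts on $H^2(G/T,T)$ in the usual way, sending $[\tau]$ to $\psi_*\circ(\varphi^{-1})^*[\tau]$, and a lift $\sigma$ exists if and only if this transformed class equals $[\tau]$ (one then builds $\sigma$ by choosing a set-theoretic section, transporting the $2$-cocycle, and using a cochain realizing the coboundary relation between the two cohomologous cocycles to define $\sigma$ on $G\cong T\times G/T$ as sets). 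When the $\Aut(G/T)\times\Aut(T)$-action on $H^2(G/T,T)$ is trivial the transformed class is automatically $[\tau]$, giving the general sufficient condition. Finally, when $|H^2(G/T,T)|\le 2$, the action is through $\Aut(H^2(G/T,T))$, which is trivial on a group of order $\le 2$; hence the hypothesis $|H^2(G/T,T)|\le 2$ is a special case, and surjectivity follows. I would spell out the cocycle-lifting construction in modest detail since that is the one genuinely non-formal step; the rest is bookkeeping.
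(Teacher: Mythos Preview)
Your proposal is correct and follows essentially the same approach as the paper's proof: both construct the maps explicitly, identify the kernel of $\Phi$ with $\Hom(G/T,T)$ via the map $\mu(g)=\sigma(g)g^{-1}$ (or its inverse $\sigma_\mu(g)=\mu(gT)g$), and handle surjectivity by passing to the cocycle description $G\simeq H\times_\tau T$ and observing that triviality of the $\Aut(H)\times\Aut(T)$-action on $H^2(H,T)$ furnishes the needed coboundary. The paper is slightly more explicit in that it parametrizes $\Aut_T(G)$ by triples $(\theta,\mu,u)$ satisfying $u(\tau(x,y))\mu(xy)=\mu(x)\mu(y)\tau(\theta(x),\theta(y))$ and records the resulting composition law, a description it reuses later; your more conceptual phrasing via the transformed cohomology class is equivalent and entirely adequate here.
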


\begin{proof}
Since any element in  $\Aut_{T}(G)$ simultaneously restricts to an automorphism of $T$ and induces an automorphism of $G/T$, we get the group morphism on the right. 
Given $\chi \in \Hom(G/T,T)$, define an automorphism $\tilde{\chi}$ of $G$ by $\tilde{\chi}(x) =x\chi(\pi(x))$, where $\pi : G\to G/T$ is the canonical surjection. This defines the group morphism $\Hom(G/T,T) \to \Aut_{T}(G)$ on the left, which is clearly injective and whose image is easily seen to be the kernel of the map on the right.

Put $H=G/T$. By the standard description of central extensions of groups, we can freely assume that $G=H\times_\tau T$ where $\tau \in Z^2(H,T)$ and the product of $G$ is defined	by
$$\forall x,y\in H, \ \forall r,s \in T, \ (x,r)\cdot(y,s) = (xy,\tau(x,y)rs).$$
It is straightforward to check that an element $\alpha \in  \Aut_{T}(G)$ is defined by $\alpha(x,t)=(\theta(x),\mu(x)u(t))$, where  $(\theta, \mu, u)$ is a triple with $\theta \in \Aut(H)$, $u \in \Aut(T)$ and $\mu : H \to T$ satisfying
$$\forall x,y \in H, \ u(\tau(x,y)) \mu(xy)=\mu(x)\mu(y) \tau(\theta(x),\theta(y)). \ \ (\star)$$ 
Under this identification, the composition law in $ \Aut_{T}(G)$ is given by 
$$(\theta,\mu,u)(\theta',\mu',u')=(\theta \circ \theta',\mu\circ\theta'\cdot u\circ\mu',u\circ u').$$
 The map $\Aut_{T}(G) \to \Aut(H) \times \Aut(T)$ in the statement of the lemma is then the projection on the first and third factor, and elements of the kernel are exactly those of the form $({\rm id}_H,\mu,{\rm id}_T)$, where $\mu : H \to T$ is a group morphism. %We have thus the announced exact sequence.
 
Assume now the natural actions of $\Aut(H)$ and $\Aut(T)$ on $H^2(H,T)$  by group automorphisms are trivial (which obviously holds when $|H^2(H,T)|\leq 2$).
Let $(\theta,u) \in \Aut(H)\times \Aut(T)$. The cocycles $u\circ \tau$ and $\tau\circ (\theta \times \theta)$ are then cohomologous to $\tau$, and hence there exists $\mu : H \to T$ such that
 $u\circ \tau = \partial(\mu) \tau \circ (\theta \times \theta)$, which is exactly the condition $(\star)$ that allows $(\theta,\mu,u)$ to define an element of $\Aut_T(G)$, and thus the map on the right in our exact sequence is surjective.
\end{proof}

Our second lemma will be used at the end of Section 4.

\begin{lemma}\label{lemm:autocentral2}
	Let $H$ be a finite group, let $T$ be a cyclic group of order $m$, let $\tau\in Z^2(H,T)$ and consider the group $G=H\times_\tau T$. Let $\alpha,\beta \in \Aut_T^\circ(G)$ (i.e. $\alpha_{|T}={\rm id}_T=\beta_{|T}$), and let $\overline{\alpha}$, $\overline{\beta}$ be the induced automorphisms of $H$. Assume that ${\rm Hom}(H,T)=\{1\}$ and that there exists $\theta\in \Aut(H)$ and $l$ prime to $m$ such that 
	\[ \theta \circ \overline{\alpha}\circ \theta^{-1}= \overline{\beta}^l \ {\rm and } \ [\tau]^l=[\tau \circ \theta \times \theta] \ {\in} \ H^2(H,T)\]
	Then there exists $f\in \Aut_T(G)$ such that
	\[ f\circ \alpha \circ f^{-1} =\beta^l \ {\rm and} \ f_{|T}=(-)^l\]  
\end{lemma}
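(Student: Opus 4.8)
The plan is to reduce everything to the exact sequence of Lemma \ref{lemm:autocentral} applied to the central subgroup $T \subset G$. Write $\rho : \Aut_T(G) \to \Aut(H) \times \Aut(T)$ for the homomorphism $\gamma \mapsto (\overline{\gamma}, \gamma_{|T})$; it is a group morphism because passing to the quotient $G/T = H$ is functorial and $\gamma(T) = T$. By Lemma \ref{lemm:autocentral} its kernel is $\Hom(H,T)$, which is trivial by hypothesis, so \emph{$\rho$ is injective}. Consequently, to establish the identity $f \circ \alpha \circ f^{-1} = \beta^l$ it is enough to produce an $f \in \Aut_T(G)$ with $\rho(f)$ in the right position and then check that $\rho(f\circ\alpha\circ f^{-1}) = \rho(\beta^l)$.

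First I would construct $f$. Since $l$ is prime to $m = |T|$ and $T$ is cyclic, $u := (-)^l$ is an automorphism of $T$. Recall from the proof of Lemma \ref{lemm:autocentral} that a pair $(\theta, u) \in \Aut(H)\times\Aut(T)$ lies in the image of $\rho$ precisely when the cocycles $u\circ\tau$ and $\tau\circ(\theta\times\theta)$ are cohomologous, a preimage then being any triple $(\theta, \mu, u)$ with $u\circ\tau = \partial(\mu)\,\tau\circ(\theta\times\theta)$. Here $u\circ\tau = \tau^l$ (pointwise $l$-th power), so $[u\circ\tau] = [\tau]^l$ in $H^2(H,T)$, and the second hypothesis says exactly that $[\tau]^l = [\tau\circ(\theta\times\theta)]$. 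Hence such an $f = (\theta,\mu,u) \in \Aut_T(G)$ exists, with $\overline{f} = \theta$ and, by normalization, $f_{|T} = u = (-)^l$; this already yields the second assertion of the statement.

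It remains to compare $f\circ\alpha\circ f^{-1}$ and $\beta^l$ via $\rho$. Since $\rho$ is a homomorphism and $\alpha \in \Aut_T^\circ(G)$, so that $\alpha_{|T} = \id_T$, we get
\[ \rho(f\circ\alpha\circ f^{-1}) = \big(\theta\circ\overline{\alpha}\circ\theta^{-1},\ u\circ\id_T\circ u^{-1}\big) = \big(\overline{\beta}^{\,l},\ \id_T\big),\]
using the first hypothesis $\theta\circ\overline{\alpha}\circ\theta^{-1} = \overline{\beta}^l$. On the other hand $\beta \in \Aut_T^\circ(G)$, so $\beta^l \in \Aut_T^\circ(G)$ and $\rho(\beta^l) = (\overline{\beta^l}, (\beta^l)_{|T}) = (\overline{\beta}^{\,l}, \id_T)$. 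Therefore $\rho(f\circ\alpha\circ f^{-1}) = \rho(\beta^l)$, and injectivity of $\rho$ forces $f\circ\alpha\circ f^{-1} = \beta^l$, as desired.

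The only genuine computation is the identity $[(-)^l\circ\tau] = [\tau]^l$, which is immediate, so I do not expect a serious obstacle beyond bookkeeping with the triple description $(\theta,\mu,u)$ of elements of $\Aut_T(G)$. The one conceptual point worth flagging is the role of the assumption $\Hom(H,T) = \{1\}$: it is exactly what makes $\rho$ injective, hence rigid enough that matching images of $f\circ\alpha\circ f^{-1}$ and $\beta^l$ suffices; without it the $f$ produced above would only satisfy $f\circ\alpha\circ f^{-1} = \beta^l$ up to a possibly unremovable element of $\Hom(H,T)$.
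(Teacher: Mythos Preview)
Your argument is correct and follows essentially the same route as the paper: construct $f=(\theta,\mu,u)$ with $u=(-)^l$ from the cohomological hypothesis, then use the exact sequence of Lemma~\ref{lemm:autocentral} and the vanishing of $\Hom(H,T)$ to conclude that $f\circ\alpha\circ f^{-1}=\beta^l$. The only cosmetic difference is that the paper carries out the triple computation of $f\circ\alpha\circ f^{-1}$ explicitly before invoking injectivity, whereas you invoke the homomorphism property of $\rho$ directly; the substance is the same.
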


\begin{proof}
Recall from the proof of the previous lemma that the elements of $\Aut_T(G)$ are represented by triples 
$(\theta, \mu, u)$ 	with $\theta \in \Aut(H)$, $u \in \Aut(T)$ and $\mu : H \to T$ satisfying
$ u\circ \tau =\partial(\mu) \tau\circ \theta\times \theta$, with $(\theta, \mu, u)(x,t)=(\theta(x),\mu(x)u(t))$, for $(x,t)\in H\times T$. By assumption, with this convention, we have $\alpha=(\overline{\alpha}, \phi, {\rm id}_T)$ and $\beta=(\overline{\beta},\gamma,{\rm id}_T)$. Let $u$ be the automorphism of $T$ defined by $u=(-)^l$. The assumption $[\tau]^l=[\tau \circ \theta \times \theta] $ thus amounts to $[u\circ \tau]=[\tau \circ \theta \times \theta]$, hence there exists $\mu : H \to T$ such that $\theta$ extends to an automorphism $f=(\theta,\mu,u)$ of $G$.
We have 
\begin{align*} 
f\circ \alpha \circ f^{-1} & = (\theta,\mu,u) (\overline{\alpha},\phi,{\rm id}_T)(\theta,\mu,u)^{-1} \\
&=(\theta\circ \overline{\alpha},\mu\circ \overline{\alpha}\cdot u\circ\phi ,u)(\theta^{-1},u^{-1}\circ((\mu\circ\theta^{-1})^{-1}),u^{-1})\\
& = (\theta\circ \overline{\alpha}\circ \theta^{-1}, \chi,{\rm id}_T)\\
& = (\overline{\beta}^l, \chi,{\rm id}_T)
\end{align*} 
for some $\chi : H\to T$.
Hence $f\circ \alpha \circ f^{-1}$ and $\beta^l$ have the same image under the group morphism on the right in the previous lemma, and the assumption ${\rm Hom}(H,T)=\{1\}$ thus implies that $f\circ \alpha \circ f^{-1} =\beta^l$. We have moreover $f_{|T}=u=(-)^l$, and this finishes the proof.
	\end{proof}
	
To finish this section, we record a last lemma, again to be used in Section 4. It is well known that inner automorphisms act trivially on the second cohomology of a group. Our next lemma is an explicit writing of this fact. The proof is a straightforward verification, but can also be obtained easily from the considerations in the proof of Lemma \ref{lemm:autocentral}.

\begin{lemma}\label{lem:tauinner}
 Let $H$ be a group, let $x\in H$ and let $\tau\in Z^2(H,k^\times)$. Then we have $\tau=\partial(\mu_x)\cdot \tau\circ {\rm ad}(x)\times {\rm ad}(x)$, where $\mu_x(y)=\tau(xy,x^{-1})\tau(x,y)\tau(x,x^{-1})^{-1}$.
\end{lemma}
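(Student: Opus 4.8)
The plan is to read off the identity from the explicit parametrization of $\Aut_T(G)$ established in the proof of Lemma~\ref{lemm:autocentral}, by applying it to a suitable inner automorphism. Take $T=k^\times$ and form $G=H\times_\tau T$ with product $(a,r)\cdot(b,s)=(ab,\tau(a,b)rs)$; since $T$ is central in $G$, conjugation by the element $(x,1)\in G$ fixes $T$ pointwise, hence defines an element of $\Aut^\circ_T(G)$ whose induced automorphism of $G/T\cong H$ is ${\rm ad}(x)$.

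First I would compute this conjugation. One has $(x,1)^{-1}=(x^{-1},\tau(x,x^{-1})^{-1})$, and a direct calculation in $G$ gives, for $y\in H$,
$$(x,1)\cdot(y,1)\cdot(x,1)^{-1}=\bigl(xyx^{-1},\ \tau(xy,x^{-1})\tau(x,y)\tau(x,x^{-1})^{-1}\bigr)=(xyx^{-1},\mu_x(y)).$$
So, in the notation of the proof of Lemma~\ref{lemm:autocentral}, the inner automorphism ${\rm ad}((x,1))$ is represented by the triple $(\theta,\mu,u)=({\rm ad}(x),\mu_x,{\rm id}_T)$ (note that $\mu_x(1)=1$ because $\tau$ is normalized).

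Then I would invoke the compatibility condition $(\star)$ from that proof, namely $u(\tau(a,b))\,\mu(ab)=\mu(a)\mu(b)\,\tau(\theta(a),\theta(b))$, which any such triple must satisfy. With $\theta={\rm ad}(x)$, $u={\rm id}_T$ and $\mu=\mu_x$ this becomes
$$\tau(y,z)\,\mu_x(yz)=\mu_x(y)\mu_x(z)\,\tau(xyx^{-1},xzx^{-1})\qquad(y,z\in H),$$
and rearranging gives exactly $\tau=\partial(\mu_x)\cdot\tau\circ{\rm ad}(x)\times{\rm ad}(x)$.

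There is no real obstacle here: everything is already contained in the bookkeeping of Lemma~\ref{lemm:autocentral}, the only new ingredient being the two short computations of $(x,1)^{-1}$ and of the conjugate $(x,1)(y,1)(x,1)^{-1}$. Alternatively --- this is the ``straightforward verification'' alluded to in the statement --- one can avoid $G$ altogether and check the identity by expanding $\partial(\mu_x)(y,z)\,\tau(xyx^{-1},xzx^{-1})$ and applying the $2$-cocycle relation $\tau(a,b)\tau(ab,c)=\tau(b,c)\tau(a,bc)$ a handful of times (with choices such as $(a,b,c)=(x,y,x^{-1})$ and $(x,yx^{-1},\dots)$); the specific formula for $\mu_x$ is precisely what forces all the auxiliary factors to cancel. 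I would present the conceptual argument as the proof and, if desired, remark that the direct check is also available.
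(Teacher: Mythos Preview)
Your argument is correct and is precisely the approach the paper itself points to: it says the lemma ``can also be obtained easily from the considerations in the proof of Lemma~\ref{lemm:autocentral}'', and you have spelled out exactly that derivation (including the computation of $\mu_x$ via conjugation by $(x,1)$ in $H\times_\tau k^\times$ and the appeal to condition $(\star)$). The only difference is that the paper leaves the details implicit, whereas you make them explicit.
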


\section{First results}

We are now ready to state and prove our first two isomorphism results for graded twistings of function algebras on finite groups.

\begin{theorem}\label{thm:firstmain}
	Let $G$ be a finite group with cyclic center, let $(i,\alpha)$ and $(j,\beta)$ be cocentral actions of a cyclic group $\Gamma$ on $G$, and put $H=G/i(\widehat{\Gamma})=G/j(\widehat{\Gamma})$. Assume that $|H^2(H,\widehat{\Gamma})|\leq 2$ (or more generally that the natural actions of $\Aut(H)$ and $\Aut(\widehat{\Gamma})$ on $H^2(H,\widehat{\Gamma})$ are trivial) and that $\Hom(H,\widehat{\Gamma})=\{1\}$. Then the following assertions are equivalent.
	\begin{enumerate}
		\item The Hopf algebras $\Oo(G)^{i,\alpha}$ and $\Oo(G)^{j,\beta}$ are isomorphic.
		\item The cocentral actions $(i,\alpha)$ and $(j,\beta)$ are equivalent.
		\item The cocentral actions $(i,\alpha)$ and $(j,\beta)$ are weakly equivalent.
	\end{enumerate}
\end{theorem}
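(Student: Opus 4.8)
The plan is to prove the chain of implications $(1)\Rightarrow(3)\Rightarrow(2)\Rightarrow(1)$; the middle and last implications being the cheap ones, while the first is where the hypotheses do the work. The implication $(2)\Rightarrow(1)$ is immediate from Lemma \ref{lemm:equi-iso}. The implication $(1)\Rightarrow(3)$ follows directly from Lemma \ref{lemm:iso-weak}: the Hopf algebra $A=\Oo(G)$ is commutative, and since $G$ has cyclic center, its universal grading group is $\widehat{Z(G)}$ (by Remark \ref{rems:cocentralbasic}(3) applied to the central subgroup $Z(G)$, together with cosemisimplicity as in Remark \ref{rems:cocentralbasic}(4)), which is finite cyclic; so an isomorphism $\Oo(G)^{i,\alpha}\simeq \Oo(G)^{j,\beta}$ forces the two cocentral actions to be weakly equivalent.

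The substance is thus in proving $(3)\Rightarrow(2)$, i.e.\ upgrading a weak equivalence to an actual equivalence. In the group-theoretic dictionary of Subsection 2.3, a weak equivalence gives $u\in\Aut(\Gamma)$ and an isomorphism $\bar f : G/j(\widehat\Gamma)\to G/i(\widehat\Gamma)$, i.e.\ $\bar f\in\Aut(H)$ (since both quotients are identified with $H$), satisfying $\bar f^{-1}\circ\overline{\alpha_g}\circ\bar f=\overline{\beta_{u(g)}}$ for all $g$. I want to promote $\bar f$ to some $f\in\Aut_{?}(G)$ that additionally intertwines $i\circ\widehat u$ with $f\circ j$ on $\widehat\Gamma$ and conjugates $\alpha_g$ to $\beta_{u(g)}$ on the nose. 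First I would reduce the problem to the central-extension normal form $G=H\times_\tau T$ with $T=i(\widehat\Gamma)=j(\widehat\Gamma)$ and $\tau\in Z^2(H,T)$, as set up in Lemma \ref{lemm:autocentral}. The key point is that the hypothesis on $H^2(H,\widehat\Gamma)$ (trivial $\Aut(H)\times\Aut(T)$-action, in particular when $|H^2(H,\widehat\Gamma)|\le 2$) makes the map $\Aut_T(G)\to\Aut(H)\times\Aut(T)$ surjective: so there is $f\in\Aut_T(G)$ lifting the pair $(\bar f, (-)^l)$ where $\widehat u=(-)^l$. This $f$ automatically satisfies $i\circ\widehat u=f\circ j$ after the identifications (both $i,j$ have image $T$, and $f_{|T}=(-)^l$). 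It remains to check $f\circ\alpha_g\circ f^{-1}=\beta_{u(g)}$: by construction these two automorphisms of $G$ lie in $\Aut_T^\circ(G)$ (fixing $T$ pointwise, since $u$ fixes $\widehat\Gamma$ appropriately and $\alpha,\beta$ are $\widehat\Gamma$-fixing) and induce the \emph{same} automorphism of $H$, namely $\bar f^{-1}\circ\overline{\alpha_g}\circ\bar f=\overline{\beta_{u(g)}}$; hence by the exact sequence of Lemma \ref{lemm:autocentral} they differ by an element of $\Hom(H,\widehat\Gamma)$, which is trivial by hypothesis. This forces equality and completes the argument. This is essentially the mechanism already isolated in Lemma \ref{lemm:autocentral2}, of which the present implication is a multi-element variant (one applies the reasoning simultaneously for every $g\in\Gamma$, but since $\Gamma=\mathbb Z_m$ is generated by one element $g_0$ it suffices to treat $\alpha_{g_0},\beta_{u(g_0)}$).

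The main obstacle is the surjectivity of $\Aut_T(G)\to\Aut(H)\times\Aut(T)$, which is exactly where the cohomological hypothesis $|H^2(H,\widehat\Gamma)|\le 2$ (or triviality of the relevant actions) is unavoidable: without it one cannot guarantee that the pair $(\bar f,(-)^l)$ lifts to an automorphism of $G$ at all, and the whole strategy of transporting structure along a single $f\in\Aut(G)$ breaks down. A secondary point requiring care is bookkeeping with the identifications $\widehat\Gamma\cong T$ via $i$ and $j$ — one must track the automorphism $u\in\Aut(\Gamma)$ (equivalently the exponent $l$ prime to $m$) consistently so that the condition $i\circ\widehat u=f\circ j$ holds rather than merely $i(\widehat\Gamma)=f(j(\widehat\Gamma))$; this is routine but is where sign/exponent errors would creep in. I do not expect the verification that $f$ is simultaneously a Hopf algebra isomorphism to pose difficulty, as that is already packaged in Lemma \ref{lemm:equi-iso}.
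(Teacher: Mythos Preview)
Your proposal is correct and follows essentially the same route as the paper: the implications $(2)\Rightarrow(1)$ and $(1)\Rightarrow(3)$ via Lemmas~\ref{lemm:equi-iso} and~\ref{lemm:iso-weak}, and then $(3)\Rightarrow(2)$ by lifting $\bar f\in\Aut(H)$ to $f_0\in\Aut_T(G)$ using the surjectivity in Lemma~\ref{lemm:autocentral} (from the cohomological hypothesis) and concluding via the injectivity part (from $\Hom(H,\widehat\Gamma)=\{1\}$). One minor slip: you write $f\circ\alpha_g\circ f^{-1}=\beta_{u(g)}$ but then say this induces $\bar f^{-1}\circ\overline{\alpha_g}\circ\bar f$ on $H$; the conjugation should be $f^{-1}\circ\alpha_g\circ f$ to match both your weak-equivalence identity and the paper's convention for equivalence of cocentral actions---exactly the sign/exponent bookkeeping you flagged.
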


\begin{proof} First notice that, since $Z(G)$ is cyclic, it has a unique subgroup of a given order, and we have indeed $i(\widehat{\Gamma})=j(\widehat{\Gamma})$. 
	Since $(1)\Rightarrow (3)$ follows from Lemma \ref{lemm:iso-weak} and $(2)\Rightarrow (1)$ follows from Lemma \ref{lemm:equi-iso}, it remains to show that $(3)\Rightarrow (2)$.  

Denote by $f \mapsto \overline{f}$ the group morphism $\Aut_{i(\widehat{\Gamma})}(G) \to \Aut(H)$ of Lemma \ref{lemm:autocentral}.  Fix a generator $g \in \Gamma$ and assume the existence of  $f\in \Aut(H)$ and $u \in \Aut(\Gamma)$  such that $f^{-1}\circ \overline{\alpha_g} \circ f = \overline{\beta_{u(g)}}$. Our assumption on $H^2(H,\widehat{\Gamma})$ ensures, by Lemma \ref{lemm:autocentral}, the existence of $f_0 \in \Aut_{i(\widehat{\Gamma})}(G)$ such that 
$$\overline{f_0}= f \  {\rm and} \ f_{0|i(\widehat{\Gamma})}=i\circ \widehat{u}\circ j^{-1}, \ i.e. \
f_0\circ j= i\circ \widehat{u}.$$ We then have $\overline{f_0^{-1}\circ \alpha_g \circ f_0} = \overline{\beta_{u(g)}}$ and $(f_0^{-1}\circ \alpha_g \circ f_0)_{|i(\widehat{\Gamma})}={\rm id}=(\beta_{u(g)})_{|i(\widehat{\Gamma})}$. The condition  $\Hom(H,\widehat{\Gamma})=\{1\}$ and Lemma \ref{lemm:autocentral} then ensure that $f_0^{-1}\circ \alpha_g \circ f_0 = \beta_{u(g)}$, and we conclude that the cocentral actions $(i,\alpha)$ and $(j,\beta)$ are equivalent.
	\end{proof}

\begin{example}
Let $p\geq 3$ be a prime number. There are exactly two non-isomorphic non-trivial graded twistings of $\Oo(\SL_2(\mathbb F_p))$. The details will be given in Section 5.
\end{example}	

The previous theorem has the following very convenient consequence when $\Gamma=\mathbb Z_2$.

\begin{theorem}\label{thm:firstmainbis}
	Let $G$ be a finite group with cyclic center, let $(i,\alpha)$ and $(j,\beta)$ be cocentral actions of $\mathbb Z_2$ on $G$, and put $H=G/i(\widehat{\mathbb Z_2})=G/j(\widehat{\mathbb Z_2})$. Assume that $H^2(H,k^\times)$ is cyclic and that $\Hom(H,\mathbb Z_2)=\{1\}$. Then the following assertions are equivalent.
	\begin{enumerate}
		\item The Hopf algebras $\Oo(G)^{i,\alpha}$ and $\Oo(G)^{j,\beta}$ are isomorphic.
		\item The cocentral actions $(i,\alpha)$ and $(j,\beta)$ are equivalent.
		\item The cocentral actions $(i,\alpha)$ and $(j,\beta)$ are weakly equivalent.
	\end{enumerate}
\end{theorem}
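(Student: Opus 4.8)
The plan is to deduce the statement from Theorem~\ref{thm:firstmain} applied with $\Gamma=\mathbb{Z}_2$. Since $\Aut(\mathbb{Z}_2)$ is trivial and $\widehat{\mathbb{Z}_2}\cong\mathbb{Z}_2$, the hypothesis $\Hom(H,\widehat{\mathbb{Z}_2})=\{1\}$ of Theorem~\ref{thm:firstmain} is literally the assumed $\Hom(H,\mathbb{Z}_2)=\{1\}$, and the only remaining point is to verify that $|H^2(H,\widehat{\mathbb{Z}_2})|\leq 2$. So the whole proof amounts to showing that this inequality follows from the hypothesis that $H^2(H,k^\times)$ is cyclic.

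To obtain the inequality I would exploit that $k$ is algebraically closed of characteristic $\neq 2$: then $\widehat{\mathbb{Z}_2}$ identifies with the subgroup $\mu_2=\{1,-1\}$ of $k^\times$, and raising to the square is a surjective endomorphism of $k^\times$ with kernel $\mu_2$, so we have a short exact sequence of trivial $H$-modules
\[
1 \longrightarrow \mu_2 \longrightarrow k^\times \xrightarrow{\ (-)^2\ } k^\times \longrightarrow 1 .
\]
Its long exact cohomology sequence contains
\[
H^1(H,k^\times)\xrightarrow{\ (-)^2\ } H^1(H,k^\times)\xrightarrow{\ \delta\ } H^2(H,\mu_2)\longrightarrow H^2(H,k^\times)\xrightarrow{\ (-)^2\ } H^2(H,k^\times).
\]
Here $H^1(H,k^\times)=\Hom(H,k^\times)$ is the character group of $H$, a finite abelian group whose $2$-torsion subgroup $\Hom(H,\mu_2)\cong\Hom(H,\mathbb{Z}_2)$ is trivial by hypothesis; hence the squaring map is injective, and therefore surjective, on $H^1(H,k^\times)$, so $\delta=0$. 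Consequently $H^2(H,\mu_2)$ embeds into $H^2(H,k^\times)$ with image the $2$-torsion subgroup $H^2(H,k^\times)[2]$, which has order at most $2$ because $H^2(H,k^\times)$ is cyclic. Thus $|H^2(H,\widehat{\mathbb{Z}_2})|=|H^2(H,\mu_2)|\leq 2$, so the hypotheses of Theorem~\ref{thm:firstmain} are met and it yields the three equivalences.

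I do not anticipate a real obstacle: the argument is just the exact-sequence bookkeeping above. The one point deserving attention is the observation that the hypothesis $\Hom(H,\mathbb{Z}_2)=\{1\}$ is exactly what forces the connecting map $\delta$ to vanish --- without it $H^2(H,\widehat{\mathbb{Z}_2})$ could be strictly larger than $H^2(H,k^\times)[2]$ --- so that ``$H^2(H,k^\times)$ cyclic'' suffices to control it. (Equivalently, one could phrase this via the universal coefficient theorem, identifying $H^2(H,k^\times)$ with the Schur multiplier $H_2(H,\mathbb{Z})$ and noting that $\Hom(H,\mathbb{Z}_2)=\{1\}$ forces $H^{\mathrm{ab}}$ to have odd order, whence $\Ext^1_{\mathbb{Z}}(H^{\mathrm{ab}},\mathbb{Z}_2)=0$; but the sequence above is the most economical route.)
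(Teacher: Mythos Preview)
Your proposal is correct and follows the same overall strategy as the paper: reduce to Theorem~\ref{thm:firstmain} by showing $|H^2(H,\mathbb Z_2)|\leq 2$. The only difference is the homological tool used for that reduction: the paper invokes the universal coefficient theorem (exactly the alternative you sketch in your final parenthetical), whereas you run the long exact sequence associated to $1\to\mu_2\to k^\times\to k^\times\to 1$. Both routes are standard and of comparable length; your Kummer-type argument is perhaps a touch more self-contained since it avoids identifying $H_2(H)$ with the Schur multiplier, while the paper's UCT route makes the role of $H^{\mathrm{ab}}$ having odd order slightly more explicit.
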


\begin{proof}
 The universal coefficient theorem provides the following exact sequence
\[
 0 \to {\rm Ext}^1(H_1(H), \mathbb Z_2) \to H^2(H, \mathbb Z_2) \to {\rm Hom}(H_2(H),\mathbb Z_2) \to 0
\]
The assumption $\Hom(H,\mathbb Z_2)=\{1\}$ implies that $\widehat{H}\simeq H_1(H)$ has odd order, so the group on the left vanishes. Moreover $H_2(H)\simeq H^{2}(H,k^\times)$ (again by the universal coefficient theorem), so the cyclicity of  $H^2(H,k^\times)$ yields that   $|H^2(H, \mathbb Z_2)|\leq 2$, and we can apply Theorem \ref{thm:firstmain}.
\end{proof}

\begin{rem}
 It is natural to wonder whether Theorem \ref{thm:firstmain} pertinently applies outside the case $m=2$. There is, at least, the example $G=H\times \mathbb Z_m$ where $H$ is a group with $\widehat{H}=\{1\}$ and $|H^2(H,\mathbb Z_m)|\leq 2$, and if $H^2(H,\mathbb Z_m)\simeq \mathbb Z_2$ (which, by the universal coefficient theorem, will hold if $H^2(H,k^\times)\simeq \mathbb Z_2$ and $m$ is even), the group $G$ obtained as the non split central extension $1\to \mathbb Z_m \to G\to H\to 1$ corresponding to the non trivial cohomology class.
\end{rem}

%There is also a useful sufficient condition to get an isomophism of graded twisting of function algebras, in the following framework.

%\begin{theorem}
% Let $G$ be a finite group with cyclic center, let $(i,\alpha)$ and $(j,\beta)$ be cocentral actions of a cyclic group $\Gamma$ on $G$, and put $H=G/i(\widehat{\Gamma})=G/j(\widehat{\Gamma})$. Assume that every automorphism of $H$ is inner and that $Z(H)=\{1\}$. Assume that there exists an automorphism $f$ of $H$ such that $\overline{\beta_g}=f\overline{\alpha_g}f^{-1}$, where $g$ is a generator of $\Gamma$. Then the Hopf algebras $\mathcal O(G)^{i,\alpha}$ and $\mathcal O(G)^{i,\alpha}$ are isomorphic. 
%\end{theorem}
 
%\begin{proof}
%Put $T=i(\widehat{\Gamma})=j(\widehat{\Gamma})$
%and  let $\pi : G\to G/T=H$ be the canonical surjection. By the assumption that all automorphisms of $H$ are inner, there exist $x,y,z\in G$ such that $f={\rm ad}(\pi(x))$, $\overline{\alpha_g}={\rm ad}(\pi(y)) =\overline{{\rm ad}(y)}$ and $\overline{\beta_g}={\rm ad}(\pi(z)) =\overline{{\rm ad}(z)}$. We thus have ${\rm ad}(\pi(xyx^{-1}))={\rm ad}(\pi(z))$ and hence $\pi(xyx^{-1})=\pi(z)$, since $Z(H)$ is trivial. Then there exists $t\in T$ such that $xyx^{-1}=zt$, and since $T$ is central we have ${\rm ad}(xyx^{-1}$
%\end{proof}

\section{Abelian cocentral  extensions of Hopf algebras}

To go beyond Theorem \ref{thm:firstmain}, it will be convenient to work in the more general framework of abelian cocentral extensions. As already said in the introduction, this is a very well studied and understood framework \cite{ad,mas,mas95,nat,kas00,kro} (even in more general situations, dropping the cocentrality assumption), but we propose a detailed exposition of the structure of Hopf algebras fitting into abelian cocentral extensions, both for the sake of self-completeness and of introducing the appropriate notations, and also because we think that some of our formulations have some interest.

\subsection{Generalities}
We recall first the concept and the structure of the Hopf algebras arising from abelian cocentral extensions.
There is a general notion of exact sequence of Hopf algebras \cite{ad}, but in this paper we will only need the cocentral ones.  

\begin{definition}
 A  sequence  of Hopf algebra maps
\begin{equation*}k \to B \overset{i}\to A \overset{p}\to L \to
k\end{equation*} is said to be cocentral exact if $i$ is injective, $p$ is surjective and cocentral, $p\circ i=\varepsilon(-)1$ and  $i(B) = A^{{\rm co} p} = \{ a \in A:\, (\id \otimes p)\circ\Delta(a) = a \otimes 1. \}$. When $B$ is commutative, a cocentral exact sequence as above is called an \textsl{abelian cocentral extension}. 
%= {^{{\rm co} p}A} = \{ a \in A:\, (p \otimes \id)\Delta(a) = 1 \otimes a\}$
\end{definition}

\begin{example}
 Let $(i,\alpha)$ be a cocentral action of a group $\Gamma$ on a linear algebraic group $G$. Then 
 \begin{equation*}k \to \Oo(G/i(\widehat{\Gamma})) \to \Oo(G) \to k\Gamma \to
k\end{equation*}
is cocentral abelian extension, as well as
 \begin{equation*}k \to \Oo(G/i(\widehat{\Gamma})) \to \Oo(G)^{i,\alpha} \to k\Gamma \to
k.\end{equation*}
Hence graded twists of function algebras fit into appropriate abelian cocentral  extensions.
\end{example}

We now restrict ourselves to finite dimensional Hopf algebras. In this case the abelian cocentral extensions are of the form 
\begin{equation*}k \to \Oo(H) \to A \to k\Gamma \to
k\end{equation*}
for some finite groups $H,\Gamma$. There are some general descriptions of the Hopf algebras $A$ fitting into such abelian cocentral extensions using various actions and cocycles (see \cite{ad,mas}). Since we only will consider the case when $\Gamma$ is cyclic, there is an even simpler description, inspired by \cite{mas95}, that we give now. We start with a lemma.

\begin{lemma}\label{lemm:alga(h)}
 Let $H$ a finite group, let $\theta  \in \Aut(H)$ with $\theta^m={\rm id}_H$ for some $m\geq 1$, and let $a : H\to k^\times$. Consider the algebra $A_m(H,\theta,a)$ defined by the quotient of the free product algebra 
$\Oo(H)*k[g]$ by the relations :
$$g^m=a,  ~\  ~\ \  ge_x = e_{\theta(x)}g,  \forall x \in H.$$
Then the set $\{e_xg^i, \ x \in H, \ 0\leq i\leq m-1\}$ linearly spans  $A_m(H,\theta,a)$, and is a basis if and only if $a\circ \theta=a$.
\end{lemma}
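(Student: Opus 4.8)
The plan is to analyze the algebra $A_m(H,\theta,a)$ as a quotient of the free product $\Oo(H) * k[g]$. First I would observe that the relations $ge_x = e_{\theta(x)}g$ allow one to move every occurrence of $g$ to the right past any $e_x$, at the cost of replacing $e_x$ by $e_{\theta^j(x)}$; since $\Oo(H) = \bigoplus_{x\in H} k e_x$ with $e_xe_y = \delta_{x,y}e_x$ and $\sum_x e_x = 1$, any word in the generators reduces to a $k$-linear combination of terms $e_x g^i$ with $i\geq 0$. Then the relation $g^m = a = \sum_x a(x) e_x$ lets one reduce the exponent $i$ modulo $m$: writing $i = qm + r$ with $0\leq r\leq m-1$, one gets $e_x g^i = e_x g^{qm} g^r$, and $g^{qm} = (g^m)^q$ is a central-looking element of $\Oo(H)$ — more precisely $g^m$ commutes with all $e_x$ iff $a\circ\theta = a$, but in any case $g^m \in \Oo(H)$, so $e_x g^{qm}$ is again a scalar multiple of some $e_{x'}$. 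This establishes that $\{e_x g^i : x\in H,\ 0\leq i\leq m-1\}$ is a spanning set, proving the first assertion.

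**The "if" direction of the basis claim.** To show linear independence when $a\circ\theta = a$, I would exhibit a representation of $A_m(H,\theta,a)$ on a vector space of dimension $m|H|$ in which these spanning elements act as linearly independent operators. The natural candidate is $V = \Oo(H)\otimes k^m$ with basis $\{e_x \otimes f_i\}$, where $e_x$ acts on the first tensor factor by multiplication and $g$ acts by $g\cdot(e_x\otimes f_i) = e_{\theta(x)}\otimes f_{i+1}$ for $i < m-1$ and $g\cdot(e_x\otimes f_{m-1}) = a(x) e_{\theta(x)}\otimes f_0$ (or a variant making $g^m$ act as multiplication by $a$). One checks the defining relations hold: $ge_x = e_{\theta(x)}g$ is immediate, and $g^m$ acts as multiplication by $a(\theta^{m-1}(x))\cdots a(\theta(x))a(x)$ on the $e_x$-component — here the hypothesis $a\circ\theta = a$ forces this product to equal $a(x)^m$... wait, that is not $a(x)$. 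So the representation needs adjusting: one should instead let $g$ act so that $g^m$ multiplies by $a$; the condition $a\circ\theta=a$ is exactly what guarantees the relation $g^m = a$ is consistent with $ge_x = e_{\theta(x)}g$ (applying $g^m$ two ways to $e_x$ gives $a\cdot e_x$ vs. $e_x\cdot (a\circ\theta^{-1})$, wait, gives $a(x)$ vs. $(g e_x g^{m-1})$ — more carefully, $g^m e_x = e_{\theta^m(x)} g^m = e_x g^m$, so $g^m$ central with respect to $e_x$ is automatic from $\theta^m = \id$, but $g^m = a$ with $a$ a specific function combined with $g e_x = e_{\theta(x)}g$ gives $e_{\theta(x)} a = g e_x g^{m-1}g = g a e_x \cdot(\ldots)$ — the genuine constraint is $g a = (a\circ\theta) g$, i.e. $a\circ\theta = a$). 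So the point is that $a\circ\theta = a$ is necessary for the relations to be consistent, and when it holds the obvious $m|H|$-dimensional module shows the spanning set is a basis.

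**The "only if" direction.** Conversely, if $a\circ\theta\neq a$, I would show the spanning set is linearly dependent by deriving a collapse from the relations: in $A_m(H,\theta,a)$ we have $g\cdot g^m = g^m\cdot g$, i.e. $g\cdot a = a\cdot g$, but also $g e_x = e_{\theta(x)}g$ gives $g a = g\sum_x a(x)e_x = \sum_x a(x) e_{\theta(x)} g = (\sum_y a(\theta^{-1}(y)) e_y) g = (a\circ\theta^{-1})g$. Hence $(a - a\circ\theta^{-1})g = 0$ in $A_m(H,\theta,a)$, which is a nontrivial linear relation among the elements $e_x g$ whenever $a\circ\theta^{-1}\neq a$, equivalently $a\circ\theta\neq a$. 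This forces $\dim A_m(H,\theta,a) < m|H|$, so the spanning set cannot be a basis.

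**Main obstacle.** The routine part — the reduction to the spanning set via the commutation and $g^m = a$ relations — is straightforward normal-form bookkeeping. The only genuinely delicate point is getting the module in the "if" direction exactly right so that $g$ acts invertibly and $g^m$ acts as multiplication by $a$ (not by some twisted product of values of $a$ along the $\theta$-orbit); this is precisely where the hypothesis $a\circ\theta=a$ is used, and I would present the action of $g$ carefully, perhaps as $g\cdot(e_x\otimes f_i) = e_{\theta(x)}\otimes f_{i+1}$ for $i<m-1$ and a suitable scalar on the top rung, then verify $g^m$ acts correctly using $a\circ\theta = a$. The "only if" direction is short once one spots the relation $(a - a\circ\theta^{-1})g = 0$.
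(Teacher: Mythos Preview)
Your spanning argument and your ``only if'' argument are correct and essentially match the paper's: the paper also observes that $g\phi = (\phi\circ\theta^{-1})g$ for $\phi\in\Oo(H)$, so $a=g^m$ being central forces $a\circ\theta=a$; your version, exhibiting the explicit relation $(a-a\circ\theta^{-1})g=0$, is the same observation made concrete.

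For the ``if'' direction you take a different route from the paper. The paper does not build an explicit module by hand; instead it invokes the crossed product $\Oo(H)\#_\sigma k\mathbb Z_m$ for a $2$-cocycle $\sigma:\mathbb Z_m\times\mathbb Z_m\to\Oo(H)^\times$ with $\sigma(g,g)\cdots\sigma(g,g^{m-1})=a$, whose existence is guaranteed by the classical description of $H^2(\mathbb Z_m,\Oo(H)^\times)$ precisely when $a$ is $\theta$-invariant. One then gets an algebra map $A_m(H,\theta,a)\to\Oo(H)\#_\sigma k\mathbb Z_m$ under which the images of the $e_xg^i$ are visibly independent. Your explicit-module approach is also fine, and in fact your first attempt already works: with $g\cdot(e_x\otimes f_i)=e_{\theta(x)}\otimes f_{i+1}$ for $i<m-1$ and $g\cdot(e_x\otimes f_{m-1})=a(x)\,e_{\theta(x)}\otimes f_0$, the operator $g^m$ sends $e_x\otimes f_0$ to $a(\theta^{m-1}(x))\,e_x\otimes f_0$, picking up a \emph{single} factor of $a$ (not a product along the orbit), and this equals $a(x)\,e_x\otimes f_0$ by $a\circ\theta=a$. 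Your mid-proof worry that the product $a(\theta^{m-1}(x))\cdots a(x)$ appears is a miscount; only the last step contributes a scalar. So your module is correct, and acting on the vectors $e_x\otimes f_0$ already separates the $e_yg^j$. The paper's crossed-product packaging has the advantage of avoiding this bookkeeping entirely, while your approach is more elementary and self-contained; both are valid.
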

 
\begin{proof}
 It is clear from the defining relations that $\{e_xg^i, \ x \in H, \ 0\leq i\leq m-1\}$ linearly spans  $A_m(H,\theta,a)$. The defining relations give that for any $\phi\in \Oo(H)$, we have $g\phi=(\phi\circ\theta^{-1})g$, and since $a=g^m$ must be central, we see from this that if the above set is linearly independent, we have $a\circ\theta =a$.  

To prove the converse, we recall a general construction. Let $R$ be a commutative algebra endowed with an
action of a group $\Gamma$, $\alpha : \Gamma \to \Aut(R)$,
% automorphism $\theta$ such that $\theta^m={\rm id}_R$, defining thus an action of the cyclic group $\mathbb Z_m$ on $R$, 
and let $\sigma : \Gamma \times \Gamma \to R^{\times}$ be a 2-cocycle according to this action:
$$\alpha_r(\sigma(s,t))\sigma(r,st)=\sigma(rs,t)\sigma(s,t), \ \forall r,s,t \in \Gamma$$
The crossed product algebra $R\#_\sigma k\Gamma$ is then defined to be the algebra having $R\otimes k\Gamma$ as underlying vector space, and product defined by 
$$x\#r \cdot y\#s = x\alpha_r(y)\sigma(r,s)\#rs$$ 
Assume furthermore that $\Gamma=\mathbb Z_m=\langle g\rangle$ is cyclic, consider an element $a \in R^\times$ that is $\mathbb Z_m$-invariant, and define the algebra $A$ to be the quotient of the free product $R*k[X]$ by the relations $Xb= \alpha_g(b)X$ and $X^m =a$. Since $a$ is invariant under the $\mathbb Z_m$-action, the classical description of the second cohomology of a cyclic group shows that there exists a $2$-cocycle  $\sigma : \mathbb Z_m\times \mathbb Z_m \to R^{\times}$ such that $\sigma(g,g)\cdots \sigma(g,g^{m-1})=a$. From this we get an algebra map 
\begin{align*}
 A &\longrightarrow R \#_\sigma k\mathbb Z_m \\
b \in R, \ X & \longmapsto b\#1, \ 1\# g.
\end{align*}
Applying this to $R=\mathcal O(H)$, the $\mathbb Z_m$-action on it induced by $\theta$ and the assumption that $a$ is invariant yields that $\{e_xg^i, \ x \in H, \ 0\leq i\leq m-1\}$ is a linearly independent set since its image is in the crossed product algebra $\Oo(H)\#_\sigma k\mathbb Z_m$.
\end{proof}

\begin{definition}\label{def:datum}
 Let $m\geq 1$. An $m$-\textsl{datum} is a quadruple $(H,\theta,a,\tau)$ consisting of a finite group $H$, an automorphism $\theta \in {\rm Aut}(H)$ such that $\theta^m={\rm id}_H$, a map $a : H \to k^\times$ such that $a\circ \theta=a$ and $a(1)=1$, and a $2$-cocycle 
$\tau : H \times H \to k^\times$ such that for any $x,y \in H$
$$\left(\prod_{i=0}^{m-1}\tau(\theta^i(x),\theta^i(y))\right)a(x)a(y)= a(xy).$$ 
\end{definition}

We now check $m$-data as above produce Hopf algebras fitting into abelian cocentral extensions, and that any such Hopf algebra arises in this way.

\begin{proposition}\label{prop:hopfa(h)}
 Let $(H,\theta,a,\tau)$ be an $m$-datum, and consider the algebra $A_m(H,\theta,a)$ defined by the quotient of the free product algebra 
$\Oo(H)*k[g]$ by the relations :
$$ge_x = e_{\theta(x)}g, \forall x \in H, ~\ ~\ \   g^m=a.$$
\begin{enumerate}
 \item There exists a unique Hopf algebra structure on $A_m(H,\theta,a)$ extending that of $\Oo(H)$ and such that
$$\Delta(g) = \sum_{y,z\in H} \tau(y,z)e_yg\otimes e_zg, \ ~\ ~\ \varepsilon(g)=1.$$
We denote by $A_m(H,\theta, a, \tau)$ the resulting Hopf algebra.
\item The Hopf algebra $A_m(H,\theta,a,\tau)$ has dimension $m|H|$ and fits into an abelian cocentral extension
$$k \to \Oo(H) \to A_m(H,\theta,a,\tau) \overset{p} \to k\mathbb Z_m \to k$$
where $p$ is the Hopf algebra map defined by $p_{|\Oo(H)}=\varepsilon$ and $p(g)=g$ (here $g$ denotes any fixed generator of $\mathbb Z_m$).
\end{enumerate}
\end{proposition}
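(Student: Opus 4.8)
The plan is to put the Hopf structure on $A_m(H,\theta,a)$ by hand, using the universal property of the free product quotient to define $\Delta,\varepsilon,S$ on generators and checking compatibility with the defining relations and the Hopf axioms, and then to read off the extension from the explicit basis of Lemma~\ref{lemm:alga(h)}. For $(1)$, define $\Delta$ by letting $\Delta|_{\Oo(H)}$ be the usual comultiplication of $\Oo(H)$ and $\Delta(g)=G:=\sum_{y,z\in H}\tau(y,z)\,e_yg\otimes e_zg$. By the universal property this extends to an algebra map $A_m(H,\theta,a)\to A_m(H,\theta,a)^{\otimes 2}$ provided $G$ satisfies the defining relations in the target, i.e. $G\cdot\Delta(e_x)=\Delta(e_{\theta(x)})\cdot G$ and $G^m=\Delta(a)$. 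The first only uses that $\theta$ is a group automorphism. The second is the heart of the matter: an easy induction gives $G^m=\sum_{y,z}\bigl(\prod_{i=0}^{m-1}\tau(\theta^i(y),\theta^i(z))\bigr)a(y)a(z)\,e_y\otimes e_z$ (using $\theta^m=\id_H$ to re-index the product and $e_yg^m=a(y)e_y$), which by the compatibility relation of an $m$-datum equals $\sum_{y,z}a(yz)\,e_y\otimes e_z=\Delta(a)$; so that relation has been arranged precisely to make $\Delta$ well defined. Similarly $\varepsilon|_{\Oo(H)}$ the usual counit together with $\varepsilon(g)=1$ respects the relations (using $a(1)=1$, $\theta(1)=1$). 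Coassociativity need only be checked on $g$, where expanding $(\Delta\otimes\id)G$ and $(\id\otimes\Delta)G$ reduces it to the $2$-cocycle identity $\tau(xy,z)\tau(x,y)=\tau(x,yz)\tau(y,z)$, and the counit axioms on $g$ use the normalization of $\tau$. This gives a bialgebra structure.

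For the antipode, note that $a=g^m$ is invertible in $\Oo(H)\subset A_m(H,\theta,a)$ (its values lie in $k^\times$) and commutes with $g$ since $a\circ\theta=a$, so $g$ is invertible. Let $S$ be the unique algebra anti-endomorphism with $S(e_x)=e_{x^{-1}}$ and $S(g)=\gamma g^{-1}$, where $\gamma=\sum_{x\in H}\tau(\theta(x),\theta(x)^{-1})^{-1}e_x\in\Oo(H)$. The relations $ge_x=e_{\theta(x)}g$ are respected automatically, and $S(g)^m=S(a)$ follows from the $m$-datum relation specialized at $y=x^{-1}$, together with $\tau(s,s^{-1})=\tau(s^{-1},s)$ for a normalized $2$-cocycle, so $S$ is well defined. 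One then checks $\sum S(g_{(1)})g_{(2)}=1=\sum g_{(1)}S(g_{(2)})$ by the same bookkeeping as for $G^m$ (this is what forces the formula for $\gamma$). Since $\Delta$ is multiplicative and $S$ anti-multiplicative, the set of elements satisfying both antipode identities is a subalgebra; it contains $1$, the $e_x$, and $g$, hence is all of $A_m(H,\theta,a)$, so $S$ is an antipode. Uniqueness is then clear: the structure is prescribed on $\Oo(H)$ and on $g$, the antipode is unique once it exists, and $\Delta,\varepsilon,S$ are determined on algebra generators hence everywhere.

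For $(2)$, since $a\circ\theta=a$, Lemma~\ref{lemm:alga(h)} says $\{e_xg^i:x\in H,\ 0\le i\le m-1\}$ is a basis, so $\dim A_m(H,\theta,a,\tau)=m|H|$ and $\Oo(H)$ (the span of the $e_x$) is a Hopf subalgebra. Next, $p$ defined by $p|_{\Oo(H)}=\varepsilon$, $p(g)=g$ respects the relations ($p(g^m)=1=a(1)=p(a)$ and $p(ge_x)=\delta_{x,1}g=p(e_{\theta(x)}g)$), is a surjective Hopf algebra map (the comultiplication compatibility on $g$ uses $\tau(1,1)=1$), and is cocentral: both $a\mapsto p(a_{(1)})\otimes a_{(2)}$ and $a\mapsto p(a_{(2)})\otimes a_{(1)}$ are algebra maps $A\to k\mathbb{Z}_m\otimes A$ (the second because the flip of $\Delta$ is again an algebra map), so it suffices to compare them on $e_x$ and $g$, where by normalization of $\tau$ they both give $1\otimes e_x$ and $g\otimes g$ respectively. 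Finally $p\circ i=\varepsilon(-)1$ is clear, and since $(\id\otimes p)\Delta(e_xg^i)=e_xg^i\otimes g^i$, writing a general element as $\sum_{x,i}c_{x,i}e_xg^i$ and using linear independence of the $e_xg^i$ and of the $g^i$ in $k\mathbb{Z}_m$, the condition $(\id\otimes p)\Delta(a)=a\otimes 1$ forces $c_{x,i}=0$ for $i\neq 0$; hence $A^{{\rm co}\,p}=\Oo(H)$, and we obtain the asserted abelian cocentral extension.

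The only genuinely delicate steps are the two computations carrying the weight of the definitions: the identity $\Delta(g)^m=\Delta(a)$, which is the $m$-datum compatibility relation in disguise and is exactly what makes $\Delta$ well defined, and the correct normalization of $\gamma$ in $S(g)=\gamma g^{-1}$ so that both antipode identities hold. Neither is conceptually hard, but both require care with the $\theta$-twists and with whether $\tau$ is evaluated at $(x,x^{-1})$ or at $(\theta^i(x),\theta^i(x)^{-1})$.
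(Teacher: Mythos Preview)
Your proof is correct and follows the same approach as the paper, which simply records that ``it is a straightforward verification, using the axioms of $m$-data'' and then invokes Lemma~\ref{lemm:alga(h)} and the decomposition $A_m(H,\theta,a,\tau)=\bigoplus_{k=0}^{m-1}\Oo(H)g^k$. You have carried out that verification in full detail, correctly identifying that the relation $\Delta(g)^m=\Delta(a)$ is exactly the $m$-datum compatibility condition, that coassociativity on $g$ is the $2$-cocycle identity for $\tau$, and that the antipode relation $S(g)^m=S(a)$ follows from the $m$-datum relation at $(x,x^{-1})$; the cocentrality and coinvariant computations in part~(2) are also handled cleanly.
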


\begin{proof}
It is a straightforward verification, using the axioms of $m$-data, that there indeed exists a Hopf algebra structure on $A_m(H,\theta,a)$ as in the statement. That  $A_m(H,\theta,a,\tau)$ has dimension $m|H|$, follows from Lemma \ref{lemm:alga(h)}, while the last statement follows easily from the decomposition $A_m(H,\theta,a,\tau)=\oplus_{k=0}^{m-1}\Oo(H)g^k$.
\end{proof}

\begin{proposition}\label{prop:ext->a(h)}
 Let $A$ be a finite-dimensional Hopf algebra fitting into an abelian  cocentral extension $$k \to \Oo(H) \to A  \to k\mathbb Z_m \to k$$
Then there exists an $m$-datum $(H,\theta,a,\tau)$ such that $A\simeq A_m(H,\theta,a,\tau)$ as Hopf algebras.
\end{proposition}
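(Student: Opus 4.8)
The plan is to reconstruct an $m$-datum directly from the cocentral grading determined by $p$. Write $A=\bigoplus_{i\in\mathbb{Z}_m}A_i$ for this grading, so that $A_0=A^{\mathrm{co}\,p}=\Oo(H)$ (identified with its image, with the standard basis of idempotents $\{e_x\}_{x\in H}$), $\Delta(A_i)\subseteq A_i\otimes A_i$, and (as recalled before Lemma~\ref{lemm:xy=yz}, since $p$ is surjective cocentral) the grading is strong: $A_iA_j=A_{i+j}$. The first, and most delicate, step is to produce a convenient degree-one element. Strongness says the multiplication maps $A_i\otimes_{\Oo(H)}A_{-i}\to\Oo(H)$ are isomorphisms, i.e.\ each $A_i$ is an invertible $\Oo(H)$-bimodule; since $\Oo(H)\cong k^H$ is semisimple commutative this forces $\dim_k A_i=|H|$ (hence $\dim_k A=m|H|$) and that $A_1$ is free of rank one as a left $\Oo(H)$-module. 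Pick a free generator $g$, so $A_1=\Oo(H)g$ and $\{e_xg\}_{x\in H}$ is a basis of $A_1$. Since $gA_{-1}\subseteq A_1A_{-1}=A_0=\Oo(H)$ is an ideal of $\Oo(H)$ (because $A_{-1}A_0\subseteq A_{-1}$) and $\Oo(H)\cdot(gA_{-1})=A_1A_{-1}=\Oo(H)$, we get $gA_{-1}=\Oo(H)$, so $gh=1$ for some $h\in A_{-1}$; as $A$ is finite-dimensional, $g$ is invertible with $g^{-1}=h\in A_{-1}$. Finally, since $\varepsilon(g)\ne 0$, we rescale $g$ so that $\varepsilon(g)=1$.

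With $g$ in hand I would read off the remaining data. Conjugation by $g$ maps $\Oo(H)=A_0$ into $A_1A_0A_{-1}=A_0$, so it is an algebra automorphism of $\Oo(H)\cong k^H$ and thus permutes the idempotents; this defines a bijection $\theta$ of $H$ by $ge_x=e_{\theta(x)}g$. Since $g^m\in A_0=\Oo(H)$ commutes with every $e_x$, comparison with $g^me_xg^{-m}=e_{\theta^m(x)}$ gives $\theta^m=\mathrm{id}_H$. Set $a:=g^m\in\Oo(H)$, viewed as a map $a:H\to k^\times$ (into $k^\times$ because $g$, hence $g^m$, is invertible, with $g^{-m}\in A_{-m}=\Oo(H)$); then $a\circ\theta=a$ because $g$ commutes with $g^m$, and $a(1)=\varepsilon(g^m)=1$. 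For $\tau$: since $\Delta(g)\in A_1\otimes A_1$ and $\{e_yg\otimes e_zg\}_{y,z}$ is a basis of $A_1\otimes A_1$, there are scalars $\tau(y,z)$ with $\Delta(g)=\sum_{y,z\in H}\tau(y,z)\,e_yg\otimes e_zg$; writing $\Delta(g)=\bigl(\sum_{y,z}\tau(y,z)\,e_y\otimes e_z\bigr)(g\otimes g)$ and using that $\Delta(g)$ is invertible shows $\tau(y,z)\in k^\times$ for all $y,z$.

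It then remains to feed the bialgebra axioms into $(\theta,a,\tau)$. The counit axiom $(\varepsilon\otimes\id)\Delta(g)=g=(\id\otimes\varepsilon)\Delta(g)$ yields $\tau(1,z)=\tau(y,1)=1$. Applying multiplicativity of $\Delta$ to the relation $ge_x=e_{\theta(x)}g$ (using $\Delta(e_x)=\sum_{ab=x}e_a\otimes e_b$ and the expression for $\Delta(g)$) and comparing supports in the basis $\{e_cg\otimes e_dg\}$ forces $\theta$ to be a group automorphism. Coassociativity $(\Delta\otimes\id)\Delta(g)=(\id\otimes\Delta)\Delta(g)$ then becomes the $2$-cocycle identity $\tau(xy,z)\tau(x,y)=\tau(x,yz)\tau(y,z)$, so $\tau\in Z^2(H,k^\times)$. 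And applying $\Delta$ to $g^m=a$, computing $\Delta(g)^m$ via $(e_yg\otimes e_zg)(e_{y'}g\otimes e_{z'}g)=\delta_{y,\theta(y')}\delta_{z,\theta(z')}\,e_yg^2\otimes e_zg^2$ and $e_yg^m=a(y)e_y$, and using $\theta^m=\mathrm{id}$, produces exactly the compatibility relation $\bigl(\prod_{i=0}^{m-1}\tau(\theta^i(x),\theta^i(y))\bigr)a(x)a(y)=a(xy)$. Hence $(H,\theta,a,\tau)$ is an $m$-datum.

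To conclude: the relations $ge_x=e_{\theta(x)}g$ and $g^m=a$ hold in $A$ for our $g$, so Proposition~\ref{prop:hopfa(h)} furnishes a Hopf algebra map $A_m(H,\theta,a,\tau)\to A$, $e_x\mapsto e_x$, $g\mapsto g$ (it respects $\Delta$ by the choice of $\tau$). Using $g\,\Oo(H)=\Oo(H)g$ and strongness we get $A_i=A_1^i=\Oo(H)g^i$, so $A=\bigoplus_{i=0}^{m-1}\Oo(H)g^i$ is generated by $\Oo(H)$ and $g$ and the map is surjective; since both Hopf algebras have dimension $m|H|$, it is an isomorphism. I expect the main obstacle to be the very first step --- producing the invertible homogeneous element $g$ --- since it needs both that cocentral gradings by a surjection are strong and that, over the semisimple commutative algebra $\Oo(H)\cong k^H$, invertible bimodules are line bundles ``supported on a permutation'' (this is what pins down $\dim A_i=|H|$ and makes $A_1$ free of rank one); after that the argument is a sequence of somewhat lengthy but routine computations with the defining relations and the coproduct. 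An alternative would be to dualize the extension to a central extension $k\to k^{\mathbb{Z}_m}\to A^*\to kH\to k$ and quote the classical structure theory of those, but matching the outcome with the concrete presentation of $A_m(H,\theta,a,\tau)$ requires essentially the same computations.
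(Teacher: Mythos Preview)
Your proof is correct and, once the invertible degree-one element $g$ is in hand, proceeds exactly as the paper does: extract $\theta$ from conjugation, set $a=g^m$, read $\tau$ off from $\Delta(g)$, and verify the $m$-datum axioms from the bialgebra axioms.

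The only real difference is how you obtain $g$. The paper simply invokes that a finite-dimensional extension is cleft (citing \cite{md,sch}), which immediately yields an invertible $u_h\in A_h$ for every $h$. You instead argue directly from strongness: $A_1$ is an invertible $\Oo(H)$-bimodule, and since $\Oo(H)\cong k^H$ is a finite product of fields, such a bimodule must be free of rank one on each side (the ``permutation line bundle'' picture). This is a self-contained proof of cleftness in this particular situation; it is more elementary but longer than the one-line citation. Both routes give the same conclusion, and your remark about the potential obstacle is accurate: producing $g$ is the only nontrivial step, and you have handled it correctly.
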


\begin{proof}
 To simplify the notation, we will identify $\Oo(H)$ with its image in $A$, so that $A_e=\Oo(H)$. The finite-dimensionality assumption ensures that the extension is cleft (see e.g \cite[Theorem 3.5]{md} or \cite[Theorem 2.4]{sch}). Here this simply means that for any $h \in \mathbb Z_m$, there exists an invertible element $u_h$ in $A_h$, that we normalize so that $\varepsilon(u_h)=1$, and hence $p(u_h)=h$, where $p: A \to k\mathbb Z_m$ is the given cocentral surjective Hopf algebra map. We have $A_eu_h\subset A_h$ and for $b\in A_h$, we can write $b=bu_h^{-1}u_h\in A_eu_h$, hence $A_h=A_eu_{h}$. 

Fix now a generator $g$ of  $\mathbb Z_m$ and $u_g$ as above. We have $u_g^m\in A_{g^m}=A_e$, and we put $a=u_g^m$. Since $\Delta(A_g)\subset A_g\otimes A_g$ we have
$\Delta(u_g) = \sum_{x,y\in H} \tau(x,y)e_xu_g\otimes e_yu_g$ for scalars $\tau(x,y)\in k$, these scalars all being non-zero since $\Delta(u_g)$ is invertible. The coassociativity and counit conditions give that the map $\tau : H\times H \to k^\times$ defined in this way is a $2$-cocycle. We have $u_gA_eu_{g}^{-1} \subset A_e$ and hence we get an automorphism $\alpha:={\rm ad}(u_g)$ of the algebra $A_e$, satisfying $\alpha^m={\rm id}$ since $u_g^m\in A_e$ and $A_e$ is commutative. It is a direct verification to check that $\alpha$ is as well a coalgebra automorphism, and hence a Hopf algebra automorphism of $A_e=\Oo(H)$, necessarily arising from an automorphism $\theta$ of $H$, with $\alpha(\phi)=\phi\circ\theta^{-1}$ for $\phi\in \Oo(H)$. Clearly $\alpha(a)=a$, $\varepsilon(a)=1$, and one checks that the last condition defining an $m$-datum is fulfilled by comparing $\Delta(u_g)^m$ and $\Delta(a)$. We thus obtain an $m$-datum $(H,\theta,a,\tau)$ and it is straightforward to check that there exists a Hopf algebra map $ A_m(H,\theta,a,\tau)\to A$, $\phi\in\Oo(H) \mapsto \phi$, $g \mapsto u_g$. Combining Lemma \ref{lemm:alga(h)} and the first paragraph in the proof, we see that this is an isomorphism.
\end{proof}

\subsection{Equivalence of $m$-data and the isomorphism problem}
The main question then is to classify the Hopf algebras $A_m(H,\theta,a,\tau)$ up to isomorphism.
For this, the following equivalence relation on $m$-data will arise naturally.

\begin{definition}\label{def:equivdata}
	Two $m$-data $(H,\theta,a,\tau)$ and $(H',\theta',a', \tau')$ are said to be equivalent if   there exists a group isomorphism $f : H \to H'$, a map $\varphi : H' \to k^{\times}$ with $\varphi(1)=1$ and $l\in\{1, \ldots ,m-1\}$ prime to $m$ such that the following conditions hold, for any $x,y \in H'$:
	\begin{enumerate}
		\item $\theta'^l=f\circ \theta \circ f^{-1}$ ;
		\item $\left(\prod_{k=0}^{m-1}\varphi(\theta'^{k}(y))\right)a'(y)^l=a(f ^{-1}(y))$ ;
		\item $\left(\prod_{k=0}^{l-1}\tau'(\theta'^{-k}(x),\theta'^{-k}(y))\right)\varphi(xy)=\tau(f^{-1}(x),f^{-1}(y))\varphi(x)\varphi(y)$.
	\end{enumerate}
\end{definition}

It is not completely obvious that the above relation is an equivalence relation, but this follows from
the following basic result,  which is a partial answer for the classification problem of the Hopf algebras $A_m(H,\theta, a, \tau)$.

\begin{proposition}\label{prop:isoext}
 Let $(H,\theta,a,\tau)$ and $(H',\theta',a', \tau')$ be $m$-data. The following assertions are equivalent.
\begin{enumerate}
 \item There exists a Hopf algebra isomorphism $F : A_m(H,\theta,a,\tau) \to A_m(H',\theta',a',\tau')$ and a group automorphism $u\in \Aut(\mathbb Z_m)$ making the following diagram commute:
$$\xymatrix{
A_m(H,\theta,a,\tau) \ar[r]^-p \ar[d]^F  & k \mathbb Z_m  \ar[d]^u  \\
A_m(H',\theta',a',\tau') \ar[r]^-{p'} & k \mathbb Z_m   
}$$

\item The $m$-data $(H,\theta,a,\tau)$ and $(H',\theta',a', \tau')$ are  equivalent.
\end{enumerate}
 
\end{proposition}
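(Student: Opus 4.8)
The plan is to prove both implications by directly unwinding the structure of Hopf algebra maps between the presented algebras $A_m(H,\theta,a,\tau)$.

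\textbf{From (1) to (2).}
First I would note that because the diagram commutes and $p,p'$ are the cocentral gradings, $F$ must respect the gradings up to the automorphism $u$ of $\mathbb Z_m$: writing $u = (-)^l$ for $l$ prime to $m$, we get $F(\Oo(H)) = \Oo(H')$ (the degree-$e$ parts) and $F(\Oo(H)g) = \Oo(H')g'^{\,l}$. The restriction $F|_{\Oo(H)} : \Oo(H) \to \Oo(H')$ is then a Hopf algebra isomorphism of commutative Hopf algebras, hence of the form $\phi \mapsto \phi \circ f^{-1}$ for a unique group isomorphism $f : H \to H'$; so condition (1) of Definition \ref{def:equivdata} will come from comparing the two automorphisms $\theta$ and $\theta'^{\,l}$ via the relation $g e_x = e_{\theta(x)} g$ (and its $l$-th power on the $A_m(H',\theta',a',\tau')$ side). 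Next, since $F(g) \in \Oo(H')g'^{\,l}$ is invertible with $\varepsilon(F(g))=1$, I would write $F(g) = \bigl(\sum_{y\in H'}\varphi(y)e_y\bigr)\, g'^{\,l}$ for a (necessarily nowhere-vanishing, $\varphi(1)=1$) function $\varphi : H' \to k^\times$; this $\varphi$ together with $f$ and $l$ will be the data realizing the equivalence. Applying $F$ to the relation $g^m = a$ and to the coproduct relation $\Delta(g) = \sum \tau(y,z)e_yg\otimes e_zg$, and using the presentation of $A_m(H',\theta',a',\tau')$ to re-expand $\bigl((\sum \varphi(y)e_y)g'^{\,l}\bigr)^m$ and $\Delta\bigl((\sum\varphi(y)e_y)g'^{\,l}\bigr)$ in the basis $\{e_z g'^{\,i}\}$, yields precisely conditions (2) and (3) of Definition \ref{def:equivdata} by comparing coefficients. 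The only slightly delicate bookkeeping here is that conjugating/raising $g'$ to the $l$-th power produces the products $\prod_{k=0}^{l-1}\tau'(\theta'^{-k}(x),\theta'^{-k}(y))$ and $\prod_{k=0}^{m-1}\varphi(\theta'^{k}(y))$ appearing in the definition — this is the step to carry out carefully.

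\textbf{From (2) to (1).}
Conversely, given $f$, $\varphi$, $l$ satisfying (1)--(3), I would define $F$ on generators by $F(\phi) = \phi\circ f^{-1}$ for $\phi \in \Oo(H)$ and $F(g) = \bigl(\sum_{y\in H'}\varphi(y)e_y\bigr) g'^{\,l}$, and check that $F$ respects the three defining relations of $A_m(H,\theta,a,\tau)$ — the relation $g e_x = e_{\theta(x)}g$ uses (1), the relation $g^m = a$ uses (2), so that $F$ is a well-defined algebra map; it is bijective because $l$ is prime to $m$ (so $g'^{\,l}$ generates the same $\mathbb Z_m$-grading) and $\varphi$ is invertible, and one can write down the inverse explicitly or invoke a dimension count together with surjectivity. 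Condition (3) is exactly what is needed for $F$ to be a coalgebra map, i.e.\ $\Delta' \circ F = (F\otimes F)\circ \Delta$ on $g$ (it holds automatically on $\Oo(H)$ since $f$ is a group isomorphism). Finally $F$ is compatible with the counit and, being a bialgebra map between Hopf algebras, automatically with the antipode; taking $u = (-)^l \in \Aut(\mathbb Z_m)$ makes the square commute since $p'(F(g)) = g'^{\,l} = u(g)$.

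\textbf{Main obstacle.}
The conceptual content is light; the real work — and the place where one must be careful — is the explicit re-expansion of $F(g)^m$ and of $\Delta(F(g))$ in the monomial basis $\{e_z g'^{\,i}\}$ of $A_m(H',\theta',a',\tau')$, keeping track of how the automorphism $\theta'$ twists the function $\varphi$ and the cocycle $\tau'$ under repeated multiplication by $g'$. In particular, verifying that raising to the $l$-th (resp.\ $m$-th) power produces exactly the indicated $l$-fold product of $\tau'$-terms (resp.\ $m$-fold product of $\varphi$-terms) is the computational heart of the proof; everything else is formal. I would organize this by first recording the auxiliary identity $\bigl(\sum_y \varphi(y) e_y\bigr) g'^{\,k} = g'^{\,k} \bigl(\sum_y \varphi(\theta'^{\,k}(y)) e_y\bigr)$ and then iterating it, which makes both the $g^m=a$ comparison and the coproduct comparison bookkeeping routine.
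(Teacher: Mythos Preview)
Your proposal is correct and follows essentially the same approach as the paper's proof: deduce $f$ from the restriction of $F$ to degree $e$, write $F(g)=\varphi\, g'^{\,l}$, and read off conditions (1), (2), (3) of Definition~\ref{def:equivdata} from compatibility with the relations $ge_x=e_{\theta(x)}g$, $g^m=a$, and the coproduct of $g$ respectively; for the converse, define $F$ on generators by the same formulas and verify. The paper records the argument more tersely (calling the converse a ``direct verification''), but your explicit bookkeeping of the $\theta'$-twisted products is exactly what underlies that verification.
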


\begin{proof}
 Assume that $F$ and $u$ as above are given, and put $A=A_m(H,\theta,a,\tau)$ and $B=A_m(H',\theta',a',\tau')$. The commutativity of the diagram yields, at the level of gradings, that $F(A_h)=B_{u(h)}$ for any $h \in \mathbb Z_m$. Then $F$ induces an isomorphism $A_e=\Oo(H) \to \Oo(H')$ coming from a group isomorphism $f : H \to H'$ such that $F(\phi)=\phi\circ f^{-1}$ for any $\phi \in \Oo(H)$. Pick a generator $g$ of $\mathbb Z_m$. We have $F(A_g)= B_{u(g)}=B_{g^l}$ for some $l\in\{1, \ldots ,m-1\}$  prime to $m$. Since $B_{g^l}=B_eg^l$, there exists $\varphi\in \Oo(H)^\times$ such that $F(g)=\varphi g^l$. The fact that $F$ is a coalgebra map yields that $\varphi(1)=1$ and relations (3). The compatibility of the algebra map $F$ with the relations $ge_x=e_{\theta(x)}g$ yields relation (1), while compatibility with the relation $g^m=a$ yields relation (2).
 
 Conversely, given $f$, $l$ and $\varphi$ as in Definition \ref{def:equivdata}, it is a direct verification to check that there exist Hopf algebra isomorphism $F : A_m(H,\theta,a,\tau) \to A_m(H',\theta',a',\tau')$ defined by $F(e_x)=e_{f(x)}$ and $F(g)=\varphi g^l$, and satisfying $u\circ p=p'\circ F$ for $u$ given by $u(g)=g^l$.
\end{proof}

% The above result has the following useful consequence.

\begin{corollary}\label{cor:basiciso}
 Let  $(H,\theta,a,\tau)$ be an $m$-datum. 
 \begin{enumerate}
 	\item Let $f\in \Aut(H)$ and let $l\geq 1$ be prime to $m$. Then $(H,f\circ\theta^l\circ f^{-1},a\circ f^{-1},\tau')$, with $\tau'=\prod_{k=0}^{l-1}\tau\circ \theta^k \times \theta^k \circ f^{-1}\times f^{-1}$, is an $m$-datum and $$A_m(H,\theta,a,\tau)\simeq A_m\left(H,f\circ \theta^l\circ f^{-1},(a\circ f^{-1})^l,\prod_{k=0}^{l-1}\tau\circ \theta^k \times \theta^k \circ f^{-1}\times f^{-1}\right)$$
 	 as Hopf algebras.
 	 \item Let $\tau'\in Z^2(H,k^\times)$ be cohomologous to $\tau$. There exists $a' : H \to k^\times$ such that 
 	 $(H, \theta,a',\tau')$ is a datum and   
 	 $$A_m(H,\theta,a,\tau)\simeq A_m(H, \theta,a',\tau')$$
 	 as Hopf algebras.
 \end{enumerate}
In particular,  if $\theta_1, \ldots,\theta_r$ is  a set of representative of the conjugacy classes of elements whose order divides $m$ in $\Aut(H)$, and  if $\tau_1,\ldots , \tau_s$ is a set of representative of $2$-cocycles in $H^2(H,k^\times)$, there exist $i\in \{1,\ldots , r\}$, $j\in \{1,\ldots,s\}$ and $a': H \to k^\times$ such that $(H,\theta_i,a',\tau_j)$ is a datum and $A_m(H,\theta,a, \tau)\simeq
 A_m(H,\theta_i,a', \tau_j)$.
 %let  $\theta' \in {\rm Aut}(H)$ conjugate to $\theta$, and let $\tau'\in Z^2(H,k^\times)$ defining the same cohomology class as $\tau$ in $H^2(H,k^\times)$. Then there exists $a' : H \to k^\times$ such that $(H,\theta',a',\tau')$ is an $m$-datum, and $A_m(H,\theta,a,\tau)\simeq A_m(H,\theta',a',\tau')$ as Hopf algebras.  
\end{corollary}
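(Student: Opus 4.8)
The plan is to deduce Corollary \ref{cor:basiciso} from Proposition \ref{prop:isoext} by exhibiting, in each case, an explicit equivalence of $m$-data. For part (1), I would start from the given datum $(H,\theta,a,\tau)$, the automorphism $f\in\Aut(H)$ and the integer $l$ prime to $m$, and use Proposition \ref{prop:isoext} in the direction $(2)\Rightarrow(1)$: I must check that the quadruple
$$\left(H,\ f\circ\theta^l\circ f^{-1},\ (a\circ f^{-1})^l,\ \textstyle\prod_{k=0}^{l-1}\tau\circ(\theta^k\times\theta^k)\circ(f^{-1}\times f^{-1})\right)$$
is indeed an $m$-datum equivalent to $(H,\theta,a,\tau)$. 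For the equivalence relation of Definition \ref{def:equivdata}, the natural choice is the group isomorphism $f$ itself, the map $\varphi\equiv 1$, and the given $l$. Condition (1) of Definition \ref{def:equivdata} reads $\theta'^l=f\circ\theta\circ f^{-1}$ with $\theta'=f\circ\theta^l\circ f^{-1}$; this is not literally true unless one is careful, so in fact the cleanest route is to observe that $(H,\theta,a,\tau)$ is equivalent to $(H,f\circ\theta^l\circ f^{-1},\dots)$ by taking the roles of primed and unprimed data swapped appropriately — i.e. apply the relation with source datum the new one and target datum $(H,\theta,a,\tau)$, using $f^{-1}$ and $l'$ with $ll'\equiv 1\pmod m$. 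Then condition (1) becomes $\theta^{ll'}=f^{-1}\circ(f\circ\theta^l\circ f^{-1})\circ f$ up to the $l'$-power, which holds since $\theta^m=\mathrm{id}$. Conditions (2) and (3) then unwind to exactly the stated formulas for the new $a$ and $\tau$: the product $\prod_{k=0}^{l'-1}$ telescoping against the $l$-fold product, together with $\theta^m=\mathrm{id}$, collapses everything. I would present this as: verify the three numbered conditions of Definition \ref{def:equivdata} by direct substitution, noting that the $2$-cocycle axiom for $\tau$ together with $\theta^m=\mathrm{id}_H$ makes the telescoping products well-defined and closes the computation; then invoke Proposition \ref{prop:isoext}$(2)\Rightarrow(1)$ for the isomorphism. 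One must also separately check that the new quadruple satisfies the compatibility axiom of Definition \ref{def:datum} relating its $\tau'$, $a'$ and $\theta'$ — this follows by raising the original compatibility identity to the $l$-th power and re-indexing.

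For part (2), let $\tau'=\partial(\mu)\cdot\tau$ for some $\mu:H\to k^\times$ with $\mu(1)=1$. Here I take $f=\mathrm{id}_H$, $l=1$, and $\varphi=\mu$ (or $\mu^{-1}$, depending on sign conventions in Definition \ref{def:equivdata}(3)). Condition (1) is trivially satisfied. Condition (3) with $l=1$ reads $\tau'(x,y)\varphi(xy)=\tau(x,y)\varphi(x)\varphi(y)$, which is precisely the statement $\tau'=\partial(\varphi)^{-1}\tau$ (or $\partial(\varphi)\tau$), so it forces the correct choice of $\varphi$ in terms of $\mu$. Condition (2) then becomes the definition of the required $a'$: namely $a'(y)=a(y)\prod_{k=0}^{m-1}\varphi(\theta^k(y))^{-1}$ (up to convention), and one must check that this $a'$ satisfies $a'\circ\theta=a'$ (immediate, since the product $\prod_{k=0}^{m-1}\varphi(\theta^k(-))$ is $\theta$-invariant because $\theta^m=\mathrm{id}_H$, and $a\circ\theta=a$) and the datum compatibility axiom with $\tau'$ in place of $\tau$ (which follows from the one for $(a,\tau)$ by multiplying through by the appropriate product of $\partial(\mu)$ values, using that $\partial(\mu)(x,y)=\mu(x)\mu(y)\mu(xy)^{-1}$ and telescoping the product over $\theta^i$). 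Then Proposition \ref{prop:isoext}$(2)\Rightarrow(1)$ gives the isomorphism $A_m(H,\theta,a,\tau)\simeq A_m(H,\theta,a',\tau')$.

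For the final ``in particular'' assertion, I would simply combine (1) and (2). Given an arbitrary element $\theta\in\Aut(H)$ with $\theta^m=\mathrm{id}_H$, pick $i$ so that $\theta_i$ is conjugate in $\Aut(H)$ to $\theta^l$ for some $l$ prime to $m$ — here one uses that $\theta$ and $\theta^l$ generate the same cyclic subgroup, so every element of order dividing $m$ is, up to the $(-)^l$ operation, conjugate to one of the $\theta_i$; apply (1) with this $f$ and $l$ to replace $\theta$ by $\theta_i$ (at the cost of changing $a$ and $\tau$ to $(a\circ f^{-1})^l$ and $\prod_{k=0}^{l-1}\tau\circ(\theta^k\times\theta^k)\circ(f^{-1}\times f^{-1})$, which is some cocycle in $Z^2(H,k^\times)$). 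Then apply (2) to replace the resulting cocycle by its chosen representative $\tau_j$ in its cohomology class, adjusting $a$ to the appropriate $a'$. This yields $A_m(H,\theta,a,\tau)\simeq A_m(H,\theta_i,a',\tau_j)$, as claimed. The main obstacle I anticipate is purely bookkeeping: keeping the index shifts in the telescoping products $\prod_{k=0}^{l-1}$ and $\prod_{k=0}^{m-1}$ consistent with the sign/inverse conventions baked into Definition \ref{def:equivdata}, and making sure at each stage that the intermediate quadruples genuinely satisfy the $m$-datum compatibility axiom; there is no conceptual difficulty, only the risk of an off-by-conjugation or an inverted $\varphi$.
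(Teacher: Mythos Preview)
Your approach is correct and essentially identical to the paper's: both parts are obtained by exhibiting explicit equivalences in the sense of Definition \ref{def:equivdata} and invoking Proposition \ref{prop:isoext}, with part (2) using $f=\mathrm{id}$, $l=1$, $\varphi=\mu$ and $a'=a\bigl(\prod_{i=0}^{m-1}\mu\circ\theta^i\bigr)^{-1}$, and the final assertion following by chaining (1) and (2). The paper's proof is simply terser, saying only that each part ``is easily obtained via the previous proposition''; your anticipated bookkeeping with the direction of the equivalence and the choice of $l'$ versus $l$ is exactly the sort of detail the paper suppresses.
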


\begin{proof}
The first assertion is easily obtained via the previous proposition. For the second one, let  $\mu :  H\to k^\times$ be such that $\tau'=\tau\partial(\mu)$. The result is again a direct consequence of the previous proposition, taking $a'=a\left( \prod_{i=0}^{m-1}\mu\circ\theta^{i}\right)^{-1}$.
The final assertion is then easily obtained by combining (1) and (2).
% Let $i\in \{1,\ldots , r\}$ and $f\in\Aut(H)$ be such that $\theta_i=f\circ \theta \circ f^{-1}$. 
% We then have a Hopf algebra isomorphism
% \begin{align*}
 %A_m(H,\theta,a ,\tau) &\longrightarrow A_m(\theta_i, a\circ f^{-1}, \tau \circ f^{-1}\times f^{-1}) \\
% e_x, \ g & \longmapsto e_{f(x)}, \ g
% \end{align*}
% Let $j\in \{1,\ldots,s\}$ be such that there exists 
 %Let $\mu :  H\longrightarrow k^\times$ be a map with $\mu(1)=1$. Then $(H, \theta,a\left( \prod_{i=0}^{m-1}\mu\circ\theta^{i}\right),\tau\partial(\mu))$ is a datum and   
 %$$A_m(H,\theta,a,\tau)\simeq A_m\left(H, \theta,a\left( \prod_{i=0}^{m-1}\mu\circ\theta^{i}\right),\tau\partial(\mu)\right)$$
 %as Hopf algebras.
\end{proof}

\begin{rem}\label{rem:normalized}
 Let $(H,\theta, a, \tau)$ be an $m$-datum. Since $a\circ \theta = a$, there exists a map $\mu : H \to k^\times$ such that
$\mu \circ\theta =\mu$ and $\mu^m=a$. The cocycle $\tau'= \tau \partial(\mu)$ then satisfies 
$$\prod_{k=0}^{m-1}\tau'\circ \theta^k \times \theta^k=1.$$ 
Hence, by Corollary \ref{cor:basiciso}, the $m$-datum $(H,\theta, a, \tau)$ is equivalent to an $m$-datum $(H,\theta, a', \tau')$ with $a'\in \widehat{H}$. Such a datum with $a'\in \widehat{H}$ will be said to be \textsl{normalized}. It is therefore tempting to work only with normalized data, but this forces to change the cocycle for each choice of automorphism $\theta$, and can be inconvenient in practice if we have ``nice'' representatives for $2$-cocycles over $H$. We will therefore work with the general notion of an $m$-datum, as given in Definition \ref{def:datum}.
\end{rem}

\begin{rem}\label{rem:opext}
Fix $\theta\in \Aut(H)$ with $\theta^m={\rm id}$. Kac's group  ${\rm Opext}_\theta(k\mathbb Z_m, \mathcal O(H))$ \cite{kac} can be described as the set of pairs $(a,\tau)\in \widehat{H}\times Z^2(H,k^\times)$ such that $(H,\theta, a,\tau)$ is a normalized $m$-datum modulo the equivalence relation defined by
$(a,\tau)\sim (a',\tau')\iff \exists \varphi : H\to k^\times$ with $\varphi\cdot \varphi\circ\theta \in \widehat{H}$,  $\left(\prod_{k=0}^{m-1}\varphi\circ \theta^{k}\right)a'=a$ and  $\tau'=\tau\partial(\varphi)$. The group law is by the ordinary multiplication on the components. The group ${\rm Opext}_\theta(k\mathbb Z_m, \mathcal O(H))$ is known to be possibly difficult to compute (see \cite{mas97}, and \cite{gm} for a recent contribution), hence the problem of the  description of $m$-data up to equivalence is a fortiori a non-obvious one as well.
\end{rem}

Proposition \ref{prop:isoext} is  in general 
 not sufficient to classify the Hopf algebras $A_m(H,\theta,a,\tau)$ up to isomorphism. However, in the context of Lemma \ref{lemm:semi-univ}, it can be sufficient. Thus we need to analyse furthermore the Hopf algebras $A_m(H,\theta,a,\tau)$ to determine when Lemma \ref{lemm:semi-univ} is applicable. For this we introduce a number of groups associated to an $m$-datum.
 
 \begin{definition}
 	 Let  $(H,\theta,a,\tau)$ be an $m$-datum. 
 	 \begin{enumerate}
 	 	\item We put $Z_{\tau, \theta}(H)=\{x \in Z(H) \ | \ \tau(\theta^i(x),y)=\tau(y,\theta^i(x)), \ \forall y\in H, \ \forall i,  \ 0\leq i \leq m-1\}$. This a central subgroup of $H$, and we get, by restriction, a new $m$-datum  $(Z_{\tau, \theta}(H),\theta,a,\tau)$.
 	 	\item Let $H^\theta$ be the subgroup of $H$ formed by elements that are invariant under $\theta$. The group $G(H,\theta,a,\tau)$ is the group whose elements are pairs $(x,\lambda)\in H^\theta \times k^\times$ satisfying $\lambda^m=a(x)$, and whose group law is defined by $(x,\lambda)\cdot (y,\mu)= (xy,\tau(x,y)\lambda\mu)$. 
 	 	\item We denote by $G_0(H,\theta,a,\tau)$ the group $G(Z_{\tau,\theta}(H),\theta,a,\tau)$, thus consisting of pairs $(x,\lambda)\in Z(H)^\theta \times k^\times$ with $x$ satisfying $\lambda^m=a(x)$ and $\tau(x,y)=\tau(y,x)$,  $\forall y\in H$.
 	 \end{enumerate}
 	\end{definition}
 
 It is easy to check that $G(H,\theta,a,\tau)$ is indeed a group, fitting into a central exact sequence
 $$1 \to \mu_m \to G(H,\theta,a,\tau) \to H^\theta\to 1.$$

\begin{proposition}\label{prop:univa(h)}
	Let $(H,\theta,a,\tau)$ be an $m$-datum. We have a universal cocentral exact sequence
	$$k \to \Oo(H/Z_{\tau, \theta}(H)) \to A_m(H,\theta,a,\tau) \to A_m(Z_{\tau, \theta}(H),\theta,a,\tau)\to k.$$
\end{proposition}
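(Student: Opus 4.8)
The plan is to write down the maps explicitly, dispatch cocentral exactness by routine checks, and then prove universality by identifying $\ker p$ with the canonical Hopf ideal attached to the universal cocentral quotient.

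Write $Z=Z_{\tau,\theta}(H)$. Since the defining condition of $Z$ is unchanged under $i\mapsto i+1$ modulo $m$, $Z$ is $\theta$-stable, so $(Z,\theta,a,\tau)$ is an $m$-datum and there is a Hopf algebra map $p\colon A_m(H,\theta,a,\tau)\to A_m(Z,\theta,a,\tau)$ with $p|_{\Oo(H)}$ the restriction $\Oo(H)\to\Oo(Z)$ and $p(g)=g$. First I would check that $p$ is well defined: it kills the defining relations (using that $\theta$ and $a$ preserve $Z$) and respects the coproduct formula for $g$, the scalars $\tau(y,z)$ with $y$ or $z$ outside $Z$ being simply discarded; it is plainly surjective. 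The map $i\colon\Oo(H/Z)\hookrightarrow\Oo(H)\hookrightarrow A_m(H,\theta,a,\tau)$ is visibly injective, and a short computation with the basis $\{e_xg^k\}$ of Lemma~\ref{lemm:alga(h)} gives $p\circ i=\varepsilon(-)1$, and, after observing $A^{{\rm co}\,p}\subseteq A^{{\rm co}\,\tilde p}=\Oo(H)$ for $\tilde p\colon A\to k\mathbb Z_m$ the map of Proposition~\ref{prop:hopfa(h)}, also $A^{{\rm co}\,p}=\Oo(H/Z)$ (the functions constant on the cosets of $Z$). Cocentrality of $p$ I would verify on the algebra generators $\Oo(H)$ and $g$ — this suffices because $(p\otimes\id)\Delta$ and $(p\otimes\id)\Delta^{\mathrm{op}}$ are both algebra maps, so the set where they agree is a subalgebra: on $\Oo(H)$ it is the cocentrality of the restriction $\Oo(H)\to\Oo(Z)$, valid since $Z\subseteq Z(H)$ (Remark~\ref{rems:cocentralbasic}(3)), and on $g$ it reduces exactly to $\tau(y,z)=\tau(z,y)$ for $y\in Z$, $z\in H$, which is the $i=0$ case of the definition of $Z$. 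This establishes the cocentral exact sequence.

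For universality, I would recall from Remark~\ref{rems:cocentralbasic}(2) that the universal cocentral Hopf algebra map out of $A:=A_m(H,\theta,a,\tau)$ is $A\to A/I$, with $I$ the ideal generated by the coideal $X$ spanned by the elements $\varphi(a_{(1)})a_{(2)}-\varphi(a_{(2)})a_{(1)}$, and, as in the proof of Lemma~\ref{lemm:invariantuniv}, one may take $a$ homogeneous for the grading $A=\bigoplus_k\Oo(H)g^k$. Cocentrality of $p$ gives $p(X)=0$, hence $I\subseteq\ker p$, so the issue is the reverse inclusion. By Lemma~\ref{lemm:alga(h)}, $\ker p=\operatorname{span}\{e_xg^k:x\notin Z\}$, and since $e_xg^k=e_x\cdot g^k$ while $e_xg^k\cdot g^{m-k}=a(x)e_x$ with $a(x)\neq0$, it is enough to show that $e_x\in I$ for every $x\in H\setminus Z$.

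This last point is the real work. Taking $a=e_x$ with $\varphi$ the evaluation at $u$ in degree $0$ (i.e.\ $\varphi(e_wg^k)=\delta_{k,0}\delta_{w,u}$) produces the elements $e_y-e_{uyu^{-1}}\in X$; taking $a=e_xg$ with $\varphi$ the evaluation at $u$ in degree $1$ produces $\tau(u,u^{-1}x)e_{u^{-1}x}g-\tau(xu^{-1},u)e_{xu^{-1}}g\in X$. Let $S=\{y\in H:e_y\in I\}$; it is conjugation-stable (first family) and $\theta$-stable ($ge_yg^{m-1}$ is a nonzero multiple of $e_{\theta(y)}$). If $y\notin Z(H)$, choose $u$ with $uyu^{-1}\neq y$; then $e_y\cdot(e_y-e_{uyu^{-1}})=e_y\in I$ by orthogonality of idempotents, so $y\in S$. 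If $y\in Z(H)\setminus Z$, the definition of $Z$ furnishes $i_0$ and $z_0$ with $\tau(\theta^{i_0}(y),z_0)\neq\tau(z_0,\theta^{i_0}(y))$; feeding $x=z_0\theta^{i_0}(y)$ and $u=z_0$ into the second family (both $u^{-1}x$ and $xu^{-1}$ equal the central element $\theta^{i_0}(y)$) gives a nonzero scalar times $e_{\theta^{i_0}(y)}g$ in $I$, hence $e_{\theta^{i_0}(y)}\in I$, hence $e_y\in I$ by $\theta$-stability of $S$. Thus $S\supseteq H\setminus Z$, so $\ker p\subseteq I$, whence $\ker p=I$; the induced surjection $A/I\to A_m(Z,\theta,a,\tau)$ is then a Hopf algebra isomorphism and $p$ is universal cocentral. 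The hard part is exactly this paragraph: extracting enough explicit elements of $X$ — in particular the $g$-graded ones carrying the cocycle $\tau$ — to see that $\ker p$ is no larger than $I$, and matching the outcome precisely with the definition of $Z_{\tau,\theta}(H)$; everything else is bookkeeping.
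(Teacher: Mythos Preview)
Your proof is correct, but the universality argument takes a different route from the paper's. The paper verifies the universal property directly: given any cocentral Hopf algebra map $q\colon A_m(H,\theta,a,\tau)\to B$, it shows that cocentrality forces $q(e_x)=0$ for $x\notin Z(H)$ (from the coproduct of $e_x$), then that $\tau(x,y)q(e_x)q(g)=\tau(y,x)q(e_x)q(g)$ for $x\in Z(H)$ (from the coproduct of $g$), and finally that the set $T=\{x:q(e_x)\neq 0\}$ is $\theta$-stable via conjugation by $q(g)$; hence $T\subset Z_{\tau,\theta}(H)$ and $q$ factors through $p$. You instead show $\ker p=I$ where $I$ is the canonical Hopf ideal of Remark~\ref{rems:cocentralbasic}(2), by manufacturing explicit elements of $X$ and proving $e_x\in I$ for each $x\notin Z_{\tau,\theta}(H)$.

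The two arguments are essentially dual: your first family $e_y-e_{uyu^{-1}}\in X$ encodes the same information as the paper's observation that cocentrality kills $e_x$ for $x\notin Z(H)$, and your second family $[\tau(u,u^{-1}x)-\tau(xu^{-1},u)]e_{\cdot}g\in X$ is exactly what underlies the paper's $\tau$-symmetry step. The paper's version is shorter and avoids manipulating the ideal $I$ explicitly; yours is more hands-on and makes the structure of $\ker p$ completely explicit, which could be useful if one wanted generators for the kernel. Either way the key content is the same two computations, so this is a stylistic rather than a substantive difference.
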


\begin{proof}
It is easily seen that there is a surjective Hopf algebra map $$p: A(H,\theta,a,\tau) \to A(Z_{\tau, \theta}(H),\theta,a,\tau)$$ with $p(g)=g$ and such that for $\phi \in \Oo(H)$, $p(\phi)$ is the restriction of $\phi$ to $Z_{\tau, \theta}(H)$. The cocentrality of $p$ follows from the centrality of the group $Z_{\tau, \theta}(H)$ in $H$, and it is easy to see that $p$ induces the announced cocentral exact sequence. We thus have to prove the universality of $p$. For this consider a cocentral Hopf algebra map $q: A(H,\theta,a,\tau) \to B$.  The cocentrality of $q$ yields that $q(e_x)=0$ if $x\not \in Z(H)$, and that for any $x \in Z(H)$ and $y \in H$, $\tau(x,y)q(e_x)q(g)=\tau(y,x)q(e_x)q(g)$. Hence  $\tau(x,y)q(e_x)=\tau(y,x)q(e_x)$ and $\tau(x,y)=\tau(y,x)$ if $q(e_x)\not=0$. Let $T:=\{x \in H \ | \ q(e_x)\not=0\}$. Since $q(g)q(e_x)q(g)^{-1}= q(ge_xg^{-1})=q(e_{\theta(x)})$ we thus see that $T\subset  Z_{\tau, \theta}(H)$. We then easily check that there exists a Hopf algebra map $f : A(Z_{\tau, \theta}(H),\theta,a,\tau)\to B$ with $f(e_x)=q(e_x)$ and $f(g)=q(g)$, as needed.
	\end{proof}

We now proceed to analyse the structure of the Hopf algebras $A_m(H,\theta,a,\tau)$, with first the following basic result.

\begin{proposition}\label{prop:a(h)co}
 Let $(H,\theta,a,\tau)$ be an $m$-datum. 
\begin{enumerate}
 \item The Hopf algebra $A_m(H,\theta,a,\tau)$ is commutative if and only if $\theta={\rm id}_H$. More generally, the abelianisation of $A_m(H,\theta,a,\tau)$ is the Hopf algebra $\Oo(G(H,\theta,a,\tau))$.
\item The Hopf algebra $A_m(H,\theta,a,\tau)$ is cocommutative if and only if $H$ is abelian and $\tau$ is symmetric, i.e. $\tau(x,y)=\tau(y,x)$ for any $x,y \in H$.
\end{enumerate}

\end{proposition}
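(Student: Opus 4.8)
For the first assertion of part (1), I would argue directly: $A := A_m(H,\theta,a,\tau)$ is commutative if and only if $g$ commutes with every $e_x$, i.e. (using the relations $ge_x = e_{\theta(x)}g$) if and only if $e_{\theta(x)} = e_x$ for all $x \in H$, which by the linear independence of the $e_x$ from Lemma~\ref{lemm:alga(h)} is equivalent to $\theta = \mathrm{id}_H$; the converse is immediate since $\Oo(H)$ and $g$ then generate a commutative algebra. For the ``more generally'' statement, the plan is to compute $A^{\mathrm{ab}} = A/[A,A]$ through its group of characters. Being a finite-dimensional commutative Hopf algebra over $k$, $A^{\mathrm{ab}}$ is of the form $\Oo(\Gamma')$ for the finite group $\Gamma' = \Alg(A^{\mathrm{ab}},k) = \Alg(A,k)$ equipped with the convolution product, so it suffices to identify this group. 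An algebra map $\chi : A \to k$ is given by a point $x_0 \in H$ with $\chi(e_{x_0}) = 1$ together with a scalar $\mu := \chi(g)$; from $\mu^m = \chi(g^m) = \chi(a) = a(x_0) \neq 0$ one gets $\mu \in k^\times$, the relation $ge_x = e_{\theta(x)}g$ then forces $x_0 \in H^\theta$, and $g^m = a$ forces $\mu^m = a(x_0)$, so that $\Alg(A,k)$ is in bijection with the underlying set of $G(H,\theta,a,\tau)$. Finally, evaluating $\Delta(g) = \sum_{y,z}\tau(y,z)e_yg\otimes e_zg$ shows that the convolution of the characters attached to $(x_0,\mu)$ and $(x_0',\mu')$ is the character attached to $(x_0x_0',\tau(x_0,x_0')\mu\mu')$, which is exactly the group law of $G(H,\theta,a,\tau)$; hence $A^{\mathrm{ab}} \cong \Oo(G(H,\theta,a,\tau))$.

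For part (2), I would use that $C := \{c \in A : \Delta(c) = \Delta^{\mathrm{op}}(c)\}$ is a subalgebra of $A$, being the equalizer of the two algebra maps $\Delta, \Delta^{\mathrm{op}} : A \to A \otimes A$ (recall $\Delta^{\mathrm{op}} = \sigma \circ \Delta$ with $\sigma$ the flip). Since $A$ is generated as an algebra by $\Oo(H)$ and $g$, the Hopf algebra $A$ is cocommutative if and only if $\Delta = \Delta^{\mathrm{op}}$ on $\Oo(H)$ and on $g$. Comparing coefficients in $\Delta(e_x) = \sum_{yz = x} e_y \otimes e_z$ and $\Delta^{\mathrm{op}}(e_x) = \sum_{yz = x} e_z \otimes e_y$ over all $x \in H$ shows that the first condition holds exactly when $yz = zy$ for all $y,z \in H$, i.e. when $H$ is abelian; and since $\{e_yg \otimes e_zg\}_{y,z}$ is a linearly independent family, the identity $\Delta(g) = \Delta^{\mathrm{op}}(g)$, that is $\sum_{y,z}\tau(y,z)e_yg\otimes e_zg = \sum_{y,z}\tau(z,y)e_yg\otimes e_zg$, holds exactly when $\tau(y,z) = \tau(z,y)$ for all $y,z$. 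Combining the two gives the claim; note that for the ``only if'' direction it suffices to test cocommutativity of $A$ on the $e_x$ and on $g$.

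The step that I expect to require the most care is the identification of the group law on $\Alg(A,k)$ in part (1): this is precisely the point at which the $2$-cocycle $\tau$ of the $m$-datum must be matched, via the comultiplication of $g$, with the group-cohomological cocycle of the central extension $1 \to \mu_m \to G(H,\theta,a,\tau) \to H^\theta \to 1$. Everything else — the reduction to characters, the elementary coefficient comparisons of part (2), and the bookkeeping with the basis $\{e_xg^i\}$ — is routine once the defining relations are written out.
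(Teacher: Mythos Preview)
Your proof is correct and follows essentially the same approach as the paper's own proof: both identify the abelianisation via its character group $\Alg(A_m(H,\theta,a,\tau),k)$, show this set is in bijection with $G(H,\theta,a,\tau)$ by matching the defining relations, and verify that convolution recovers the group law of $G(H,\theta,a,\tau)$; the commutativity and cocommutativity criteria are likewise obtained by direct inspection on the basis $\{e_xg^i\}$. Your write-up simply spells out in more detail what the paper leaves as ``easily seen'' and ``an immediate calculation''.
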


\begin{proof}
The assertions regarding the commutativity or cocommutativity of $A_m(H,\theta,a,\tau)$ are easily seen using Lemma \ref{lemm:alga(h)}.  An algebra map $\chi : A_m(H,\theta,a,\tau)\to k$ corresponds to a pair $(x,\lambda)\in H\times k^\times$, with $\chi(\phi)=\phi(x)$ for any $\phi \in \Oo(H)$ and $\phi(a)=\lambda$. The compatibility of $\chi$ with the defining relations of $A_m(H,\theta,a,\tau)$ is easily seen to be equivalent to the condition that $(x,\lambda)\in G(H,\theta,a,\tau)$, and an immediate calculation shows that the group law in ${\rm Alg}(A_m(H,\theta,a,\tau),k)$ corresponds to the group law in $G(H,\theta,a,\tau)$. Thus the abelianization of $A_m(H,\theta,a,\tau)$, which is the algebra of functions on ${\rm Alg}(A_m(H,\theta,a,\tau),k)$, is isomorphic to $\Oo(G(H,\theta,a,\tau))$.
\end{proof}

We now discuss when the universal grading group of  $A_m(H,\theta,a,\tau)$ is cyclic.

\begin{proposition} \label{prop:a(h)cyclic} Let $(H,\theta,a,\tau)$ be an $m$-datum. 
	\begin{enumerate}
 \item The Hopf algebra $A_m(H,\theta,a,\tau)$ has a universal cyclic grading group if and only if the group $G_0(H,\theta,a,\tau)$ is cyclic and the restriction of $\theta$ to $Z_{\tau, \theta}(H)$ is trivial.
 \item The natural cocentral Hopf algebra map $p: A_m(H,\theta,a,\tau)\to k\mathbb Z_m$ is universal if and only if the group $Z_{\tau, \theta}(H)$ is  trivial.
\end{enumerate}
\end{proposition}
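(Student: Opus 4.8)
The plan is to combine Proposition \ref{prop:univa(h)}, which identifies the universal cocentral quotient of $A_m(H,\theta,a,\tau)$ with $A_m(Z_{\tau,\theta}(H),\theta,a,\tau)$, with Proposition \ref{prop:a(h)co}(1), which computes the abelianisation. For part (2), the map $p : A_m(H,\theta,a,\tau)\to k\mathbb Z_m$ factors through the universal cocentral map $A_m(H,\theta,a,\tau)\to A_m(Z_{\tau,\theta}(H),\theta,a,\tau)$ of Proposition \ref{prop:univa(h)}; it is itself universal precisely when this intermediate quotient is already $k\mathbb Z_m$, i.e. when $A_m(Z_{\tau,\theta}(H),\theta,a,\tau)$ has dimension $m$. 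Since $\dim A_m(Z_{\tau,\theta}(H),\theta,a,\tau) = m|Z_{\tau,\theta}(H)|$ by Proposition \ref{prop:hopfa(h)}(2), this holds if and only if $Z_{\tau,\theta}(H)$ is trivial, and one checks conversely that triviality of $Z_{\tau,\theta}(H)$ indeed forces the quotient to be $k\mathbb Z_m$ with $p$ the corresponding map. This gives (2) quickly.

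For part (1), I would first reduce to the universal cocentral quotient: $A_m(H,\theta,a,\tau)$ has a universal \emph{grading} group if and only if it has a universal cocentral Hopf algebra map to some $k\Gamma$, and by Remark \ref{rems:cocentralbasic}(4) applied to the (cosemisimple) universal cocentral quotient, such a $\Gamma$ always exists here; the point is when it is cyclic. By Proposition \ref{prop:univa(h)} the universal grading group of $A_m(H,\theta,a,\tau)$ coincides with the universal grading group of $B:=A_m(Z_{\tau,\theta}(H),\theta,a,\tau)$. Now $B$ is built from the \emph{abelian} (in fact central) group $Z:=Z_{\tau,\theta}(H)$, and one wants to say that $B$ is cocommutative and its universal grading group is just its group of grouplikes, equivalently the group $\Alg(B,k)$ dual to its abelianisation. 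Here the subtlety is that $B$ is cocommutative if and only if, in addition to $Z$ being abelian, the cocycle $\tau$ restricted to $Z$ is symmetric — but symmetry of $\tau$ on $Z_{\tau,\theta}(H)$ holds by the very definition of $Z_{\tau,\theta}(H)$ (taking $i=0$). So $B$ is automatically cocommutative, hence $B\simeq k[\widehat{B}]$ where $\widehat{B}$ is its grouplike group, and the universal grading group of a group algebra $k\Gamma'$ is $\Gamma'$ itself (the identity map $k\Gamma'\to k\Gamma'$ being universal cocentral). Dually $\widehat{B}$ is the group of characters of the commutative Hopf algebra $B^* $; but more directly, since $B$ is cocommutative, $B \simeq (\text{abelianisation of }B)^*$ as Hopf algebras, i.e. $B\simeq \Oo(\Alg(B,k))^*$, and by Proposition \ref{prop:a(h)co}(1), $\Alg(B,k) = G(Z,\theta,a,\tau) = G_0(H,\theta,a,\tau)$. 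Hence the universal grading group of $A_m(H,\theta,a,\tau)$ is $G_0(H,\theta,a,\tau)$ \emph{provided} $B$ is cocommutative — and $B$ is cocommutative exactly when $Z$ is abelian (automatic) and $\theta$ acts as the identity on $Z=Z_{\tau,\theta}(H)$; this last condition is the second requirement in the statement.

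Putting these together: if $\theta_{|Z_{\tau,\theta}(H)} = \mathrm{id}$, then $B$ is cocommutative with universal grading group $G_0(H,\theta,a,\tau)$, so $A_m(H,\theta,a,\tau)$ has a universal cyclic grading group iff $G_0(H,\theta,a,\tau)$ is cyclic. Conversely, if $A_m(H,\theta,a,\tau)$ has a \emph{cyclic} universal grading group $\Gamma$, then $B = A_m(Z_{\tau,\theta}(H),\theta,a,\tau)$ has the same cyclic universal grading group $\Gamma$; one must then argue that a Hopf algebra of the form $A_m(Z,\theta,a,\tau)$ with $Z$ central in itself and having a cyclic universal grading group must have $\theta_{|Z} = \mathrm{id}$. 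For this I would note that the cocentral map $B \to k\mathbb Z_m$ of Proposition \ref{prop:hopfa(h)}(2) is surjective, so $\Gamma$ surjects onto $\mathbb Z_m$, and examine the grouplikes: the $e_x g^i$ with $x\in Z$ span a sub-coalgebra, and $\Delta(g)$ is grouplike-free unless $\tau_{|Z}$ is a coboundary on the relevant piece; more robustly, since $B$ is commutative-by-cocommutative here, one uses that $B$ has a universal grading group iff $B$ is cosemisimple (true, $m|Z|<\infty$ in char $0$), and that $B$ cocommutative with cyclic grading group forces $\widehat{B}$ cyclic — but $\widehat{B}\twoheadrightarrow \mathbb Z_m$ and if $\theta_{|Z}\neq\mathrm{id}$ then $B$ is genuinely non-cocommutative, whose grading group cannot coincide with that of the commutative... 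Actually the cleanest route for the converse: if $\theta_{|Z_{\tau,\theta}(H)}\neq\mathrm{id}$, then by Proposition \ref{prop:a(h)co}(2) $B$ is not cocommutative, yet any Hopf algebra with a universal grading group that is abelian (in particular cyclic) has the property that the universal cocentral map is an isomorphism onto its image which... — here I would instead directly invoke that for a cosemisimple Hopf algebra $B$, the universal grading group is abelian iff $B$ is cocommutative, and being cyclic is stronger. \textbf{The main obstacle} is precisely this last implication — pinning down why a cyclic universal grading group forces $\theta$ trivial on $Z_{\tau,\theta}(H)$ — which I expect the authors handle by a more hands-on analysis of the matrix subcoalgebras of $B$ (the Peter–Weyl pieces indexed by $\theta$-orbits on $Z$), showing that a non-trivial $\theta$-orbit produces a matrix coalgebra of size $>1$, hence a non-abelian contribution to the universal grading group.
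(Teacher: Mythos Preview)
Your argument for (2) is correct and matches the paper's: dimension count via Proposition~\ref{prop:hopfa(h)}(2) combined with Proposition~\ref{prop:univa(h)}.

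For (1), your overall strategy is right, and your observation that $B := A_m(Z_{\tau,\theta}(H),\theta,a,\tau)$ is \emph{always} cocommutative (since $Z_{\tau,\theta}(H)$ is abelian and $\tau$ is symmetric on it by the $i=0$ case of the defining condition) is correct --- it even goes slightly beyond what the paper states, showing that $A_m(H,\theta,a,\tau)$ always has a universal grading group.

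However, from that point on you repeatedly swap ``commutative'' and ``cocommutative'', and this is what creates the obstacle you flag at the end. The claim ``$B \simeq (\text{abelianisation of }B)^*$'' is false unless $B$ is commutative (for $B=k\Gamma'$, the abelianisation is $k[\Gamma'_{\mathrm{ab}}]$, and dimensions match only when $\Gamma'$ is abelian). You then write ``$B$ is cocommutative exactly when \ldots\ $\theta$ acts as the identity on $Z$'' and ``if $\theta_{|Z}\neq\mathrm{id}$ then by Proposition~\ref{prop:a(h)co}(2) $B$ is not cocommutative''; both should say \emph{commutative} and cite part (1), not (2), of Proposition~\ref{prop:a(h)co}.

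Once this is straightened out, the ``main obstacle'' dissolves. The paper handles the converse in one line, with no Peter--Weyl analysis: if the universal grading group $\Gamma$ is cyclic, then by Proposition~\ref{prop:univa(h)} we have $B \simeq k\Gamma$, which is \emph{commutative} because $\Gamma$ is abelian; Proposition~\ref{prop:a(h)co}(1) then gives $\theta_{|Z_{\tau,\theta}(H)} = \mathrm{id}$, and since $B$ now coincides with its abelianisation $\Oo(G_0(H,\theta,a,\tau))$, the group $G_0$ is cyclic. In your language: $\Gamma'$ cyclic $\Rightarrow$ $\Gamma'$ abelian $\Rightarrow$ $k\Gamma'=B$ commutative $\Rightarrow$ $\theta_{|Z}=\mathrm{id}$, after which $B=\Oo(G_0)$ identifies $\Gamma'$ with $\widehat{G_0}$.
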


\begin{proof}
(1) Assume that	 $A_m(H,\theta,a,\tau)$ has a universal cyclic grading group. By Proposition \ref{prop:univa(h)}, we have that $A_m(Z_{\tau, \theta}(H),\theta,a,\tau)$ is the group algebra of a cyclic group, and in particular is commutative. Then by (1) in Proposition \ref{prop:a(h)co}, the restriction of $\theta$ to $Z_{\tau, \theta}(H)$ is trivial and  $G_0(H,\theta,a,\tau)= G(Z_{\tau,\theta}(H),\theta,a,\tau)$ is cyclic.

Conversely, if the restriction of $\theta$ to $Z_{\tau, \theta}(H)$ is trivial, then by (1) in Proposition \ref{prop:a(h)co}, the Hopf algebra $A_m(Z_{\tau, \theta}(H),\theta,a,\tau)$ is commutative and isomorphic to $\Oo(G(Z_{\tau,\theta}(H),\theta,a,\tau))$. Assuming moreover that $G_0(H,\theta,a,\tau)= G(Z_{\tau,\theta}(H),\theta,a,\tau)$ is cyclic, we obtain that $A_m(Z_{\tau, \theta}(H),\theta,a,\tau)$ is the group algebra of a cyclic group, and we conclude by Proposition \ref{prop:univa(h)}.

(2)  The canonical surjection $A_m(Z_{\tau,\theta}(H),\theta,a,\tau) \to k \mathbb Z_m$ is an isomorphism if and only if $Z_{\tau, \theta}(H)$ is  trivial, because $\dim(A_m(Z_{\tau,\theta}(H),\theta,a,\tau))=m|Z_{\tau,\theta}(H)|$. Hence Proposition \ref{prop:univa(h)} yields the result.
\end{proof}	

The previous result leads us to introduce some more vocabulary.

\begin{definition}
	An $m$-datum $(H,\theta,a,\tau)$ is said to be \textsl{cyclic} (resp. \textsl{reduced}) if the group $G_0(H,\theta,a,\tau)$ is cyclic and the restriction of $\theta$ to $Z_{\tau, \theta}(H)$ is trivial (resp. if the group $Z_{\tau, \theta}(H)$ is  trivial).
\end{definition}

We get our most useful result for the classification of Hopf algebras of type $A_m(H,\theta,a,\tau)$. 

\begin{proposition}\label{prop:isoa(h)}
	Let $(H,\theta,a,\tau)$ and $(H',\theta',a', \tau')$ be cyclic $m$-data. The following assertions are equivalent.
	\begin{enumerate}
		\item The Hopf algebras $A_m(H,\theta,a,\tau)$  and $A_m(H',\theta',a',\tau')$ are isomorphic.
		
		\item The data $(H,\theta,a,\tau)$ and $(H',\theta',a', \tau')$ are  equivalent.
		\end{enumerate}
\end{proposition}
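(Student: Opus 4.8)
The plan is to leverage Proposition \ref{prop:isoext} (which already handles the case of an isomorphism compatible with a map on $k\mathbb Z_m$) by showing that, under the cyclicity hypothesis on the $m$-data, \emph{any} Hopf algebra isomorphism is automatically compatible with such a map. The direction $(2)\Rightarrow(1)$ is immediate from Proposition \ref{prop:isoext}, so the content is $(1)\Rightarrow(2)$. First I would observe that a cyclic $m$-datum is, by definition, exactly the condition appearing in Proposition \ref{prop:a(h)cyclic}(1) guaranteeing that $A_m(H,\theta,a,\tau)$ has a universal cyclic grading group; let $\Gamma_0$ denote this group, namely $G_0(H,\theta,a,\tau)=G(Z_{\tau,\theta}(H),\theta,a,\tau)$. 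Similarly $A_m(H',\theta',a',\tau')$ has universal cyclic grading group $\Gamma_0'=G_0(H',\theta',a',\tau')$. Since a Hopf algebra isomorphism $F: A_m(H,\theta,a,\tau)\to A_m(H',\theta',a',\tau')$ must send the universal cocentral map of the source to that of the target (by the universal property, composing $F$ with the universal map of the target is cocentral, hence factors through the universal map of the source, and by the same argument applied to $F^{-1}$ this factoring is an isomorphism), it follows that $\Gamma_0\simeq\Gamma_0'$ as groups; in particular both are finite cyclic of the same order, and the hypotheses of Lemma \ref{lemm:semi-univ} are met.

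Next I would apply Lemma \ref{lemm:semi-univ} with $A=A_m(H,\theta,a,\tau)$, $B=A_m(H',\theta',a',\tau')$, $\Gamma=\mathbb Z_m$, the cocentral surjections being the maps $p,p'$ of Proposition \ref{prop:hopfa(h)}(2), and the isomorphism $f$ being $F$. For this I must check that $p$ and $p'$ are indeed surjective cocentral Hopf algebra maps onto the group algebra of the \emph{same} cyclic group $\mathbb Z_m$, which is exactly their content, and that $A,B$ have the same universal finite cyclic grading group $\Gamma_0$, which was just established. Lemma \ref{lemm:semi-univ} then produces $u\in\Aut(\mathbb Z_m)$ with $u\circ p=p'\circ F$, i.e. precisely the commutative square in condition (1) of Proposition \ref{prop:isoext}. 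Invoking Proposition \ref{prop:isoext} then gives that the data $(H,\theta,a,\tau)$ and $(H',\theta',a',\tau')$ are equivalent, which is (2).

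The only genuinely delicate point is the claim that $F$ identifies the two universal cyclic grading groups — more precisely that both Hopf algebras really do \emph{possess} a universal cyclic grading group (so that $\Gamma_0$ and $\Gamma_0'$ are well-defined finite cyclic groups) and that $F$ carries one universal cocentral map to the other. The former is exactly Proposition \ref{prop:a(h)cyclic}(1) together with the definition of ``cyclic $m$-datum'', so it is free. The latter follows from the universal property of a universal cocentral Hopf algebra map together with the observation that an isomorphism pulls back cocentral surjections to cocentral surjections; this is the same kind of argument as in Remark \ref{rems:cocentralbasic}(2) and the opening of the proof of Lemma \ref{lemm:semi-univ}, so no new difficulty arises. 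I would write this out carefully since it is the hinge of the argument, then let Lemma \ref{lemm:semi-univ} and Proposition \ref{prop:isoext} do the rest.
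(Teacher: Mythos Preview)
Your proposal is correct and follows essentially the same route as the paper's proof: use Proposition \ref{prop:a(h)cyclic} to ensure both Hopf algebras have a universal cyclic grading group, apply Lemma \ref{lemm:semi-univ} to produce the automorphism $u$, and then invoke Proposition \ref{prop:isoext}. The paper's proof is considerably terser and does not spell out why the two universal grading groups are isomorphic (the hypothesis ``same $\Gamma_0$'' in Lemma \ref{lemm:semi-univ}); your explicit justification of this point via the universal property is a welcome clarification but not a genuinely different argument.
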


\begin{proof}
We have (2)$\Rightarrow$(1) by Proposition 	\ref{prop:isoext}. Assuming that (1) holds, Proposition \ref{prop:a(h)cyclic} ensures that we are in the situation of Lemma  \ref{lemm:semi-univ}, which in turn ensures that we are in the situation of (1) in Proposition \ref{prop:isoext}, so that (2) holds.
\end{proof}

Combining Propositions \ref{prop:ext->a(h)} and \ref{prop:isoa(h)}, we finally obtain the main result of the section.

\begin{theorem}\label{thm:isoext}
	Let $H$ be a finite group and let $m\geq 1$.  The map $(H,\theta,a, \tau) \mapsto A_m(H,\theta, a , \tau)$ induces a bijection between the following sets:
	\begin{enumerate}
		\item equivalence classes of cyclic (resp. reduced) $m$-data having $H$ as underlying group;
		\item isomorphism classes of Hopf algebras $A$ fitting into an abelian cocentral extension
		$$k \to \Oo(H) \to A \to k\mathbb Z_m \to k$$ and having a cyclic universal grading group (resp.~ having $\mathbb Z_m$ as universal grading group).
	\end{enumerate}
\end{theorem}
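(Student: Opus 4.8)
The plan is to assemble Theorem \ref{thm:isoext} from the results already established in the section, verifying that the map $(H,\theta,a,\tau)\mapsto A_m(H,\theta,a,\tau)$ is well-defined on equivalence classes, surjective onto the stated isomorphism classes, and injective. First I would treat the ``cyclic'' case, and then explain the minor modifications needed for the ``reduced'' case.

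For the cyclic case: \emph{well-definedness} follows from Proposition \ref{prop:isoext} (equivalent $m$-data give isomorphic Hopf algebras), together with the observation that being a cyclic $m$-datum is preserved under equivalence — here I would note that the groups $Z_{\tau,\theta}(H)$ and $G_0(H,\theta,a,\tau)$ are, up to isomorphism, invariants of the equivalence class (this is implicit in Proposition \ref{prop:univa(h)}, since they control the universal cocentral quotient, which is an isomorphism invariant once we know the universal grading group is cyclic), so a datum equivalent to a cyclic one is again cyclic. \emph{Surjectivity}: given $A$ fitting into an abelian cocentral extension $k\to\Oo(H)\to A\to k\mathbb Z_m\to k$ with cyclic universal grading group, Proposition \ref{prop:ext->a(h)} produces an $m$-datum $(H,\theta,a,\tau)$ with $A\simeq A_m(H,\theta,a,\tau)$; the hypothesis that the universal grading group is cyclic, fed through Proposition \ref{prop:a(h)cyclic}(1), forces this datum to be cyclic. \emph{Injectivity}: if $A_m(H,\theta,a,\tau)\simeq A_m(H',\theta',a',\tau')$ with both data cyclic, then Proposition \ref{prop:isoa(h)} gives directly that the data are equivalent — note that injectivity here automatically handles the fact that $H$ and $H'$ must be isomorphic, which is why the statement is phrased with ``having $H$ as underlying group''. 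I should also remark that conversely every cyclic $m$-datum $(H,\theta,a,\tau)$ does produce such an extension with cyclic universal grading group, so the map indeed lands in set (2); this is Proposition \ref{prop:hopfa(h)}(2) combined with Proposition \ref{prop:a(h)cyclic}(1).

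For the reduced case the argument is formally identical, replacing Proposition \ref{prop:a(h)cyclic}(1) by Proposition \ref{prop:a(h)cyclic}(2): a reduced $m$-datum is exactly one for which the canonical map $p:A_m(H,\theta,a,\tau)\to k\mathbb Z_m$ is the universal cocentral map, i.e. $\mathbb Z_m$ itself is the universal grading group, and conversely. One small point to check is that ``reduced'' is again preserved under equivalence of $m$-data, which follows as before since $Z_{\tau,\theta}(H)$ being trivial is equivalent to $p$ being universal, an isomorphism-invariant condition.

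I do not expect a genuine obstacle here — the theorem is a bookkeeping corollary of the preceding propositions. The only point requiring a little care is the assertion, used in both directions, that the relevant auxiliary data ($Z_{\tau,\theta}(H)$, cyclicity of $G_0$, triviality of $Z_{\tau,\theta}(H)$) depend only on the isomorphism class of $A_m(H,\theta,a,\tau)$; but this is exactly what Propositions \ref{prop:univa(h)}, \ref{prop:a(h)co} and \ref{prop:a(h)cyclic} deliver, since they characterize these conditions intrinsically in terms of the Hopf algebra (its universal cocentral quotient, its abelianization, its universal grading group). So the proof is essentially: cite Proposition \ref{prop:ext->a(h)} for surjectivity, cite Proposition \ref{prop:isoa(h)} for injectivity, cite Proposition \ref{prop:isoext} (or \ref{prop:hopfa(h)}) for well-definedness, and invoke Proposition \ref{prop:a(h)cyclic} in each case to match the ``cyclic/reduced'' condition on data with the ``cyclic universal grading group / universal grading group equal to $\mathbb Z_m$'' condition on Hopf algebras.
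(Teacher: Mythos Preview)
Your proposal is correct and follows the same approach as the paper, which simply states the theorem as an immediate combination of Propositions \ref{prop:ext->a(h)} and \ref{prop:isoa(h)}. You have spelled out in more detail the role of Proposition \ref{prop:a(h)cyclic} in matching the ``cyclic/reduced'' conditions on data with the corresponding conditions on the universal grading group, and the well-definedness check; these are implicit in the paper's one-line justification but your explicit treatment is entirely in line with it.
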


\begin{corollary}\label{cor:isoextz(h)}
Let $H$ be a finite group with $Z(H)=\{1\}$ and let $m\geq 2$.  The map $(H,\theta,a, \tau) \mapsto A_m(H,\theta, a , \tau)$ induces a bijection between the following sets:
	\begin{enumerate}
		\item equivalence classes of  $m$-data having $H$ as underlying group;
		\item isomorphism classes of Hopf algebras $A$ fitting into an abelian cocentral extension
		$$k \to \Oo(H) \to A \to k\mathbb Z_m \to k.$$ 
	\end{enumerate}
 
\end{corollary}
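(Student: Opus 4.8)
The plan is to deduce this directly from Theorem \ref{thm:isoext}, so essentially all the work consists of checking that the hypothesis $Z(H)=\{1\}$ collapses the distinction between ``cyclic'', ``reduced'', and arbitrary $m$-data, and that the side condition on universal grading groups in Theorem \ref{thm:isoext} becomes automatic. First I would observe that for \emph{any} $m$-datum $(H,\theta,a,\tau)$ with underlying group $H$, the associated central subgroup $Z_{\tau,\theta}(H)$ is contained in $Z(H)$ by its very definition, hence $Z_{\tau,\theta}(H)=\{1\}$ when $Z(H)=\{1\}$. Thus every $m$-datum with underlying group $H$ is automatically reduced (and a fortiori cyclic, since $G_0(H,\theta,a,\tau)=G(Z_{\tau,\theta}(H),\theta,a,\tau)=G(\{1\},\theta,a,\tau)$, which by the central exact sequence $1\to\mu_m\to G_0\to\{1\}^\theta\to 1$ is just $\mu_m$, a cyclic group, and the restriction of $\theta$ to the trivial group is trivial).

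Next I would apply Theorem \ref{thm:isoext} in its ``reduced'' form: the map $(H,\theta,a,\tau)\mapsto A_m(H,\theta,a,\tau)$ induces a bijection between equivalence classes of reduced $m$-data with underlying group $H$ and isomorphism classes of Hopf algebras $A$ fitting into an abelian cocentral extension $k\to\Oo(H)\to A\to k\mathbb Z_m\to k$ and having $\mathbb Z_m$ as universal grading group. By the previous paragraph the left-hand side is exactly the set of equivalence classes of \emph{all} $m$-data with underlying group $H$, which is the set in item (1) of the corollary. It remains to identify the right-hand side with the set in item (2), i.e.\ to show that for $H$ with trivial center, every Hopf algebra $A$ fitting into such an abelian cocentral extension automatically has $\mathbb Z_m$ as its universal grading group. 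For this I would invoke Proposition \ref{prop:ext->a(h)} to write $A\simeq A_m(H,\theta,a,\tau)$ for some $m$-datum, and then Proposition \ref{prop:a(h)cyclic}(2): the natural cocentral map $p:A_m(H,\theta,a,\tau)\to k\mathbb Z_m$ is universal precisely when $Z_{\tau,\theta}(H)$ is trivial, which holds here. Hence $\mathbb Z_m$ is the universal grading group of $A$, so the ``having $\mathbb Z_m$ as universal grading group'' qualifier in Theorem \ref{thm:isoext} is vacuous in this setting and the two sets of item (2) coincide.

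I do not expect any genuine obstacle here; the corollary is a direct specialization. The only mild point of care is bookkeeping: making sure that ``$m$-datum with underlying group $H$'' in the corollary's item (1) is literally the same as ``reduced $m$-datum with underlying group $H$'' — which, as noted, follows from $Z_{\tau,\theta}(H)\subset Z(H)=\{1\}$ — and that one is entitled to use the $m\geq 2$ hypothesis only where Theorem \ref{thm:isoext} needs it (in fact the argument above works verbatim for $m\geq 1$, so the restriction $m\geq 2$ is only a matter of convention, excluding the trivial case $m=1$ where $A=\Oo(H)$). I would state the proof in two or three sentences: reduce to the reduced case via $Z_{\tau,\theta}(H)\subset Z(H)=\{1\}$, then apply Theorem \ref{thm:isoext} together with Proposition \ref{prop:a(h)cyclic}(2) to see that the universal grading group condition is automatic.
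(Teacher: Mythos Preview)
Your proposal is correct and follows exactly the paper's approach: use $Z_{\tau,\theta}(H)\subset Z(H)=\{1\}$ to see that every $m$-datum is reduced and every such extension has $\mathbb Z_m$ as universal grading group, then apply Theorem \ref{thm:isoext}. The paper's proof is just a two-sentence version of what you wrote.
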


\begin{proof}
This follows from the previous theorem, since the assumption $Z(H)=\{1\}$ ensures that all the $m$-data  $(H,\theta,a, \tau)$ are reduced and that all the corresponding abelian cocentral extensions
		$k \to \Oo(H) \to A \to k\mathbb Z_m \to k$ are universal. 
\end{proof}

\subsection{Classification results}\label{subsecClass} We now apply Theorem \ref{thm:isoext} and Corollary \ref{cor:isoextz(h)} to obtain effective classification results for Hopf algebras fitting into abelian cocentral extensions, under various assumptions.

The set of equivalence classes of $m$-data has a very simple description under some strong  assumptions on $H$, and then the previous result takes the following simple form, where we use the following notation: if $G$ is a group and $m\geq 1$, the set ${\rm CC}_m^\bullet(G)$ is the set of  elements of $G$ such that $x^m=1$ and $x\not=1$, modulo the equivalence relation defined by $x\sim y\iff$ there exists $l$ prime to $m$ such that $x^l$ is conjugate to $y$.  When $m=2$, ${\rm CC}_2^\bullet(G)$ is just the set of conjugacy classes of elements of order $2$ in $G$.

%\begin{theorem}\label{thm:isoext2}
%	Let $H$ be a finite group with $\widehat{H}=\{1\}$ and $|H^{2}(H,k^\times)|\leq 2$.  There is a bijections between the following sets:
%	\begin{enumerate}
%		\item ${\rm CC}_2(\Aut(H))\times H^2(\H,k^\times)$;
%		\item isomorphism classes of noncommutative Hopf algebras $A$ fitting into an abelian cocentral extension
%		$$k \to \Oo(H) \to A \to k\mathbb Z_2 \to k$$ and having a cyclic universal grading group (resp. having $\mathbb Z_2$ as universal grading group).
%	\end{enumerate}
%\end{theorem}

%An additional assumption of $H$ now gives the following classification result. 

\begin{theorem}\label{cor:isoext2}
 Let $H$ be a finite group with $\widehat{H}=\{1\}=Z(H)$ and $H^{2}(H,k^\times)\simeq \mathbb Z_2$. Then for any $m\geq 2$, there is a bijection between the set of isomorphism classes of noncommutative Hopf algebras $A$ fitting into an abelian cocentral extension
		$$k \to \Oo(H) \to A \to k\mathbb Z_m \to k$$
and  
\begin{enumerate} 
	\item if $m$ is odd, the set ${\rm CC}_m^\bullet(\Aut(H))$;
	\item if $m$ is even, the set  ${\rm CC}_m^\bullet(\Aut(H))\times H^2(H,k^\times)$.
\end{enumerate}
\end{theorem}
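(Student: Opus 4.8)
The plan is to invoke Corollary~\ref{cor:isoextz(h)}, which reduces the problem to classifying $m$-data with underlying group $H$ up to the equivalence of Definition~\ref{def:equivdata}, the noncommutative ones being exactly those with $\theta\neq\mathrm{id}_H$ by Proposition~\ref{prop:a(h)co}(1). First I would normalize: by Remark~\ref{rem:normalized} together with the hypothesis $\widehat H=\{1\}$, every such $m$-datum is equivalent to one with $a$ equal to the trivial map $1$, and for such a datum the last axiom of Definition~\ref{def:datum} reads precisely $\tau\in Z^2_\theta(H):=\{\sigma\in Z^2(H,k^\times)\,:\,\prod_{i=0}^{m-1}\sigma\circ(\theta^i\times\theta^i)=1\}$. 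To a normalized datum $(H,\theta,1,\tau)$ I attach two candidate invariants: the class $\ddot\theta$ of $\theta$ in ${\rm CC}_m^\bullet(\Aut(H))$ (legitimate since $\theta\neq\mathrm{id}_H$), and the cohomology class $[\tau]\in H^2(H,k^\times)$.

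Next I would verify these are invariants and identify their ranges. That $\ddot\theta$ is invariant, and that it captures exactly the equivalence relation defining ${\rm CC}_m^\bullet(\Aut(H))$, follows from condition (1) of Definition~\ref{def:equivdata} in one direction, and in the other direction from Corollary~\ref{cor:basiciso}(1), which lets one replace $\theta$ by any member of its ${\rm CC}_m^\bullet$-class, so that two data with ${\rm CC}_m^\bullet$-equivalent automorphisms may be taken to have literally the same $\theta$ (with $a$ still trivial). For the class $[\tau]$: since $\Aut(\mathbb Z_2)$ is trivial, the natural action of $\Aut(H)$ on $H^2(H,k^\times)\simeq\mathbb Z_2$ is trivial, hence $[\sigma\circ(\theta^i\times\theta^i)]=[\sigma]$; passing the condition $\tau\in Z^2_\theta(H)$ to cohomology then gives $[\tau]^m=1$, which forces $[\tau]=1$ for $m$ odd and is automatic for $m$ even. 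The same triviality of the action, applied to condition (3) of Definition~\ref{def:equivdata}, yields $[\tau']^{\,l}=[\tau]$ for the parameter $l$ prime to $m$; when $m$ is even this $l$ is odd, whence $[\tau']=[\tau]$, so $[\tau]$ is an invariant of the equivalence class.

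The crux is then to show that for a fixed $\theta\neq\mathrm{id}_H$ the map $\tau\mapsto[\tau]$ induces a bijection from equivalence classes of data $(H,\theta,1,\tau)$ (using only equivalences with $f=\mathrm{id}_H$ and $l=1$) onto $\{c\in H^2(H,k^\times):c^m=1\}$; this is essentially the computation of ${\rm Opext}_\theta(k\mathbb Z_m,\Oo(H))$ under our hypotheses. For injectivity, if $\tau'=\tau\partial(\varphi)$ with $\tau,\tau'\in Z^2_\theta(H)$ and $\varphi(1)=1$, then $\partial(\prod_i\varphi\circ\theta^i)=1$ forces $\prod_i\varphi\circ\theta^i\in\widehat H=\{1\}$, and then the triple $(\mathrm{id}_H,\varphi,1)$ is readily checked to satisfy conditions (1)--(3), so the two data are equivalent. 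For surjectivity, given $c$ with $c^m=1$, choose a normalized cocycle $\sigma$ with $[\sigma]=c$; then $\prod_{i=0}^{m-1}\sigma\circ(\theta^i\times\theta^i)=\partial(\psi)$ is a $\theta$-invariant coboundary, which (since $\widehat H=\{1\}$) forces $\psi$ itself to be $\theta$-invariant; solving the norm equation $\prod_{i=0}^{m-1}\varphi\circ\theta^i=\psi^{-1}$ orbit by orbit — on a $\theta$-orbit of size $d$ this amounts to extracting an $(m/d)$-th root — one obtains $\varphi$ with $\varphi(1)=1$, and $\tau:=\sigma\partial(\varphi)$ then lies in $Z^2_\theta(H)$ with $[\tau]=c$. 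Combining all this with Corollary~\ref{cor:isoextz(h)} (note that every datum with underlying group $H$ is reduced, since $Z(H)=\{1\}$) yields the asserted bijection: ${\rm CC}_m^\bullet(\Aut(H))$ when $m$ is odd, and ${\rm CC}_m^\bullet(\Aut(H))\times H^2(H,k^\times)$ when $m$ is even.

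The step I expect to be the main obstacle is not conceptual but organizational: ensuring that the two reductions — to $a=1$ and to a fixed representative $\theta$ — can be applied in succession without circularity, and tracking precisely which cocycle one ends up with after each use of Remark~\ref{rem:normalized} and Corollary~\ref{cor:basiciso}. The only genuinely non-formal ingredient is the solvability of the norm equation $\prod_i\varphi\circ\theta^i=\psi^{-1}$, which relies on $k$ being algebraically closed.
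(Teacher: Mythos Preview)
Your proposal is correct and rests on the same key observations as the paper's proof: the triviality of the $\Aut(H)$-action on $H^2(H,k^\times)\simeq\mathbb Z_2$, the reduction via Corollary~\ref{cor:isoextz(h)}, and repeated use of Corollary~\ref{cor:basiciso}. The main organizational difference is that you normalize to $a=1$ first (via Remark~\ref{rem:normalized}) and then characterize the admissible cocycles as your $Z^2_\theta(H)$, effectively computing ${\rm Opext}_\theta$ under the hypotheses; the paper instead, for $m$ even, works directly with pairs $(\theta,\tau)$ and uses $\widehat H=\{1\}$ to argue that the compatible $a$ is \emph{uniquely determined} by $(\theta,\tau)$ rather than normalized away, which lets it bypass the norm-equation step entirely. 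Both routes are valid; yours is a bit more systematic (and makes the Opext viewpoint explicit), the paper's a bit more direct.
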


\begin{proof}
Since $Z(H)=\{1\}$, the previous corollary ensures that we have a bijection between the set of isomorphism classes of noncommutative Hopf algebras as above and the set of equivalence classes of $m$-data $(H,\theta, a , \tau)$ with  $\theta\not={\rm id}$.

The key point, to be used freely, is that, since $H^{2}(H,k^\times)\simeq \mathbb Z_2$, for any $\theta\in \Aut(H)$ and $\tau\in Z^2(H,k^\times)$, we have $[\tau]=[\tau\circ \theta\times \theta]$ and $[\tau][\tau\circ \theta\times \theta]=1$ in $H^2(H,k^\times)$.

First assume that $m$ is odd. Let $(H,\theta, a , \tau)$ be an $m$-data with $\theta\not={\rm id}$. Then 
$[\tau]^m=1$ and $[\tau]=1$ since $m$ is odd, so $(H,\theta, a , \tau)$ is equivalent to some $m$-datum
$(H,\theta, a' , 1)$ with $a'=1$ since $\widehat{H}=\{1\}$. The result is then clear.

Assume now that $m$ is even,
and start with a pair $(\theta,\tau)$ where $\theta\in \Aut(H)$ satisfies $\theta^m={\rm id}$ $\theta\not={\rm id}$, and $\tau\in Z^2(H,k^\times)$. The assumption $H^{2}(H,k^\times)\simeq \mathbb Z_2$ implies again that there exists $a: H \to k^\times$ such that $\prod_{k=0}^{m-1}\tau\circ\theta^k \times \theta^k =\partial(a^{-1})$. The assumption $\widehat{H}=\{1\}$ implies that such a map $a$ is unique and satisfies $a\circ \theta = a$, so to $(\theta,\tau)$ we can unambiguously associate an $m$-datum $(H,\theta, a,\tau)$.

Consider now another such pair $(\theta',\tau')$ with $a'$ the corresponding map making  $(H,\theta', a',\tau')$ an $m$-datum. If the $m$-data $(H,\theta, a,\tau)$ and $(H,\theta', a',\tau')$ are equivalent, then there is $l$ prime to $m$ (hence $l$ is odd) such that $\theta'^l$ is conjugate to $\theta$ and
$[\tau]= [\tau']^l=[\tau']$ (remark at the beginning of the proof).
 %that $\tau$ and $\tau'$ are cohomologous (because $\tau\circ f^{-1}\times f^{-1}$ is cohomologous to $\tau$, by the assumption $|H^{2}(H,k^\times)|\leq 2$).

Conversely if $\theta=f\circ \theta^l \circ f^{-1}$, for $f\in \Aut(H)$ and $l$ prime to $m$, then we have, by Corollary \ref{cor:basiciso}
\begin{align*} 
(H,\theta,a,\tau) & \sim (H, f\circ \theta^l \circ f^{-1},(a\circ f^{-1})^l, \prod_{k=0}^{l-1}\tau\circ \theta^kf^{-1}\times \theta^kf^{-1}) \\
& \sim (H,\theta', (a\circ f^{-1})^l, \prod_{k=0}^{l-1}\tau\circ \theta^kf^{-1}\times \theta^kf^{-1}). \\
\end{align*}
The cocycle on the right is cohomologous to $\tau^l$, hence to $\tau$,
 and if we assume that $\tau'$ is cohomologous to $\tau$, we have 
(again thanks to Corollary \ref{cor:basiciso}) 
$$	(H,\theta, a, \tau)\sim  (H,\theta', b, \tau)\sim  (H,\theta', c, \tau')$$
for some maps $b,c$, with necessarily $c=a'$ by the discussion at the beginning of the proof. This concludes the proof.
\end{proof}

Another useful consequence of Theorem \ref{thm:isoext} is the following one, again under strong assumptions.

\begin{theorem}\label{cor:isoext3}
  Let $H$ be a finite group with $|\widehat{H}|\leq 2$, and $Z(H)=\{1\}=H^{2}(H,k^\times)$. Then for $m \geq 1$,  there is a bijection between the set of isomorphism classes of noncommutative Hopf algebras $A$ fitting into an abelian cocentral extension
		$$k \to \Oo(H) \to A \to k\mathbb Z_m \to k$$
and 
\begin{enumerate}
	\item if $m$ is odd, the set ${\rm CC}_m^\bullet({\rm Aut}(H))$;
	\item if $m$ is even, the set ${\rm CC}_m^\bullet({\rm Aut}(H))\times \widehat{H}$.
\end{enumerate}
\end{theorem}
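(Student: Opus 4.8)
The plan is to imitate the proof of the previous theorem (Theorem \ref{cor:isoext2}), but now exploiting the two simplifying hypotheses $H^2(H,k^\times)=\{1\}$ and $|\widehat H|\le 2$ instead of $H^2(H,k^\times)\simeq\mathbb Z_2$. Since $Z(H)=\{1\}$, Corollary \ref{cor:isoextz(h)} already gives a bijection between isomorphism classes of noncommutative $A$ as in the statement and equivalence classes of $m$-data $(H,\theta,a,\tau)$ with $\theta\neq\mathrm{id}_H$ (noncommutativity corresponds to $\theta\neq\mathrm{id}_H$ by Proposition \ref{prop:a(h)co}(1), and all such data are automatically reduced). So the whole problem reduces to describing these equivalence classes of $m$-data in concrete terms.

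First I would observe that because $H^2(H,k^\times)=\{1\}$, every $2$-cocycle $\tau$ is a coboundary, so by Corollary \ref{cor:basiciso}(2) every $m$-datum $(H,\theta,a,\tau)$ is equivalent to one of the form $(H,\theta,a',1)$ with trivial cocycle. The defining cocycle condition for an $m$-datum then forces $a'(xy)=a'(x)a'(y)$, i.e. $a'\in\widehat H$, and $a'\circ\theta=a'$ is automatic since $\widehat H$ has order $\le 2$ (the unique nontrivial character, if it exists, is preserved by every automorphism). Hence every $m$-datum is equivalent to one of the form $(H,\theta,a,1)$ with $a\in\widehat H$, and we must decide when two such data $(H,\theta,a,1)$ and $(H,\theta',a',1)$ are equivalent. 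Unwinding Definition \ref{def:equivdata} with $\tau=\tau'=1$: condition (3) becomes $\varphi(xy)=\varphi(x)\varphi(y)$, so $\varphi\in\widehat H$; condition (1) says $\theta'^l=f\circ\theta\circ f^{-1}$ for some $f\in\Aut(H)$ and $l$ prime to $m$; and condition (2) becomes $(\prod_{k=0}^{m-1}\varphi(\theta'^k(y)))\,a'(y)^l=a(f^{-1}(y))$. Since $\varphi\in\widehat H$ is $\theta'$-invariant, $\prod_{k=0}^{m-1}\varphi\circ\theta'^k=\varphi^m$, which is trivial when $m$ is even (as $\varphi^2=1$) but equals $\varphi$ when $m$ is odd. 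Similarly $a'^l=a'$ when $l$ is odd (which it always is when $m$ is even, since $\gcd(l,m)=1$) and $a'^l=a'$ in all cases because $a'\in\widehat H$ has order $\le 2$ anyway when $|\widehat H|=2$; the only subtlety is the parity bookkeeping already seen in Theorem \ref{cor:isoext2}.

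Putting this together: when $m$ is odd, the character $a$ can always be absorbed (choose $\varphi$ appropriately, noting $\varphi\mapsto\varphi$ under $\prod\varphi\circ\theta'^k$ is surjective onto the $\le 2$ possible values of $a$, and using $a\circ f^{-1}=a$ as $|\widehat H|\le 2$), so $(H,\theta,a,1)\sim(H,\theta',1,1)$ exactly when $\theta$ and $\theta'$ are related by $\theta'^l=f\theta f^{-1}$, i.e. the classification is by ${\rm CC}_m^\bullet(\Aut(H))$. When $m$ is even the product $\prod\varphi\circ\theta'^k=\varphi^m=1$ is useless for changing $a$, so $a$ is a genuine invariant and condition (2) reduces to $a'=a$ (using $a'^l=a'$ and $a\circ f^{-1}=a$); hence two data are equivalent iff $a=a'$ in $\widehat H$ and $\theta,\theta'$ are conjugate up to $l$-th powers, giving the bijection with ${\rm CC}_m^\bullet(\Aut(H))\times\widehat H$. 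Throughout one uses freely that $\Aut(H)$ acts trivially on $\widehat H$ (immediate since $|\widehat H|\le 2$) so that $a\circ f^{-1}=a$, and that in Corollary \ref{cor:basiciso} the cocycle stays trivial under all the operations performed. The main obstacle, as in the proof of Theorem \ref{cor:isoext2}, is not any single hard step but keeping the parity and $l$-power bookkeeping straight — making sure that when $m$ is even we genuinely cannot kill $a$, and when $m$ is odd we genuinely can — and checking that the equivalence relation on $\Aut(H)$-part is precisely the one defining ${\rm CC}_m^\bullet$.
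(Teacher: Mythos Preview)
Your proof is correct and follows essentially the same approach as the paper's own argument: reduce via Corollary \ref{cor:isoextz(h)} to classifying $m$-data $(H,\theta,a,\tau)$ with $\theta\neq\mathrm{id}$, trivialize $\tau$ using $H^2(H,k^\times)=\{1\}$ and Corollary \ref{cor:basiciso}(2), and then analyze the resulting equivalence on pairs $(\theta,a)$ using that $\Aut(H)$ fixes $\widehat{H}$ pointwise and that $\prod_k\varphi\circ\theta'^k=\varphi^m$. The one imprecise aside --- that ``$a'^l=a'$ in all cases'' --- is false when $l$ is even and $a'$ is nontrivial, but your final paragraph handles the $m$ even versus $m$ odd split correctly without relying on it (for $m$ even you use that $l$ is odd, and for $m$ odd you absorb $a$ into $\varphi$), exactly as the paper does.
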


\begin{proof}
    Corollary \ref{cor:isoextz(h)} ensures that we have a bijection between the set of isomorphism classes of noncommutative Hopf algebras as above and the set of equivalence classes of $m$-data $(H,\theta, a , \tau)$ with $\theta^m\not={\rm id}$. Then, since $H^{2}(H,k^\times)=\{1\}$, Corollary \ref{cor:basiciso} ensures that all such data are equivalent to data of type $(H,\theta, a, 1)$ (hence with $a\in \widehat{H}$). Now using that $|\widehat{H}|\leq 2$, so that ${\rm Aut}(H)$ acts trivially on $\widehat{H}$, we see that two $m$-data $(H,\theta, a, 1)$ and $(H,\theta', a', 1)$ are equivalent if and only if there exists $f\in {\rm Aut}(H)$, $\varphi \in \widehat{H}$ and $l$ prime to $m$ such that 
$$\theta'^{l}=f\circ \theta \circ f^{-1}, \ ~\ ~\ \varphi^ma'^l=a.$$
If $m$ is even, we have $\varphi^m=1$ and the last condition amounts to $a'=a$ ($l$ being then necessarily odd), again since $|\widehat{H}|\leq 2$. If $m$ is odd, we have $\varphi^m=\varphi$, and such a $\varphi$ always exists if $l$ does. This concludes the proof.
\end{proof}

To prove our next classification result, we will use the following lemma.

\begin{lemma}\label{lem:Sn}
Let $H$  be a finite group in which any automorphism is inner and such that $Z(H)=\{1\}$ and $|\widehat{H}|\leq 2$. If $(H,\theta,a,\tau)$ and $(H,\theta, a', \tau)$ are equivalent  $2$-data, then $a=a'$.
\end{lemma}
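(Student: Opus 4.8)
The plan is to unwind the definition of equivalence of $2$-data (Definition \ref{def:equivdata}) in this very rigid setting and show that the only freedom left forces $a=a'$. So first I would fix the data: since $(H,\theta,a,\tau)$ and $(H,\theta,a',\tau)$ are equivalent with $m=2$, there exist $f\in\Aut(H)$, $\varphi : H\to k^\times$ with $\varphi(1)=1$, and $l$ prime to $2$, i.e. $l=1$ (as $l\in\{1\}$ when $m=2$), satisfying conditions (1)--(3) of Definition \ref{def:equivdata}. Condition (1) reads $\theta=f\circ\theta\circ f^{-1}$, condition (2) reads $\varphi(y)\varphi(\theta(y))\,a'(y)=a(f^{-1}(y))$ for all $y$, and condition (3) reads $\tau'(x,y)\varphi(xy)=\tau(f^{-1}(x),f^{-1}(y))\varphi(x)\varphi(y)$ — but here $\tau'=\tau$, so (3) becomes $\tau(x,y)\varphi(xy)=\tau(f^{-1}(x),f^{-1}(y))\varphi(x)\varphi(y)$.

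Next I would exploit the hypothesis that every automorphism of $H$ is inner: write $f=\mathrm{ad}(w)$ for some $w\in H$. By Lemma \ref{lem:tauinner}, $\tau = \partial(\mu_w)\cdot \tau\circ(\mathrm{ad}(w)\times\mathrm{ad}(w))$ for the explicit map $\mu_w$, equivalently $\tau(f^{-1}(x),f^{-1}(y))$ differs from $\tau(x,y)$ by an explicit coboundary $\partial(\nu)$ for some $\nu : H\to k^\times$ (take $\nu$ the appropriate version of $\mu_{w^{-1}}$). Plugging this into condition (3) gives $\varphi(xy)=\nu(x)\nu(y)\nu(xy)^{-1}\varphi(x)\varphi(y)$ after rearranging, i.e. $\partial(\varphi)=\partial(\nu)$, so $\varphi/\nu$ (or $\varphi\cdot\nu$, depending on sign conventions) is a group homomorphism $H\to k^\times$, hence an element of $\widehat H$. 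Since $|\widehat H|\le 2$, the map $\varphi$ is determined up to the explicit coboundary-type function $\nu$ and a sign character; the point is that this pins down $\varphi$ tightly enough.

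Then I would combine this with condition (2). The product $\varphi(y)\varphi(\theta(y))$ is what multiplies $a'$; I want to show this product equals $a(f^{-1}(y))/a'(y)$ forces $a=a'$. Using $Z(H)=\{1\}$, I expect $f=\mathrm{ad}(w)$ combined with $\theta^2=\mathrm{id}$ to show $a\circ f^{-1}=a$ (recall $a$ is constant on $\theta$-orbits, $a\circ\theta=a$; one should check $a$ is a class function using the cocycle axiom for $2$-data, or rather derive that $a\circ\mathrm{ad}(w)=a$ from the axiom $\tau(x,y)\tau(\theta(x),\theta(y))a(x)a(y)=a(xy)$ together with Lemma \ref{lem:tauinner}). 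Having reduced (2) to $\varphi(y)\varphi(\theta(y))\,a'(y)=a(y)$, and knowing from the previous paragraph that $\varphi=\nu\cdot\chi$ for a character $\chi\in\widehat H$ and the explicit $\nu$, I would compute $\varphi(y)\varphi(\theta(y))=\nu(y)\nu(\theta(y))\chi(y)\chi(\theta(y))$. Since $\chi$ has order dividing $2$ and I expect $\chi(\theta(y))=\chi(y)$ (as $\theta$ acts trivially on $\widehat H$ because $|\widehat H|\le 2$), this gives $\chi(y)^2=1$, so $\varphi(y)\varphi(\theta(y))=\nu(y)\nu(\theta(y))$, a quantity depending only on the original datum $(H,\theta,a,\tau)$ and not on $a'$; evaluating the normalized $2$-datum condition shows $\nu(y)\nu(\theta(y))=a(y)/a(y)=1$ — or more carefully, that whatever this product is, it is the same whether computed from $(H,\theta,a,\tau)$ or $(H,\theta,a',\tau)$, forcing $a=a'$.

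The main obstacle I anticipate is the bookkeeping in the last step: controlling $\varphi(y)\varphi(\theta(y))$ precisely, since a priori the character $\chi$ could be the nontrivial element of $\widehat H$ and one must verify $\chi(y)\chi(\theta(y))=\chi(y)^2=1$ genuinely holds (this needs $|\widehat H|\le 2$ so that $\chi^2=1$, which is automatic). A cleaner route might be: from $\partial(\varphi)=\partial(\nu)$ and the fact that the $2$-datum is (up to equivalence) normalizable (Remark \ref{rem:normalized}), reduce to the case $a,a'\in\widehat H$ and $\tau$ normalized so that $\tau\cdot(\tau\circ\theta\times\theta)=1$; then condition (2) becomes $\varphi\cdot(\varphi\circ\theta)\cdot a'=a$ in $\widehat H$ up to the coboundary constraint, and the Opext description in Remark \ref{rem:opext} together with $|\widehat H|\le 2$ makes the equality $a=a'$ transparent. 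I would present whichever of these is shorter after checking the sign conventions in Lemma \ref{lem:tauinner} and Definition \ref{def:equivdata} line up.
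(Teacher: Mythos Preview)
Your outline matches the paper's approach almost exactly: write $f=\mathrm{ad}(w)$, use Lemma~\ref{lem:tauinner} to get $\partial(\varphi)=\partial(\nu)$ so that $\varphi=\nu\chi$ with $\chi\in\widehat H$, kill $\chi\cdot(\chi\circ\theta)=\chi^2=1$ via $|\widehat H|\le 2$, and reduce to showing $\nu\cdot(\nu\circ\theta)=1$. But this last identity is the entire content of the lemma, and you have not proved it; your phrase ``evaluating the normalized $2$-datum condition shows $\nu(y)\nu(\theta(y))=1$'' is not a proof, and in fact the $2$-datum axiom $\partial(a^{-1})=\tau\cdot(\tau\circ\theta\times\theta)$ by itself does not yield this.

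What the paper does, and what your sketch is missing, are three concrete ingredients. First, it reduces at the outset to \emph{normalized} data (Remark~\ref{rem:normalized}), so that $\tau\cdot(\tau\circ\theta\times\theta)=1$ and $a,a'\in\widehat H$; this also makes $a\circ f^{-1}=a$ immediate since $f$ is inner, whereas your justification for that step (``$a$ is a class function'') is not valid before normalization. Second, it writes $\theta=\mathrm{ad}(x)$ as well (you only wrote $f$ as inner), and uses condition~(1), $f\theta=\theta f$, together with $Z(H)=\{1\}$ to deduce that $x$ and $w$ commute. Third, with these in hand it carries out an explicit computation with the formula for $\mu_w$ in Lemma~\ref{lem:tauinner}: the normalization gives $\tau\circ(\mathrm{ad}(x)\times\mathrm{ad}(x))=\tau^{-1}$, and the commutation $xw=wx$ then lets one check directly that $\mu_w(\mathrm{ad}(x)(z))=\mu_w(z)^{-1}$, i.e.\ $\nu\cdot(\nu\circ\theta)=1$. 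None of these three steps appears in your outline; they are not bookkeeping but the substance of the argument.
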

 
\begin{proof}
We first assume that our data are normalized: $\tau \cdot\tau\circ \theta \times \theta =1$ (and $a, a'\in \widehat{H}$). 
 Let $f\in{\rm Aut}(H)$ and $\varphi : H\to k^\times$ be such that 
$$f\circ\theta=\theta \circ f, \ \varphi\cdot\varphi\circ \theta \cdot a'=a\circ f^{-1}, \ \tau = \partial(\varphi)\tau\circ f^{-1}\times f^{-1}.$$
Writing $\theta={\rm ad}(x)$ and $f^{-1}={\rm ad}(y)$, we then have $xy=yx$ since $Z(H)=\{1\}$ and 
$$\varphi\cdot\varphi\circ \theta \cdot a'=a\circ f^{-1}, \ \tau = \partial(\varphi)\tau\circ f^{-1}\times f^{-1}=\partial(\varphi)\partial(\mu_y^{-1})\tau$$
where $\mu_y$ is as in Lemma \ref{lem:tauinner}. Hence $\varphi = \chi \mu_y$ for some $\chi\in \widehat{H}$, and 
$$\varphi \cdot \varphi\circ \theta = \chi\cdot \chi\circ \theta \cdot \mu_y\cdot \mu_y\circ\theta.$$ 
Since $|\widehat{H}|\leq 2$ and $\theta$ is inner, we obtain $\varphi \cdot \varphi\circ \theta =  \mu_y\cdot \mu_y\circ\theta= \mu_y\cdot \mu_y\circ{\rm ad}(x)$. For $z\in H$, we have
\begin{align*}
\mu_y\circ {\rm ad}(x)(z) &=\tau(yxzx^{-1},y^{-1})\tau(y,xzx^{-1})\tau(y,y^{-1})^{-1} \\
& = \tau(xyzx^{-1},xy^{-1}x^{-1})\tau(xyx^{-1},xzx^{-1})\tau(xyx^{-1},xy^{-1}x^{-1})^{-1}\\
& = \tau(yz,y^{-1})^{-1}\tau(y,z)^{-1}\tau(y,y^{-1}) \\
& = \mu_y(z)^{-1}
\end{align*}
where we have used the fact that our datum is normalized and that $xy=yx$. Hence $\varphi\cdot\varphi\circ \theta=1$, and $a=a'$. 

In general, recall (See remark \ref{rem:normalized}) that $(H,\theta,a,\tau)$ and $(H,\theta,a',\tau')$ are
respectively equivalent to normalized $2$-data $(H,\theta,b,\tau')$ and $(H,\theta,b',\tau')$, hence 
$b=b'$ from the normalized case, and $a=a'$ by the construction of $b$ and $b'$ from $a$ and $a'$ (see the proof of Corollary \ref{cor:basiciso}).
\end{proof}

\begin{theorem}\label{cor:isoext4}
  Let $H$ be a finite group in which any automorphism is inner and with $|\widehat{H}|\leq 2$, $Z(H)=\{1\}$ and $|H^{2}(H,k^\times)|\leq 2$. Then  there is a bijection between the set of isomorphism classes of noncommutative Hopf algebras $A$ fitting into an abelian cocentral extension
		$$k \to \Oo(H) \to A \to k\mathbb Z_2 \to k$$
and  the set ${\rm CC}_2^\bullet(H) \times\widehat{H}\times H^{2}(H,k^\times)$.
\end{theorem}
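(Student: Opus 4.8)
The plan is to combine Corollary~\ref{cor:isoextz(h)} (valid here since $Z(H)=\{1\}$ and $m=2$) with a direct analysis of the equivalence classes of $2$-data over $H$, the real input being Lemma~\ref{lem:Sn}.

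\emph{Step 1: reduction to $2$-data.} Corollary~\ref{cor:isoextz(h)} furnishes a bijection between isomorphism classes of Hopf algebras $A$ fitting into $k\to\Oo(H)\to A\to k\mathbb Z_2\to k$ and equivalence classes of $2$-data $(H,\theta,a,\tau)$ with underlying group $H$, and by Proposition~\ref{prop:a(h)co}(1) the Hopf algebra $A_2(H,\theta,a,\tau)$ is noncommutative exactly when $\theta\neq\mathrm{id}_H$. So it suffices to biject the set of equivalence classes of $2$-data with $\theta\neq\mathrm{id}_H$ with $\mathrm{CC}_2^\bullet(H)\times\widehat H\times H^2(H,k^\times)$.

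\emph{Step 2: the two discrete invariants.} Because every automorphism of $H$ is inner and $Z(H)=\{1\}$, the map $g\mapsto\mathrm{ad}(g)$ is an isomorphism $H\xrightarrow{\ \sim\ }\Aut(H)$ carrying the involutions of $H$ onto the automorphisms $\theta\neq\mathrm{id}_H$ with $\theta^2=\mathrm{id}_H$ and conjugacy onto conjugacy; hence $\theta$ determines a conjugacy class of an involution of $H$, i.e.\ an element $\overline\theta\in\mathrm{CC}_2^\bullet(H)$, and since $m=2$ forces $l=1$ in Definition~\ref{def:equivdata}, condition~(1) there shows $\overline\theta$ depends only on the equivalence class of the datum. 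Likewise, since every automorphism of $H$ is inner, Lemma~\ref{lem:tauinner} gives $[\tau\circ f\times f]=[\tau]$ for every $f\in\Aut(H)$, and condition~(3) of Definition~\ref{def:equivdata} (again with $l=1$) then shows $[\tau]\in H^2(H,k^\times)$ is also an equivalence invariant. The third invariant, an element of $\widehat H$ reading off $a$, is the delicate one, and Lemma~\ref{lem:Sn} is what makes it well defined.

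\emph{Step 3: the character invariant and the bijection.} Fix an involution $g_c\in H$ in each class $c\in\mathrm{CC}_2^\bullet(H)$ and a cocycle $\tau_e\in Z^2(H,k^\times)$ in each class $e\in H^2(H,k^\times)$. For the pair $(\theta,\tau)=(\mathrm{ad}(g_c),\tau_e)$ one has $\bigl[\prod_{i=0}^{1}\tau_e\circ\theta^i\times\theta^i\bigr]=[\tau_e]^2=1$ by Lemma~\ref{lem:tauinner} and the hypothesis $|H^2(H,k^\times)|\leq 2$, so there exists a $2$-datum $(H,\mathrm{ad}(g_c),a_{c,e},\tau_e)$ (the existence of a suitable $a_{c,e}$, including the relation $a_{c,e}\circ\theta=a_{c,e}$, being obtained exactly as in the proof of Theorem~\ref{cor:isoext2}, after passing if necessary to a normalized datum in the sense of Remark~\ref{rem:normalized}); moreover the set of all $a$ for which $(H,\mathrm{ad}(g_c),a,\tau_e)$ is a $2$-datum is a torsor under $\widehat H$, multiplication by $\widehat H$ preserving the datum axioms (the relation $a\circ\theta=a$ being stable because $|\widehat H|\leq 2$). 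By Corollary~\ref{cor:basiciso}, every $2$-datum with $\theta\neq\mathrm{id}_H$ and discrete invariants $(c,e)$ is equivalent to some $(H,\mathrm{ad}(g_c),a_{c,e}\chi,\tau_e)$ with $\chi\in\widehat H$, and $\chi$ depends only on the equivalence class: this is precisely Lemma~\ref{lem:Sn}, which forces $\chi=\chi'$ whenever $(H,\mathrm{ad}(g_c),a_{c,e}\chi,\tau_e)$ and $(H,\mathrm{ad}(g_c),a_{c,e}\chi',\tau_e)$ are equivalent. The resulting assignment (equivalence class) $\mapsto(c,\chi,e)$ is surjective, since $(c,\chi,e)$ is realized by the noncommutative datum $(H,\mathrm{ad}(g_c),a_{c,e}\chi,\tau_e)$ ($g_c\neq1$), and injective, since two data with the same triple are both equivalent to $(H,\mathrm{ad}(g_c),a_{c,e}\chi,\tau_e)$ by Step~2 and Corollary~\ref{cor:basiciso}. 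Composing with the bijection of Step~1 gives the claimed bijection.

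The main obstacle is the well-definedness of the $\widehat H$-component: one needs that two equivalent $2$-data sharing the same $\theta$ and the same cocycle $\tau$ carry the same map $a$, which is exactly the content of Lemma~\ref{lem:Sn} and uses all three hypotheses — every automorphism inner, $Z(H)=\{1\}$, and $|\widehat H|\leq 2$ — essentially (through the inner-automorphism identity for $\mu_y$ derived from Lemma~\ref{lem:tauinner}, together with $|\widehat H|\leq 2$ to remove the residual ambiguity). By contrast, the invariance of the conjugacy class of $\theta$ and of $[\tau]$, the $\widehat H$-torsor structure of the completions $a$, and the realizability of each triple are routine once Corollaries~\ref{cor:isoextz(h)} and \ref{cor:basiciso}, Lemma~\ref{lem:tauinner} and Proposition~\ref{prop:a(h)co} are available, just as in the proofs of Theorems~\ref{cor:isoext2} and \ref{cor:isoext3}.
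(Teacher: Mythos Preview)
Your proof is correct and follows essentially the same route as the paper's: reduce via Corollary~\ref{cor:isoextz(h)} and Proposition~\ref{prop:a(h)co} to classifying $2$-data with $\theta\neq\mathrm{id}$, use Corollary~\ref{cor:basiciso} to bring any such datum to a standard form indexed by $(\overline\theta,[\tau])$, and invoke Lemma~\ref{lem:Sn} to separate the $\widehat H$-component. The only packaging difference is that the paper picks, for each representative $\theta_i$, a \emph{normalized} non-trivial cocycle $\tau_i$ (so that $a\in\widehat H$ is immediate), whereas you fix cocycle representatives $\tau_e$ independently of $\theta$ and then argue existence of a compatible $a_{c,e}$ by passing through Remark~\ref{rem:normalized}; both lead to the same list of representatives and the same use of Lemma~\ref{lem:Sn}.
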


\begin{proof}
 As before, in view of the assumption $Z(H)=\{1\}$, by Corollary \ref{cor:isoextz(h)},  we have to classify the $2$-data $(H,\theta, a,\tau)$ with $\theta\not={\rm id}$ up to equivalence. 
 We can assume that $H^2(H,k^\times)\simeq \mathbb Z_2$, otherwise the result follows from Theorem \ref{cor:isoext2}. 
 Fix a set $\{\theta_1, \ldots , \theta_r\}$ of representative of the elements of ${\rm CC}_2^\bullet(\Aut(H))\simeq {\rm CC}_2^\bullet(H)$, and for each $i$, fix a non-trivial $2$-cocycle $\tau_i\in H^{2}(H,k^\times)$ such that $\tau_i\cdot \tau_i\circ \theta_i\times\theta_i=1$
 (these cocycles exist since $H^2(H,k^\times)\simeq \mathbb Z_2$). Then Corollary \ref{cor:basiciso} ensures that any $2$-data with non-trivial underlying isomorphism is equivalent to one in the list
 $$\{(H,\theta_i, a, 1), \ i=1,\dots, r, \ a\in \widehat{H}\}, \quad \{(H,\theta_i, a,\tau_i), \ \ i=1,\dots, r, \ a\in \widehat{H}\}.$$ 
 Any two different data inside one of the two sets are not equivalent by Lemma \ref{lem:Sn}, while two data taken from the two different sets are easily seen not to be equivalent either. This concludes the proof.
\end{proof}

\subsection{Back to graded twisting}
To finish the section, we go back to graded twistings. %This leads to the following definition.

\begin{proposition}\label{prop:gradextexp}
	Let $(i,\alpha)$ be a cocentral action of $\mathbb{Z}_m$ on a finite group $G$. Put $H=G/i(\widehat{\mathbb{Z}_m})$,  fix a $2$-cocycle $\tau_0 : H\times H \to \widehat{\mathbb{Z}_m}$ such that
$G\simeq H\times_{\tau_0} \widehat{\mathbb{Z}_m}$ and a generator $g$ of $\mathbb Z_m$. Define a $2$-cocycle $\tau : H\times H \to \mu_m$ by $\tau(x,y)=\tau_0(x,y)(g)$, and let $\theta$ be the automorphism of $H$ induced by $\alpha=\alpha_g$
Then there exists $a : H \to \mu_m$ such that $(H,\theta, a,\tau)$ is an $m$-datum and $\Oo(G)^{i,\alpha}\simeq A_m(H,\theta, a,\tau)$.
%Conversely, if $(H,\theta, a,\tau)$ is an $m$-datum of graded twist type, then $A_m(H,\theta, a,\tau)$ is a graded twist of $\Oo(H\times_{\tau} \mu_m)$.
\end{proposition}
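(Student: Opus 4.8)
The plan is to produce an explicit abelian cocentral extension description of $\Oo(G)^{i,\alpha}$ and then match it, term by term, with the defining relations of $A_m(H,\theta,a,\tau)$. Recall from the example just after the definition of abelian cocentral extension that both $\Oo(G)$ and $\Oo(G)^{i,\alpha}$ fit into abelian cocentral extensions
$$k \to \Oo(G/i(\widehat{\mathbb{Z}_m})) \to \Oo(G)^{i,\alpha} \to k\mathbb Z_m \to k,$$
so by Proposition \ref{prop:ext->a(h)} there is \emph{some} $m$-datum $(H,\theta',a',\tau')$ with $\Oo(G)^{i,\alpha}\simeq A_m(H,\theta',a',\tau')$. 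The work is to show that $\theta'$ and $\tau'$ can be taken to be exactly the $\theta$ and $\tau$ built from $\alpha$ and $\tau_0$ in the statement (and then $a$ is forced to exist by the $m$-datum axioms, possibly after the normalization of Remark \ref{rem:normalized}).

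First I would set up coordinates: write $G = H\times_{\tau_0}\widehat{\mathbb{Z}_m}$ and identify $\Oo(G)$, as a coalgebra and as the degree-zero-plus-higher pieces of the grading coming from $i$, with $\bigoplus_{k=0}^{m-1}\Oo(H)u_g^k$, where $u_g\in\Oo(G)_{g}$ is a suitable grouplike-up-to-$\Oo(H)$-coefficients element realizing the grading by $\mathbb Z_m$; concretely $u_g$ corresponds, under $\Oo(G)\cong\Oo(H)\# k\widehat{\mathbb Z_m}$-type decomposition, to the function detecting the $\widehat{\mathbb Z_m}$-coordinate, and its comultiplication picks up the cocycle: $\Delta(u_g)=\sum_{x,y}\tau_0(x,y)(g)\,e_xu_g\otimes e_yu_g=\sum_{x,y}\tau(x,y)e_xu_g\otimes e_yu_g$. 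This is exactly the $\tau$ prescribed in the statement. The element $a:=u_g^m$ lies in $\Oo(G)_e=\Oo(H)$, is a unit, and the $m$-datum compatibility between $a$ and $\tau$ then holds automatically by comparing $\Delta(u_g)^m$ with $\Delta(a)$, as in the proof of Proposition \ref{prop:ext->a(h)}.

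Next I would pass to the twist. By the definition of the graded twisting, the coalgebra of $\Oo(G)^{i,\alpha}$ is unchanged, so $\Delta(u_g)$ and hence $\tau$ are unchanged; only the product is modified, by $a\cdot b=a\,\alpha_{\deg a}(b)$. The new relation $u_g\cdot e_x = u_g\,\alpha_g(e_x)$; since $\alpha_g$ acts on $\Oo(H)$ through the automorphism $\theta$ of $H$ (here I use that $\alpha_g\in\Aut^\circ_{i(\widehat{\mathbb Z_m})}(G)$ descends to $H$), this reads $u_g\cdot e_x = u_g\, e_{\theta^{-1}(x)}$, equivalently, rearranging with the old commutation relation $u_g e_x = e_{\phi(x)}u_g$ coming from conjugation in $G$ (which is $\theta$ up to the identification chosen for $u_g$), $u_g\cdot e_x = e_{\theta(x)}\cdot u_g$. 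Likewise $u_g^{\cdot m}$ computes to $a$ (or to $a$ twisted by iterated $\theta$'s, which is again $a$ since $a\circ\theta=a$). So $\Oo(G)^{i,\alpha}$ is generated by $\Oo(H)$ and $g:=u_g$ subject to precisely the relations defining $A_m(H,\theta,a,\tau)$, and the dimensions agree ($m|H|=|G|$), giving the isomorphism by Lemma \ref{lemm:alga(h)}.

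The main obstacle I expect is bookkeeping of conventions: pinning down exactly which automorphism ($\theta$ versus $\theta^{-1}$) and which side of the cocycle appears once one commits to a specific identification $G\cong H\times_{\tau_0}\widehat{\mathbb Z_m}$ and a specific choice of $u_g$, and checking that the twisted product's $\alpha_g$-factor combines with the untwisted commutation relation to give the clean relation $ge_x=e_{\theta(x)}g$ rather than something off by an inner automorphism or a coboundary. Getting $a\circ\theta=a$ and the $m$-datum product identity to hold on the nose (rather than only up to the equivalence of Definition \ref{def:equivdata}) may require invoking Remark \ref{rem:normalized} or absorbing a coboundary via Corollary \ref{cor:basiciso}(2); I would state the result allowing $a$ to be whatever the computation produces, noting it satisfies the axioms, rather than trying to normalize it.
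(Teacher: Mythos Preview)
Your approach is the same as the paper's: reduce to $G=H\times_{\tau_0}\widehat{\mathbb Z_m}$, pick an explicit $u_g$ in degree $g$ and $e'_x$ in degree $e$, and verify the presentation of $A_m(H,\theta,a,\tau)$. Two points in your sketch are off and should be fixed.

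First, there is no ``old commutation relation $u_g e_x = e_{\phi(x)}u_g$ coming from conjugation in $G$''. The untwisted algebra $\Oo(G)$ is commutative, so $u_g e_x = e_x u_g$ there; the relation $u_g\cdot e_x = e_{\theta(x)}\cdot u_g$ in the twist comes entirely from $\alpha_g$. Concretely, with $e'_x=\sum_\chi e_{x,\chi}$ one has $\alpha_g(e'_x)=e'_{\theta(x)}$, hence $u_g\cdot e'_x = u_g\,\alpha_g(e'_x)=u_g\,e'_{\theta(x)}=e'_{\theta(x)}\,u_g=e'_{\theta(x)}\cdot u_g$, using commutativity of the old product and $e'_{\theta(x)}\in A_e$.

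Second, the twisted $m$-th power $u_g^{\cdot m}$ is not simply ``$a$ or $a$ twisted by iterated $\theta$'s''. One has $u_g^{\cdot m}=u_g\,\alpha_g(u_g)\cdots\alpha_{g^{m-1}}(u_g)$, and $\alpha_g(u_g)\neq u_g$ in general: writing $\alpha_g=(\theta,\mu)$ in the coordinates of Lemma~\ref{lemm:autocentral} (so $\alpha_g(x,\chi)=(\theta(x),\mu(x)\chi)$ with $\mu:H\to\widehat{\mathbb Z_m}$), each $\alpha_{g^k}(u_g)$ differs from $u_g$ by a factor in $\Oo(H)$ built from $\mu$. The paper resolves exactly the bookkeeping you anticipate by setting
\[
a_0=\prod_{k=1}^{m-1}(\mu\circ\theta^{-k})^{k},\qquad a(x)=a_0(x)(g),
\]
and checking directly that $a_0\circ\theta=a_0$ and $\prod_{i=0}^{m-1}\tau_0\circ\theta^i\times\theta^i=\partial(a_0^{-1})$, which are precisely the $m$-datum axioms. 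This explicit formula for $a$, rather than a mere existence statement, is also what is used downstream in Remark~\ref{rem:gradedtwisttype} and in the proof of Theorem~\ref{thm:secondmain}.
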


\begin{proof}
We can assume without loss of generality that $G=H\times_{\tau_0}\widehat{\mathbb Z_m}$ and that $i$ is the canonical injection. Indeed, consider the isomorphism
$F : G \to H\times_{\tau_0} \widehat{\mathbb{Z}_m}$ making the following diagram commutative
$$\xymatrix{
1 \ar[r] & \widehat{\mathbb Z_m} \ar[r]^i \ar@{=}[d]& G \ar[r]^{\pi} \ar[d]^F &  H  \ar[r] \ar@{=}[d]& 1\\
1 \ar[r] &  \widehat{\mathbb Z_m} \ar[r]^-{i_0} &   H\times_{\tau_0} \widehat{\mathbb{Z}_m} \ar[r]^-{\pi_0} & H \ar[r] & 1
}$$
where $\pi$ is the canonical surjection, and $i_0$ and $\pi_0$ denote the canonical injection and surjection. Using the Hopf algebra isomorphism $\Oo(G) \simeq \Oo( H\times_{\tau_0} \widehat{\mathbb{Z}_m})$ induced by $F$, we obtain an isomorphism $\Oo(G)^{i,\alpha}\simeq \Oo( H\times_{\tau_0} \widehat{\mathbb{Z}_m})^{i_0,F\alpha F^{-1}}$. %where $\alpha'$ is transported form $\alpha$ via $F$.

 Recall from Subsection \ref{subsec:groupprelim} (particularly the proof of Lemma \ref{lemm:autocentral}) that $\alpha=\alpha_g$ has the form $\alpha=(\theta,\mu)$ with $\theta \in \Aut(H)$ and $\mu : H \to \widehat{\mathbb Z_m}$ satisfying
$$\theta^m={\rm id}, \ \prod_{i=0}^{m-1}\mu\circ \theta^i=1, \ \tau_0 =\partial(\mu)\cdot (\tau_0\circ \theta \times \theta).$$
Define now a map $a_0 : H \to \widehat{\mathbb{Z}_m}$:
	$$a_0 =\prod_{k=1}^{m-1}(\mu\circ\theta^{-k})^{k}.$$
We then have 	
	$$\prod_{i=0}^{m-1}\tau_0\circ \theta^i\times \theta^i= \prod_{i=0}^{m-1}\tau_0\circ \theta^{-i}\times \theta^{-i}=\partial (a_0^{-1}) \ {\rm and} \ a_0\circ \theta =a_0.$$ 
Defining then $a : H \to \mu_m$ by $a(x)=a_0(x)(g)$, we get an $m$-datum $(H,\theta,a,\tau)$ satisfying the announced conditions, and we have to show that 	$A_m(H,\theta,a,\tau)\simeq \Oo(H\times_{\tau_0} \widehat{\mathbb Z_m})^{i,\alpha}$. 

For this, first note that the $\mathbb Z_m$-grading on $\Oo(H\times_{\tau_0} \widehat{\mathbb Z_m})^{i,\alpha}$ is given by 
$$\Oo(H\times_{\tau_0} \widehat{\mathbb Z_m})^{i,\alpha}_{h}= \{\phi \in \Oo(H\times_{\tau_0} \widehat{\mathbb Z_m}) \ | \ \phi(x,\chi)=\chi(h)\phi(x,1),\ \forall (x,\chi)\in H\times \widehat{\mathbb Z}_m \}.$$
Put, for $x \in H$,
$$u_g =\sum_{x \in H} \sum_{\chi \in \widehat{\mathbb{Z}_m}}\chi(g)e_{x,\chi}\in \Oo(H\times_{\tau_0} \widehat{\mathbb Z_m})^{i,\alpha}_{g}, ~\ ~\  \ e'_{x}= \sum_{\chi \in \widehat{\mathbb{Z}_m}}e_{x,\chi}\in \Oo(H\times_{\tau_0} \widehat{\mathbb Z_m})^{i,\alpha}_{e}.$$
Using the product in $\Oo(H\times_{\tau_0} \widehat{\mathbb Z_m})^{i,\alpha}$, we see that
$$u_ge'_x=e'_{\theta(x)}u_g, ~\ ~\ \ u_g^m= a.$$
Hence there exists an algebra map $A_m(H,\theta, a , \tau)\to \Oo(H\times_{\tau_0} \widehat{\mathbb Z_m})^{i,\alpha}$ sending $e_x$ to $e'_x$ and $g$ to $u_g$, which is, exactly as in the proof of Proposition \ref{prop:ext->a(h)}, a Hopf algebra isomorphism.
\end{proof}

\begin{rem}\label{rem:gradedtwisttype}
Say that an $m$-datum $(H,\theta, a, \tau)$ is of graded twist type if $\tau$ has values into $\mu_m$ and if there exists $\mu : H \to \mu_m$ such that
\[
 \prod_{i=0}^{m-1}\mu\circ \theta^i=1,~\ ~\ \ \tau =\partial(\mu)\cdot (\tau\circ \theta \times \theta), ~\ ~\ \ a =\prod_{k=1}^{m-1}(\mu\circ\theta^{-k})^{k}.
\]
The previous result (and its proof) says  that if $(i,\alpha)$ is a cocentral action of $\mathbb{Z}_m$ on a finite group $G$, then letting $H=G/i(\widehat{\mathbb{Z}_m})$, we have $\Oo(G)^{i,\alpha}\simeq A_m(H,\theta, a,\tau)$ for some $m$-datum $(H,\theta, a,\tau)$ of graded twist type.

Conversely, it is not difficult to show that if $(H,\theta, a,\tau)$ is an $m$-datum of graded twist type, then $A_m(H,\theta, a,\tau)$ is a graded twist of $\Oo(H\times_{\tau} \mu_m)$. 
\end{rem}

We now use the previous considerations to get another isomorphism result for graded twists of function algebras on finite groups by $\mathbb Z_p$, where $p$ is a prime number. We start with a lemma.

\begin{lemma}
	Let $(H,\theta,a ,\tau)$ be a $p$-datum, with $p$ a prime number. Assume that $H^2(H,k^\times)\simeq \mathbb Z_p$. Then we have $[\tau]=[\tau \circ \theta \times \theta]$ in $H^2(H,k^\times)$. %If moreover ${\rm Hom}(H,\mathbb Z_p)=\{1\}$, then we have  $[\tau]=[\tau \circ \theta \times \theta]$ in $H^2(H,\mu_p)$ as well.
\end{lemma}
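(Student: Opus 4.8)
The plan is to show that the automorphism $\theta$ of $H$ induces an automorphism of the finite group $H^2(H,k^\times)$, and that the hypothesis $H^2(H,k^\times)\simeq\mathbb{Z}_p$ forces this induced automorphism to be trivial by an order argument. The maps $a$ and $\tau$, and the cocycle identity relating them in the definition of a $p$-datum, will play no essential role; all that is used is $\theta^p=\id_H$.

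First I would recall that for a trivial module $M$, group cohomology is functorial: an automorphism $\theta\in\Aut(H)$ induces an automorphism $\theta^*$ of $H^2(H,M)$, given on cocycles by $\theta^*[\sigma]=[\sigma\circ(\theta\times\theta)]$, with $(\theta_1\circ\theta_2)^*=\theta_2^*\circ\theta_1^*$. Taking $M=k^\times$ and using $\theta^p=\id_H$, we get $(\theta^*)^p=(\theta^p)^*=\id$, so the order of $\theta^*$ divides $p$.

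Next I would invoke the hypothesis: since $H^2(H,k^\times)\simeq\mathbb{Z}_p$, the automorphism $\theta^*$ lies in $\Aut(\mathbb{Z}_p)\simeq(\mathbb{Z}/p\mathbb{Z})^\times$, a group of order $p-1$. An element whose order divides $p$ in a group of order $p-1$ is trivial, since $\gcd(p,p-1)=1$. Hence $\theta^*=\id$, which is precisely the claimed equality $[\tau]=[\tau\circ\theta\times\theta]$.

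I do not expect any real obstacle here; the only point requiring a little care is getting the functoriality/pullback conventions straight so that $\theta^*$ is genuinely an automorphism of order dividing $p$. If one wanted an argument that visibly uses the $p$-datum structure, one could instead observe that the datum identity yields $\prod_{i=0}^{p-1}\tau\circ\theta^i\times\theta^i=\partial(a^{-1})$, hence $\sum_{i=0}^{p-1}(\theta^*)^i[\tau]=0$ in $H^2(H,k^\times)$; writing $\theta^*$ as multiplication by a unit $\lambda\in(\mathbb{Z}/p\mathbb{Z})^\times$ and summing the geometric series (using $\lambda^{p}=\lambda$) gives either $\lambda=1$ or $[\tau]=0$, and in both cases $[\tau]=[\tau\circ\theta\times\theta]$. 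I would nonetheless present the first, shorter argument.
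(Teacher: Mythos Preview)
Your proposal is correct. Your primary argument is in fact shorter and more elementary than the paper's own proof: you use only the single ingredient $\theta^p=\id_H$ from the $p$-datum, observe that the induced map $\theta^*$ on $H^2(H,k^\times)\simeq\mathbb Z_p$ lies in $\Aut(\mathbb Z_p)$, a group of order $p-1$, and conclude $\theta^*=\id$ from $\gcd(p,p-1)=1$. The paper instead runs essentially your second, alternative argument: it writes $[\tau\circ\theta\times\theta]=[\tau]^l$ for some $l$ prime to $p$, uses the $p$-datum relation $\prod_{k=0}^{p-1}[\tau\circ\theta^k\times\theta^k]=1$ to get $[\tau]^{\sum_{k=0}^{p-1}l^k}=1$, and then argues (via Fermat's little theorem, implicitly) that $\sum_{k=0}^{p-1}l^k\equiv 0\pmod p$ forces $l\equiv 1\pmod p$. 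Your route has the advantage of not touching $a$ or $\tau$ at all and of making transparent that the conclusion holds for \emph{every} cohomology class, not just the one appearing in the datum; the paper's route has the minor advantage of visibly tying the result to the datum axioms, which is how it gets used downstream.
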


\begin{proof}
We can assume that $\tau$ is nontrivial, hence that $[\tau]$ is a generator of $H^2(H,k^\times)$.	The group ${\rm Aut}(H)$ acts on the cyclic group  $H^2(H,k^\times )$ by automorphisms, hence there exits $l$ prime to $p$ such that $[\tau]^l=[\tau \circ \theta \times \theta]$. The assumption that we have a $p$-datum now gives 
	\[ [1]=\prod_{k=0}^{p-1} [\tau \circ \theta^k\times \theta^k]=\prod_{k=0}^{p-1} [\tau]^{l^k}=
	[\tau]^{\sum_{k=0}^{p-1}l^k}.\]
Since $p$ is prime and $[\tau]$ has order $p$, we get $l\equiv 1 [p]$, and hence  $[\tau]=[\tau \circ \theta \times \theta]$ in $H^2(H,k^\times)$. %The additionnal assumption ${\rm Hom}(H,\mathbb Z_p)=\{1\}$ ensures, by the universal coefficient theorem, that $H^2(H,\mathbb Z_p)\simeq \mathbb Z_p$ as well, and then the exact sequence induced by the $p$-power map $k^\times \to k^\times$ 
%$$1 \to {\rm Hom}(H,\mu_p) \to {\rm Hom}(H,k^\times) \to {\rm Hom}(H,k^{\times}) \to H^2(H,\mu_p) \to H^2(H, k^\times) \to H^2(H,k^\times)$$
%ensures that the natural map $H^2(H,\mu_p)\to H^2(H,k^\times)$ is an isomorphism, yielding that  $[\tau]=[\tau \circ \theta \times \theta]$ in $H^2(H,\mu_p)$.
	\end{proof}

We arrive at our expected isomorphism result.

\begin{theorem}\label{thm:secondmain}
	Let $G$ be a finite group with cyclic center, let $(i,\alpha)$ and $(j,\beta)$ be cocentral actions of $\mathbb Z_p$  on $G$, where $p$ is a prime number, and put $H=G/i(\widehat{\mathbb Z_p})=G/j(\widehat{\mathbb Z_p})$. Assume that $\Hom(H,\mathbb Z_p)=\{1\}$ and that $H^2(H,k^\times)$ is trivial or cyclic of order $p$. Then the following assertions are equivalent.
	\begin{enumerate}
		\item The Hopf algebras $\Oo(G)^{i,\alpha}$ and $\Oo(G)^{j,\beta}$ are isomorphic.
		\item The cocentral actions $(i,\alpha)$ and $(j,\beta)$ are equivalent.
		%\item The cocentral actions $(i,\alpha)$ and $(j,\beta)$ are weakly equivalent.
	\end{enumerate}
\end{theorem}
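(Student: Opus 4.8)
The implication $(2)\Rightarrow(1)$ is Lemma~\ref{lemm:equi-iso}, so everything lies in proving $(1)\Rightarrow(2)$. I would first dispose of the easy regime. Since $\Hom(H,\mathbb Z_p)=\{1\}$ forces $\widehat H$ to have order prime to $p$, the universal coefficient theorem gives $H^2(H,\widehat{\mathbb Z_p})\cong H^2(H,\mathbb Z_p)\cong\Hom(H_2(H),\mathbb Z_p)\cong\Hom(H^2(H,k^\times),\mathbb Z_p)$; when $H^2(H,k^\times)$ is trivial this group is trivial, and when $p=2$ it has order at most $2$, so in either case the natural actions on $H^2(H,\widehat{\mathbb Z_p})$ are trivial and Theorem~\ref{thm:firstmain} already gives $(1)\Rightarrow(2)$. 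The genuinely new case is therefore $p$ odd with $H^2(H,k^\times)\cong\mathbb Z_p$, where $\Aut(\widehat{\mathbb Z_p})$ acts nontrivially on $H^2(H,\widehat{\mathbb Z_p})\cong\mathbb Z_p$ and Theorem~\ref{thm:firstmain} does not apply; I concentrate on it.

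\emph{Reduction and translation.} As $Z(G)$ is cyclic, $T:=i(\widehat{\mathbb Z_p})=j(\widehat{\mathbb Z_p})$ is its unique subgroup of order $p$, so $H=G/T$. Arguing as in the surjectivity part of the proof of Lemma~\ref{lem:XmG}, $(j,\beta)$ is equivalent to a cocentral action of the form $(i,\beta')$; hence, by Lemma~\ref{lemm:equi-iso} and transitivity of equivalence, I may assume $j=i$, and I fix an identification $G=H\times_{\tau_0}T$ with $i$ the canonical inclusion. Fixing a generator $g$ of $\mathbb Z_p$ and applying Proposition~\ref{prop:gradextexp} to each action produces Hopf isomorphisms $\Oo(G)^{i,\alpha}\cong A_p(H,\theta_\alpha,a_\alpha,\tau)$ and $\Oo(G)^{i,\beta}\cong A_p(H,\theta_\beta,a_\beta,\tau)$ \emph{with the same cocycle} $\tau(x,y)=\tau_0(x,y)(g)$ --- the crucial point being that $\tau$ is determined by the extension $1\to T\to G\to H\to 1$ alone, not by $\alpha$ or $\beta$ --- where $\theta_\alpha,\theta_\beta\in\Aut(H)$ are the automorphisms induced by $\alpha_g,\beta_g\in\Aut_T^\circ(G)$. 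Moreover $\Oo(G)$ is cosemisimple with universal grading group $\widehat{Z(G)}$, which is cyclic, so Lemma~\ref{lemm:invariantuniv} shows both twists carry this same cyclic universal grading group, and then Proposition~\ref{prop:a(h)cyclic} tells us the two $p$-data are cyclic.

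\emph{Core argument.} A Hopf isomorphism between the twists is a Hopf isomorphism between the two cyclic $p$-data, so Proposition~\ref{prop:isoa(h)} gives $f\in\Aut(H)$, $\varphi:H\to k^\times$ and $l$ prime to $p$ realizing the equivalence of Definition~\ref{def:equivdata}; in particular $\theta_\beta^l=f\circ\theta_\alpha\circ f^{-1}$. Taking cohomology classes in relation~(3) of that definition, and using the lemma immediately preceding this theorem --- which, since $H^2(H,k^\times)\cong\mathbb Z_p$, gives $[\tau]=[\tau\circ\theta_\beta\times\theta_\beta]$ and hence $[\tau\circ\theta_\beta^{-k}\times\theta_\beta^{-k}]=[\tau]$ for all $k$ --- I obtain $[\tau]^l=[\tau\circ f^{-1}\times f^{-1}]$ in $H^2(H,k^\times)$. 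Because $\widehat H$ has order prime to $p$, the long exact sequence of $1\to\mu_p\to k^\times\to k^\times\to 1$ makes $H^2(H,\mu_p)\to H^2(H,k^\times)$ injective, and composing with the evaluation isomorphism $H^2(H,\widehat{\mathbb Z_p})\cong H^2(H,\mu_p)$ transfers the identity to the $\widehat{\mathbb Z_p}$-valued cocycle $\tau_0$. The preceding lemma also shows that $\theta_\alpha$ and $\theta_\beta$ act trivially on $H^2(H,k^\times)$, which is what lets one produce a conjugating automorphism $\theta\in\Aut(H)$ and an integer $l$ prime to $p$ satisfying \emph{both} $\theta\,\theta_\alpha\,\theta^{-1}=\theta_\beta^l$ \emph{and} $[\tau_0]^l=[\tau_0\circ\theta\times\theta]$. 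Feeding this into Lemma~\ref{lemm:autocentral2} (applied with $\alpha=\alpha_g$, $\beta=\beta_g$ and the subgroup $T$, the hypothesis $\Hom(H,T)=\{1\}$ being exactly $\Hom(H,\mathbb Z_p)=\{1\}$) yields $f_0\in\Aut_T(G)$ with $f_0\circ\alpha_g\circ f_0^{-1}=\beta_g^l$ and $f_0|_T=(-)^l$; by the dictionary of Subsection~\ref{subsec:groupprelim} this $f_0$, together with the automorphism of $\mathbb Z_p$ determined by $l$, is an equivalence of the cocentral actions $(i,\alpha)$ and $(i,\beta)$, hence (by the initial reduction) of $(i,\alpha)$ and $(j,\beta)$.

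\emph{Expected main obstacle.} The delicate point is the bookkeeping with the exponent $l$: one must arrange that a \emph{single} $l$ works simultaneously in the conjugation relation and in the cohomological relation demanded by Lemma~\ref{lemm:autocentral2}, and that the output of that lemma matches the definition of equivalence of cocentral actions both in the action part and in the restriction to $T$. When $\tau_0$ is a coboundary (which, as noted, is automatic when $H^2(H,k^\times)$ is trivial) both sides of the cohomological relation vanish and nothing needs checking; the entire difficulty is in the nonsplit case $[\tau_0]\neq1$, and it is precisely there that the lemma preceding the theorem --- forcing every automorphism occurring in a $p$-datum to act trivially on $H^2(H,k^\times)$ --- is indispensable for making the exponents cohere.
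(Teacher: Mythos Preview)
Your argument is correct and follows essentially the same route as the paper: reduce to $G=H\times_{\tau_0}T$ with $i=j$, translate both twists into $p$-data $A_p(H,\theta_\alpha,a_\alpha,\tau)$ and $A_p(H,\theta_\beta,a_\beta,\tau)$ via Proposition~\ref{prop:gradextexp} (with the \emph{same} $\tau$), use cyclicity of the universal grading group and Proposition~\ref{prop:isoa(h)} to extract $f$, $\varphi$, $l$, turn relation~(3) of Definition~\ref{def:equivdata} into $[\tau]^l=[\tau\circ f^{-1}\times f^{-1}]$ via the preceding lemma, push this to $\widehat{\mathbb Z_p}$-coefficients by the long exact sequence for $1\to\mu_p\to k^\times\to k^\times\to 1$, and conclude with Lemma~\ref{lemm:autocentral2}.

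Two small differences worth flagging. First, your preliminary case split (observing that when $H^2(H,k^\times)$ is trivial, or when $p=2$, Theorem~\ref{thm:firstmain} already applies) is a pleasant shortcut but is not in the paper; the paper runs the full argument uniformly. Second, you reduce to $j=i$ by invoking the surjectivity argument of Lemma~\ref{lem:XmG}, whereas the paper transports both actions along a fixed isomorphism $G\simeq H\times_{\tau_0}\widehat{\mathbb Z_p}$; these are equivalent manoeuvres. Your explicit justification that the $p$-data are cyclic (via Lemma~\ref{lemm:invariantuniv} and Proposition~\ref{prop:a(h)cyclic}) is a bit more careful than the paper, which simply cites Theorem~\ref{thm:isoext} ``applicable by Lemma~\ref{lemm:invariantuniv}''. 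Finally, the ``bookkeeping with $l$'' that you flag as the main obstacle---matching the conjugation relation with the cohomological relation $[\tau_0]^l=[\tau_0\circ\theta\times\theta]$ required by Lemma~\ref{lemm:autocentral2}, given that what falls out naturally is $[\tau]^l=[\tau\circ f^{-1}\times f^{-1}]$---is exactly the point where the paper is also terse; both proofs rely on the symmetry of the equivalence relation and the freedom to swap the roles of $\alpha$ and $\beta$ (or replace $f$ by $f^{-1}$ and $l$ by its inverse modulo $p$) to align the conventions.
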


\begin{proof} Just as in the proof of Theorem \ref{thm:firstmain},  we have $i(\widehat{\mathbb Z_p})=j(\widehat{\mathbb Z_p})$, and $(2)\Rightarrow (1)$ follows from Lemma \ref{lemm:equi-iso}. It remains to show that $(1)\Rightarrow (2)$. 

Assume that (1) holds. To prove (2), we can safely assume that $G= H\times_{\tau_0} \widehat{\mathbb{Z}_p}$ for a $2$-cocycle $\tau_0 : H\times H \to \widehat{\mathbb{Z}_p}$ and that $i$ and $j$ are the canonical injections.
Indeed, recall  from the beginning of the proof of Proposition \ref{prop:gradextexp}, of which we retain the notation, that fixing an appropriate isomorphism $F : G \to H\times_{\tau_0} \widehat{\mathbb Z_p}$,  we get isomorphisms
$$\Oo(G)^{i,\alpha}\simeq \Oo( H\times_{\tau_0} \widehat{\mathbb{Z}_m})^{i_0,F\alpha F^{-1}}, \ ~\ ~\ 
\Oo(G)^{j,\beta}\simeq \Oo( H\times_{\tau_0} \widehat{\mathbb{Z}_m})^{i_0,F\beta F^{-1}}$$
where $i_0$ is the canonical injection. The cocentral actions $(i,\alpha)$ and $(j,\beta)$ then are equivalent if and only if the cocentral actions $(i_0, F\alpha F^{-1})$ and $(i_0,F\beta F^{-1})$ are. 

%(transporting all the cocentral actions from $G$ to $H\times_{\tau_0} \widehat{\mathbb{Z}_p}$ via an isomorphism between the two groups).
By Proposition \ref{prop:gradextexp}, we have $\Oo(G)^{i,\alpha}\simeq A_p(H,\theta, a,\tau)$
and $\Oo(G)^{j,\beta}\simeq A_p(H,\theta', a',\tau)$, for $\theta = \overline{\alpha_g}$, $\theta' =\overline{\beta_g}$
(denoting again by $f \mapsto \overline{f}$ the group morphism $\Aut_{i(\widehat{\Gamma})}(G) \to \Aut(H)$ of Lemma \ref{lemm:autocentral})
and $a,a' : H \to\mu_p$ such that 
$(H,\theta,a,\tau)$ and $(H,\theta', a',\tau)$ are $p$-data.

Since $A_p(H,\theta, a,\tau) \simeq A_p(H,\theta',a',\tau)$, Theorem \ref{thm:isoext}, which is applicable by Lemma \ref{lemm:invariantuniv},
provides a group automorphism $f\in {\rm Aut}(H)$, $\varphi : H \to k^\times$ and $l$ prime to $p$ such that 
$$\theta'^l=f\circ \theta \circ f^{-1}, \ ~\ ~\  \prod_{k=0}^{l-1}\tau\circ\theta'^{-k}\times \theta'^{-k}= \tau\circ (f^{-1}\times f^{-1}) \cdot \partial( \varphi).$$
The previous lemma ensures that $[\tau\circ\theta'\times \theta']=[\tau]$, hence we have 
$[\tau]^l=[\tau \circ f^{-1}\times f^{-1}]$ in $H^2(H,k^\times)$. Our assumptions ensure, by the universal coefficient theorem, that $H^2(H,\mathbb Z_p)\simeq \mathbb Z_p$ and that the natural map $H^2(H,\mu_p)\to H^2(H,k^\times)$ is an isomorphism, because of the exact sequence induced by the $p$-power map $k^\times \to k^\times$ 
$$1 \to {\rm Hom}(H,\mu_p) \to {\rm Hom}(H,k^\times) \to {\rm Hom}(H,k^{\times}) \to H^2(H,\mu_p) \to H^2(H, k^\times) \to H^2(H,k^\times)$$
Thus we have $[\tau]^l=[\tau \circ f^{-1}\times f^{-1}]$ in $H^2(H,\mu_p)$, and $[\tau_0]^l=[\tau_0 \circ f^{-1}\times f^{-1}]$ in $H^2(H,\widehat{\mathbb Z_p})$. Hence by Lemma \ref{lemm:autocentral2} there exists $F\in \Aut(G)$ such that
$\beta_g^l=F^{-1}\alpha_gF$ and $F_{|\widehat{\mathbb Z_p}}=(-)^l$, therefore means that our cocentral actions are equivalent.
%The second condition is exactly the condition $(\star)$ in the proof of Lemma \ref{lemm:autocentral} that ensures that $f$ extends to an automorphism $f_0$ of $G$ whose restriction to $i(\widehat{\mathbb Z_2})$ is (automatically) the identity. 
% We have $\overline{\beta_g} = \theta'= f\circ \theta \circ f^{-1} = \overline{\alpha_g\circ f_0\circ \alpha_g^{-1}}$. The condition $\Hom(H,\widehat{\mathbb Z_2})=\{1\}$ and Lemma \ref{lemm:autocentral} ensure that $\beta = \alpha\circ f_0\circ \alpha^{-1}$, meaning that  cocentral actions $(i,\alpha)$ and $(j,\beta)$ are equivalent, the condition $f\circ i= j$ being automatic since there is only one non-trivial group morphism $\mathbb Z_2 \to Z(G)$.
%Fix a generator $g \in \Gamma$ and assume the existence of  $f\in \Aut(H)$ and $u \in \Aut(\Gamma)$  such that $f\overline{\alpha_g} f^{-1} = \overline{\beta_{u(g)}}$. Our assumption on $H^2(H,\widehat{\Gamma})$ ensures, by Lemma \ref{lemm:autocentral}, the existence of $f_0 \in \Aut(G)$ such that 
%$$\overline{f_0}= f \  {\rm and} \ f_{0|i(\widehat{\Gamma})}=j\circ \widehat{u}\circ i^{-1}, \ i.e. \
%f_0\circ i= j\circ \widehat{u}$$ We then have $\overline{f_0\alpha_g f_0^{-1}} = \overline{\beta_{u(g)}}$ and $(f_0\alpha_g f_0^{-1})_{|i(\widehat{\Gamma})}={\rm id}=(\beta_{u(g)})_{|i(\widehat{\Gamma})}$. The condition  $\Hom(H,\widehat{\Gamma})=\{1\}$ and Lemma \ref{lemm:autocentral} then ensure that $f_0\alpha_g f_0^{-1} = \beta_{u(g)}$, and we conclude that the cocentral actions $(i,\alpha)$ and $(j,\beta)$ are equivalent.
	\end{proof}

\begin{rem}\label{rem:ccweak}
%Let $G$ be a finite group with cyclic
 Let $(i,\alpha)$ be a cocentral action of $\mathbb Z_m$ (of which we fix a generator $g$) on a finite group $G$. Then the Hopf algebra $\Oo(G)^{i,\alpha}$ is noncommutative if and only if $\theta$, the automorphism of $H=G/i(\mathbb Z_m)$ induced by $\alpha_g$, is non-trivial. This follows from the combination of Proposition \ref{prop:gradextexp} and of Proposition \ref{prop:a(h)co} (but can be proved quite directly as well by analyzing the $1$-dimensional representations of $\Oo(G)^{p,\alpha}$). Hence, in the situation of Theorem \ref{thm:firstmain} (or of Theorem \ref{thm:firstmainbis} for $m=2$), there is a bijection between
\begin{enumerate}
 \item the set of isomorphism classes of Hopf algebras that are noncommutative graded twisting of $\Oo(G)$ by $\mathbb Z_m$,
\item the set of equivalence classes of cocentral actions of $\mathbb Z_m$ on $G$ that  do not induce the identity on $H$, with $H$ the quotient of $G$ by its unique central subgroup of order $m$, 
\item the set of weak equivalence classes of cocentral actions of $\mathbb Z_m$ on $G$ that are not weakly equivalent to the trivial one.
\end{enumerate}
The second set is in bijection with $\mathbb X_m^\bullet(G)$ (see the end of subsection 2.3) and for $m=2$, is as well in bijection with ${\rm CC}_2^\bullet({\rm Aut}(H))$ (see Lemma \ref{lemm:autocentral}).

Under the assumptions of Theorem \ref{thm:secondmain}, we obtain, for $p$ prime, a bijection between 
\begin{enumerate}
 \item the set of isomorphism classes of Hopf algebras that are noncommutative graded twisting of $\Oo(G)$ by $\mathbb Z_p$,
\item the set of equivalence classes of cocentral actions of $\mathbb Z_p$ on $G$ that are not  equivalent to the trivial one.
\end{enumerate}
The latter set is, by Lemma \ref{lem:XmG},  in bijection with $\mathbb X_p^\bullet(G)$ (see the end of Subsection 2.3).%${\rm CC}_2^\bullet({\rm Aut}(G))$.
\end{rem}

 \section{Examples}

In this section we apply the previous results to  examine the examples announced in the introduction.

\subsection{Special linear groups over finite fields} We begin by examining graded twistings of linear groups over finite fields.

\begin{theorem}\label{thm:slz2}
Let $q=p^\alpha$, with $p\geq 3$ a prime number and $\alpha \geq 1$,
and let $n\geq 2$ be even. %and let $m$ be a prime divisor of ${\rm GCD}(n,q-1)$. 
 There is a bijection between the set of isomorphism classes of noncommutative Hopf algebras that are graded twistings of $\Oo({\rm SL}_n(\mathbb{F}_q))$ by $\mathbb Z_2$ and the set $\mathbb X_2^\bullet({\rm SL}_n(\mathbb{F}_q))$.
%${\rm CC}_2^\bullet({\rm Aut}({\rm SL}_n(\mathbb F_{q}))$.
%\item Assume that $\alpha=1$ and $n=2$. Then there is a 
%${\rm CC}_2^\bullet({\rm PSL}_n(\mathbb{F}_p)\rtimes \mathbb Z_2)$.
\end{theorem}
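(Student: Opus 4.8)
The plan is to deduce this from Theorem~\ref{thm:firstmainbis} applied to $G=\SL_n(\mathbb{F}_q)$ together with the dictionary of Subsection~2.3, so that the whole content lies in checking the hypotheses of that theorem for this particular $G$.

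First I would set up the group theory. The centre $Z(G)$ consists of the scalar matrices $\lambda I$ with $\lambda^n=1$, hence is cyclic of order $d=\gcd(n,q-1)$; since $n$ is even and $q$ is odd we have $2\mid d$, so $Z(G)$ contains a unique subgroup of order $2$, namely $\{\pm I\}$, and every cocentral action of $\mathbb{Z}_2$ on $G$ uses the embedding $\widehat{\mathbb{Z}_2}\hookrightarrow\{\pm I\}$. In particular the quotient appearing in Theorem~\ref{thm:firstmainbis} is always $H:=G/\{\pm I\}$, regardless of the action. Moreover $G$ is perfect except for $\SL_2(\mathbb{F}_3)$, whose abelianisation is $\mathbb{Z}_3$; in every case $G^{\mathrm{ab}}$ has odd order, so its quotient $H^{\mathrm{ab}}$ does too, and therefore $\Hom(H,\mathbb{Z}_2)=\{1\}$.

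The technical core is to show that $H^2(H,k^\times)$, that is, the Schur multiplier $M(H)$, is cyclic. I would feed the central extension $1\to\{\pm I\}\to G\to H\to 1$ into the five-term exact sequence in group homology (see \cite{hs}), which yields exactness of $M(G)\to M(H)\to\mathbb{Z}_2\to H_1(G)$. Since $H_1(G)=G^{\mathrm{ab}}$ has odd order the last arrow is zero, so the kernel of $M(H)\to\mathbb{Z}_2$ is the image of $M(G)$, a quotient of $M(G)$. By the classical Schur multiplier computations recorded in \cite{kar}, $M(\SL_n(\mathbb{F}_q))$ is trivial except that $M(\SL_2(\mathbb{F}_9))\cong\mathbb{Z}_3$, so in all cases at hand $M(G)$ is cyclic of odd order; hence $M(H)$ is an abelian extension of $\mathbb{Z}_2$ by a cyclic group of odd order, which splits as a direct product and is therefore cyclic. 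I expect this step --- pinning down $M(H)$ for the intermediate quotient $\SL_n(\mathbb{F}_q)/\{\pm I\}$, which is not one of the standard simple groups --- to be the main obstacle; everything else is formal.

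Having verified the hypotheses of Theorem~\ref{thm:firstmainbis}, I would conclude as follows. That theorem shows that $(i,\alpha)\mapsto\Oo(G)^{i,\alpha}$ induces a bijection from the set of equivalence classes of cocentral actions of $\mathbb{Z}_2$ on $G$ onto the set of isomorphism classes of graded twistings of $\Oo(G)$ by $\mathbb{Z}_2$. By Remark~\ref{rem:ccweak}, $\Oo(G)^{i,\alpha}$ is noncommutative precisely when the automorphism of $H$ induced by $\alpha$ is nontrivial, a condition invariant under equivalence of cocentral actions; restricting the bijection to these classes and invoking Lemma~\ref{lem:XmG} together with the definition of $\mathbb{X}_2^\bullet$ identifies them with $\mathbb{X}_2^\bullet(\SL_n(\mathbb{F}_q))$, which is exactly the asserted bijection.
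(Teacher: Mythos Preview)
Your proof is correct and follows the same route as the paper: verify the hypotheses of Theorem~\ref{thm:firstmainbis} (cyclic centre, $\Hom(H,\mathbb Z_2)=\{1\}$, cyclic Schur multiplier of $H$) and then invoke Remark~\ref{rem:ccweak} together with Lemma~\ref{lem:XmG}. You are in fact more careful than the paper on one point: the paper cites the cyclicity of $H^2(\mathrm{PSL}_n(\mathbb F_q),k^\times)$, whereas the group actually needed is $H=\SL_n(\mathbb F_q)/\{\pm I\}$, which coincides with $\mathrm{PSL}_n(\mathbb F_q)$ only when $\gcd(n,q-1)=2$; your five-term-sequence argument, using that $M(\SL_n(\mathbb F_q))$ has odd order for $q$ odd, handles the general even $n$ directly.
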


\begin{proof}
The center of ${\rm SL}_n(\mathbb{F}_q)$ is cyclic and has even order, the character group of ${\rm SL}_n(\mathbb{F}_p)/\{\pm 1\}$ is trivial, and $H^2({\rm PSL}_n(\mathbb F_q), k^\times)$ is always cyclic under our assumptions (see \cite[Chapter 7]{kar}, for example), hence Theorem \ref{thm:firstmainbis} and Remark \ref{rem:ccweak} provide the announced bijection. %with ${\rm CC}_2^\bullet({\rm Aut}({\rm SL}_n(\mathbb{F}_p))$. %the group ${\rm Aut}({\rm SL}_n(\mathbb{F}_p))$ being isomorphic with a semi-direct product ${\rm PSL}_n(\mathbb{F}_p)\rtimes \mathbb Z_2$ when $n\geq 3$.
\end{proof}

\begin{theorem}\label{thm:slzp}
Let $q=p^\alpha$, with $p$ a prime number and $\alpha \geq 1$,
let $n\geq 2$ and assume that  $m= {\rm GCD}(n,q-1)$ is prime and that $(n,q)\not\in\{(2,9), (3,4)\}$. 
Then there is a bijection between the set of isomorphism classes of noncommutative Hopf algebras that are graded twistings of $\Oo({\rm SL}_n(\mathbb{F}_q))$ by $\mathbb Z_m$ and the set $\mathbb X_m^\bullet({\rm SL}_n(\mathbb{F}_q))$.
%${\rm CC}_2^\bullet({\rm Aut}({\rm SL}_n(\mathbb F_{q}))$.
%\item Assume that $\alpha=1$ and $n=2$. Then there is a 
%${\rm CC}_2^\bullet({\rm PSL}_n(\mathbb{F}_p)\rtimes \mathbb Z_2)$.
\end{theorem}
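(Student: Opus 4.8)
The plan is to obtain this statement from Theorem~\ref{thm:secondmain}, applied with the prime $p$ taken to be $m$, in exactly the way that Theorem~\ref{thm:slz2} follows from Theorem~\ref{thm:firstmainbis}. Concretely: once the hypotheses of Theorem~\ref{thm:secondmain} are verified for $G={\rm SL}_n(\mathbb F_q)$, two noncommutative graded twistings $\Oo(G)^{i,\alpha}$ and $\Oo(G)^{j,\beta}$ by $\mathbb Z_m$ will be isomorphic if and only if the cocentral actions $(i,\alpha)$ and $(j,\beta)$ are equivalent, and then Remark~\ref{rem:ccweak} together with Lemma~\ref{lem:XmG} identifies the set of isomorphism classes of such noncommutative Hopf algebras with $\mathbb X_m^\bullet(G)$. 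So the work reduces to checking three group-theoretic facts about $G$ and the group $H:=G/i(\widehat{\mathbb Z_m})$.

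First I would record the elementary facts. The center $Z({\rm SL}_n(\mathbb F_q))$ is cyclic of order ${\rm GCD}(n,q-1)=m$; since $m$ is prime, $Z(G)\simeq\mathbb Z_m$ is its unique subgroup of order $m$, so every injective morphism $\widehat{\mathbb Z_m}\to Z(G)$ is onto, hence $i(\widehat{\mathbb Z_m})=Z(G)=j(\widehat{\mathbb Z_m})$ and $H=G/i(\widehat{\mathbb Z_m})={\rm PSL}_n(\mathbb F_q)$, independently of the action. This gives the common quotient $H$ and the cyclic center required in Theorem~\ref{thm:secondmain}. Next, $\Hom(H,\mathbb Z_m)=\{1\}$: the group ${\rm PSL}_n(\mathbb F_q)$ is non-abelian simple unless $(n,q)\in\{(2,2),(2,3)\}$; but ${\rm PSL}_2(\mathbb F_2)\simeq S_3$ arises only for $q=2$, forcing $m={\rm GCD}(n,1)=1$, which is not prime, and ${\rm PSL}_2(\mathbb F_3)\simeq A_4$ arises only for $q=3$ with $n$ even, forcing $m=2$, with $\Hom(A_4,\mathbb Z_2)=\{1\}$ since $A_4^{\ab}\simeq\mathbb Z_3$. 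So in all the cases covered by the statement $\Hom(H,\mathbb Z_m)=\{1\}$.

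The remaining, and only substantial, step is the Schur multiplier. Here I would invoke the standard classification (e.g.~\cite[Chapter~7]{kar}): one has $H^2({\rm PSL}_n(\mathbb F_q),k^\times)\simeq\mathbb Z_{{\rm GCD}(n,q-1)}$ with the only exceptions ${\rm PSL}_2(\mathbb F_4)$, ${\rm PSL}_2(\mathbb F_9)$, ${\rm PSL}_3(\mathbb F_2)$, ${\rm PSL}_3(\mathbb F_4)$, ${\rm PSL}_4(\mathbb F_2)$. Among these, the ones for which ${\rm GCD}(n,q-1)$ is prime are exactly ${\rm PSL}_2(\mathbb F_9)$ (with ${\rm GCD}(2,8)=2$) and ${\rm PSL}_3(\mathbb F_4)$ (with ${\rm GCD}(3,3)=3$), i.e.~precisely the pairs $(n,q)\in\{(2,9),(3,4)\}$ excluded in the statement, while ${\rm PSL}_2(\mathbb F_4)$, ${\rm PSL}_3(\mathbb F_2)$ and ${\rm PSL}_4(\mathbb F_2)$ all have ${\rm GCD}(n,q-1)=1$, which is not prime. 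Hence under our hypotheses $H^2(H,k^\times)\simeq\mathbb Z_m$ is cyclic of order the prime $m$, so all the assumptions of Theorem~\ref{thm:secondmain} hold and the scheme of the first paragraph yields the claim. I expect this last step to be the only point requiring care: there is no conceptual obstacle once Theorem~\ref{thm:secondmain} is available, but one must be sure that the list of groups with exceptional Schur multiplier is complete and that its intersection with the condition ``${\rm GCD}(n,q-1)$ prime'' is exactly $\{(2,9),(3,4)\}$.
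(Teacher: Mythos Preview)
Your proof is correct and follows essentially the same approach as the paper: verify that $Z({\rm SL}_n(\mathbb F_q))$ is cyclic, that $\Hom({\rm PSL}_n(\mathbb F_q),\mathbb Z_m)=\{1\}$, and that $H^2({\rm PSL}_n(\mathbb F_q),k^\times)\simeq\mathbb Z_m$ under the stated hypotheses, then invoke Theorem~\ref{thm:secondmain} and Remark~\ref{rem:ccweak}. You give more explicit detail than the paper in handling the non-simple cases $(n,q)\in\{(2,2),(2,3)\}$ for the $\Hom$ condition and in checking exactly which exceptional Schur multipliers intersect the condition ``$m$ prime'', but the strategy is identical.
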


\begin{proof}
The center of ${\rm SL}_n(\mathbb{F}_q)$ is $\mu_n(\mathbb F_q)$ and is cyclic of order $m={\rm GCD}(n,q-1)$,  the group  ${\rm Hom}({\rm PSL}_n(\mathbb{F}_p),\mathbb Z_m)$ is trivial, and $H^2({\rm PSL}_n(\mathbb F_q), k^\times)\simeq \mathbb Z_m$  under our assumptions (see \cite[Chapter 7]{kar}, for example). Hence Theorem \ref{thm:secondmain} and Remark \ref{rem:ccweak} provide the announced bijection. %with ${\rm CC}_2^\bullet({\rm Aut}({\rm SL}_n(\mathbb{F}_p))$. %the group ${\rm Aut}({\rm SL}_n(\mathbb{F}_p))$ being isomorphic with a semi-direct product ${\rm PSL}_n(\mathbb{F}_p)\rtimes \mathbb Z_2$ when $n\geq 3$.
\end{proof}

In the case $n=2$, we have results for abelian cocentral extensions as well.

\begin{theorem}\label{sl2p}
Let $p\geq 3$ be a prime number.
\begin{enumerate}
 \item There are exactly $2$ isomorphism classes of noncommutative Hopf algebras that are graded twistings of $\Oo({\rm SL}_2(\mathbb{F}_p))$.
\item If $p\geq 5$, there are exactly $4$  isomorphism classes of noncommutative Hopf algebras fitting into an abelian cocentral extension $k \to \Oo({\rm PSL}_2(\mathbb{F}_p))\to A \to k\mathbb Z_2\to  k$.
\end{enumerate}
\end{theorem}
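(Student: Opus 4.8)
\noindent\emph{Proof proposal.}
The plan is to reduce each part to a counting result already established in the paper, so that the only substantial input is a short list of standard facts about $\SL_2(\mathbb{F}_p)$ and $H:=\mathrm{PSL}_2(\mathbb{F}_p)=\SL_2(\mathbb{F}_p)/\{\pm I\}$. The facts I will use are: $Z(\SL_2(\mathbb{F}_p))=\{\pm I\}\simeq\mathbb{Z}_2$, so $\{\pm I\}$ is the unique central subgroup of order $2$; $H^2(H,k^\times)\simeq\mathbb{Z}_2$ for every prime $p\geq 3$ (for $p=3$, $H\simeq A_4$ has Schur multiplier $\mathbb{Z}_2$ as well --- see \cite[Chapter 7]{kar}); the abelianization of $H$ is trivial for $p\geq 5$ and is $\simeq\mathbb{Z}_3$ for $p=3$, so $\Hom(H,\mathbb{Z}_2)=\{1\}$ in all cases; and $\Aut(H)\simeq\mathrm{PGL}_2(\mathbb{F}_p)$, a classical fact ($\mathbb{F}_p$ has no nontrivial field automorphism; for $p=3$ this is $\Aut(A_4)\simeq S_4\simeq\mathrm{PGL}_2(\mathbb{F}_3)$).

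For part (1): because $Z(\SL_2(\mathbb{F}_p))\simeq\mathbb{Z}_2$, any surjective cocentral Hopf algebra map $\Oo(\SL_2(\mathbb{F}_p))\to k\Gamma$ with $\Gamma\neq 1$ has $\Gamma\simeq\mathbb{Z}_2$ (its dual $\widehat{\Gamma}$ embeds into $\mathbb{Z}_2$), so every noncommutative graded twisting of $\Oo(\SL_2(\mathbb{F}_p))$ arises from a cocentral action of $\mathbb{Z}_2$. Since $H^2(H,k^\times)$ is cyclic and $\Hom(H,\mathbb{Z}_2)=\{1\}$, Theorem \ref{thm:firstmainbis} applies, and together with Remark \ref{rem:ccweak} it shows that the isomorphism classes of noncommutative graded twistings of $\Oo(\SL_2(\mathbb{F}_p))$ are in bijection with $\mathrm{CC}_2^\bullet(\Aut(H))=\mathrm{CC}_2^\bullet(\mathrm{PGL}_2(\mathbb{F}_p))$, the set of conjugacy classes of involutions of $\mathrm{PGL}_2(\mathbb{F}_p)$. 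For part (2): when $p\geq 5$ the group $H$ is simple nonabelian, hence $Z(H)=\{1\}$, $\widehat{H}=\{1\}$ and $H^2(H,k^\times)\simeq\mathbb{Z}_2$, so Theorem \ref{cor:isoext2} with $m=2$ puts the isomorphism classes of noncommutative Hopf algebras fitting into $k\to\Oo(H)\to A\to k\mathbb{Z}_2\to k$ in bijection with $\mathrm{CC}_2^\bullet(\Aut(H))\times H^2(H,k^\times)$. Thus both statements reduce to showing that $\mathrm{PGL}_2(\mathbb{F}_p)$ has exactly two conjugacy classes of involutions, which then gives $2$ classes for part (1) and $2\times 2=4$ for part (2).

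This involution count is the only real computation. I would argue geometrically via the action of $\mathrm{PGL}_2(\mathbb{F}_p)$ on $\mathbb{P}^1(\mathbb{F}_p)$: a nontrivial involution $\sigma$ is semisimple --- it cannot be unipotent, since a nontrivial unipotent element has order $p$, which is odd --- so it has exactly two fixed points in $\mathbb{P}^1(\overline{\mathbb{F}_p})$, which are either both defined over $\mathbb{F}_p$ or form a Frobenius-conjugate pair in $\mathbb{P}^1(\mathbb{F}_{p^2})\setminus\mathbb{P}^1(\mathbb{F}_p)$. As $\mathrm{PGL}_2(\mathbb{F}_p)$ is sharply $3$-transitive (in particular $2$-transitive) on $\mathbb{P}^1(\mathbb{F}_p)$, and an involution fixing two given points of the line is uniquely determined by them, any two involutions of the first type are conjugate, and the analogous transitivity on Frobenius-conjugate pairs of points handles the second type. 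Both types occur --- the image of $\mathrm{diag}(1,-1)$ for the first, and the image of the companion matrix of $x^2-\lambda$ with $\lambda$ a non-square (which exists since $p$ is odd) for the second --- and they are distinct, since only the first type fixes a point of $\mathbb{P}^1(\mathbb{F}_p)$. Hence there are exactly two classes, as needed. (Equivalently one can read this off the list of $A\in\mathrm{GL}_2(\mathbb{F}_p)$ with $A^2$ scalar and $A$ non-scalar via the characteristic polynomial $x^2-\lambda$, or quote the known conjugacy-class data of $\mathrm{PGL}_2(\mathbb{F}_q)$.) The remaining work --- assembling the quoted facts and invoking Theorems \ref{thm:firstmainbis} and \ref{cor:isoext2} --- is routine; the involution count is the main point.
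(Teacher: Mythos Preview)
Your proof is correct and follows essentially the same route as the paper. The paper invokes Theorem~\ref{thm:slz2} (itself a consequence of Theorem~\ref{thm:firstmainbis} and Remark~\ref{rem:ccweak}) to reduce part~(1) to counting $\mathbb X_2^\bullet(\SL_2(\mathbb F_p))$, while you go straight to $\mathrm{CC}_2^\bullet(\Aut(H))$ via Remark~\ref{rem:ccweak}; these sets are in bijection and the difference is purely cosmetic. For part~(2) both arguments are identical. The only genuine presentational difference is the involution count: the paper simply writes down the two matrix representatives $\mathrm{ad}\!\begin{pmatrix}1&0\\0&-1\end{pmatrix}$ and $\mathrm{ad}\!\begin{pmatrix}0&\lambda\\1&0\end{pmatrix}$ (citing Dieudonn\'e for $\Aut(\SL_2(\mathbb F_p))\simeq\mathrm{PGL}_2(\mathbb F_p)$), whereas you give a self-contained geometric argument via fixed points on $\mathbb P^1$---which in the end produces exactly the same two representatives.
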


\begin{proof}
Theorem \ref{thm:slz2} ensures that there is a bijection  between the set isomorphism classes of noncommutative Hopf algebras that are graded twistings of $\Oo({\rm SL}_2(\mathbb{F}_p))$ and $\mathbb X_2^\bullet({\rm SL}_2(\mathbb F_p))$.
All the automorphisms of ${\rm SL}_2(\mathbb F_p)$ are obtained by conjugation of a matrix in ${\rm GL}_2(\mathbb F_p)$ (see e.g. \cite{dieu}), and we see that there are two equivalence classes of elements in $\mathbb X_2^\bullet({\rm SL}_2(\mathbb F_p)))$, represented by the automorphisms
$${\rm ad}\left(\begin{pmatrix}
            1 & 0 \\
0 & -1
           \end{pmatrix}\right), ~\ ~\ \ {\rm ad}\left(\begin{pmatrix}
            0 & \lambda \\
1 & 0
           \end{pmatrix}\right)$$
where $\lambda$ is a chosen element such that $\lambda\not \in (\mathbb F_p^*)^{2}$. This proves the first assertion.

We have, for $p\geq 5$, $\widehat{{\rm PSL}_2(\mathbb{F}_p)}=\{1\}$, and since $Z({\rm PSL}_2(\mathbb{F}_p))=\{1\}$ and $H^2({\rm PSL}_2(\mathbb{F}_p),k^\times) \simeq\mathbb Z_2$, the second assertion follows from the previous discussion and Corollary \ref{cor:isoext2}.
\end{proof}

%Do the same (more or less) for graded twists of  $\Oo({\rm SL}_n(\mathbb{F}_p))$ (with $n$ even)  by $\mathbb Z_2$... 

\subsection{Alternating and symmetric groups} 
We now discuss examples involving alternating and symmetric groups. We begin with alternating groups and their Schur covers (see e.g. \cite{kar}).

%Let $n\geq 4$. We denote by $\widetilde{A_n}$ the unique Schur covering of the alternating group $A_n$ (see e.g. \cite{kar}).

\begin{theorem}\label{thm:an}
Let $n\geq 4$ and let  $\widetilde{A_n}$ be the unique Schur cover of the alternating group $A_n$.
\begin{enumerate}
 \item 
 There is a bijection between the set of isomorphism classes of noncommutative Hopf algebras that are graded twistings of $\Oo(\widetilde{A_n})$ by $\mathbb Z_2$ and  ${\rm CC}_2^\bullet({\rm Aut}(A_n))$. For $n\not=6$, there are precisely $\lfloor \frac{n}{2} \rfloor$ such isomorphism classes.
%\item For $n=6,7$, there is a bijection between the set of isomorphism classes of noncommutative Hopf algebras that are graded twistings of $\Oo(\widetilde{A_n})$ by $\mathbb Z_2$ and the set ${\rm CC}_2^\bullet({\rm Aut}(\widetilde{A_n}))$.
\item For $n=5$ or $n\geq 8$,  there is a bijection between the set of isomorphism classes of noncommutative Hopf algebras fitting into an abelian cocentral extension $k \to \Oo(A_n)\to A \to k\mathbb Z_2\to  k$ and the set ${\rm CC}_2^\bullet({\rm Aut}(A_n))\times \mathbb Z_2$. There are precisely $2\lfloor \frac{n}{2} \rfloor$ such isomorphism classes.
\end{enumerate}
\end{theorem}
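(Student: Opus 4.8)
For (1), the plan is to apply Theorem~\ref{thm:firstmainbis} to $G=\widetilde{A_n}$, and for (2) to apply Theorem~\ref{cor:isoext2} to $H=A_n$; in both cases the work reduces to verifying the hypotheses from classical facts about $A_n$, its Schur multiplier and its automorphism group, for which \cite{kar} is a convenient reference.

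For (1), I would first record that $H_2(A_n)\simeq\mathbb{Z}_2$ for $n\geq 4$ with $n\notin\{6,7\}$, while $H_2(A_6)\simeq H_2(A_7)\simeq\mathbb{Z}_6$; in every case the Schur multiplier is cyclic, so $Z(\widetilde{A_n})$ is cyclic, it has a unique subgroup $T_2$ of order $2$, and one forms $H=\widetilde{A_n}/T_2$. For $n\notin\{6,7\}$ this is $H=A_n$, and for $n\in\{6,7\}$ it is the triple cover $3.A_n$. In all cases $\Hom(H,\mathbb{Z}_2)=\{1\}$ (the group $H$ is perfect for $n\geq 5$, and $A_4$ has abelianization $\mathbb{Z}_3$) and $H^2(H,k^\times)$ is cyclic of order $2$. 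Theorem~\ref{thm:firstmainbis} then applies, and Remark~\ref{rem:ccweak} turns the conclusion into a bijection between the isomorphism classes of noncommutative graded twistings of $\Oo(\widetilde{A_n})$ by $\mathbb{Z}_2$ and ${\rm CC}_2^\bullet(\Aut(H))$. For $n\notin\{6,7\}$ this is literally ${\rm CC}_2^\bullet(\Aut(A_n))$; for $n\in\{6,7\}$ I would note that reduction modulo the center gives a morphism $\Aut(3.A_n)\to\Aut(A_n)$ with kernel $\Hom(A_n,\mathbb{Z}_3)=\{1\}$, which is surjective because any automorphism of the perfect group $A_n$ lifts to its Schur cover $\widetilde{A_n}=6.A_n$ and hence descends to $3.A_n=\widetilde{A_n}/T_2$; so $\Aut(3.A_n)\simeq\Aut(A_n)$ and the bijection is again with ${\rm CC}_2^\bullet(\Aut(A_n))$. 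Finally, for $n\neq 6$ one has $\Aut(A_n)=S_n$, whose involution classes are represented by the products of $k$ disjoint transpositions with $1\leq k\leq\lfloor n/2\rfloor$, giving $|{\rm CC}_2^\bullet(\Aut(A_n))|=\lfloor n/2\rfloor$.

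For (2), I would restrict to $n=5$ or $n\geq 8$, so that $H=A_n$ satisfies $Z(A_n)=\{1\}$, $\widehat{A_n}=\{1\}$ (by simplicity) and $H^2(A_n,k^\times)\simeq H_2(A_n)\simeq\mathbb{Z}_2$; these are precisely the hypotheses of Theorem~\ref{cor:isoext2}. In the even case $m=2$, that theorem yields a bijection between the isomorphism classes of noncommutative Hopf algebras fitting into $k\to\Oo(A_n)\to A\to k\mathbb{Z}_2\to k$ and ${\rm CC}_2^\bullet(\Aut(A_n))\times H^2(A_n,k^\times)$, that is, ${\rm CC}_2^\bullet(\Aut(A_n))\times\mathbb{Z}_2$. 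Using again $\Aut(A_n)=S_n$ and the count of involution classes from part (1), this set has $2\lfloor n/2\rfloor$ elements.

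I expect the only genuinely delicate point to be the case $n\in\{6,7\}$ in part (1): one must correctly identify the quotient $\widetilde{A_n}/T_2$ with the triple cover $3.A_n$, confirm that its Schur multiplier is still cyclic, and check that passing to this cover does not alter the relevant automorphism data. Everything else is a routine application of Theorems~\ref{thm:firstmainbis} and~\ref{cor:isoext2}, Remark~\ref{rem:ccweak}, and the standard structure theory of the alternating and symmetric groups.
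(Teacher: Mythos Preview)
Your proposal is correct and follows essentially the same approach as the paper: apply Theorem~\ref{thm:firstmainbis} together with Remark~\ref{rem:ccweak} for part (1), and Theorem~\ref{cor:isoext2} for part (2), after checking the standard facts about $A_n$. Your treatment of the exceptional cases $n\in\{6,7\}$ in part (1) is in fact more careful than the paper's: the paper simply cites that $H^2(A_n,k^\times)$ is cyclic and that $\Hom(A_n,\mathbb Z_2)=\{1\}$, whereas you correctly identify that the relevant quotient $H=\widetilde{A_n}/T_2$ is the triple cover $3.A_n$ in those cases and supply the additional argument that $\Aut(3.A_n)\simeq\Aut(A_n)$ needed to match the statement with ${\rm CC}_2^\bullet(\Aut(A_n))$.
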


\begin{proof}
In all cases  $\mathbb Z_2 \subset  Z(\widetilde{A_n})$, the center $Z(\widetilde{A_n})$ is cyclic, and $H^2(A_n,k^\times)$ is cyclic (isomorphic to $\mathbb Z_6$ for $n=6,7$ and to $\mathbb Z_2$ otherwise) and we have ${\rm Hom}(A_n,\mathbb Z_2)=\{1\}$, so the first statement is a direct consequence of Theorem \ref{thm:firstmainbis}. We have  ${\rm CC}_2^\bullet({\rm Aut}(A_n))={\rm CC}_2^\bullet({\rm Aut}(S_n))$, and when $n\not=6$ this coincides with  ${\rm CC}_2^\bullet(S_n)$, which has $\lfloor \frac{n}{2} \rfloor$ elements.

%For $n\geq 4$ and $n\not=6,7$, we have $Z(\widetilde{A_n})=\mathbb Z_2\simeq H^2(A_n,k^\times)$, hence the non-trivial graded twistings of $\Oo(\widetilde{A_n})$ are by $\mathbb Z_2$. 
%Since we have moreover ${\rm Hom}(A_n,\mathbb Z_2)=\{1\}$, the first statement is a direct consequence of Theorem \ref{thm:firstmain}.

%For $n=6,7$, we have $Z(\widetilde{A_n})=\mathbb Z_6\simeq H^2(A_n,k^\times)$. Since ${\rm Hom}(A_n,\mathbb Z_2)=\{1\}$, the second statement is a direct consequence of Theorem \ref{thm:secondmain}.

For $n=5$ or $n\geq 8$, we have moreover $H^2(A_n,k^\times)\simeq \mathbb Z_2$, and $\widehat{A_n}=\{1\}$, and since $Z(A_n)=\{1\}$,  the statement follows from Corollary \ref{cor:isoext2}.
\end{proof}

\begin{theorem}\label{thm:sn}
Assume that  $n\not=6$.
\begin{enumerate}
 \item There are exactly $4\lfloor \frac{n}{2} \rfloor$  isomorphism classes of noncommutative Hopf algebras fitting into an abelian cocentral extension $k \to \Oo(S_n)\to A \to k\mathbb Z_2\to  k$.
 \item Let $G$ be any group fitting into a central extension $1 \to \mathbb Z_2 \to G \to S_n \to 1$. There are exactly  $2\lfloor \frac{n}{2} \rfloor$  isomorphism classes of noncommutative Hopf algebras that are graded twistings of $\Oo(G)$ by $\mathbb Z_2$.
\end{enumerate}
\end{theorem}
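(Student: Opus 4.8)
The plan is to derive (1) directly from Theorem~\ref{cor:isoext4}, and (2) by combining (1) with the description of graded twists in Proposition~\ref{prop:gradextexp} together with an explicit construction. For (1) I take $H=S_n$ with $n\geq 4$: since $n\neq 6$ every automorphism of $S_n$ is inner, and moreover $Z(S_n)=\{1\}$, $\widehat{S_n}\simeq\mathbb{Z}_2$ (the signature) and $H^2(S_n,k^\times)\simeq\mathbb{Z}_2$ (the Schur multiplier), so Theorem~\ref{cor:isoext4} applies and yields a bijection between the isomorphism classes in the statement and ${\rm CC}_2^\bullet(S_n)\times\widehat{S_n}\times H^2(S_n,k^\times)$. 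Since the conjugacy classes of involutions of $S_n$ are precisely the classes of products of $k$ disjoint transpositions for $1\leq k\leq\lfloor n/2\rfloor$, one has $|{\rm CC}_2^\bullet(S_n)|=\lfloor n/2\rfloor$, whence the count $\lfloor n/2\rfloor\cdot 2\cdot 2=4\lfloor n/2\rfloor$.

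For (2), first note that $Z(S_n)=\{1\}$ forces the central $\mathbb{Z}_2$ of $G$ to be all of $Z(G)$, which is cyclic, with $G/Z(G)\simeq S_n$. Fix an isomorphism $G\simeq S_n\times_{\tau_0}\widehat{\mathbb{Z}_2}$ and a generator $g$ of $\mathbb{Z}_2$, and let $c_G\in H^2(S_n,k^\times)$ be the image of $[\tau_0]$ under $H^2(S_n,\widehat{\mathbb{Z}_2})\simeq H^2(S_n,\mu_2)\to H^2(S_n,k^\times)$; it depends only on $G$. By Proposition~\ref{prop:gradextexp} every graded twist $\Oo(G)^{i,\alpha}$ is isomorphic to some $A_2(S_n,\theta,a,\tau)$ with $[\tau]=c_G$, and by Remark~\ref{rem:ccweak} it is noncommutative exactly when $\theta\neq{\rm id}$. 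Hence, since the bijection of (1) restricts, over the value $c_G$ of the $H^2(S_n,k^\times)$-coordinate, to a bijection with ${\rm CC}_2^\bullet(S_n)\times\widehat{S_n}$, the noncommutative graded twists of $\Oo(G)$ by $\mathbb{Z}_2$ lie among at most $|{\rm CC}_2^\bullet(S_n)|\cdot|\widehat{S_n}|=2\lfloor n/2\rfloor$ isomorphism classes.

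It remains to realize all $2\lfloor n/2\rfloor$ of these. For $1\leq k\leq\lfloor n/2\rfloor$ fix a product $\sigma_k\in S_n$ of $k$ disjoint transpositions and a lift $\widetilde{\sigma_k}\in G$; since $\widetilde{\sigma_k}^2\in Z(G)$, the inner automorphism ${\rm ad}(\widetilde{\sigma_k})$ of $G$ has order $2$ and fixes $Z(G)$ pointwise, so it defines a noncommutative cocentral action $(i,\alpha^{(k)})$ of $\mathbb{Z}_2$ on $G$ inducing ${\rm ad}(\sigma_k)$ on $S_n$. Let $\chi\colon S_n\to\mathbb{Z}_2=Z(G)$ be the signature and $\widetilde\chi\in\Aut(G)$ the automorphism $\widetilde\chi(x)=x\,\chi(\pi(x))$ of Lemma~\ref{lemm:autocentral}, where $\pi\colon G\to S_n$ is the projection; a direct check shows $\widetilde\chi$ has order $2$, induces the identity on $S_n$, and commutes with ${\rm ad}(\widetilde{\sigma_k})$ (because $\chi\circ{\rm ad}(\sigma_k)=\chi$), so $\beta^{(k)}_g:={\rm ad}(\widetilde{\sigma_k})\circ\widetilde\chi$ defines a second noncommutative cocentral action with the same induced automorphism of $S_n$. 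Writing both in the form of Proposition~\ref{prop:gradextexp} yields Hopf algebras $A_2(S_n,{\rm ad}(\sigma_k),a_k,\tau)$ and $A_2(S_n,{\rm ad}(\sigma_k),a_k',\tau)$ with a common cocycle $\tau$, where a short computation shows that $a_k$ and $a_k'$ differ by the (non-trivial) signature, so $a_k\neq a_k'$; the relevant $2$-data being reduced since $Z(S_n)=\{1\}$, Lemma~\ref{lem:Sn} and Proposition~\ref{prop:isoa(h)} then give $\Oo(G)^{i,\alpha^{(k)}}\not\simeq\Oo(G)^{i,\beta^{(k)}}$, while for $k\neq k'$ the automorphisms ${\rm ad}(\sigma_k),{\rm ad}(\sigma_{k'})$ are non-conjugate; hence the $2\lfloor n/2\rfloor$ Hopf algebras so produced are pairwise non-isomorphic. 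Comparing with the upper bound finishes the proof.

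The main obstacle is this last lower bound: because $\Hom(S_n,\mathbb{Z}_2)\neq\{1\}$, neither Theorem~\ref{thm:firstmainbis} nor Theorem~\ref{thm:secondmain} is available, and isomorphism of the twisted Hopf algebras is strictly finer than weak equivalence of cocentral actions — each of the $\lfloor n/2\rfloor$ noncommutative weak-equivalence classes splits into two isomorphism classes. The construction above is designed to exhibit this splitting uniformly in $G$ by building the second family $\beta^{(k)}$ out of the signature $S_n\to\mathbb{Z}_2$, and Lemma~\ref{lem:Sn} is precisely what guarantees that $\alpha^{(k)}$ and $\beta^{(k)}$ indeed produce non-isomorphic Hopf algebras.
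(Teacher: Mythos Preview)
Your proof is correct. Part (1) is identical to the paper's argument. For part (2), the upper bound is also obtained the same way, via Proposition~\ref{prop:gradextexp} and the classification in (1) restricted to the fixed cohomology class $c_G$.

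The difference lies in the lower bound. The paper argues abstractly: given any $2$-datum $(S_n,\theta,a,\tau)$ with $\tau$ the cocycle attached to $G$, it uses Lemma~\ref{lem:tauinner} (innerness of $\theta$) to produce $\mu:S_n\to\mu_2$ with $\tau\cdot\tau\circ\theta\times\theta=\partial(\mu)$, observes that $a^{-1}$ and $\mu$ differ by a character so $a^2=1$, and concludes that the datum is of graded twist type in the sense of Remark~\ref{rem:gradedtwisttype}, hence $A_2(S_n,\theta,a,\tau)$ is a graded twist of $\Oo(S_n\times_\tau\mu_2)\simeq\Oo(G)$. You instead construct the $2\lfloor n/2\rfloor$ cocentral actions directly, pairing each ${\rm ad}(\widetilde{\sigma_k})$ with its twist by the signature automorphism $\widetilde\chi$, and then verify non-isomorphism via Lemma~\ref{lem:Sn}. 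Your approach is more explicit and makes the role of the signature in producing the second family transparent; the paper's approach is shorter and shows at once that \emph{every} datum with cocycle $\tau$ arises as a graded twist of $\Oo(G)$, without needing to build anything by hand. Both routes rest on the same two ingredients, Proposition~\ref{prop:gradextexp} and Lemma~\ref{lem:Sn}, so the difference is one of style rather than substance.
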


\begin{proof} Every automorphism of $S_n$ is inner when $n\not=6$, and we have $\widehat{S_n} \simeq \mathbb Z_2\simeq H^2(S_n,k^\times)$, so the first assertion follows from Theorem \ref{cor:isoext4}.
	
	Let $G$ be a group as in the statement. By Proposition \ref{prop:gradextexp}, a graded twisting of $\Oo(G)$ is isomorphic to 
	$A_2(S_n,\theta,a, \tau)$ for a cocycle $\tau : S_n\times S_n \to \mathbb Z_2$ canonically build from the central extension $1 \to \mathbb Z_2 \to G \to S_n \to 1$. Hence Lemma \ref{lem:Sn} ensures that there are at most $2\lfloor \frac{n}{2} \rfloor$  isomorphism classes of noncommutative graded twistings of $\Oo(G)$.

Conversely, start with a $2$-datum $(S_n,\theta,a, \tau)$, with $\tau$ as before.  We wish to prove that $A_2(S_n,\theta,a,\tau)$ is isomorphic to a graded twist of $\Oo(G)$.
%Since $H^2(S_n,k^\times)\simeq \mathbb Z_2$ there is a cocycle cohomologous to $\tau$ having its values into $\mu_2$, and by Corollary \ref{cor:basiciso}, we can safely assume that $\tau$ has its values into $\mu_2$. 
By Lemma \ref{lem:tauinner}, since any automorphism of $S_n$ is inner, there exists $\mu : S_n\to \mu_2$ such that $\tau\cdot \tau\circ\theta\times \theta=\partial(\mu)$. Then $a^{-1}$ and $\mu$ differ by an element of $\widehat{S_n}$, and hence $a^2=1$. Our $2$-data $(S_n,\theta,a,\tau)$ is then of graded twist type as in Remark  \ref{rem:gradedtwisttype}, and then we know that $A_2(S_n,\theta,a,\tau)$ is a graded twist of $\Oo(H\times_\tau\mu_2)\simeq \mathcal O(G)$. This concludes the proof.
	\end{proof}

\subsection{The alternating group $A_5$} Examples with the alternating group $A_5$ fall into the series studied in the last two subsections, but there is a special interest in $A_5$, because of the following result from \cite{bina}: any finite-dimensional cosemisimple Hopf algebra $A$ having a faithful $2$-dimensional comodule $V$ with $V\otimes V^*\simeq V^*\otimes V$ fits into an abelian cocentral extension 
$$k \to \mathcal O(H) \to A \to k\mathbb Z_m\to k$$
for some $m\geq 2$ and some polyhedral group $H\in \{ A_4, \ S_4, \ A_5, \ D_{2n}\}$ . Using Corollary \ref{cor:isoext2} and the easy description of the conjugacy classes in $S_5\simeq \Aut(A_5)$, we have the following contribution to this situation.

\begin{theorem}
 Let $m\geq 2$ and let $N$ be the number of isomorphism classes of noncommutative Hopf algebras $A$ fitting into
an abelian cocentral extension 
$k \to \mathcal O(A_5) \to A \to k\mathbb Z_m\to k$.  
Then, according to the value of ${\rm GCD}(m,120)$, the value of $N$ is as follows:
\begin{enumerate}
 \item $N=0$ if ${\rm GCD}(m,120)=1$.
\item $N=4$ if ${\rm GCD}(m,120)=2$.
\item $N=1$ if ${\rm GCD}(m,120)=3, 5$.
\item $N=6$ if ${\rm GCD}(m,120)=4,8$.
\item $N=7$ if ${\rm GCD}(m,120)=6, 20, 40$.
\item $N=5$ if ${\rm GCD}(m,120)=10$.
\item $N=9$ if ${\rm GCD}(m,120)=12, 24$.
\item $N=2$ if ${\rm GCD}(m,120)=15$.
\item $N=10$ if ${\rm GCD}(m,120)=30, 60, 120$.
\end{enumerate}
\end{theorem}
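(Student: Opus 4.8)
The plan is to apply Corollary~\ref{cor:isoext2} with $H=A_5$. Its three hypotheses are classical, and I would record them first: $A_5$ is perfect, so $\widehat{A_5}=\{1\}$; $A_5$ is simple and nonabelian, so $Z(A_5)=\{1\}$; and the Schur multiplier of $A_5$ is $\mathbb{Z}_2$, i.e.\ $H^2(A_5,k^\times)\simeq\mathbb{Z}_2$, the associated double cover being $\widetilde{A_5}\simeq\SL_2(\mathbb{F}_5)$. Corollary~\ref{cor:isoext2} then identifies the set to be counted with ${\rm CC}_m^\bullet(\Aut(A_5))$ when $m$ is odd and with ${\rm CC}_m^\bullet(\Aut(A_5))\times H^2(A_5,k^\times)$ when $m$ is even, so that the theorem becomes a purely group-theoretic counting problem.

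For that problem I would use the standard identification $\Aut(A_5)\simeq S_5$ (conjugation gives $\mathrm{Inn}(A_5)=A_5$, and $\mathrm{Out}(A_5)=\mathbb{Z}_2$ is realised by conjugation by an odd permutation), so that ${\rm CC}_m^\bullet(\Aut(A_5))={\rm CC}_m^\bullet(S_5)$. The six nontrivial conjugacy classes of $S_5$, with the common order of their elements, are: transpositions ($2$), products of two disjoint transpositions ($2$), $3$-cycles ($3$), products of a disjoint $3$-cycle and a transposition ($6$), $4$-cycles ($4$), and $5$-cycles ($5$). Since $S_5$ is a rational group — every element is conjugate to all its powers of the same order — the equivalence relation defining ${\rm CC}_m^\bullet$ collapses, on the set of elements $x$ with $x^m=1$, to ordinary conjugacy. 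Hence $|{\rm CC}_m^\bullet(S_5)|$ is simply the number of these six classes whose order divides $m$, which depends only on which of $2,3,4,5$ divide $m$, hence only on ${\rm GCD}(m,120)$; so $N$ is constant on each of the listed sets of gcd-values.

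It then remains to carry out the case analysis: for each value of ${\rm GCD}(m,120)$, list which of the six classes survive, record whether $m$ is even (so that the $\mathbb{Z}_2=H^2(A_5,k^\times)$ factor is present), and assemble $N$, grouping the gcd-values that give the same answer. I expect the only genuinely delicate point to be the even-$m$ case: one must determine, class by class, whether the two cohomology classes really produce non-isomorphic Hopf algebras, which requires returning to the equivalence of $m$-data (Definition~\ref{def:equivdata}, Proposition~\ref{prop:isoa(h)}) rather than simply doubling the count of surviving classes. Everything else is routine bookkeeping over the nine cases.
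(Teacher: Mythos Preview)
Your approach is the paper's: verify the hypotheses of Theorem~\ref{cor:isoext2} for $H=A_5$, identify $\Aut(A_5)\simeq S_5$, and count conjugacy classes (using that $S_5$ is rational so that ${\rm CC}_m^\bullet(S_5)$ collapses to ordinary conjugacy). Your closing worry is unnecessary, though: for even $m$, Theorem~\ref{cor:isoext2} already asserts a \emph{bijection} with ${\rm CC}_m^\bullet(S_5)\times H^2(A_5,k^\times)$ --- its proof shows that for $l$ coprime to $m$ one has $l$ odd and hence $[\tau']^l=[\tau']$, so distinct cohomology classes never merge --- and therefore the even-$m$ count is simply $2\,|{\rm CC}_m^\bullet(S_5)|$, with no class-by-class return to Definition~\ref{def:equivdata} required.
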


Of course, the above theorem does not give any information about the realizability of one of the above Hopf algebras
as Hopf algebras having a faithful $2$-dimensional comodule.

\subsection{Dihedral groups $D_n$}
In this subsection we discuss Hopf algebras fitting into an abelian cocentral extension
		$$k \to \Oo(D_n) \to A \to k\mathbb Z_2 \to k$$
with $D_n$ the dihedral group of order $2n$. While the group structure of $D_n$ is certainly less rich than the one of the groups of the previous sections, the situation with Hopf algebra extensions as above is in fact much more involved.

\subsubsection{Notation} As usual, the group $D_n$ is presented by generators $r$, $s$ and relations $r^n=1=s^2$, $sr=r^{n-1}s$, and its automorphisms all are of the form $\Psi_{k,l}$, $(k,l) \in \mathbb Z/n\mathbb Z \times U(\mathbb Z/n\mathbb Z)$, with 
$$\Psi_{k,l}(r) = r^l, \quad \Psi_{k,l}(s) = sr^k.$$
Such an automorphism $\Psi_{k,l}$ has order $2$ precisely when $(k,l)\not=(0,1)$, $l^2=1$ and $k(l+1)=0$ (in $\mathbb Z/n\mathbb Z$). The following facts are also well-known:
\[
\text{if $n$ is odd, then }  Z(D_n)=\{1\}, ~\ ~\ \ H^2(D_n,k^\times)=\{1\}, ~\ ~\ \ \widehat{D_n}\simeq \mathbb Z_2,
\]
\[
\text{if $n$ is even, then }  Z(D_n)=\{1, ~\ ~\ r^{n/2}\}, ~\ ~\ \ H^2(D_n,k^\times)\simeq \mathbb Z_2, ~\ ~\ \ \widehat{D_n}\simeq \mathbb Z_2\times \mathbb Z_2
\] 

\subsubsection{The case when $n$ is odd} Here the situation is very simple, since we are in the situation of Corollary \ref{cor:isoext3}: we have, for $m\geq 1$, a bijection between 
the set of isomorphism classes of noncommutative Hopf algebras $A$ fitting into an abelian cocentral extension
		 $$k \to \Oo(D_n) \to A \to k\mathbb Z_m \to k$$
and   \begin{enumerate}
	\item if $m$ is odd, the set ${\rm CC}_m^\bullet({\rm Aut}(D_n))$;
	\item if $m$ is even, the set ${\rm CC}_m^\bullet({\rm Aut}(D_n))\times \widehat{D_n}$.
\end{enumerate}

%In particular there are exactly $2|{\rm CC}_2({\rm Aut}(D_n))|$ isomorphism classes of Hopf algebras as above.

%\begin{proof}
%Starting with $\theta\in {\rm Aut}(D_n)$ of order $2$ and $a\in \widehat{D_n}$, we have $a\circ \theta = a$, since $|\widehat{D_n}|=2$, which implies that group ${\rm Aut}(D_n)$ acts trivially on $\widehat{D_n}$. We thus obtain a Hopf algebra $A_2(D_n, \theta, a,1)$ as in (1). Since $Z(D_n)=\{1\}$, all data having $D_n$ as underlying group are reduced, hence by Proposition \ref{prop:isoa(h)} we have $A_2(D_n, \theta, a,1)\simeq A_2(D_n, \theta', a',1)$ if and only if the $2$-data $(D_n,\theta, a, 1)$ and $(D_n,\theta',a',1)$ are equivalent. Again since  the group ${\rm Aut}(D_n)$ acts trivially on $\widehat{D_n}$,  we see from Definition \ref{def:equivdata} that $(D_n,\theta, a, 1)$ and $(D_n,\theta',a',1)$ are equivalent if and only if $\theta$ and $\theta'$ are conjugate and $a=a'$. Finally, since $H^2(D_n,k^\times)=\{1\}$, we obtain from Proposition \ref{prop:ext->a(h)} and Corollary \ref{cor:basiciso} that any Hopf algebra as in (1) is isomorphic to $A_2(D_n, \theta, a,1)$ for   $\theta\in {\rm Aut}(D_n)$ of order $2$ and $a\in \widehat{D_n}$.
 %This follows directly from Theorem \ref{thm:isoext2}, since $H^2(D_n,k^\times)=\{1\}= Z(D_n)
%\end{proof}

An immediate application yields the following result.

\begin{theorem}
 Let $n\geq 3$ be odd and let $e_n$ be the number of isomorphism classes of noncommutative Hopf algebras $A$ fitting into an abelian cocentral extension $k \to \Oo(D_{n}) \to A \to k\mathbb Z_2 \to k$.
\begin{enumerate}
 \item If $n = p^r$ with $p$ odd prime and $r \geq 1$, then $e_n=2$.
\item If $n=p^rq^s$, with $p,q$ distinct odd primes and $r,s\geq 1$, then $e_n=6$.
%\item If $n=4p^r$, with $r\geq 1$ and $p$ an odd prime, then $e_n=12$.
%\item If $n=2^sp^r$, with $s\geq 3$, $r\geq 1$ and $p$ an odd prime, then $e_n=10$.
\end{enumerate}
 %Let $p$ be a prime number with $p\geq 3$ and $m\geq 1$. Then there are precisely $2$ isomorphism classes of noncommutative Hopf algebras  fitting into an abelian cocentral extension $k \to \Oo(D_{p^m}) \to A \to k\mathbb Z_2 \to k$.
\end{theorem}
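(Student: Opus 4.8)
The plan is to feed the statement into the bijection obtained just above (the $m=2$ specialization of Corollary \ref{cor:isoext3}), so that computing $e_n$ amounts to counting conjugacy classes of involutions in $\Aut(D_n)$, and then to carry out that count using the explicit form $\Psi_{k,l}$ of the automorphisms of $D_n$.

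First I would note that for $n$ odd we have $Z(D_n)=\{1\}$, $H^2(D_n,k^\times)=\{1\}$ and $\widehat{D_n}\simeq\mathbb Z_2$, so Corollary \ref{cor:isoext3} applies with $H=D_n$ and $m=2$; its even case gives a bijection between the set of isomorphism classes counted by $e_n$ and ${\rm CC}_2^\bullet(\Aut(D_n))\times\widehat{D_n}$. Since $|\widehat{D_n}|=2$, this yields $e_n=2\,|{\rm CC}_2^\bullet(\Aut(D_n))|$, and because $m=2$ the set ${\rm CC}_2^\bullet(\Aut(D_n))$ is simply the set of conjugacy classes of order-$2$ elements of $\Aut(D_n)$. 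So the whole problem reduces to counting those classes.

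Next I would use the description $\Aut(D_n)=\{\Psi_{k,l}\}$ with $(k,l)\in\mathbb Z/n\mathbb Z\times U(\mathbb Z/n\mathbb Z)$. A direct check from the formulas for $\Psi_{k,l}(r)$ and $\Psi_{k,l}(s)$ gives the composition law $\Psi_{a,b}\circ\Psi_{k,l}=\Psi_{a+bk,\,bl}$, hence $\Aut(D_n)\simeq\mathbb Z/n\mathbb Z\rtimes U(\mathbb Z/n\mathbb Z)$, and the conjugation formula $\Psi_{a,b}\Psi_{k,l}\Psi_{a,b}^{-1}=\Psi_{(1-l)a+bk,\,l}$; in particular $l$ is a conjugacy invariant. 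Recall $\Psi_{k,l}$ has order $2$ exactly when $(k,l)\neq(0,1)$, $l^2=1$ and $k(l+1)=0$. Since $n$ is odd, $2$ is invertible, so $l=1$ forces $k=0$, which is excluded; thus every involution has $l^2=1$ and $l\neq1$. Fixing such an $l$ and decomposing $\mathbb Z/n\mathbb Z$ by the Chinese remainder theorem over the prime-power factors $p_i^{e_i}$ of $n$, on each factor one has $l\equiv\pm1$, so $l+1$ is a unit or $0$ there; it follows that the set of admissible $k$ (those with $k(l+1)=0$) is precisely the subgroup $(1-l)\cdot\mathbb Z/n\mathbb Z$. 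The conjugation formula then shows that the orbit of any admissible $\Psi_{k,l}$ already contains every $\Psi_{k',l}$ with $k'\in(1-l)\cdot\mathbb Z/n\mathbb Z$, i.e.\ there is exactly one conjugacy class of involutions for each admissible $l$. Hence $|{\rm CC}_2^\bullet(\Aut(D_n))|$ equals the number of $l\in U(\mathbb Z/n\mathbb Z)$ with $l^2=1$, $l\neq1$, which is $2^{\omega(n)}-1$, with $\omega(n)$ the number of distinct prime divisors of $n$.

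Finally I would specialize. For $n=p^r$ the group $U(\mathbb Z/p^r\mathbb Z)$ is cyclic of even order, hence has exactly two square roots of $1$, giving one class and $e_n=2$; for $n=p^rq^s$ one obtains $2^2-1=3$ classes, so $e_n=6$. The step I expect to require the most care is the Chinese-remainder bookkeeping identifying the admissible $k$'s with $(1-l)\cdot\mathbb Z/n\mathbb Z$ and checking that conjugation acts transitively on them; the remaining verifications (composition law, conjugation formula, specialization) are routine.
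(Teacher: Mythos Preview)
Your proof is correct and follows essentially the same approach as the paper: both reduce to $e_n = 2\,|{\rm CC}_2^\bullet(\Aut(D_n))|$ via Corollary~\ref{cor:isoext3} and then count the involution classes in $\Aut(D_n)$. The only difference is cosmetic---the paper lists explicit representatives in each case, whereas you derive the uniform formula $|{\rm CC}_2^\bullet(\Aut(D_n))|=2^{\omega(n)}-1$ via the CRT argument.
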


\begin{proof}
The previous statement ensures that $e_n$ is twice the number of conjugacy classes of elements of order $2$ in ${\rm Aut}(D_{n})$, that we compute in the above two cases.
In the first case there is precisely one such conjugacy class, represented by
 $\Psi_{0,-1}$. In the second situation, fix integers $a,b$ such that $p^ra+q^s b=1$, and such that $a,b$ become invertible in $\mathbb Z/p^rq^s\mathbb Z$. One checks that there are $3$ conjugacy classes of elements of order $2$ in ${\rm Aut}(D_{p^rq^s})$, represented by $\Psi_{0,-1}$, $\Psi_{0,2q^sb-1}$ and $\Psi_{0,2p^ra-1}$.
\end{proof}

\begin{rem}
For $n=3$,  the two non-isomorphic Hopf algebras of the previous theorem are the two non-isomorphic  noncommutative and noncocommutative Hopf algebras of dimension $12$, classified by Fukuda \cite{fuk}.
\end{rem}

\subsubsection{The case when $n$ is even} We now assume, throughout the subsection, that $n$ is even. None of our previous classification results apply here and we have to perform a specific analysis.
We obtain a pretty satisfactory result in Table \ref{Tab:Dn}, which, on the other hand, indicates that, in full generality, it is probably hopeless to get compact classification results, such as in theorems  \ref{cor:isoext2}, \ref{cor:isoext3}, \ref{cor:isoext4}.

 We begin with a useful test to determine whether a $2$-cocycle on $D_n$ is trivial or not, and when it is trivial, to describe it as an explicit coboundary.

\begin{lemma} \label{lem:trivcocDn}
Let $\beta \in Z^{2}(D_n,k^\times)$. The following assertions are equivalent:
\begin{enumerate}
 \item $[\beta]=1$ in $H^2(D_n,k^\times)$;	
\item   there exist $x,y\in k^\times$ such that
\[   x^n = \beta(r,r)\beta(r,r^2)\cdots \beta(r,r^{n-1}) , \quad
y^2= \beta(s,s), \quad x^{2} = \beta(r,r^{n-1})\beta(r^{n-1},s)^{-1}\beta(s,r); \]
\item  we have $\left(\beta(r,r^{n-1})\beta(r^{n-1},s)^{-1}\beta(s,r) \right)^{n/2} = \beta(r,r)\beta(r,r^2)\cdots \beta(r,r^{n-1})$.
\end{enumerate}
Moreover, when 	$[\beta]=1$ in $H^2(D_n,k^\times)$, picking $x,y\in k^\times$ as above, the map $\mu : D_n\to k^\times$ defined by, for $0\leq i\leq n-1$, $0\leq j\leq 1$,
\[ \mu(r^is^j) = \beta(r^{i},s^j)^{-1}\beta(r,r)^{-1}\beta(r,r^2)^{-1}\cdots  \beta(r,r^{i-1})^{-1}\beta(s,s^j)^{-1}x^iy^j\]
is such that $\beta=\partial(\mu)$.
\end{lemma}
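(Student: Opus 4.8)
The plan is to work directly with the standard presentation of $D_n$ and the explicit bar-resolution description of $2$-cocycles. Recall that $D_n = \langle r, s \mid r^n=1, s^2=1, srs=r^{-1}\rangle$, so every element is uniquely $r^i s^j$ with $0 \leq i \leq n-1$, $0 \leq j \leq 1$. A normalized $2$-cocycle $\beta$ is determined up to coboundary by a small amount of data; the key observation is that the group $H^2(D_n,k^\times) \simeq \mathbb{Z}_2$ is detected by a single scalar invariant built from $\beta$. So the strategy is: first reduce an arbitrary $\beta$ to a ``normal form'' by subtracting an explicit coboundary $\partial(\nu)$, then read off the invariant, and finally check that this invariant is exactly the quantity appearing in (3).

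First I would set $\nu(r^i s^j)$ using the prescription in the last displayed formula of the statement (the candidate $\mu$) and simply \emph{verify} that $\beta \cdot \partial(\nu)^{-1}$ is either trivial or equal to a fixed reference cocycle, depending on whether the condition in (3) holds. Concretely, writing $\mu$ as given, one computes $\partial(\mu)(a,b) = \mu(a)\mu(b)\mu(ab)^{-1}$ for the generating pairs $(r,r^k)$, $(r^i,s)$, $(s,s)$, $(s,r^k)$, etc. Because of the uniqueness of the normal form $r^i s^j$, it suffices to check the cocycle identity on enough pairs to pin down $\beta$ entirely; the relevant pairs are $(r,r^k)$ for $1\leq k \leq n-1$, the pair $(r,r^{n-1})$ together with $(r^{n-1},s)$ and $(s,r)$ (which is how the ``twist'' between the rotation and reflection parts enters), and $(s,s)$. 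The three equations in (2) are precisely the conditions that $x,y$ must satisfy so that $\mu$ as defined gives $\partial(\mu) = \beta$ on all of these pairs; one then checks, using the cocycle relations for $\beta$, that satisfying these forces $\partial(\mu)=\beta$ everywhere. This establishes $(2) \Rightarrow (1)$ and the ``moreover'' clause simultaneously.

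For $(1) \Rightarrow (3)$: if $[\beta]=1$, write $\beta = \partial(\mu)$ for some $\mu : D_n \to k^\times$ with $\mu(1)=1$, and then directly compute each of the three products $\beta(r,r)\cdots\beta(r,r^{n-1})$, $\beta(s,s)$, and $\beta(r,r^{n-1})\beta(r^{n-1},s)^{-1}\beta(s,r)$ in terms of $\mu$. The first telescopes to $\mu(r)^n\mu(1)^{-1} = \mu(r)^n$ (using $r^n=1$, $\mu(1)=1$); the second gives $\mu(s)^2$; the third, after using $\mu(r^{n-1}s)=\mu(sr)$ and the relevant cancellations, reduces to $\mu(r)^2$. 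Hence $x=\mu(r)$, $y=\mu(s)$ witness (2), and raising the third to the power $n/2$ and comparing with the first yields (3). The implication $(3)\Rightarrow(2)$ is then immediate: given the single scalar equation in (3), one extracts $x$ as an $(n/2)$-th root arranged to be compatible — solve $x^2 = \beta(r,r^{n-1})\beta(r^{n-1},s)^{-1}\beta(s,r)$ for $x$, note (3) says this $x$ automatically satisfies $x^n = \beta(r,r)\cdots\beta(r,r^{n-1})$ since $k$ is algebraically closed so square roots exist and the $n/2$-power relation is what forces consistency — and pick any square root $y$ of $\beta(s,s)$. The implication $(2) \Rightarrow (3)$ is trivial (raise the last equation to the $n/2$ power).

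The main obstacle I anticipate is the bookkeeping in the $(1)\Rightarrow(3)$ and ``moreover'' computations: one must carefully track how $\mu$ behaves on products $r^i s \cdot r^k$, $r^i \cdot r^k$ crossing the ``wrap-around'' at $r^n=1$, and on the reflection coset, using $s r^k = r^{-k} s$. These are finitely many routine identities but it is easy to slip a sign or an index; the cleanest way to organize it is to first handle all pairs within the cyclic subgroup $\langle r\rangle$ (a standard computation for $\mathbb{Z}_n$), then the ``action'' pairs $(s, r^k)$ and $(r^k, s)$, then $(s,s)$, and finally note that the cocycle identity propagates these to all of $D_n \times D_n$. One subtlety worth flagging: the formula for $\mu$ in the statement involves $\beta(r^i, s^j)$ and a product $\beta(r,r)\cdots\beta(r,r^{i-1})$ with the convention that an empty product (when $i=0$ or $i=1$) equals $1$, and $\beta(s, s^0)=\beta(s,1)=1$ by normalization; keeping these degenerate cases straight is where most errors would creep in, so I would state the conventions explicitly at the start.
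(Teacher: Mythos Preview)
Your approach is correct, and the computations you outline for $(1)\Rightarrow(2)$ (setting $x=\mu(r)$, $y=\mu(s)$ and telescoping) and for $(2)\Leftrightarrow(3)$ are exactly right. The difference from the paper is one of packaging rather than substance, but it is worth noting.

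The paper's proof invokes the twisted group algebra $k_\beta D_n$: the cocycle $\beta$ is a coboundary precisely when $k_\beta D_n$ admits an algebra map to $k$, and such a map $\chi$ gives $\mu(g)=\chi(e_g)$. Because $k_\beta D_n$ is generated by $e_r,e_s$ subject to the relations coming from the presentation $r^n=1$, $s^2=1$, $sr=r^{-1}s$, an algebra map is exactly a choice of $x=\chi(e_r)$, $y=\chi(e_s)$ satisfying the three equations in (2). This framing makes the sufficiency of (2) automatic: you do not need to verify $\partial(\mu)=\beta$ pair by pair, because you already know the relations in (2) \emph{present} the algebra. The explicit formula for $\mu$ then drops out by expressing $e_{r^is^j}$ as a scalar times $e_r^i e_s^j$ in $k_\beta D_n$.

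Your route, by contrast, defines $\mu$ directly and verifies $\partial(\mu)=\beta$ on a generating set of pairs, then argues via the cocycle identity that this propagates everywhere. This is perfectly valid but is exactly the ``bookkeeping'' you flag as the main obstacle; the twisted-group-algebra viewpoint absorbs that bookkeeping into the statement that $k_\beta D_n$ has the expected presentation. Either way the underlying computation is identical, so your proof would go through as written.
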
	 

\begin{proof}
 This is a direct verification, using the well-known fact that $\beta$ is trivial if and only if there exists an algebra map $k_\beta D_n \to k$, where $k_\beta D_n$ is the twisted group algebra. Such an algebra map then furnishes a map $\mu$ with $\beta=\partial(\mu)$. 
\end{proof}

We now exhibit a convenient explicit non-trivial $2$-cocycle over $D_n$.

\begin{lemma}\label{lem:tauomega}
 Let $\omega\in k^\times$ be such that $\omega^n=1$. Then the map 
\begin{align*}
 \tau_\omega : D_n \times D_n &\longrightarrow k^\times \\
(r^is^j, r^ks^l) &\longmapsto \omega^{jk} \ (j,l  \in \{0,1\})
\end{align*}
is a $2$-cocycle, and $[\tau_\omega]=1\iff \omega^{n/2}=1$. When $\omega^{n/2}=-1$, $\tau_\omega$ represents the only non-trivial cohomology class in $H^2(D_n,k^\times)$. 
\end{lemma}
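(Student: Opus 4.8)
The plan is to check directly that $\tau_\omega$ is a normalized $2$-cocycle, then to decide its triviality through the criterion of Lemma~\ref{lem:trivcocDn}, and finally to appeal to the fact that $H^2(D_n,k^\times)\simeq\mathbb Z_2$ for $n$ even.

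First I would record the multiplication rule $r^is^j\cdot r^ks^l = r^{i+(-1)^jk}s^{j+l}$ in $D_n$, with the exponent of $s$ read modulo $2$. Normalization of $\tau_\omega$ is immediate, since $\tau_\omega(1,x)$ and $\tau_\omega(x,1)$ each carry a zero in the relevant exponent. For the cocycle identity, which in multiplicative form reads $\tau_\omega(b,c)\,\tau_\omega(a,bc) = \tau_\omega(ab,c)\,\tau_\omega(a,b)$, I would write $a=r^is^j$, $b=r^ks^l$, $c=r^ps^q$ and note that $\tau_\omega$ of a pair only depends on the $s$-exponent of the first factor and the $r$-exponent of the second; in particular $q$ is irrelevant. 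It then suffices to run through the four cases $(j,l)\in\{0,1\}^2$: using the multiplication rule to compute the $s$-exponent of $ab$ and the $r$-exponent of $bc$, one finds that both sides reduce in each case to the same power of $\omega$ — namely $1$ for $(j,l)=(0,0)$, $\omega^{k+p}$ for $(1,0)$, $\omega^{p}$ for $(0,1)$, and $\omega^{k}$ for $(1,1)$. This case check, with the $(-1)^j$ twist and the reduction of $s$-exponents modulo $2$, is the only slightly delicate point, though it is entirely elementary.

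Next I would feed $\beta=\tau_\omega$ into the equivalent condition (3) of Lemma~\ref{lem:trivcocDn}. The required values are read off at once from the defining formula: $\tau_\omega(r,r^{n-1})=1$, $\tau_\omega(r^{n-1},s)=1$, $\tau_\omega(s,r)=\omega$, and $\tau_\omega(r,r^i)=1$ for every $i$. Hence the left-hand side of (3) equals $\omega^{n/2}$ while the right-hand side equals $1$, so $[\tau_\omega]=1$ in $H^2(D_n,k^\times)$ if and only if $\omega^{n/2}=1$.

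Finally, since $n$ is even the hypothesis $\omega^n=1$ forces $\omega^{n/2}\in\{1,-1\}$, and $H^2(D_n,k^\times)\simeq\mathbb Z_2$. Consequently, when $\omega^{n/2}=-1$ the class $[\tau_\omega]$ is non-trivial, hence it is the unique non-trivial element of $H^2(D_n,k^\times)$, which is the last assertion of the lemma.
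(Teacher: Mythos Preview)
Your proof is correct and follows exactly the same route as the paper: a direct verification of the cocycle identity, then an application of condition~(3) in Lemma~\ref{lem:trivcocDn}, and finally the observation that $H^2(D_n,k^\times)\simeq\mathbb Z_2$. The paper's own proof merely asserts that each of these steps is straightforward, so your version is in fact a faithful expansion of it.
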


\begin{proof}
 It is a straightforward verification that $ \tau_\omega$ is  a $2$-cocycle, and the triviality condition follows from Lemma \ref{lem:trivcocDn}. The last assertion follows from the previous one and the fact that $H^2(D_n,k^\times)\simeq \mathbb Z_2$.
\end{proof}

We now proceed to describe the possible $2$-data over $D_n$. We begin with a preliminary lemma.

\begin{lemma}\label{lem:acircthetaDn}
Let $\theta \in {\rm Aut}(D_n)$ and $\tau \in Z^{2}(D_n,k^\times)$ be such that $[\tau]=1$ and $\tau \circ\theta\times \theta=\tau$, and let $a : D_n \to k^\times$ be such that $\tau=\partial(a)$. If  $a(\theta(r))=a(r)$ and $a(\theta(s))=a(s)$, then $a \circ \theta =a$.
\end{lemma}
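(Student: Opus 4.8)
The plan is to reduce the statement to the elementary fact that a group homomorphism out of $D_n$ is completely determined by its values on the two generators $r$ and $s$; the only real content is to convert the cocycle identity $\tau\circ\theta\times\theta=\tau$ into a statement about the $1$-cochain $a$.

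First I would unwind the hypotheses. Since $\theta$ is a group automorphism, for all $x,y\in D_n$ one has $(\partial(a)\circ\theta\times\theta)(x,y)=a(\theta(x))a(\theta(y))a(\theta(xy))^{-1}=\partial(a\circ\theta)(x,y)$, so the assumption $\tau\circ\theta\times\theta=\tau$ together with $\tau=\partial(a)$ gives $\partial(a\circ\theta)=\partial(a)$. As the coboundary map is multiplicative on cochains, this is equivalent to $\partial((a\circ\theta)\cdot a^{-1})=1$.

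Next I would set $\chi:=(a\circ\theta)\cdot a^{-1}\colon D_n\to k^\times$. From $\partial(\chi)=1$ and $\chi(1)=a(\theta(1))a(1)^{-1}=1$ it follows that $\chi$ is a group homomorphism, that is, $\chi\in\widehat{D_n}$. By hypothesis $\chi(r)=a(\theta(r))a(r)^{-1}=1$ and $\chi(s)=a(\theta(s))a(s)^{-1}=1$; since $D_n=\langle r,s\rangle$ and $\chi$ is a homomorphism, $\chi$ is trivial, which is exactly the assertion $a\circ\theta=a$.

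I do not expect a genuine obstacle here: the argument is a short bookkeeping exercise with the (normalized, multiplicative) coboundary convention. The one point to get right is the identity $\partial(a)\circ\theta\times\theta=\partial(a\circ\theta)$, which uses that $\theta$ is an automorphism. It is also worth remarking that the displayed hypothesis $[\tau]=1$ is already subsumed by $\tau=\partial(a)$ and plays no independent role, and that neither the structure of $\widehat{D_n}$ nor that of $H^2(D_n,k^\times)$ is needed for this particular lemma.
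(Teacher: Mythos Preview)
Your proof is correct and is essentially the same argument as the paper's: both boil down to the observation that the ratio $\chi=(a\circ\theta)\cdot a^{-1}$ is multiplicative and vanishes on the generators $r,s$. The paper phrases this as ``if $a(\theta(g))=a(g)$ and $a(\theta(h))=a(h)$ then $a(\theta(gh))=a(gh)$,'' obtained directly from $\partial(a)=\tau=\tau\circ\theta\times\theta=\partial(a\circ\theta)$, while you package the same computation by naming $\chi$ and observing $\chi\in\widehat{D_n}$; the content is identical.
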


\begin{proof}
	We have for any $g,h\in D_n$, 
	\[ a(g)a(h)a(gh)^{-1}=\tau(g,h)= \tau(\theta(g),\theta(h))=a(\theta(g)) a(\theta(h)) a(\theta(gh))^{-1}\]
	hence if $a(g)=a(\theta(g))$ 	and  $a(h)=a(\theta(h))$, we have $a(\theta(gh))=a(gh)$, and the result follows since $D_n$ is generated by $r$ and $s$.
\end{proof}

\begin{lemma}\label{lem:possa}
 Let $\Psi_{u,v}\in {\rm Aut}(D_n)$. Let $\omega\in k^\times$ with $\omega =-1$ if $n/2$ is odd, and with $\omega$ a primitive nth root of unity if $n/2$ is even. Let $\tau_\omega\in Z^2(D_n,k^\times)$ be the non trivial cocycle of Lemma \ref{lem:tauomega}.   Let $x,y\in k^\times$ be such that $x^n=1$, $y^2=\omega^{u}$ and $x^2=\omega^{-v-1}$ ($x^2=1$ if $n/2$ is odd).  The map $a_{x,y} : D_n\to k^{\times}$ defined by
$$ a_{x,y}(r^is^j)=\omega^{-uj}x^{i}y^{j},  \  ~\  ~\  0\leq i\leq n-1, \ 0\leq j \leq 1$$
is such that $\tau_\omega(\tau_\omega\circ \Psi_{u,v}\times \Psi_{u,v}) = \partial(a_{x,y}^{-1})$, and any map satisfying this identity is of the form $a_{\pm x,\pm y}$. Moreover we have $a_{x,y}\circ \Psi_{u,v}=a_{x,y}$ if and only if $x^u=1=x^{v-1}$.

Assume furthermore that $\Psi_{u,v}$ has order $2$. Then $a_{x,y}\circ \Psi_{u,v}=a_{x,y}$ if and only if we are in one of the following situations.

\begin{enumerate}
 \item $n/2$ is odd, $u$ is even, $x=\pm 1$ and $y=\pm 1$.
\item $n/2$ is odd, $u$ is odd, $x=1$ and $y=\pm \xi$, with $\xi$ a primitive fourth root of unity.
\item $n/2$ is even, $u$ is even, $v^2=1+kn$, $u(1+v) = ln$ with $k,l$ even, and $x=\pm \omega^{\frac{-v-1}{2}}$, $y=\pm \omega_0^u$, with $\omega_0^2=\omega$.
\item $n/2$ is even, $u$ is odd, $v^2=1+kn$, $u(1+v) = ln$ with $k,l$ even, and $x=\omega^{\frac{-v-1}{2}}$, $y=\pm \omega_0^u$, with $\omega_0^2=\omega$.
\item $n/2$ is even, $u$ is odd, $v^2=1+kn$, $u(1+v) = ln$ with $k$ even and $l$ odd, and $x=-\omega^{\frac{-v-1}{2}}$, $y=\pm \omega_0^u$, with $\omega_0^2=\omega$.
%\item $n/2$ is even, $u\equiv 0 \ [4]$, $v\equiv 3\ [4]$, $x=\pm 1$ and $y=\pm 1$.
%\item $n/2$ is even, $u\equiv 2 \ [4]$, $v\equiv 3\ [4]$, $x=\pm 1$ $y=\pm \omega$.
%\item $n/2$ is even, $u\equiv 0\ [4]$, $v\equiv 1 \ [4]$,  $x=\pm \omega$, and $y=\pm 1$.
%\item $n/2$ is even, $u\equiv 1 \ [4]$ , $v\equiv 3[4]$, $x=1$ and $y=\pm \omega_0$, where $\omega_0^2=\omega$.
%\item $n/2$ is even, $u\equiv 3 \ [4]$ , $v\equiv 3[4]$, $x=1$ and $y=\pm \omega_0^3$, where $\omega_0^2=\omega$.
\end{enumerate}
\end{lemma}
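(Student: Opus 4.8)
The plan is to treat the statement in three stages: first, identify all maps $a\colon D_n\to k^\times$ satisfying $\tau_\omega\,(\tau_\omega\circ\Psi_{u,v}\times\Psi_{u,v})=\partial(a^{-1})$ and recover the constraints on $x,y$; second, settle the criterion for $a_{x,y}\circ\Psi_{u,v}=a_{x,y}$; third, under the extra hypothesis that $\Psi_{u,v}$ has order $2$, unfold that criterion into the five listed configurations.

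For the first stage I would compute the $2$-cocycle $\sigma:=\tau_\omega\,(\tau_\omega\circ\Psi_{u,v}\times\Psi_{u,v})$ explicitly from the formulas $\Psi_{u,v}(r^is^j)=r^{vi-uj}s^j$ and $\tau_\omega(r^as^b,r^cs^d)=\omega^{bc}$, the only delicate point being the reduction of $s$-exponents modulo $2$ when multiplying group elements. Since $\mathrm{Aut}(D_n)$ acts trivially on $H^2(D_n,k^\times)\simeq\mathbb Z_2$, we have $[\tau_\omega\circ\Psi_{u,v}\times\Psi_{u,v}]=[\tau_\omega]$ and hence $[\sigma]=[\tau_\omega]^2=1$, so $\sigma$ is a coboundary; applying Lemma~\ref{lem:trivcocDn} to $\sigma$ (its triviality test, then the explicit trivialising map it provides) produces one solution $a=\mu^{-1}$, and matching its shape against the ansatz $a_{x,y}(r^is^j)=\omega^{-uj}x^iy^j$ recovers the constraints $x^n=1$, $x^2=\omega^{-v-1}$, $y^2=\omega^u$ of the statement. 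Any two solutions of $\partial(a^{-1})=\sigma$ differ by an element of $\widehat{D_n}$; for $n$ even $\widehat{D_n}=\{1,\chi_1,\chi_2,\chi_1\chi_2\}$ with $\chi_1(r^is^j)=(-1)^i$ and $\chi_2(r^is^j)=(-1)^j$, and multiplying $a_{x,y}$ by these characters yields precisely $a_{-x,y},a_{x,-y},a_{-x,-y}$ (which satisfy the same constraints), so the full solution set is $\{a_{\pm x,\pm y}\}$. For the second stage I would simply compute $a_{x,y}\circ\Psi_{u,v}(r^is^j)=\omega^{-uj}x^{vi-uj}y^j$; equality with $a_{x,y}(r^is^j)$ for all $i,j$ amounts to $x^{vi-uj}=x^i$, which, specializing to $(i,j)=(1,0)$ and $(0,1)$ and then checking sufficiency, is equivalent to $x^{v-1}=1=x^u$.

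For the last stage, assume $\Psi_{u,v}$ has order $2$, so $(u,v)\neq(0,1)$, $v^2\equiv1$ and $u(v+1)\equiv0\pmod n$; since $v\in U(\mathbb Z/n\mathbb Z)$ and $n$ is even, $v$ is odd, so $v+1$ is even and one may write $v^2=1+kn$, $u(v+1)=ln$. The task is to decide, according to the parities of $n/2$ and $u$, when $x,y\in k^\times$ exist with $x^n=1$, $x^2=\omega^{-v-1}$, $y^2=\omega^u$ and $x^u=1=x^{v-1}$, and to list them all. If $n/2$ is odd then $\omega=-1$ forces $x^2=1$, hence $x=\pm1$, with $x^{v-1}=1$ automatic and $x^u=1$ ruling out $x=-1$ when $u$ is odd; together with $y^2=(-1)^u$ this gives cases (1) and (2). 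If $n/2$ is even then $\omega$ is a primitive $n$th root, the solutions of $x^2=\omega^{-v-1}$ are $\pm x_0$ with $x_0=\omega^{(-v-1)/2}$ (automatically $n$th roots of unity), and $x_0^{v-1}=\omega^{-(v^2-1)/2}=(\omega^{n/2})^{-k}=(-1)^k$ while $x_0^u=\omega^{-u(v+1)/2}=(\omega^{n/2})^{-l}=(-1)^l$ (and the same up to the harmless factor $(-1)^{v-1}=1$ for $-x_0$). Deciding which of $\pm x_0$ satisfies both $x^{v-1}=1$ and $x^u=1$ in terms of the parities of $k,l,u$ forces $k$ even and yields cases (3) ($u$ even, $l$ even, $x=\pm x_0$), (4) ($u$ odd, $l$ even, $x=x_0$), (5) ($u$ odd, $l$ odd, $x=-x_0$), while no solution exists if $k$ is odd or if $l$ is odd with $u$ even --- which is why only these five configurations occur; in each of (3)--(5) one has $y=\pm\omega_0^u$ with $\omega_0^2=\omega$, and in (1)--(2), $y=\pm1$ resp. $y=\pm\xi$ with $\xi$ a primitive fourth root.

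I expect the main obstacle to be the bookkeeping of this last stage: keeping track of the two candidate square roots $\pm x_0$ against the parities of $k$, $l$, $u$, and checking that the resulting list is both correct and exhaustive (including the ``empty'' sub-cases). The explicit determination of $\sigma$ in the first stage, although routine, is the other spot requiring care, chiefly because of the modulo-$2$ reductions among $s$-exponents.
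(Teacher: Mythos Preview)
Your proposal is correct and follows essentially the same route as the paper: argue that $\sigma=\tau_\omega\,(\tau_\omega\circ\Psi_{u,v}\times\Psi_{u,v})$ is trivial because $|H^2(D_n,k^\times)|=2$, apply Lemma~\ref{lem:trivcocDn} to produce the explicit trivialisation $a_{x,y}$, parametrise the remaining solutions by $\widehat{D_n}$, reduce $a_{x,y}\circ\Psi_{u,v}=a_{x,y}$ to the two conditions $x^{v-1}=1=x^u$, and then do the parity bookkeeping. The only minor difference is that you check invariance by direct evaluation on all of $r^is^j$, whereas the paper reduces to the generators $r,s$ via Lemma~\ref{lem:acircthetaDn}; your direct computation is equally valid and slightly more self-contained.
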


\begin{proof}
The cocycle $\tau_\omega(\tau_\omega\circ \Psi_{u,v}\times \Psi_{u,v})$ is necessarily trivial since $H^2(D_n,k^\times)$ has order $2$, and Lemma \ref{lem:trivcocDn} yields the identity $\tau_\omega(\tau_\omega\circ \Psi_{u,v}\times \Psi_{u,v}) = \partial(a_{x,y}^{-1})$. Any map $D_n\to k$ satisfying the previous identity differs from $a_{x,y}$
by the multiplication of an element in $\widehat{D_n}$, and hence is of the form $a_{\pm x,\pm y}$. The previous lemma ensures that $a_{x,y}\circ \Psi_{u,v}=a_{x,y}$ if and only if 
$a_{x,y}(\Psi_{u,v}(r))=a_{x,y}(r)$ and $a_{x,y}(\Psi_{u,v}(s))=a_{x,y}(s)$. We have
$$a_{x,y}(r)= x, \ a_{x,y}(\Psi_{u,v}(r)) = x^{v}, \ a_{x,y}(s)=\omega^{-u}y, \ a_{x,y}(\Psi_{u,v}(s))=\omega^{-u}x^{-u}y.$$ Hence we have $a_{x,y}\circ \Psi_{u,v}=a_{x,y}$ if and only if $x^{v-1}=1$ and $x^u=1$. The result is then obtained via a case by case discussion and the previous lemma.
\end{proof}

Lemma \ref{lem:possa} describes the automorphisms $\Psi_{u,v}$ that fit into a $2$-datum $(D_n, \Psi_{u,v}, a, \tau_\omega)$ with the description of the possible maps $a$. We now have to classify them up to equivalence: this is done in our next lemma.

\begin{lemma}\label{lem:classa}
Let $\Psi_{u,v}\in {\rm Aut}(D_n)$ be an element of order $2$, and retain the notation of Lemma \ref{lem:possa}.
\begin{enumerate}
\item For $n/2$ odd, $u$ even and $x,y$ as in Lemma \ref{lem:possa} ($x=\pm 1$ and $y=\pm 1$), the $2$-data $(D_n,\Psi_{u,v}, a_{1,1}, \tau_\omega)$ and $(D_n,\Psi_{u,v}, a_{1,-1}, \tau_\omega)$ are equivalent, while the $2$-data $(D_n,\Psi_{u,v}, a_{1,1}, \tau_\omega)$, $(D_n,\Psi_{u,v}, a_{-1,1}, \tau_\omega)$ and $(D_n,\Psi_{u,v}, a_{-1,-1}, \tau_\omega)$ are pairwise non-equivalent. Hence there are exactly three equivalence classes of $2$-data over $D_n$ having $\Psi_{u,v}$ as underlying automorphism.
 \item For $n/2$ odd and  $u$  odd, the $2$-data $(D_n,\Psi_{u,v}, a_{1,\xi}, \tau_\omega)$ and $(D_n,\Psi_{u,v}, a_{1,-\xi}, \tau_\omega)$ are equivalent. Hence there is only one equivalence class of $2$-data over $D_n$ having $\Psi_{u,v}$ as underlying automorphism. 
\item For $n/2$ even and  $u$  odd satisfying the conditions of cases 4 or 5 in Lemma \ref{lem:possa}, and for $x,y$ as above, the $2$-data $(D_n,\Psi_{u,v}, a_{x,y}, \tau_\omega)$ and $(D_n,\Psi_{u,v}, a_{x,-y}, \tau_\omega)$ are equivalent. Hence there is only one equivalence class of $2$-data over $D_n$ having $\Psi_{u,v}$ as underlying automorphism.
\item For $n\equiv0[8]$ and  $u$ even satisfying the conditions of case 3 in Lemma \ref{lem:possa}, and for $x,y$ as in Lemma \ref{lem:possa}, the $2$-data $(D_n,\Psi_{u,v}, a_{x,y}, \tau_\omega)$ and $(D_n,\Psi_{u,v}, a_{x,-y}, \tau_\omega)$ are equivalent, while the $2$-data $(D_n,\Psi_{u,v}, a_{x,y}, \tau_\omega)$ and $(D_n,\Psi_{u,v}, a_{-x,y}, \tau_\omega)$ are not equivalent. Hence there are exactly two equivalence classes of $2$-data over $D_n$ having $\Psi_{u,v}$ as underlying automorphism. 
\item For $n\equiv4[8]$,  $u$ even and $v\equiv 3 [4]$ satisfying the conditions of case 3 in Lemma \ref{lem:possa}, and for $x,y$ in Lemma \ref{lem:possa}, the $2$-data $(D_n,\Psi_{u,v}, a_{x,y}, \tau_\omega)$ and $(D_n,\Psi_{u,v}, a_{x,-y}, \tau_\omega)$ are equivalent, while the $2$-data $(D_n,\Psi_{u,v}, a_{x,y}, \tau_\omega)$ and $(D_n,\Psi_{u,v}, a_{-x,y}, \tau_\omega)$ are not equivalent. Hence there are exactly two equivalence classes of $2$-data over $D_n$ having $\Psi_{u,v}$ as underlying automorphism. 
\item For $n\equiv4[8]$,  $u$ even and $v\equiv 1 [4]$ satisfying the conditions of case 3 in Lemma \ref{lem:possa}, and for $x,y$ as in Lemma \ref{lem:possa},  there are exactly three equivalence classes of $2$-data over $D_n$ having $\Psi_{u,v}$ as underlying automorphism. 
\end{enumerate}
\end{lemma}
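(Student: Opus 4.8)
The plan is to make the equivalence relation on $2$-data completely explicit in the present situation and then carry out a finite sign computation on top of the lists produced by Lemma~\ref{lem:possa}. Specializing Definition~\ref{def:equivdata} to $m=2$ (so the only admissible exponent is $l=1$) and to $2$-data $(D_n,\Psi_{u,v},a,\tau_\omega)$ and $(D_n,\Psi_{u,v},a',\tau_\omega)$ sharing the \emph{same} automorphism and the \emph{same} cocycle, condition~(1) becomes ``$f\in\Aut(D_n)$ commutes with $\Psi_{u,v}$'' (call this centralizer $C(\Psi_{u,v})$, which in the identification $\Aut(D_n)\cong\mathbb{Z}/n\mathbb{Z}\rtimes(\mathbb{Z}/n\mathbb{Z})^\times$ consists of the $\Psi_{k,l}$ with $k(v-1)\equiv u(l-1)\ [n]$); condition~(3) becomes $\partial(\varphi)=\tau_\omega\cdot(\tau_\omega\circ(f^{-1}\times f^{-1}))^{-1}$, which always has a solution because $\Aut(D_n)$ acts trivially on $H^2(D_n,k^\times)\simeq\mathbb{Z}_2$, an explicit one being provided by Lemma~\ref{lem:trivcocDn}, and $\varphi$ is unique up to a character in $\widehat{D_n}$; and condition~(2) becomes $a'=(\varphi\cdot\varphi\circ\Psi_{u,v})^{-1}\cdot(a\circ f^{-1})$. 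Hence the two data are equivalent exactly when some $f\in C(\Psi_{u,v})$ carries $a$ to $a'$ up to the character correction $\varphi\cdot\varphi\circ\Psi_{u,v}$.

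This reduces everything to bookkeeping of signs: by Lemma~\ref{lem:possa} the admissible maps are the $a_{\pm x,\pm y}$ for the $x,y$ listed there, and writing $\widehat{D_n}=\{1,\chi_1,\chi_2,\chi_1\chi_2\}$ with $\chi_1(r)=-1,\chi_1(s)=1$, $\chi_2(r)=1,\chi_2(s)=-1$, one has $\chi_2\cdot a_{x,y}=a_{x,-y}$ while $\chi_1$ and $\chi_1\chi_2$ flip the ``$r$-sign'' $x$. Two mechanisms produce equivalences. First, the ambiguity of $\varphi$ up to $\widehat{D_n}$ contributes the subgroup $\{\chi\cdot\chi\circ\Psi_{u,v}:\chi\in\widehat{D_n}\}$ of $\widehat{D_n}$; since $\Psi_{u,v}$ preserves the $\langle r\rangle$-coset and multiplies $r$ by an odd power, this subgroup is trivial when $u$ is even and equals $\{1,\chi_2\}$ when $u$ is odd, the latter forcing $a_{x,y}\sim a_{x,-y}$ at once; combined with the fact that $x$ is then forced in Lemma~\ref{lem:possa}(2),(4),(5), this gives the ``one class'' assertions of items~(2) and~(3). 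Second, genuine nontrivial $f\in C(\Psi_{u,v})$: the workhorse is $f=\Psi_{n/2,1}$, which centralizes every order-two $\Psi_{u,v}$; Lemma~\ref{lem:trivcocDn} yields $\varphi(r^as^j)=\zeta^{j}$ up to $\widehat{D_n}$ (with $\zeta^2=-1$), hence correction $\chi_2$ and $a_{x,y}\circ f^{-1}=a_{x,\,x^{n/2}y}$, so $f$ sends $a_{x,y}$ to $a_{x,\,-x^{n/2}y}$; it thus identifies $a_{x,y}$ with $a_{x,-y}$ precisely when $x^{n/2}=1$. For $n/2$ odd one has $x^{n/2}=(-1)^{n/2}$ ($=x$ here), which is what splits item~(1) asymmetrically between $x=1$ and $x=-1$; for $n/2$ even, $x^{n/2}$ is a fourth root of unity whose value depends on $v\bmod 4$ and $n\bmod 8$, which is exactly the origin of the distinction between items~(4), (5) and~(6), the cases where $\Psi_{n/2,1}$ alone does not suffice being completed by other centralizing automorphisms (e.g. $\Psi_{0,\,n/2+1}$ when $u$ is even).

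It then remains to run the six cases: for each I would pin down $C(\Psi_{u,v})$ from $k(v-1)\equiv u(l-1)\ [n]$ under the parity and divisibility hypotheses carried over from Lemma~\ref{lem:possa}, compute for a generating set of $C(\Psi_{u,v})$ the induced permutation of $\{a_{\pm x,\pm y}\}$ together with its $\varphi$-correction (again via Lemma~\ref{lem:trivcocDn}), and record the merged data; the positive claims then come with explicit witnesses $(f,\varphi)$. For the nonequivalence claims I would avoid exhausting $C(\Psi_{u,v})$ and instead use a Hopf-algebra invariant: the isomorphism class of the group $G(D_n,\Psi_{u,v},a,\tau_\omega)$ of one-dimensional representations of $A_2(D_n,\Psi_{u,v},a,\tau_\omega)$ (equivalently its abelianization, Proposition~\ref{prop:a(h)co}), which is the central extension $1\to\mu_2\to G\to D_n^{\Psi_{u,v}}\to1$ determined by the restrictions of $a$ and $\tau_\omega$ to the $\Psi_{u,v}$-fixed subgroup; comparing these extensions --- via the well-defined squares $\tilde q^{\,2}\in\mu_2$ of the lifts of order-two fixed elements $q$ such as $r^{n/2}$ and a fixed reflection, and, where these do not suffice, finer features of $G$ such as its abelianity --- separates the remaining two or three classes. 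An orbit count then yields three, one, one, two, two and three classes in items~(1)--(6).

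The main obstacle will be the arithmetic bookkeeping in items~(3)--(6): there $n/2$ is even, one must carry along a square root $\omega_0$ of the primitive $n$-th root $\omega$, and the constraints ``$v^2=1+kn$, $u(1+v)=ln$ with $k,l$ of prescribed parity'' interact with $n\bmod 8$ and $v\bmod 4$ to govern simultaneously the size of $C(\Psi_{u,v})$ and the sign $x^{n/2}$ deciding whether $\Psi_{n/2,1}$ merges data or not. The genuinely delicate point is verifying that the chosen invariant is \emph{complete} on the surviving classes --- that no further centralizing automorphism, possibly of order $>2$ or of reflection type, identifies them --- since it is precisely this finer behaviour that obstructs a compact classification of the kind obtained in Theorems~\ref{cor:isoext2}--\ref{cor:isoext4}.
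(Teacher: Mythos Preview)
Your outline for the \emph{positive} claims (constructing explicit equivalences) is essentially the paper's approach: the same centralizer condition on $f$, the same parametrization of $\varphi$ via Lemma~\ref{lem:trivcocDn}, and the same workhorse automorphisms (the paper uses $\Psi_{n/2,1}$ and $\Psi_{n/4,1}$ rather than $\Psi_{0,n/2+1}$, but this is cosmetic).

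The \emph{nonequivalence} strategy, however, has a genuine gap. You propose to separate the surviving classes by the isomorphism type of the abelianization group $G(D_n,\Psi_{u,v},a,\tau_\omega)$, but this invariant is not fine enough. Take item~(1) with $\Psi_{u,v}=\Psi_{0,-1}$ (so $n/2$ odd, $\omega=-1$): the fixed subgroup is $\{1,r^{n/2},s,r^{n/2}s\}\simeq\mathbb Z_2^2$, and one computes that the lifts of $r^{n/2}$, $s$, $r^{n/2}s$ in $G$ square to $(1,x)$, $(1,y)$, $(1,-xy)$ respectively, while the commutator of any two generators is $(1,-1)$. Thus for both $a_{1,1}$ and $a_{-1,1}$ the group $G$ is nonabelian of order $8$ with exactly five involutions, hence $G\simeq D_4$ in both cases; only $a_{-1,-1}$ gives $Q_8$. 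So your invariant cannot distinguish $a_{1,1}$ from $a_{-1,1}$, and no ``finer feature'' of $G$ (abelianity, element orders, etc.) will help, since the groups are isomorphic.

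The paper avoids this by doing what you explicitly set out \emph{not} to do: it parametrizes \emph{all} $\varphi$ solving condition~(c) for an arbitrary $f^{-1}=\Psi_{\alpha,\beta}\in C(\Psi_{u,v})$ (these are the $\varphi_{z,t}$ with $z=\pm\omega^{(1-\beta)/2}$, $t=\pm\omega_0^{\alpha}$), substitutes into condition~(b), and shows that the resulting equation at $r$ forces $\varepsilon(r)=1$, i.e.\ $x'=x$, uniformly in $f$. The remaining analysis of $\varepsilon(s)$ is then a single equation in $(\alpha,\beta)$ whose solvability depends on $n\bmod 8$ and $v\bmod 4$. This direct exhaustion of $C(\Psi_{u,v})$ is what actually does the work, and there does not appear to be a shortcut via Hopf-algebra invariants alone.
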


\begin{proof}
Recall that an equivalence between two $2$-data $(H,\theta, a, \tau)$ and $(H,\theta, a', \tau)$ is provided by a pair $(f,\varphi)$ with $f\in \Aut(H)$ and $\varphi : H \to k^\times$ satisfying
$${\rm (a)} \quad f\circ \theta =\theta \circ f, \ {\rm (b)} \quad \varphi \cdot \varphi\circ \theta \cdot a'=a\circ f^{-1}, \quad {\rm (c)} \ \tau =\partial(\varphi)\cdot \tau\circ f^{-1}\times f^{-1}.$$
For $u$ odd (cases (2) and (3) in the lemma), taking $\varphi\in \widehat{H}$ such that $\varphi(r)=-1$, we see that the pair $({\rm id}, \varphi)$ realizes an equivalence between the $2$-data $(D_n,\Psi_{u,v}, a_{x,y}, \tau_\omega)$ and $(D_n,\Psi_{u,v}, a_{x,-y}, \tau_\omega)$, and thus the statements (2) and (3) are proved.

We assume now that $u$ is even. Let $f\in \Aut(D_n)$, with $f^{-1}=\Psi_{\alpha,\beta}$. Then, similarly to Lemma \ref{lem:possa}, one shows that the maps $\varphi : D_n \to k^\times $ satisfying (c) above are defined by
$$\varphi_{z,t}(r^is^j)= \omega^{-j\alpha}z^it^j$$
where $z=\pm \omega^{\frac{1-\beta}{2}}$, $t=\pm(\omega_0)^\alpha$, with $\omega_0^2=\omega$. We then have
$$\varphi_{z,t}(r^is^j)\varphi_{z,t}(\Psi_{u,v}(r^is^j)) = \omega^{-2j\alpha}z^{i(1+v)-ju} t^{2j}$$
and in particular
$$\varphi_{z,t}(r)\varphi_{z,t}(\Psi_{u,v}(r))=z^{1+v}= \omega^{\frac{(1-\beta)(1+v)}{2}}, \quad  \varphi_{z,t}(s)\varphi_{z,t}(\Psi_{u,v}(s))=\omega^{-\alpha}z^{-u}.$$
Equation (b), for $a_{x,y}$ and $a_{x',y'}=\varepsilon a_{x,y}$, where $\varepsilon  \in \widehat{D_n}$  (with $x'=\varepsilon (r)x$, $y'=\varepsilon(s)y)$), then becomes $$z^{1+v}=\varepsilon(r)x^{\beta-1}, \quad z^{-u}=\varepsilon(s)\omega^{\alpha}x^{-\alpha}.$$
The first equation is then 
$$\omega^{\frac{(1-\beta)(v+1)}{2}} = \varepsilon(r)\omega^{\frac{(-v-1)(\beta-1)}{2}}$$ 
which gives $\varepsilon(r)=1$. Hence if the $2$-data $(D_n, \Psi_{u,v}, a_{x,y}, \tau_\omega)$ and $(D_n, \Psi_{u,v}, a_{x',y'}, \tau_\omega)$ are equivalent, then necessarily $x=x'$, as claimed. 

Since $u$ is even, the second equation now is 
\begin{align}\label{eq2}\omega^{\frac{(\beta-1)u}{2}-\alpha}=\varepsilon(s) x^{-\alpha}.\end{align}
Assume that $n/2$ is odd, so that $\omega=-1$. Since $\beta$ is odd, the second equation becomes $(-1)^\alpha = \varepsilon(s) x^\alpha$, with $x=\pm 1$. This is possible with $\varepsilon(s)=-1$ only if $x=1$. Hence we see that the $2$-data $(D_n, \Psi_{u,v}, a_{-1,1}, \tau_\omega)$ and $(D_n, \Psi_{u,v}, a_{-1,-1}, \tau_\omega)$ are not equivalent. 
Conversely, taking $f=f^{-1}=\Psi_{n/2,1}$ (which commutes with $\Psi_{u,v}$) and $\varphi_{z,t}$ as above, we see that the pair $(\Psi_{n/2,1},\varphi_{z,t})$ makes the $2$-data $(D_n, \Psi_{u,v}, a_{1,1}, \tau_\omega)$ and $(D_n, \Psi_{u,v}, a_{1,-1}, \tau_\omega)$  equivalent. This concludes the proof of Assertion (1).

Assume now that $n/2$ is even. Then, writing $x=\nu \omega^{\frac{-v-1}{2}}$ with $\nu=\pm 1$, Equation \ref{eq2}
becomes 
\begin{align}\label{eq2.1}\varepsilon(s) = \nu^\alpha \omega^{\frac{(\beta-1)u-\alpha(v+3)}{2}} = \nu^\alpha \omega^{\frac{(\beta-1)u-\alpha(v-1)}{2}}\omega^{-2\alpha}.\end{align}
If $v\equiv 3[4]$, taking $\alpha=n/2$ and $\beta=1$,  Equation \ref{eq2.1} is realized with $\varepsilon(s) =-1$. Hence taking $f=\Psi_{n/2,1}$ (which commutes with $\Psi_{u,v}$) and $\varphi_{z,t}$ as above, we obtain that the $2$-data $(D_n,\Psi_{u,v}, a_{x,y},\tau_\omega)$ and $(D_n,\Psi_{u,v}, a_{x,-y},\tau_\omega)$ are equivalent when $v\equiv 3[4]$.
This proves Assertion (5).

Assume that $v\equiv 1[4]$. If $n/4$ is even, it is not difficult to check that the condition $v^2\equiv1[2n]$ implies that $v\equiv 1[8]$. Then we see that condition \ref{eq2.1} is realized with $\varepsilon(s)=-1$ by taking $\beta =1$ and $\alpha = n/4$, and choosing $f=\Psi_{n/4,1}$ (which commutes with $\Psi_{u,v}$) we obtain that the $2$-data  $(D_n,\Psi_{u,v}, a_{x,y},\tau_\omega)$ and $(D_n,\Psi_{u,v}, a_{x,-y},\tau_\omega)$ are equivalent. This finishes the proof of Assertion (4).

We assume finally that $n/4$ is odd (still with $v\equiv 1[4]$). Recall that $x=\nu \omega^{\frac{-v-1}{2}}$ with $\nu=\pm 1$. Taking $\alpha=n/4$ and $\beta=1$, Equation \ref{eq2.1} with $\varepsilon(s)=-1$ is realized in the following two cases:
$$\nu=1, \ v\equiv 1[8] \ ; \quad \nu=-1, \ v\equiv 5 [8].$$
Taking $f=\Psi_{n/4,1}$,  we obtain:

\begin{itemize}[label=$\circ$]
\item for $v\equiv 1[8]$,  the $2$-data  $(D_n,\Psi_{u,v}, a_{\omega^{\frac{-v-1}{2}},y},\tau_\omega)$ and $(D_n,\Psi_{u,v}, a_{\omega^{\frac{-v-1}{2}},-y},\tau_\omega)$ are equivalent,  

\item
for $v\equiv 5[8]$, the $2$-data  $(D_n,\Psi_{u,v}, a_{-\omega^{\frac{-v-1}{2}},y},\tau_\omega)$ and $(D_n,\Psi_{u,v}, a_{-\omega^{\frac{-v-1}{2}},-y},\tau_\omega)$ are equivalent. 
\end{itemize}

To see that these are the only cases where there is an equivalence, assume that Equation \ref{eq2.1} holds
with $\varepsilon(s)=-1$, and $(\beta-1)u\equiv \alpha(v-1)[n]$:
$$-1 = \nu^\alpha \omega^{\frac{(\beta-1)u-\alpha(v-1)}{2}}\omega^{-2\alpha}.$$
Squaring this identity, we see that $\alpha\in \{0,n/4, n/2, 3n/4\}$. One checks easily that the condition $u(v+1)\equiv 0[2n]$ implies that there does not exist $\beta$ such that $(\beta-1)u\equiv n [2n]$. Hence, assuming that 
$\nu=1$ and   $v\equiv 5[8]$ or that  $\nu=-1$ and  $v\equiv 1[8]$, and  examining all the possibilities for $\alpha$, we always arrive at an identity $-1=1$: contradiction. This finishes the proof of Assertion (6), hence the proof of the lemma.
%If $n/4$ is even, OK for $v\equiv 1[8]$.
%If $n/4$ is odd, OK for $v\equiv 5 [8]$. ???
\end{proof}

Lemma \ref{lem:classa} enables one to classify the reduced $2$-data over $D_n$, as soon as the representative elements for the conjugacy classes of order $2$ elements in ${\rm Aut}(D_n)$ have been found. We record the result in Table \ref{Tab:Dn}, where $\Psi_{u,v}$ is an order $2$ automorphism of $D_n$ (hence with $v^2\equiv 1[n]$ and $u(v+1)\equiv 0[n]$), and $N(u,v)$ denotes the number of equivalence classes of reduced $2$-data over $D_n$ having $\Psi_{u,v}$ as underlying automorphism.

\begin{table}[h]
\centering
\begin{tabular}{|p{5cm}|c|}
 \hline
Properties of $n/2$, $u$ and $v$ & $N(u,v)$\\
%\hline
%$\Psi_{2^{m-1},1}$ & $x= \omega$, $y=1$ & $a_{\omega,1}(r^is^j)=\omega^i$ \\
%&   $x=\omega$, $y=-1$ & $a_{\omega,-1}(r^is^j)=\omega^i(-1)^j$ \\
%&   $x=-\omega$, $y=1$& $a_{-\omega,1}(r^is^j)=(-\omega)^i$ \\
%&  $x=-\omega$, $y=-1$ &  $a_{-\omega,-1}(r^is^j)=\omega^i(-1)^{i+j}$ \\
%\hline
%$\Psi_{0,2^{m-1}-1}$ & $x= 1$, $y=1$ & $a_{1,1}(r^is^j)=1$ \\
%&  $x=1$, $y=-1$ &  $a_{1,-1}(r^is^j)=(-1)^j$ \\
%&  $x=-1$, $y=1$ &  $a_{-1,1}(r^is^j)=(-1)^i$ \\
%&  $x=-1$, $y=-1$ &  $a_{-1,-1}(r^is^j)=(-1)^{i+j}$ \\
%\hline
%$\Psi_{0,2^{m-1}+1}$ & $x=\omega$, $y=1$ & $a_{\omega,1}(r^is^j)=\omega^i$ \\
%&  $x=\omega$, $y=-1$ &  $a_{\omega,-1}(r^is^j)=\omega^i(-1)^j$ \\
%&  $x=-\omega$, $y=1$ &  $a_{-\omega,1}(r^is^j)=(-\omega)^i$ \\
%&  $x=-\omega$, $y=-1$ &  $a_{-\omega,-1}(r^is^j)=\omega^i(-1)^{i+j}$ \\
\hline
$n/2$ odd, $u$ odd & 1 \\
\hline
$n/2$ odd, $u$ even & 3 \\
\hline
$n/2$ even, $u$ odd, $v^2\equiv 1[2n]$ & 1\\
\hline
$n\equiv0[8]$, $u$ even, $v^2\equiv 1[2n]$, $u(v+1)\equiv 0 [2n]$ & 2\\
\hline 
$n\equiv4[8]$, $u$ even, $v^2\equiv 1[2n]$, $u(v+1)\equiv 0 [2n]$, $v\equiv 3[4]$ & 2\\
\hline 
$n\equiv4[8]$, $u$ even, $v^2\equiv 1[2n]$, $u(v+1)\equiv 0 [2n]$, $v\equiv 1[4]$ & 3\\
\hline
%$n/4$ odd, $u$ even, $v^2\equiv 1[2n]$, $u(v+1)\equiv 0 [2n]$ & 3\\
%\hline
\end{tabular}
\vspace{0.5cm}
\caption{Number of reduced $2$-data over $D_n$ having $\Psi_{u,v}$ as automorphism}
\label{Tab:Dn}
\end{table}
%\end{center}

We now apply the results in Table \ref{Tab:Dn} to enumerate the Hopf algebras fitting into a universal cocentral extension $k \to \Oo(D_{n}) \to A \to k\mathbb Z_2 \to k$ in a number of particular cases.

\begin{theorem}
 Let $n\geq 4$ be even and let $e_n$ be the number of isomorphism classes of noncommutative Hopf algebras $A$ fitting into a universal cocentral extension $k \to \Oo(D_{n}) \to A \to k\mathbb Z_2 \to k$.
\begin{enumerate}
 \item If $n = 2^r$ with $r \geq 2$, then $e_n=3$.
\item If $n=2p^r$, with $r\geq 1$ and $p$ odd prime, then $e_n=5$.
\item If $n=4p^r$, with $r\geq 1$ and $p$  odd prime, then $e_n=9$.
\item If $n=2^sp^r$, with $s\geq 3$, $r\geq 1$ and $p$  odd prime, then $e_n=10$.
\end{enumerate}
\end{theorem}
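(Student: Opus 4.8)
The plan is to reduce the count to Table~\ref{Tab:Dn} and then, in each of the four cases, to enumerate the relevant conjugacy classes of automorphisms of $D_n$. Since the extension $k\to\Oo(D_n)\to A\to k\mathbb Z_2\to k$ is required to be universal, Theorem~\ref{thm:isoext} identifies the isomorphism classes of such $A$ with the equivalence classes of reduced $2$-data $(D_n,\theta,a,\tau)$, and by Proposition~\ref{prop:a(h)co} the noncommutative ones are exactly those with $\theta\neq{\rm id}_{D_n}$, that is (since $m=2$) those for which $\theta$ has order $2$ in $\Aut(D_n)$. In Definition~\ref{def:equivdata} the integer $l$ prime to $m=2$ is forced to equal $1$, so equivalent $2$-data have underlying automorphisms related by $\theta'=f\circ\theta\circ f^{-1}$, hence conjugate in $\Aut(D_n)$; conversely, by Corollary~\ref{cor:basiciso}, any conjugate of $\theta$ is realized by an equivalence. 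Therefore $e_n=\sum N(u,v)$, the sum ranging over a set of representatives $\Psi_{u,v}$ of the conjugacy classes of order-$2$ elements of $\Aut(D_n)$, where $N(u,v)$ is the number appearing in Table~\ref{Tab:Dn}, with the convention that $N(u,v)=0$ for the order-$2$ automorphisms not listed there --- and one checks, using Lemma~\ref{lem:possa}, that these carry no reduced $2$-datum precisely because the corresponding congruences modulo $2n$ fail.

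It then remains to make the right-hand side explicit. Recall $\Aut(D_n)\cong\mathbb Z/n\mathbb Z\rtimes U(\mathbb Z/n\mathbb Z)$ via $(k,l)\mapsto\Psi_{k,l}$, with $\Psi_{a,b}\Psi_{u,v}\Psi_{a,b}^{-1}=\Psi_{(1-v)a+bu,\,v}$, so that the $v$-component is a conjugacy invariant and, for fixed $v$, the $u$-component ranges over the cosets of $(1-v)\mathbb Z/n\mathbb Z$ obtained by scaling $u$ by the units; an automorphism $\Psi_{u,v}$ has order $2$ exactly when $(u,v)\neq(0,1)$, $v^2\equiv1$ and $u(v+1)\equiv0\pmod n$. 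Writing $n=2^sm_0$ with $m_0$ odd and splitting the analysis by the Chinese remainder theorem into its $2$-part and its odd part (the latter governed by the prime power $p^r\mid m_0$, where $U(\mathbb Z/p^r\mathbb Z)$ is cyclic so that $v^2\equiv1$ has only the solutions $v\equiv\pm1$), one lists the admissible pairs $(u,v)$ up to conjugacy, together with the parity of $u$ and the residues of $v$, $v^2$ and $u(v+1)$ modulo $2n$ that select the applicable row of Table~\ref{Tab:Dn}. Summing the relevant values of $N(u,v)$ gives $e_n=3,5,9,10$ in the four cases; for example, for $n=2p^r$ one finds precisely three classes, with underlying automorphisms conjugate to $\Psi_{p^r,1}$, $\Psi_{0,-1}$ and $\Psi_{1,-1}$, of respective contributions $1$, $3$, $1$.

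I expect the last step, the combinatorial bookkeeping, to be the main obstacle: the rows of Table~\ref{Tab:Dn} are separated not only by the parities of $n/2$ and $u$ but by congruences of $v$ (and of $v^2$ and $u(v+1)$) modulo $2n$, so one must keep careful track of the $2$-adic valuation of $n$ --- which is exactly why the cases $n=2^r$, $n=2p^r$, $n=4p^r$ and $n=2^sp^r$ with $s\geq3$ are treated separately --- and one must also confirm, via Lemma~\ref{lem:possa}, that the order-$2$ automorphisms omitted from the table carry no reduced $2$-datum. By contrast, the odd prime-power factor $p^r$ contributes only through the extra conjugacy classes it creates for the $u$-component, and once the $2$-part is understood its effect is routine, and in particular independent of $r$.
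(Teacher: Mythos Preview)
Your approach is exactly the paper's, and your worked example $n=2p^r$ matches it precisely. The one step you gloss over, which the paper's proof states at the outset, is why Table~\ref{Tab:Dn} already captures \emph{all} reduced $2$-data: since $n$ is even, $Z(D_n)=\{1,r^{n/2}\}$ is non-trivial, so any $2$-datum with trivial cocycle class fails to be reduced, whereas one checks directly that $Z_{\tau_\omega,\theta}(D_n)=\{1\}$ for the non-trivial $\tau_\omega$; this is why only $\tau_\omega$ appears in the lemmas feeding the table, and without this observation the reduction to Table~\ref{Tab:Dn} is not justified. Beyond that, the combinatorial bookkeeping you leave undone in cases (1), (3), (4) is carried out explicitly in the paper case by case (listing representatives, discarding those that fail the mod $2n$ congruences of Lemma~\ref{lem:possa}, and summing the table entries), and indeed, as you anticipate, $n=4$ must be handled separately within case~(1).
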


\begin{proof}
 A $2$-datum $(D_n,\theta, a, \tau)$ is not reduced if $\tau$ is a trivial cocycle, because $Z(D_n)$ is non trivial, and is reduced if $\tau$ is the non-trivial $2$-cocycle in Lemma \ref{lem:tauomega}. Hence, by Corollary \ref{cor:basiciso}, Theorem \ref{thm:isoext} and Proposition \ref{prop:a(h)co}, $e_n$ equals the number of equivalence classes of $2$-data $(D_n,\theta, a,\tau_\omega)$ with $\theta\not={\rm id}$, which now will be determined in each case using Table \ref{Tab:Dn}.

For $n=4$, there are $3$ conjugacy classes of order $2$ elements in  ${\rm Aut}(D_n)$, represented by $\Psi_{2,1}$, $\Psi_{0,-1}$ and $\Psi_{1,-1}$. The first automorphism does not satisfy the condition $u(v+1)\equiv 0[8]$ in Lemma \ref{lem:possa}, so cannot fit into a $2$-datum. For the last two automorphisms, Table \ref{Tab:Dn} gives $e_4=2+1=3$. 

For $n=2^r$ with $r\geq 3$, there are $5$ conjugacy classes of order $2$ elements in ${\rm Aut}(D_n)$, represented by:
\[
 \Psi_{2^{r-1},1}, \quad \Psi_{0,2^{r-1}-1}, \quad \Psi_{0, 2^{r-1}+1}, \quad \Psi_{0,-1}, \quad \Psi_{1,-1}.  
\]
Among these automorphism, only $\Psi_{0,-1}$ and  $\Psi_{1,-1}$ satisfy the compatibility conditions of Lemma  \ref{lem:possa} that make them part of a $2$-data. Finally, Table \ref{Tab:Dn} gives again $e_n=2+1=3$.

For $n=2p^r$, with $p$ odd prime, there are $3$ conjugacy classes of order $2$ elements in ${\rm Aut}(D_n)$, represented by  $\Psi_{p^r,1}$, $\Psi_{0,-1}$ and $\Psi_{1,-1}$. Table \ref{Tab:Dn} gives $e_n=1+3+1=5$. 

For $n=4p^r$, with $p$ odd prime, fix integers $a,b$ such that $4a+p^rb=1$, and such that $a,b$ become invertible in $\mathbb Z/4p^r\mathbb Z$. There are four elements in $\mathbb Z/4p^r\mathbb Z$ such that $v^2=1$ and in fact $v^2\equiv 1[2n]$: $v=\pm 1$, $v=\pm(4a-p^rb)$. One then checks that the representatives of the conjugacy classes of the order $2$ elements $\Psi_{u,v}\in {\rm Aut}(D_n)$ satisfying the conditions in Lemma \ref{lem:possa} are 
\[
 \Psi_{0,-1}, \quad \Psi_{1,-1}, \quad \Psi_{0,8a-1}, \quad \Psi_{p^r, 8a-1}, \quad \Psi_{0, 1-8a}.%, \ \Psi_{4, 1-8a}
\]
Table \ref{Tab:Dn} now yields that $e_n=2+1+2+1+3=9$.

For $n=2^sp^r$, with $p$ odd prime and $s\geq 3$, fix integers $a,b$ such that $2^sa+p^rb=1$, and such that $a,b$ become invertible in $\mathbb Z/2^sp^r\mathbb Z$. There are $8$ elements in $\mathbb Z/2^sp^r\mathbb Z$ such that $v^2=1$ but only $4$ such that $v^2\equiv 1[2n]$: $v=\pm 1$, $v=\pm(2^sa-p^rb)=\pm(2^{s+1}a-1)$. One then checks that the representatives of the conjugacy classes of the order $2$ elements $\Psi_{u,v}\in {\rm Aut}(D_n)$ satisfying the conditions in Lemma \ref{lem:possa} are 
\[
 \Psi_{0,-1}, \quad \Psi_{1,-1}, \quad \Psi_{0,2^{s+1}a-1}, \quad \Psi_{p^r, 2^{s+1}a-1}, \quad \Psi_{0, 1-2^ {s+1}a}, \quad \Psi_{2^s, 1-2^{s+1}a}.
\]
Table \ref{Tab:Dn} now yields that $e_n=2+1+2+1+2+2=10$.
\end{proof}

\begin{rem}
 Part (1) of the above theorem contributes to the classification of semisimple Hopf algebra of dimension $2^r$, studied in \cite{kas03,kas16}.
\end{rem}

\subsection{Hopf algebras of dimension $p^2q^r$} 
To conclude the paper, we  look at an example where the group $H$ is abelian, one of the most studied situation in the literature \cite{mas,nat,kro}. We wish to prove the following result, for which the case $r=1$ was obtained in \cite{nat}.

%Here $p$ is an odd prime number and $q|(p-1)$. 

\begin{theorem}\label{thm:p2qr}
Let $p,q$ be odd prime numbers, let $r\geq 1$ and assume that $q^{r}|p-1$. 
 The number of isomorphism classes of noncommutative and noncocommutative Hopf algebras fitting into a cocentral extension $k \to \Oo(\mathbb Z_p^2) \to A \to k\mathbb Z_{q^r} \to k$ is precisely $\frac{1}{2} (\sum_{i=1}^r q^i+q^{i-1})= \frac{(q+1)(q^r-1)}{2(q-1)}$.
%\[
% \frac{q^t+ 2\sum_{i=0}^{t-1}q^i +1}{2}
%\sum_{i=1}^t \frac{q^i+q^{i-1}}{2}
%\]
 %precisely is $1+ \sum_{1 \not= q'|q} \varphi(q')=1+q-1=q$, where $\varphi$ is the Euler function.
\end{theorem}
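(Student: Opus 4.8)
The plan is to reduce the classification to a counting problem about conjugacy classes of automorphisms of $\mathbb Z_p^2$, via the machinery of $m$-data with $m=q^r$ and $H=\mathbb Z_p^2$. By Proposition \ref{prop:ext->a(h)} any $A$ as in the statement is of the form $A_m(\mathbb Z_p^2,\theta,a,\tau)$, and by Proposition \ref{prop:a(h)co} it is noncommutative exactly when $\theta\neq\id$ and noncocommutative exactly when $\tau$ is not symmetric. Since $H^2(\mathbb Z_p^2,k^\times)\simeq\mathbb Z_p$ and a $2$-cocycle on an abelian group is symmetric precisely when it is trivial, noncocommutativity means $[\tau]\neq 0$; and when $[\tau]\neq 0$ the associated alternating form on $\mathbb F_p^2$ is nondegenerate, so its radical is trivial, i.e. $Z_{\tau,\theta}(\mathbb Z_p^2)=\{1\}$ and the datum is reduced (so the canonical cocentral extension is universal by Proposition \ref{prop:a(h)cyclic}). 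Hence Theorem \ref{thm:isoext} identifies the set to be counted with the set of equivalence classes of $m$-data $(\mathbb Z_p^2,\theta,a,\tau)$ with $\theta\neq\id$ and $[\tau]\neq 0$.

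Next I would extract the constraint on $\theta$. The last axiom of an $m$-datum, read in $H^2(\mathbb Z_p^2,k^\times)\simeq\mathbb F_p$, reads $\big(\sum_{k=0}^{q^r-1}(\det\theta)^k\big)[\tau]=0$; as $q^r\mid p-1$ and $\theta^{q^r}=\id$, $\det\theta$ is a $q^r$-th root of unity in $\mathbb F_p^\times$, and if $\det\theta=1$ the sum is $q^r\neq 0$, forcing $[\tau]=0$. So $\det\theta\neq 1$ (in particular $\theta\neq\id$ is automatic), and conversely when $\det\theta\neq 1$ the geometric sum vanishes, so every nontrivial class $[\tau]$ is admissible; a matching $a$ then exists — one may even take it $\theta$-invariant, because the first Tate cohomology of $\langle\theta\rangle$ acting on $\widehat{\mathbb Z_p^2}$ vanishes, $\theta$ being semisimple with $q^r$ invertible mod $p$. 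Such $\theta$ are semisimple with eigenvalues in $\mu_{q^r}(\mathbb F_p)\subset\mathbb F_p^\times$, hence conjugate in ${\rm GL}_2(\mathbb F_p)$ to $\mathrm{diag}(\zeta^i,\zeta^j)$ for a fixed primitive $q^r$-th root of unity $\zeta$; the conjugacy class is the unordered pair $\{i,j\}$ with $i+j\not\equiv 0\pmod{q^r}$, and passing to powers $\theta\mapsto\theta^l$ ($l$ prime to $q^r$) identifies $\{i,j\}$ with $\{li,lj\}$. A Burnside count for the action of $(\mathbb Z/q^r\mathbb Z)^\times\times S_2$ on $\{(i,j):i+j\not\equiv 0\}$ then yields exactly $\tfrac{(q+1)(q^r-1)}{2(q-1)}=\tfrac12\sum_{i=1}^r(q^i+q^{i-1})$ orbits: the fixed-point contributions collapse (using that $q$ is odd and $q^r$ invertible mod $p$) to a total $q^{r-1}(q+1)(q^r-1)$, which one divides by $|G|=2q^{r-1}(q-1)$.

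It then remains to show that each admissible conjugacy class of $\theta$ supports exactly one equivalence class of datum — this is the technical heart, and for $r=1$ it is Natale's computation \cite{nat}. First, the centralizer of $\theta$ in ${\rm GL}_2(\mathbb F_p)$ surjects onto $\mathbb F_p^\times$ under $\det$ (it is the diagonal torus when $\theta$ is regular, all of ${\rm GL}_2(\mathbb F_p)$ when $\theta$ is scalar), so Corollary \ref{cor:basiciso}(1) with $l=1$ lets us normalize $[\tau]$ to a fixed nontrivial class, and then Corollary \ref{cor:basiciso}(2) lets us fix a representative cocycle $\tau_0$. With $\theta$ and $\tau_0$ fixed, the admissible maps $a$ form a torsor under $\widehat{\mathbb Z_p^2}^{\,\theta}$ (this is where the invariance condition $a\circ\theta=a$ enters), while the equivalences with $f=\id$ translate $a$ by the image of the norm map $N_\theta=\prod_{k=0}^{q^r-1}(-)\circ\theta^k$ on $\widehat{\mathbb Z_p^2}$; computing eigenspace by eigenspace, $N_\theta$ is multiplication by $q^r$ (invertible mod $p$) on $\widehat{\mathbb Z_p^2}^{\,\theta}$ and zero on a complement, so its image is exactly $\widehat{\mathbb Z_p^2}^{\,\theta}$, whence all admissible $a$ are equivalent. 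One also records that, because $i+j\not\equiv 0$, the only $l$ prime to $q^r$ with $\{li,lj\}=\{i,j\}$ is $l=1$, so no further identifications arise from powering within a fixed conjugacy class. Combining the three steps, the number of isomorphism classes equals the Burnside count $\tfrac{(q+1)(q^r-1)}{2(q-1)}$. The main obstacle is this last rigidity step: keeping precise track of which triples $(f,\varphi,l)$ realize an equivalence fixing the conjugacy class of $\theta$, and verifying that the norm-map image exhausts $\widehat{\mathbb Z_p^2}^{\,\theta}$.
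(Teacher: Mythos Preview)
Your argument is essentially correct and takes a genuinely different, more conceptual route than the paper. The paper stratifies by the exact order $q^s$ of $\theta$, writes down explicit representatives for ${\rm CC}_{q^s,q^s}^\bullet(\GL_2(\mathbb F_p))$ (Lemma~\ref{lem:repautzp2}), and then in Lemma~\ref{lem:datazp2} checks case by case that the one representative with $\theta\sim\mathrm{diag}(\xi,\xi^{-1})$ admits no reduced datum while every other admits exactly one, obtaining $|\mathcal E_s|=(q^s+q^{s-1})/2$. You instead (i) identify the obstruction uniformly as $\det\theta\neq 1$ via the action of $\Aut(H)$ on $H^2(\mathbb Z_p^2,k^\times)\simeq\mathbb F_p$ by determinant, (ii) count all admissible $\theta$ at once by a Burnside argument on pairs of exponents, and (iii) prove rigidity of the datum by a clean Tate-cohomology/norm-map argument rather than by explicit computation. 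Your approach generalizes more readily (the determinant and norm-map arguments would survive replacing $\mathbb Z_p^2$ by other elementary abelian groups), while the paper's is more self-contained and makes the stratification $\sum_s(q^s+q^{s-1})/2$ visible.

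Two small points. First, your closing claim that ``because $i+j\not\equiv 0$, the only $l$ prime to $q^r$ with $\{li,lj\}=\{i,j\}$ is $l=1$'' is false (take $i=j=q^{r-1}$ when $r\geq 2$: any $l\equiv 1\pmod q$ fixes the pair), but it is also unnecessary. You have already shown, using only $l=1$ equivalences, that all data with a fixed $\theta$ are equivalent; and conversely any equivalence of data forces $\theta'^l=f\theta f^{-1}$, so distinct $\theta$-classes (under conjugation and powering) yield distinct datum classes. Nothing more is needed. Second, the parenthetical ``using that $q^r$ is invertible mod $p$'' belongs to the norm-map step, not to the Burnside count, which is a purely combinatorial computation in $\mathbb Z/q^r\mathbb Z$; you may want to separate those justifications.
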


%When $r=1$, this was 

The rest of the section is devoted to the proof of Theorem \ref{thm:p2qr}. We begin with some generalities. Recall from Subsection \ref{subsecClass} that  if $G$ is a group and $m\geq 1$, the set ${\rm CC}_m^\bullet(G)$ is the set of  elements of $G$ such that $x^m=1$ and $x\not=1$, modulo the equivalence relation defined by $x\sim y\iff$ there exists $l$ prime to $m$ such that $x^l$ is conjugate to $y$. For $d>1$ a divisor of $m$, denote by  ${\rm CC}_{m,d}^\bullet(G)$ the set of equivalence classes of elements having order $d$ in $G$ (clearly the order of an element is well-defined in ${\rm CC}_m^\bullet(G)$). We get a decomposition
\[
 {\rm CC}_m^\bullet(G) = \coprod_{d|m,  d>1} {\rm CC}_{m,d}^\bullet(G).
\]
For each such $d$, we have an obvious well-defined surjective map ${\rm CC}_{m,d}^\bullet(G)  \to {\rm CC}_{d,d}^\bullet(G)$ which is injective if $m$ is a power of a prime. Thus identifying the two sets when $m=q^r$ with $q$ a prime number, we obtain a decomposition
 \[
 {\rm CC}_{q^r}^\bullet(G) = \coprod_{s=1}^r {\rm CC}_{q^s,q^s}^\bullet(G).
\]
The group we are interested in is ${\rm GL}_2(\mathbb Z/p\mathbb Z)$, that we identify with ${\rm Aut}(\mathbb Z_p^2)$, and for which we have the following result.

\begin{lemma}\label{lem:repautzp2}
Let $p,q$ be odd prime numbers and let $r\geq 1$ be such that $q^r|(p-1)$. Let $\xi$ be a  root of unity of order $q^r$ in $\mathbb Z/p\mathbb Z$. The set 
\[
% \left\{   \begin{pmatrix}
 %         \xi & 0 \\
  %        0 & \xi  
   %      \end{pmatrix},  \begin{pmatrix}
    %      \xi & 0 \\
     %     0 & \xi^{-1}  
    %     \end{pmatrix} 
%\right\} \cup
\left\{
\begin{pmatrix}
          \xi & 0 \\
          0 & \xi^l  
         \end{pmatrix}, \ l \in \{1, 2, \ldots, \frac{q^r-1}{2}, q^r-1\}, \ {\rm GCD}(q,l)=1 \right\} \cup
\left\{
\begin{pmatrix}
          \xi & 0 \\
          0 & \xi^{qu}
         \end{pmatrix}, \ 0 \leq u < q^{r-1}
\right\}
\]
is a set of representatives for the elements of ${\rm CC}_{q^r,q^r}^\bullet({\rm GL}_2(\mathbb Z/p\mathbb Z)).$
\end{lemma}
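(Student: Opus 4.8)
The plan is to enumerate the order-$q^r$ elements of ${\rm GL}_2(\mathbb Z/p\mathbb Z)$ first up to conjugacy, and then pass to the quotient by the powering relation that defines ${\rm CC}_{q^r,q^r}^\bullet$. Two arithmetic observations drive the argument. Since $q\mid p-1$ we have $p\neq q$, so an element of order $q^r$ has minimal polynomial dividing the separable polynomial $X^{q^r}-1$ and is therefore semisimple; and since $q^r\mid p-1$ and $\mathbb F_p^\times$ is cyclic, all $q^r$-th roots of unity of $\overline{\mathbb F_p}$ already lie in $\mathbb F_p$. Hence every order-$q^r$ element is diagonalizable over $\mathbb Z/p\mathbb Z$, i.e.\ conjugate to $\mathrm{diag}(\xi^a,\xi^b)$ with $a,b\in\mathbb Z/q^r\mathbb Z$; comparing $q$-adic valuations, such a matrix has order $q^r$ exactly when $q\nmid a$ or $q\nmid b$. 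Since in ${\rm GL}_2$ two diagonal matrices are conjugate precisely when they have the same multiset of diagonal entries (use a permutation matrix), this yields a bijection between conjugacy classes of order-$q^r$ elements and unordered pairs $\{a,b\}\subset\mathbb Z/q^r\mathbb Z$ with $q\nmid a$ or $q\nmid b$.

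Next I would translate the ${\rm CC}$-relation to this picture. Raising $\mathrm{diag}(\xi^a,\xi^b)$ to an $l$-th power with $\gcd(l,q)=1$ multiplies the pair $\{a,b\}$ by the unit $l$, while conjugacy already allows transposing the two entries; since $(\mathbb Z/q^r\mathbb Z)^\times$ is a group, the relation that $x^l$ is conjugate to $y$ is itself an equivalence relation on the order-$q^r$ elements, so the ${\rm CC}$-classes are exactly the orbits of the pairs $\{a,b\}$ under simultaneous scaling by $(\mathbb Z/q^r\mathbb Z)^\times$. As at least one entry is a unit, I may scale it to $1$, so each class has a representative $\{1,c\}$; and solving $\{l,cl\}=\{1,c'\}$ shows $\{1,c\}\sim\{1,c'\}$ if and only if $c'=c$, or ($c$ is a unit and) $c'=c^{-1}$.

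This splits the classes into two families. If $q\mid c$, scaling $\{1,c\}$ by a unit preserves the valuation multiset $\{0,v_q(c)\}$ of the pair, with $v_q(c)\ge 1$, so the only $1$-normalized representative of the orbit is $\{1,c\}$ itself; thus the values $c=qu$, $0\le u<q^{r-1}$, yield pairwise inequivalent classes, and none of them is equivalent to a class with $c$ a unit (for which the valuation multiset is $\{0,0\}$). These give exactly the second displayed set $\mathrm{diag}(\xi,\xi^{qu})$, each such matrix clearly of order $q^r$ and nontrivial. If $q\nmid c$, the orbit of $\{1,c\}$ is $\{\{1,c\},\{1,c^{-1}\}\}$, so these classes are indexed by $(\mathbb Z/q^r\mathbb Z)^\times$ modulo the involution $c\mapsto c^{-1}$, whose only fixed points are $c=\pm 1$ (as $q$ is odd), giving $\tfrac12\varphi(q^r)+1$ classes. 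It remains to check that $\{\,l\in\{1,\dots,\tfrac{q^r-1}{2}\}:\gcd(l,q)=1\,\}\cup\{q^r-1\}$ is a system of representatives for this quotient: it has the correct cardinality $\tfrac12\varphi(q^r)+1$, and one verifies class by class that it meets each pair $\{c,c^{-1}\}$ exactly once.

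The main obstacle is this last step — the explicit verification that the chosen interval, together with $q^r-1$, is a transversal for inversion on $(\mathbb Z/q^r\mathbb Z)^\times$ — since it is the only point where the concrete arithmetic of inversion modulo a prime power enters in a non-formal way. Everything preceding it is essentially forced once one knows that the $q^r$-th roots of unity lie inside $\mathbb F_p$ and hence that the order-$q^r$ elements of ${\rm GL}_2(\mathbb Z/p\mathbb Z)$ are diagonalizable.
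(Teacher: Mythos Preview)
Your reduction is the same as the paper's (which merely says ``direct verification, using the fact that elements of order $q^r$ in ${\rm GL}_2(\mathbb Z/p\mathbb Z)$ are diagonalizable''), and your write-up is considerably more explicit: diagonalize, identify conjugacy classes with unordered pairs $\{a,b\}\subset\mathbb Z/q^r\mathbb Z$ having at least one unit entry, normalize to $\{1,c\}$, and observe that the ${\rm CC}$-relation becomes $c\sim c^{-1}$ on units and is trivial on non-units. All of this is correct, and your count $\tfrac12\varphi(q^r)+1+q^{r-1}$ is exactly what the paper needs downstream.

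The gap is precisely where you flag it. The final step --- checking that the units in $\{1,\ldots,\tfrac{q^r-1}2\}$ together with $q^r-1$ form a transversal for the involution $c\mapsto c^{-1}$ on $(\mathbb Z/q^r\mathbb Z)^\times$ --- is not a matter of routine verification: it is \emph{false} in general. For $q=11$, $r=1$ one has $3\cdot 4\equiv 1\pmod{11}$, so $3$ and $4$ lie in the same inversion class yet both belong to $\{1,\ldots,5\}$, while the class $\{7,8\}$ (since $7\cdot 8\equiv 1$) is not hit at all. The same phenomenon occurs for $q=17$ with $3$ and $6$. So the displayed set in the lemma has the correct cardinality but is not literally a set of representatives; the statement should be read as ``any transversal for inversion on $(\mathbb Z/q^r\mathbb Z)^\times$, together with the $q^{r-1}$ non-units'', which is all that is used in the sequel (only the count matters for Lemma~\ref{lem:datazp2} and Theorem~\ref{thm:p2qr}). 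Your argument proves exactly that corrected version.
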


The proof is a direct verification, using the fact that elements of order $q^r$ in  ${\rm GL}_2(\mathbb Z/p\mathbb Z)$ are diagonalizable. We now discuss when the above automorphisms are part of reduced $q^r$-data.

\begin{lemma}\label{lem:datazp2}
 Let $p,q$ be odd prime numbers and let $r\geq 1$ be such that $q^r|(p-1)$. Let $\theta$ be an automorphism of order $q^r$ of $\mathbb Z_p^2$, represented by one of the matrices of the previous lemma.
\begin{enumerate}
 \item If $\theta = \begin{pmatrix}
          \xi & 0 \\
          0 & \xi^{-1}  
         \end{pmatrix}$, there does not exist any reduced $q^r$-datum having $\theta$ as underlying automorphism.
\item Otherwise, there exists, up to equivalence, exactly one reduced $q^r$-datum having  $\theta$ as underlying automorphism.
\end{enumerate}
\end{lemma}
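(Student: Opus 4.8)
The plan is to translate the whole question into the cohomology of $H=\mathbb Z_p^2$, using that $\widehat H\simeq\mathbb Z_p^2$, that $H^2(H,k^\times)\simeq\mathbb Z_p$ with a nontrivial class corresponding through the commutator pairing $\tau(x,y)\tau(y,x)^{-1}$ to a nondegenerate alternating bicharacter on $H$, and that $\Aut(H)=\mathrm{GL}_2(\mathbb Z/p)$ acts on $H^2(H,k^\times)$ via the determinant character. The first step is the observation that a $q^r$-datum $(\mathbb Z_p^2,\theta,a,\tau)$ is reduced if and only if $[\tau]\neq 1$: if $[\tau]=1$ we may assume by Corollary~\ref{cor:basiciso} that $\tau$ is the trivial cocycle, and then $Z_{\tau,\theta}(\mathbb Z_p^2)=\mathbb Z_p^2$; if $[\tau]\neq 1$, the nondegeneracy of the commutator pairing forces $Z_{\tau,\theta}(\mathbb Z_p^2)=\{1\}$. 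So it remains to classify, up to equivalence, the $q^r$-data $(\mathbb Z_p^2,\theta,a,\tau)$ with $[\tau]\neq 1$ and $\theta$ one of the diagonal automorphisms of Lemma~\ref{lem:repautzp2}.

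The second step is to extract the constraint imposed by the last axiom of an $m$-datum, namely that $\prod_{i=0}^{q^r-1}\tau\circ\theta^i\times\theta^i$ be a coboundary; in $H^2(H,k^\times)\simeq\mathbb Z_p$ this reads $[\tau]^{\sum_{i=0}^{q^r-1}(\det\theta)^i}=1$. Write $d=\det\theta$, so $d^{q^r}=1$ in $(\mathbb Z/p)^\times$. If $\theta=\mathrm{diag}(\xi,\xi^{-1})$ then $d=1$ and the exponent is $q^r\not\equiv 0\pmod p$; as $q^r$ is invertible modulo $p$ this forces $[\tau]=1$, and together with the first step this proves part~(1). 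For every other $\theta$ in the list, $d$ has order $e$ with $1<e\mid q^r$, hence $\sum_{i=0}^{q^r-1}d^i=\tfrac{q^r}{e}\cdot\tfrac{d^e-1}{d-1}=0$ in $\mathbb Z/p$, so the cohomological obstruction vanishes and no $\tau$ is excluded.

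For part (2), fix such a $\theta$, which by Lemma~\ref{lem:repautzp2} we may take to be $\mathrm{diag}(\xi,\mu)$ with $\xi\mu=d\neq 1$, and work with the standard cocycle $\tau_0\big((a_1,a_2),(b_1,b_2)\big)=\zeta^{a_1b_2}$ for a fixed primitive $p$-th root of unity $\zeta$ (which takes values in $\mu_p$). A direct computation gives $\tau_0\circ\theta^i\times\theta^i=\tau_0^{\,d^i}$, so $\prod_{i=0}^{q^r-1}\tau_0\circ\theta^i\times\theta^i=\tau_0^{\,\sum_i d^i}=1$ on the nose, and therefore $(\mathbb Z_p^2,\theta,1,\tau_0)$ is a reduced $q^r$-datum; this gives existence. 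For uniqueness, observe that the automorphism $f_s$ of $\mathbb Z_p^2$ sending $(a_1,a_2)$ to $(sa_1,a_2)$ commutes with $\theta$ and has determinant $s\in(\mathbb Z/p)^\times$; conjugating a datum by $f_s$ via Corollary~\ref{cor:basiciso}(1) (with $l=1$) keeps $\theta$ unchanged and turns $[\tau]$ into any prescribed nonzero class. Hence every reduced $q^r$-datum with underlying automorphism $\theta$ is equivalent to one with $[\tau]=[\tau_0]$, and then by Corollary~\ref{cor:basiciso}(2) to one of the form $(\mathbb Z_p^2,\theta,a,\tau_0)$; the axiom $a\circ\theta=a$ then forces $a\in\widehat H^{\theta}$. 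Finally the equivalences that fix $(\theta,\tau_0)$ and have $f=\mathrm{id}$ are parametrized by characters $\varphi\in\widehat H$ and replace $a$ by $a\cdot\big(\prod_{k=0}^{q^r-1}\varphi\circ\theta^k\big)^{-1}$; so all admissible $a$ become equivalent once we know the norm map $\varphi\mapsto\prod_{k=0}^{q^r-1}\varphi\circ\theta^k$ is surjective onto $\widehat H^{\theta}$, giving exactly one equivalence class.

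The only point that needs real care is the case $\theta=\mathrm{diag}(\xi,1)$ (the automorphism with $u=0$ in Lemma~\ref{lem:repautzp2}), since there $\widehat H^{\theta}\simeq\mathbb Z_p$ is nontrivial and one genuinely has to produce the surjectivity of the norm map onto $\widehat H^{\theta}$; this follows from $\gcd(q^r,p)=1$ together with $\sum_{k=0}^{q^r-1}\xi^k\equiv 0\pmod p$. For all the other automorphisms in the list one has $\widehat H^{\theta}=\{1\}$, so $a$ is forced and the last step is immediate.
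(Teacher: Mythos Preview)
Your proof is correct and follows essentially the same strategy as the paper's own argument: both reduce to the explicit bicharacter $\tau_0$ on $\mathbb Z_p^2$, use the diagonal automorphisms $f_s=\mathrm{diag}(s,1)$ (which commute with $\theta$) to move between nontrivial cohomology classes, and then absorb the remaining freedom in $a$ via a character $\varphi$. The paper carries out this last step by a direct case split (the cases $\xi^i\neq 1$ versus $\theta=\mathrm{diag}(\xi,1)$), whereas you package it uniformly as surjectivity of the norm map $\varphi\mapsto\prod_k\varphi\circ\theta^k$ onto $\widehat H^{\theta}$; your formulation via the determinant action of $\mathrm{GL}_2(\mathbb Z/p)$ on $H^2(\mathbb Z_p^2,k^\times)$ and the computation $\sum_{i=0}^{q^r-1}(\det\theta)^i$ is a clean way to state what the paper verifies by direct inspection of the $\tau_\omega$.
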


\begin{proof}
Fix generators $x_1$, $x_2$ of $\mathbb Z_p^2$, and  for $\omega\in \mu_p$, let $\tau_\omega : \mathbb Z_p^2\times \mathbb Z_p^2 \to k^\times$ be the unique bicharacter such that 
\[
 \tau_\omega(x_1,x_1)=1=\tau_\omega(x_2,x_2)=\tau_\omega(x_2,x_1),  \ \tau_\omega(x_1,x_2)=\omega. 
\]
It is well-known that any $2$-cocycle on $\mathbb Z_p^2$ is cohomologous to $\tau_\omega$ for some $\omega  \in \mu_p$, and that such $2$-cocycles $\tau_\omega$ and $\tau_{\omega'}$ are cohomologous if and only if $\omega=\omega'$. By Corollary \ref{cor:basiciso}, we can assume that any $q^r$-datum has $\tau_\omega$ as underlying cocycle, for some $\omega \in \mu_p$. Moreover a direct computation gives, for $\omega\not=1$ 
\[
 \prod_{k=0}^{q^r-1} \tau_\omega\circ \theta^k\times \theta^k =1 \iff  \prod_{k=0}^{q^r-1} [\tau_\omega\circ \theta^k\times \theta^k] =1 \iff \theta \not= \begin{pmatrix}
          \xi & 0 \\
          0 & \xi^{-1}  
         \end{pmatrix}
\]
and this shows the first assertion, since a datum is not reduced if the underlying cocycle is trivial. Moreover, for any $a\in \widehat{\mathbb Z_p^2}$ such that $a\circ \theta=a$, we obtain a reduced $q^r$-datum $(\mathbb Z_p^2,\theta,a, \tau_\omega)$, and any reduced $q^r$-datum arises in this way. 

We can now prove the second assertion via a case by case discussion. If $\theta = \begin{pmatrix}
          \xi & 0 \\
          0 & \xi^i
         \end{pmatrix}$ with $\xi^i\not=1$, the only compatible $a$ is $a=1$. We then see that, for $1\leq k\leq  p-1$, the $q^r$-data
\[
 (\mathbb Z_p^2, \theta, 1, \tau_{\omega}) \ {\rm and} \  (\mathbb Z_p^2, \theta, 1, \tau_{\omega^k})
\]
are equivalent, using, in the notation of Definition \ref{def:equivdata}, $f^{-1}=\begin{pmatrix} k& 0 \\ 0 & 1 \end{pmatrix}$ and $\varphi=1$.
 
 If $\theta = \begin{pmatrix}
          \xi & 0 \\
          0 & 1
         \end{pmatrix}$, the compatible $a$'s are given by $a(x_1)=1$ and $a(x_2)=\omega^k$, $0\leq k\leq p-1$. Denote by $a_{k}$ such an element of $\widehat{\mathbb Z_p^2}$.
We then see that, for $0\leq k_1\leq  p-1$ and $1\leq k_1\leq p-1$, the $q^r$-data
\[
 (\mathbb Z_p^2, \theta, 1, \tau_{\omega}) \ {\rm and} \  (\mathbb Z_p^2, \theta, a_{k_1}, \tau_{\omega^{k_2}})
\]
are equivalent, using, in the notation of Definition \ref{def:equivdata}, $f^{-1}=\begin{pmatrix} k_2& 0 \\ 0 & 1 \end{pmatrix}$ and $\varphi \in \widehat{\mathbb Z_p^2}$ such that $\varphi(x_1)=1$ and 
$\varphi(x_2)^{q^r}=\omega^{-k_1}$. This concludes the proof.
\end{proof}

%Assume given $a$ and $\tau$ such that  $(\mathbb Z_p^2,\theta, a , \tau)$ is a reduced $q^r$-datum, so that $[\tau]\not=1$. By Corollary \ref{cor:basiciso} we can assume that $\tau=\tau_\omega$ for $\omega\not=1 \in \mu_p$. A direct computation shows that 
%\[
% \prod_{k=0}^{q^r-1} \tau_\omega\circ \theta^k\times \theta^k =1 \iff  \prod_{k=0}^{q^r-1} [\tau_\omega\circ \theta^k\times \theta^k] =1 \iff \theta \not= \begin{pmatrix}
%          \xi & 0 \\
 %         0 & \xi^{-1}  
 %        \end{pmatrix}
%\]
%and this shows the first assertion, and that $a\in \widehat{\mathbb Z_p^2}$.
%\end{proof}

%\begin{lemma}
% Let $(\mathbb Z_p^2,\theta, a , \tau)$ be a $q^r$-datum, with $p, q$ prime and $r\geq 1$ and with $\theta$ of order $q^r$.
%\begin{enumerate}
% \item The $q^r$-datum $(\mathbb Z_p^2,\theta, a , \tau)$ is reduced if and only if $[\tau]\not=1$.
%\end{enumerate}
%\end{lemma}

\begin{proof}[Proof of Theorem \ref{thm:p2qr}]
 Let $A$ be a Hopf algebra as in the statement of Theorem \ref{thm:p2qr}: there exists a $q^r$-datum $(\mathbb Z_p^2,\theta, a, \tau)$ such that $A\simeq A_{q^r}(\mathbb Z_p^2,\theta, a , \tau)$, and with $\theta\not={\rm id}$ and $[\tau]\not=1$ (Proposition \ref{prop:a(h)co}), and the datum is reduced, as we have seen in the proof of the previous lemma.
Hence, by Proposition \ref{prop:isoa(h)} (and Theorem \ref{thm:isoext}) we have a bijection between isomorphism classes of Hopf algebras $A$ as above  and equivalence classes of $q^r$-data  $(\mathbb Z_p^2,\theta, a, \tau)$, $\theta \not={\rm id}$, $[\tau]\not=1$. For $1\leq s\leq r$, let $\mathcal E_s$ be the set  of equivalence classes of $q^r$-data as above and with $\theta$ of order $q^s$. Clearly $\mathcal E = \coprod_{s=1}^r\mathcal E_s$. Using Corollary \ref{cor:basiciso}, 
Lemma \ref{lem:repautzp2} and Lemma \ref{lem:datazp2}, we obtain  $|\mathcal E_s|=\frac{q^s+q^{s-1}}{2}$ for each $1\leq s\leq r$, and the announced result follows.
\end{proof}

The above reasoning works as well when $q=2$, the only small difference being in the counting process of Lemma \ref{lem:repautzp2}. The result is as follows (again, when $r=1$, this was proved in \cite{nat}). 

\begin{theorem}\label{thm:p22r}
Let $p$ be an odd prime, let $r\geq 1$ and assume that $2^{r}|p-1$. 
 The number of isomorphism classes of noncommutative and noncocommutative Hopf algebras fitting into a cocentral extension $k \to \Oo(\mathbb Z_p^2) \to A \to k\mathbb Z_{2^r} \to k$ is $1$ if $r=1$, and is $2(3.2^{r-2}-1)$ if $r\geq 2$.
%\[
% \frac{q^t+ 2\sum_{i=0}^{t-1}q^i +1}{2}
%\sum_{i=1}^t \frac{q^i+q^{i-1}}{2}
%\]
 %precisely is $1+ \sum_{1 \not= q'|q} \varphi(q')=1+q-1=q$, where $\varphi$ is the Euler function.
\end{theorem}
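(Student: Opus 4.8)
The plan is to mirror the proof of Theorem \ref{thm:p2qr}, adjusting only the combinatorial count of $2^r$-data. First I would observe that, exactly as in the odd case, a Hopf algebra $A$ fitting into a cocentral extension $k\to\Oo(\mathbb Z_p^2)\to A\to k\mathbb Z_{2^r}\to k$ that is noncommutative and noncocommutative corresponds by Proposition \ref{prop:a(h)co} to a $2^r$-datum $(\mathbb Z_p^2,\theta,a,\tau)$ with $\theta\neq{\rm id}$ and $[\tau]\neq 1$; and since $Z(\mathbb Z_p^2)=\mathbb Z_p^2$ plays no obstructive role (the relevant subgroup $Z_{\tau,\theta}(H)$ is trivial when $[\tau]\neq 1$, as noted in the proof of the previous lemma), every such datum is reduced. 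Hence by Proposition \ref{prop:isoa(h)} together with Theorem \ref{thm:isoext} the count equals the number of equivalence classes of such $2^r$-data. Then I would decompose $\mathcal E=\coprod_{s=1}^r\mathcal E_s$ according to the order $2^s$ of $\theta$, exactly as before.

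Next I would reprove the analogues of Lemma \ref{lem:repautzp2} and Lemma \ref{lem:datazp2} for $q=2$. The structural content of Lemma \ref{lem:datazp2} is unchanged: an element $\theta$ of order $2^s$ in ${\rm GL}_2(\mathbb Z/p\mathbb Z)$, diagonalizable as ${\rm diag}(\xi,\xi^l)$ with $\xi$ a primitive $2^s$-th root of unity, is the underlying automorphism of a reduced $2^r$-datum if and only if $\theta\neq{\rm diag}(\xi,\xi^{-1})$, and when it is, there is exactly one equivalence class — the argument via the bicharacters $\tau_\omega$, the condition $\prod_k\tau_\omega\circ\theta^k\times\theta^k=1\iff\theta\neq{\rm diag}(\xi,\xi^{-1})$, and the normalization of $a$ using a diagonal $f^{-1}$, goes through verbatim since $p$ is odd. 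So the only thing to redo is the enumeration: $|\mathcal E_s|$ equals the number of conjugacy-up-to-$l$-prime-to-$2^s$ classes of order-$2^s$ elements of ${\rm GL}_2(\mathbb Z/p\mathbb Z)$ minus the one excluded class ${\rm diag}(\xi,\xi^{-1})$ (which is well-defined: note $l=-1$ is prime to $2^s$, so this really is a single excluded class for each $s\geq 1$).

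The main work — and the main obstacle — is the counting of ${\rm CC}_{2^s,2^s}^\bullet({\rm GL}_2(\mathbb Z/p\mathbb Z))$. An order-$2^s$ diagonalizable element is conjugate to ${\rm diag}(\xi,\xi^l)$ with $l$ odd (so that the element genuinely has order $2^s$ one needs $l$ odd, i.e.\ ${\rm GCD}(2,l)=1$) ranging over $\mathbb Z/2^s\mathbb Z$, with ${\rm diag}(\xi,\xi^l)\sim{\rm diag}(\xi,\xi^{l'})$ in ${\rm GL}_2$ iff $\{1,l\}=\{1,l'\}$ or $\{1,l\}=\{l^{-1},1\}\cdot$(scaling), i.e.\ iff $l'\in\{l,l^{-1}\}$, and then further identifying $l$ with $jl$ for $j$ a unit of $\mathbb Z/2^s\mathbb Z$ (the $x^{l}$-conjugate relation). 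So $\mathcal E_s$ is in bijection with $\{l\in(\mathbb Z/2^s\mathbb Z)^\times\}/(l\sim l^{-1},\ l\sim jl)$, minus the class of $l=-1$. One computes this by a small group-theory calculation: for $s=1$ the unit group is trivial so there is one $l$ (namely $l=1={-1}$) which is excluded, giving $|\mathcal E_1|=0$ for $r=1$ — wait, more carefully, when $r=1$ the order-$2$ diagonalizable elements are ${\rm diag}(\xi,1)$ and ${\rm diag}(\xi,\xi)=-I$ and ${\rm diag}(\xi,\xi^{-1})={\rm diag}(\xi,\xi)$; the genuinely order-$2$-coordinate case gives exactly ${\rm diag}(\xi,1)$ after excluding ${\rm diag}(\xi,-1)$, so $|\mathcal E_1|=1$, matching the stated answer $1$ for $r=1$. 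For $s\geq 2$, $(\mathbb Z/2^s\mathbb Z)^\times\cong\mathbb Z_2\times\mathbb Z_{2^{s-2}}$ acts on itself and one divides by the involution $l\mapsto l^{-1}$ as well; a Burnside/direct count gives $3\cdot 2^{s-2}$ classes including ${\rm diag}(\xi,\xi^{-1})$ (one should double-check small cases $s=2,3$ by hand), hence $|\mathcal E_s|=3\cdot 2^{s-2}-1$. Summing $|\mathcal E|=1+\sum_{s=2}^r(3\cdot 2^{s-2}-1)=3(2^{r-1}-1)-(r-1)$ — this does not obviously match $2(3\cdot 2^{r-2}-1)$, so the honest version of this paragraph requires recomputing the per-$s$ count carefully; the likely resolution is that $\mathcal E_s$ must be counted before quotienting by $l\sim jl$ in the way that matches Definition \ref{def:equivdata} (the $f^{-1}={\rm diag}(k,1)$ move only rescales $\tau_\omega$, it does not conjugate $\theta$ by a non-diagonal matrix), so the equivalence on $l$ is just $l\sim l^{-1}$ together with $\theta\sim\theta^{j}$ for $j$ prime to $2^r$, and one must carefully track which of these two independent relations survives — this bookkeeping, and verifying it produces $2(3\cdot 2^{r-2}-1)$, is exactly where the delicacy lies. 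I would pin it down by matching against the known $r=1$ answer of \cite{nat} and one further case ($r=2$, where the claim is $2(3-1)=4$) computed completely explicitly.
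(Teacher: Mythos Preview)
Your overall strategy matches the paper's: reduce via Propositions~\ref{prop:a(h)co} and~\ref{prop:isoa(h)} to counting equivalence classes of reduced $2^r$-data with $\theta\neq{\rm id}$ and $[\tau]\neq 1$, split by the order $2^s$ of $\theta$, and invoke the analogues of Lemmas~\ref{lem:repautzp2} and~\ref{lem:datazp2}. The paper itself gives no more detail than this, so on the level of approach you are aligned with it.

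The genuine gap is in your enumeration of ${\rm CC}_{2^s,2^s}^\bullet({\rm GL}_2(\mathbb Z/p\mathbb Z))$. You assert that ${\rm diag}(\xi,\xi^l)$ has order $2^s$ only when $l$ is odd, but this is false: the matrix has order $2^s$ as soon as \emph{one} eigenvalue does, and $\xi$ already has order $2^s$ by choice. Thus $l$ ranges over all of $\mathbb Z/2^s\mathbb Z$, not just the units. You have dropped precisely the analogue of the second family in Lemma~\ref{lem:repautzp2} (the matrices ${\rm diag}(\xi,\xi^{qu})$ with $q\mid l$). For even $l$ no power-and-swap identification is possible: the two eigenvalues then have different orders, so after raising to an odd power $j$ and possibly swapping, the eigenvalue of order $2^s$ must still match $\xi$, forcing $j\equiv 1$; hence each even $l$ is its own class. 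Your later worry about an independent relation ``$l\sim jl$'' is a symptom of the same confusion: once one eigenvalue is normalized to $\xi$, the only residual identification from the ${\rm CC}^\bullet$ relation is $l\sim l^{-1}$ for odd $l$, nothing more. Redo the count with both families present; as a sanity check, for $s=2$ you should find four classes in ${\rm CC}_{4,4}^\bullet$ (namely $l=0,1,2,3$, with $1$ and $3$ each self-inverse), giving $|\mathcal E_2|=3$ after excluding $l=-1$, hence $|\mathcal E|=1+3=4$ for $r=2$ as required.
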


%When $r=1$, this was proved in \cite{nat}

\end{document}